\documentclass[a4paper,twoside,12pt]{amsart}

\usepackage{ae}
\usepackage{amsmath}
\usepackage{amssymb}
\usepackage{diagrams}
\usepackage{url}

\newtheorem{thm}{Theorem}[section]
\newtheorem{prop}[thm]{Proposition}
\newtheorem{lem}[thm]{Lemma}
\newtheorem{cor}[thm]{Corollary}

\theoremstyle{definition}
\newtheorem{defi}[thm]{Definition}
\newtheorem{example}[thm]{Example}
\newtheorem{conjecture}[thm]{Conjecture}
\newtheorem{question}[thm]{Question}

\theoremstyle{remark}

\newtheorem{remark}[thm]{Remark}

\newenvironment{packed_enum}{
\begin{enumerate}
  \setlength{\itemsep}{1pt}
  \setlength{\parskip}{0pt}
  \setlength{\parsep}{0pt}
}{\end{enumerate}}

\DeclareMathAlphabet{\mathscr}{OT1}{pzc}{m}{it}

\def\jac{\textup{jac}\,}

\def\romanenum{\renewcommand{\labelenumi}{\textup{(}\roman{enumi}\textup{)}}}

\def\Bl{\textup{Bl}}

\def\Spec{\textup{Spec }}
\def\Spf{\textup{Spf }}
\def\Max{\textup{Max }}

\def\ker{\textup{ker }}

\def\Hom{\textup{Hom}}

\def\O{\mathcal{O}}

\def\N{\mathbb{N}}

\def\m{\mathfrak{m}}
\def\a{\mathfrak{a}}

\def\n{\mathfrak{n}}

\def\P{\mathbb{P}}
\def\p{\mathfrak{p}}

\def\id{\textup{id}}

\def\an{\textup{an}}
\def\rig{\textup{rig}}
\def\Rig{\textup{Rig}}
\def\FF{\textup{FF}}
\def\sRig{\textup{uRig}}
\def\uRig{\textup{uRig}}
\def\D{\mathbb{D}}
\def\srig{\textup{urig}}
\def\urig{\textup{urig}}

\def\Sp{\textup{Sp\,}}

\def\sSp{\textup{sSp\,}}
\def\sp{\textup{sp}}
\def\sI{\mathcal{I}}

\def\sK{\mathcal{K}}

\def\sT{\mathcal{T}}

\def\fU{\mathfrak{U}}
\def\fV{\mathfrak{V}}

\def\fX{\mathfrak{X}}
\def\fY{\mathfrak{Y}}

\newcommand{\ul}[1]{\underline{#1}}

\def\rad{\textup{rad}}

\def\sup{\textup{sup}}

\def\sF{\mathcal{F}}
\def\sG{\mathcal{G}}

\def\Gauss{\textup{Gauss}\,}

\def\fC{\mathfrak{C}}
\def\parent{\textup{par}}
\def\children{\textup{ch}}
\def\leaves{\textup{lv}}
\def\subtree{\textup{subt}}

\def\sr{\textup{ur}}
\def\ur{\textup{ur}}
\def\r{\textup{r}}
\def\fr{\mathfrak{r}}

\def\Zar{\textup{Zar}}

\def\aux{\textup{aux}}

\def\comp{\textup{comp}}
\def\Spa{\textup{Spa}}

\def\sG{\mathscr{G}}

\def\Specns{\textup{Spec}}
\def\sSpns{\textup{sSp}}
\def\Spfns{\textup{Spf}}

\setlength{\parindent}{0.0cm}
\setlength{\parskip}{0.3cm}

\setlength{\hoffset}{-.150in}
\setlength{\textwidth}{5.3in}
\setlength{\voffset}{-.5in}
\setlength{\textheight}{9.0in}

\def\phi{\varphi}

\romanenum

\begin{document}

\title{\textsc{Uniformly rigid spaces}}

\author{Christian Kappen}
\email{christian.kappen@uni-due.de}

\address{
Institut für Experimentelle Mathematik\\
Ellernstrasse 29, 45326 Essen}

\begin{abstract}
We define a new category of non-archimedean analytic spa\-ces over a complete discretely valued field, which we call \emph{uniformly rigid}. It extends the category of rigid spaces, and it can be described in terms of bounded functions on products of open and closed polydiscs. We relate uniformly rigid spaces to their associated classical rigid spaces, and we transfer various constructions and results from rigid geometry to the uniformly rigid setting. In particular, we prove an analog of Kiehl's patching theorem for coherent ideals, and we define the uniformly rigid generic fiber of a formal scheme of formally finite type. This uniformly rigid generic fiber is more intimately linked to its model than the classical rigid generic fiber obtained via Berthelot's construction.
\end{abstract}

\maketitle

\section{Introduction}

Let $K$ be a non-archimedean field, and let $R$ be its valuation ring, equipped with the valuation topology. Grothendieck had suggested that rigid spaces over  $K$ should be viewed as generic fibers of formal schemes of \emph{topologically finite} (tf) type over $R$, that is, of formal schemes which are locally isomorphic to formal spectra of quotients of strictly convergent power series rings in finitely many variables
\[
R\langle T_1,\ldots,T_n\rangle\;.
\] 
He envisaged that rigid spaces should, in a suitable sense, be obtained from these formal schemes by tensoring over $R$ with $K$. In accordance with this point of view, there is a generic fiber functor
\[
\rig\,:\,\left(\begin{array}{cc}\textup{formal $R$-schemes}\\\textup{of locally tf type}\end{array}\right)\rightarrow(\textup{rigid $K$-spaces})
\]
characterized by the property that it maps affine objects to affinoid spaces such that, on the level of functions, it corresponds to the extension of scalars functor $\cdot\otimes_RK$. This functor was more closely studied first by Raynaud and later by Bosch and Lütkebohmert; they proved that it induces an equivalence
between the category of quasi-paracompact and quasi-separated rigid $K$-spaces and the category of quasi-paracompact admissible formal $R$-schemes, localized with respect to the class of admissible blowups, cf.\ \cite{Raynaud_formalrigid}, \cite{frg1} and \cite{bosch_frgnotes} Theorem 2.8/3. 


From now on, let us assume that the absolute value on $K$ is discrete, so that $R$ is noetherian. Berthelot has extended the generic fiber functor to the class of formal $R$-schemes of locally \emph{formally finite} (ff) type, which are locally isomorphic to formal spectra of topological quotients of mixed formal power series rings in finitely many variables
\[
R[[S_1,\ldots,S_m]]\langle T_1,\ldots,T_n\rangle\;,
\]
where an ideal of definition is generated by the maximal ideal of $R$ and by the $S_i$, cf.\ \cite{rapoport-zink} Section 5.5, \cite{berthelot_rigcohpreprint} 0.2 and \cite{dejong_crystalline} 7.1--7.2. This extension of $\rig$ is characterized by the property that it maps admissible blowups to isomorphisms, where a blowup is called admissible if it is defined by an ideal that locally contains a power of a uniformizer of $R$, cf.\ \cite{temkin_desing} 2.1. The extended $\rig$ functor no longer maps affine formal schemes to affinoid spaces; for example, the generic fiber of the affine formal $R$-scheme $\Spf R[[S]]$ is the open rigid unit disc over $K$, which is not quasi-compact.

While Raynaud's generic fiber functor is precisely described in terms of admissible blowups, Berthelot's extended generic fiber functor is less accessible: for example, let us consider an unbounded function $f$ on the open rigid unit disc $\D^1_K$. The resulting morphism $\phi$ from $\D^1_K$ to the rigid projective line does not extend to models of ff type; indeed, the domain of a model of $\phi$ cannot be quasi-compact, for otherwise $f$ would be bounded. In particular, there exists no admissible blowup of $\Spf R[[S]]$ admitting an extension of $\phi$, and the schematic closure of the graph of $\phi$ in the fibered product of $\Spf R[[S]]$ and $\P^1_R$ over $\Spf R$ does not exist. This phenomenon presents a serious obstacle if one tries for example to develop a theory of Néron models of ff type. 

The main object of this article is to present a new category of non-archi\-mede\-an analytic spaces, the category of \emph{uniformly rigid spaces}, which are better adapted to formal schemes of locally ff type than Tate's rigid analytic spaces. Intuitively speaking, uniformly rigid spaces and their morphisms are described in terms of \emph{bounded} functions on finite products of open and closed unit discs. Like rigid $K$-spaces, uniformly rigid $K$-spaces are locally ringed G-topological $K$-spaces, where the letter G indicates that the underlying set of physical points is not equipped with a topology, but with a Grothendieck topology. 
Let us give a brief overview of our definitions and results.

We say that a $K$-algebra is \emph{semi-affinoid} if it is obtained from an $R$-algebra of ff type via the extension of scalars functor $\cdot\otimes_RK$. In other words, semi-affinoid $K$-algebras are quotients of $K$-algebras of the form
\[
(R[[S_1,\ldots,S_m]]\langle T_1,\ldots,T_n\rangle)\otimes_RK\;.
\]
We define the category of semi-affinoid $K$-spaces as the opposite of the category of semi-affinoid $K$-algebras, where a morphism of semi-affinoid $K$-algebras is simply a $K$-algebra homomorphism. Semi-affinoid $K$-spaces play the role of 'building blocks' for uniformly rigid $K$-spaces,
such that we effectively implement Grothendieck's original point of view in the ff type situation. Semi-affinoid $K$-algebras can be studied via  the universal properties of the free semi-affinoid $K$-algebras, which we establish in Theorem \ref{freesemaffthm}.


We define a G-topology on the category of semi-affinoid $K$-spaces equipped with its physical points functor by considering compositions of admissible blowups, completion morphisms and open immersions on flat affine models of ff type, cf.\ Definitions \ref{semaffsubdomdefi} and \ref{maintauxdefi}. These formal-geometric constructions define semi-affinoid subdomains, which may be regarded as nested rational subdomains involving strict or non-strict inequalities in semi-affinoid functions. In contrast to the classical rigid case, we cannot avoid nested constructions; this is essentially due to the fact that admissible blowups defined on open formal subschemes need not extend, cf.\ Remark \ref{nestedrequrem}. 
Just like in rigid geometry, the disconnected covering of the closed semi-affinoid unit disc $\sSp K\langle S\rangle$ by the open semi-affinoid unit disc $\sSp K[[S]]$ and the semi-affinoid unit circle $\sSp K\langle S,S^{-1}\rangle$ is \emph{not} admissible in the uniformly rigid G-topology, cf.\ Example \ref{nonadmdisccovex}. In particular, contrary to the rigid-analytic situation, finite coverings of semi-affinoid spaces by semi-affinoid subdomains need not be admissible.

Using methods from formal geometry, we prove a uniformly rigid acyclicity theorem, which in particular implies the following:
\begin{thm}[\ref{acyclicitytheorem}]
The presheaf of semi-affinoid functions is a sheaf for the uniformly rigid G-topology.
\end{thm}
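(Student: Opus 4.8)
The plan is to reduce the sheaf property for arbitrary admissible coverings to the case of finite coverings of an affinoid-like building block, and then to establish acyclicity for those elementary coverings by transferring the corresponding result from formal and rigid geometry. First I would recall that, by the general formalism of Grothendieck topologies, it suffices to verify the sheaf axioms on the admissible coverings that generate the uniformly rigid G-topology. Since these coverings are built, via Definitions \ref{semaffsubdomdefi} and \ref{maintauxdefi}, from admissible blowups, completion morphisms and open immersions on flat affine models of ff type, the key is to control the \v{C}ech complex of semi-affinoid functions associated to a finite covering of a semi-affinoid space $X = \sSp A$ by semi-affinoid subdomains. Because every semi-affinoid subdomain is realized as a nested rational construction on a flat formal model $\mathfrak{X}$ of ff type with generic fiber $X$, I would pass to such a model and express the covering in formal-geometric terms.

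The central step is to prove exactness of the \v{C}ech complex, i.e. the acyclicity statement of which the sheaf property is a consequence. Here I would exploit the relationship between semi-affinoid algebras and their formal models together with the extension-of-scalars description $\cdot\otimes_R K$. The strategy is to interpret the semi-affinoid sections over each piece of the covering as global functions on an open formal subscheme or admissible blowup of $\mathfrak{X}$, and then to invoke the theorem of formal functions / the coherence and acyclicity of quasi-coherent sheaves on affine formal schemes of ff type. Concretely, an admissible covering refines to a covering whose pieces correspond to an affine open covering of some admissible blowup $\mathfrak{X}'\to\mathfrak{X}$; the \v{C}ech complex of the structure sheaf of $\mathfrak{X}'$ is acyclic because $\mathfrak{X}'$ is, up to the blowup, affine and the relevant cohomology of coherent sheaves vanishes. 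Tensoring with $K$ is exact over the noetherian ring $R$, so exactness is preserved when passing from the formal-integral level to the semi-affinoid level, yielding the desired vanishing of higher \v{C}ech cohomology and in particular the sheaf property in degrees $0$ and $1$.

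The main obstacle I anticipate lies in the nested nature of semi-affinoid subdomains, emphasized in Remark \ref{nestedrequrem}: unlike in classical rigid geometry, admissible blowups defined on open formal subschemes need not extend, so a single covering cannot in general be realized on one global blowup of $\mathfrak{X}$. Consequently I cannot directly reduce to the \v{C}ech complex of a fixed formal model, and I must instead argue via a limit or refinement procedure, controlling the compatibility of the various local models along overlaps. Managing this requires a careful analysis of how completion morphisms interact with admissible blowups and open immersions, and verifying that the resulting transition maps on sections are compatible enough to glue. A secondary technical point is that finite coverings need not be admissible (cf.\ Example \ref{nonadmdisccovex}), so the acyclicity argument must be framed only for those coverings that are genuinely admissible in the uniformly rigid G-topology, which is exactly where the formal-geometric definition of admissibility does the work.

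Once acyclicity is established for the generating coverings, the general sheaf property follows formally: the separatedness and gluing axioms for an arbitrary admissible covering reduce, via refinement and the \v{C}ech-to-derived-functor comparison, to the vanishing proved above. I would conclude by remarking that the full acyclicity theorem (exactness of the augmented \v{C}ech complex in all degrees) contains the sheaf statement as the assertion of exactness in the lowest degrees, so the theorem as stated is an immediate corollary of the stronger result.
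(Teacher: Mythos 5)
Your treatment of the elementary case contains a genuine error at its key step. You assert that the \v{C}ech complex of the structure sheaf of the blowup $\fX'\rightarrow\fX$ with respect to the finite affine covering $(\fX_i)_{i\in I}$ is acyclic at the integral level, and that exactness then survives the flat base change $\cdot\otimes_RK$. But $\fX'$ is not affine, and by Leray (using the vanishing of coherent cohomology on the affine pieces, \cite{egaiii} 4.1.5, 4.1.7 and 1.3.1) one has $\check{H}^q((\fX_i)_{i\in I},\O_{\fX'})=H^q(\fX',\O_{\fX'})$, which is in general \emph{nonzero} for $q\geq 1$. The input actually needed is not exactness over $R$ but the statement that $H^q(\fX',\O_{\fX'})=\Gamma(\fX,R^q\beta_*\O_{\fX'})$ (via \cite{egaiii} 1.4.11) is $\pi$-torsion --- the ff type transcription of \cite{luetkebohmertformalrigid} 2.1 --- so that the cohomology dies only \emph{after} tensoring with $K$. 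The same reference is needed in degree $-1$: to identify the augmented complex of semi-affinoid functions with the augmented complex of $\O_{\fX'}\otimes_RK$, you must know that $\beta^\sharp\otimes_RK$ is an isomorphism, i.e.\ that global functions are unchanged by admissible blowup after inverting $\pi$. Flatness of $K$ over $R$ preserves exactness but cannot create it, so as written your argument for the elementary case does not close.

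The second, larger gap is the reduction from general $\sT_\aux$-admissible coverings to elementary ones. You correctly identify the obstacle of Remark \ref{nestedrequrem} --- a covering cannot in general be realized on a single blowup of one model --- but the ``limit or refinement procedure, controlling the compatibility of the various local models along overlaps'' is never made precise, and this is exactly where the content of the theorem lies. The paper's mechanism is purely combinatorial and avoids any glueing of formal models: an admissible covering has by definition a leaflike refinement, so by \cite{bgr} 8.1.4/3 it suffices to prove that leaflike coverings are universally acyclic, and universality is free because leaflike coverings pull back to leaflike coverings. One then inducts on the volume of the rooted tree $J$ indexing an extending treelike covering: removing the children of the parent $\iota'$ of a deepest leaf yields a smaller treelike covering with leaf set $I'$, and \cite{bgr} 8.1.4/3 reduces acyclicity of $(X_i)_{i\in I}$ to acyclicity of its restrictions to the intersections $X_{i_0\cdots i_r}$ with $i_s\in I'$; each such restriction is refined either by the trivial covering (when some $i_s\neq\iota'$, since then $X_{i_0\cdots i_r}\subseteq X_{i_s}$ with $i_s\in I$) or by the elementary covering $(X_j)_{j\in\children(\iota')}$ of $X_{\iota'}$, so the elementary case together with the induction hypothesis concludes. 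Without this tree-volume induction (or an equivalent device exploiting the rooted tree structure of Definition \ref{maintauxdefi}), your proposal has no mechanism for handling the nesting, and no amount of compatibility-checking of local models along overlaps will substitute for it, precisely because the blowups on the pieces do not extend to the ambient model.
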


The resulting functor from the category of semi-affinoid $K$-spaces to the category of locally G-ringed $K$-spaces is fully faithful; hence global uniformly rigid $K$-spaces can be defined, cf.\ Definition \ref{srigspacedefi}. They can be constructed by means of standard glueing techniques; this is possible because uniformly rigid spaces satisfy the properties (G$_0$)--(G$_2$) listed in \cite{bgr} p.\ 339. It follows that the category of uniformly rigid $K$-spaces admits fibered products and that there is a natural generic fiber functor $\urig$ from the category of formal $R$-schemes of locally ff type to the category of uniformly rigid $K$-spaces. The final picture can be described as follows:
\begin{thm}[Section \ref{compfuncsec}]
Let $\Rig_K$, $\uRig_K$ and $\FF_R$ denote the categories of rigid $K$-spaces, of uniformly rigid $K$-spaces and of formal $R$-schemes of locally ff type respectively. Let moreover $\Rig'_K\subseteq\Rig_K$ be the full subcategory of rigid spaces that are quasi-paracompact and quasi-separated. There is a diagram of functors
\begin{diagram}
&&&&&\Rig'_K&\\
&&&&\ldInto(4,2)<\ur&\dInto&\\
&\uRig_K&&\rTo^\r&&\Rig_K&\\
{\textup{ }}&&\luTo<\urig&&\ruTo>\rig&&&\\
&&&\FF_R&&&
\end{diagram}
commuting up to isomorphism, where 
\begin{enumerate}
\item the functor $\r$ is defined by applying the functor $\rig$ locally to models of ff type, where 
\item the functor $\ur$ is defined by applying $\urig$ to a global Raynaud-type model of locally tf type
\end{enumerate}
and where the following holds:
\begin{enumerate}
\item The functor $\ur$ is a full embedding.
\item The functor $\r$ is faithful, yet not fully faithful. 
\item  For each $X\in\uRig_K$, there is a \emph{comparison morphism} $\comp_X:X^r\rightarrow X$ that is final among all morphisms of locally G-ringed $K$-spaces from rigid $K$-spaces to $X$; it is a bijection on physical points, and it induces isomorphisms of stalks.
\end{enumerate}
\end{thm}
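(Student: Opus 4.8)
The plan is to establish the comparison morphism of assertion (iii) first, since it is the engine that drives the other two statements. Locally, a uniformly rigid $K$-space $X$ is semi-affinoid, say $X=\sSp B$ with $B=A\otimes_RK$ for a flat $R$-algebra $A$ of ff type; I would set $X^r=\rig(\Spf A)$ and define $\comp_X$ to be the identity on physical points together with the inclusion of bounded (semi-affinoid) functions into rigid functions on structure sheaves, so that $\comp_X^\#$ is the natural map $\mathcal O_X\to(\comp_X)_*\mathcal O_{X^r}$. To see that this is well defined and globalizes, I would check independence of the chosen model—admissible blowups are sent to isomorphisms by $\rig$—and then glue, invoking the acyclicity theorem (Theorem \ref{acyclicitytheorem}) to patch the local data into a morphism of locally G-ringed $K$-spaces. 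The bijection on physical points is then immediate, since both $|X|$ and $|X^r|$ are canonically identified with the set of generic-fibre points of the formal model $\Spf A$.

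The statements about stalks and finality are the technical heart. For the stalks I would argue that around a given point the small closed polydiscs form a cofinal system of neighbourhoods in \emph{both} the uniformly rigid and the classical rigid G-topology; on such a quasi-compact piece every rigid function is bounded, so semi-affinoid and affinoid functions agree there, and passing to the colimit yields the isomorphism $\mathcal O_{X,x}\xrightarrow{\sim}\mathcal O_{X^r,x}$. For finality, given a rigid $K$-space $Y$ and a morphism $\psi\colon Y\to X$ of locally G-ringed spaces, I would construct $\tilde\psi\colon Y\to X^r$ by setting $\tilde\psi=\psi$ on points (legitimate as $\comp_X$ is a bijection there) and transporting $\psi$ on stalks through the isomorphisms just established. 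The delicate point is to upgrade this pointwise datum to an honest rigid morphism: covering $Y$ by affinoids and $X^r$ by Berthelot's increasing affinoid covering, one uses quasi-compactness of each affinoid of $Y$ to factor it through a single member of the covering, where the problem reduces to a map of affinoid algebras and boundedness makes the lift unique. I expect this factorization—controlling the interaction of the nested semi-affinoid subdomains with the rigid covering—to be where the real work lies.

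With (iii) in hand, assertion (i) is largely formal. By construction $\r\circ\ur\cong\mathrm{id}_{\Rig'_K}$, since applying $\r$ locally to $\urig$ of a Raynaud-type model of tf type recovers $\rig$ of that model; this already yields faithfulness of $\ur$ and its injectivity on isomorphism classes. Moreover, for $X$ in the image of $\ur$ the underlying model is of tf type, so the semi-affinoid building blocks are in fact affinoid and $\comp_{\ur X}$ is an isomorphism. Fullness then follows by a diagram chase: a morphism $g\colon\ur X\to\ur Y$ composed with the isomorphism $\comp_{\ur X}$ is a morphism from a rigid space, hence factors uniquely through $\comp_{\ur Y}$ by finality, producing a rigid morphism $h\colon X\to Y$; naturality of $\comp$ together with $\r\circ\ur\cong\mathrm{id}$ and the invertibility of $\comp_{\ur X},\comp_{\ur Y}$ then force $\ur(h)=g$.

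Finally, for assertion (ii) I would deduce faithfulness of $\r$ directly from the properties in (iii): because $\comp_X$ is a bijection on points and an isomorphism on stalks it is an epimorphism of locally G-ringed spaces, so the naturality relation $f\circ\comp_X=\comp_Z\circ\r(f)$ shows that $f$ is determined by $\r(f)$. That $\r$ fails to be full is witnessed by the example from the introduction: an unbounded function on the open unit disc $\D^1_K$ defines a rigid morphism $\D^1_K\to\P^1_K$ that cannot descend through $\urig$, because morphisms in $\uRig_K$ correspond on functions to bounded power series, that is, to elements of $R[[S]]\otimes_RK$, whereas the rigid $\mathrm{Hom}$-set contains all analytic functions on the open disc.
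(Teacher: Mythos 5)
Your outline reproduces the paper's architecture (local definition of $X^\r$ via Berthelot, stalk comparison, finality, then deducing (i) and (ii)), and your treatments of the stalk isomorphism and of assertion (ii) are sound sketches. But two steps would fail as proposed.

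First, your mechanism for finality is broken. You want to take a morphism of locally G-ringed spaces $\psi\colon Y\rightarrow X$, cover $X^\r$ by Berthelot's increasing affinoid covering, and use quasi-compactness of each affinoid of $Y$ to factor it through a single member. Quasi-compactness of $Y$ only applies to \emph{admissible} coverings of $Y$, and the pullback $(\psi^{-1}(X_n))_n$ is not known to be admissible: $\psi$ is continuous only for the uniformly rigid G-topology on $X$, and Berthelot's covering is precisely \emph{not} $\sT_\urig$-admissible (Remark \ref{qcrem} -- this non-admissibility is the whole point of the theory, and it is what makes the unbounded-function counterexample work). The paper's engine is instead algebraic: pass to global sections $A\rightarrow\Gamma(Y,\O_Y)$, reduce to affinoid $Y=\Sp B$, and use Corollary \ref{freesemaffcor} ($iv$) to produce compatible formal models $\ul{A}\rightarrow\ul{B}$ together with Corollary \ref{afflattfintypecor}, which forces $\ul{B}$ to be of tf type; applying $\rig$ to $\Spf\ul{\psi}$ yields the factorization (Proposition \ref{semaffspaceassocprop}). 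Equivalently, the needed boundedness comes from the maximum principle for affinoid algebras applied to the images of the topologically quasi-nilpotent coordinates $S_i$ -- not from quasi-compactness of $Y$. Relatedly, you never verify that $\comp_X$ is a morphism of G-topological spaces at all, i.e.\ that $\sT_\rig$ is finer than $\sT_\urig$ (Proposition \ref{srigrigcompprop}); glueing via the acyclicity theorem does not supply this topology comparison, whose proof again runs through Lemmas \ref{easyrigidlem} and \ref{veryspecialinafflem}.

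Second, in your proof of (i) you assert that for $X$ with a tf-type Raynaud model the building blocks are affinoid and hence $\comp_{\ur X}$ is an isomorphism. This is false: the paper shows that even for the affinoid algebra $K\langle S\rangle$ the uniformly rigid G-topology on $\sSp K\langle S\rangle$ is strictly coarser than the rigid G-topology on $\Sp K\langle S\rangle$ (the semi-affinoid open unit disc is $\sT_\urig$-admissible with ring of sections $R[[S]]\otimes_RK$, not the full ring of analytic functions), so $\comp_{\ur X}$ is a bijection on points with isomorphic stalks but never invertible in nontrivial cases -- compare the remark after Proposition \ref{closedimprop}. Your diagram chase can be repaired: having constructed $h$ with $\comp_{\ur Y}\circ h=g\circ\comp_{\ur X}$ via finality, naturality gives $\r(g)=h=\r(\ur(h))$, and then faithfulness of $\r$ (your assertion (ii), which does hold) yields $\ur(h)=g$ without ever inverting $\comp$. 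The paper instead proves fullness via the \emph{initiality} of $\comp_Y\colon Y\rightarrow Y^\ur$ for $Y\in\Rig'_K$ (Corollary \ref{assocsrigspacecompcor}), a genuinely nontrivial input requiring \cite{bosch_frgnotes} Lemma 2.8/4 to realize an admissible affinoid covering by an open covering of a formal Raynaud model; your proposal contains no substitute for this statement as written.
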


For $X\in\uRig_K$, we say that $X^\r$ is the underlying rigid space of $X$. Conversely, for $Y\in\Rig'_K$ we say that $Y^\ur$ is the Raynaud-type uniformly rigid structure on $Y$. Via the comparison morphisms, uniformly rigid spaces and their underlying rigid spaces are locally indistinguishable; we may thus view a uniformly rigid space as a rigid space equipped with an additional global uniform structure which is encoded in terms of a coarser G-topology and a smaller sheaf of analytic functions. Let us point out that the open rigid unit disc carries two canonical uniform structures, the one given by a Raynaud model of locally tf type and the one given by the canonical affine model $\Spf R[[S]]$ of ff type. The corresponding uniformly rigid spaces are distinct, since one is quasi-compact while the other one is not quasi-compact. The fact that $\r$ is not fully faithful is seen by the example of an \emph{unbounded} function $f$ on the rigid open unit disc which we considered above: The rigid-analytic morphism $\phi$ defined by $f$ does not extend to a morphism of uniformly rigid spaces from $(\Spf R[[S]])^\urig$ to $(\P_K^{1,\an})^\ur$. 


In Section \ref{cohmodsec}, we study coherent modules on uniformly rigid $K$-spaces. We prove the existence of schematic closures of coherent submodules, cf.\ Theorem \ref{frameembthm}. Using the resulting models of coherent ideals, we prove the following analog of Kiehl's theorem A in rigid geometry, cf.\ \cite{kiehlab}: 

\begin{thm}[\ref{frameembassoccor}]
Coherent ideals on semi-affinoid spaces are associated to their ideals of global functions.
\end{thm}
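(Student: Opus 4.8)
The plan is to prove the statement by descending from a flat formal model via the generic fiber functor, using the schematic closure construction of Theorem \ref{frameembthm} to produce an algebraic model of the given coherent ideal and then transferring the formal analog of Theorem A back to the semi-affinoid space. Write $X=\sSp A$ with $A$ semi-affinoid, let $\mathcal{I}\subseteq\mathcal{O}_X$ be a coherent ideal, and set $I:=\mathcal{I}(X)\subseteq A$. The assertion to be proved is that $\mathcal{I}$ coincides with the sheaf $\widetilde{I}$ associated to the ideal $I$ of global functions.

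First I would invoke Theorem \ref{frameembthm} to realize $\mathcal{I}$ as a generic fiber: choose a flat affine formal model $\mathfrak{X}=\Spf\mathcal{A}$ of $X$ of formally finite type, and let $\mathcal{J}\subseteq\mathcal{O}_{\mathfrak{X}}$ be the schematic closure of $\mathcal{I}$, a coherent open ideal whose uniformly rigid generic fiber is $\mathcal{I}$. Since $\mathcal{A}$ is noetherian and adic and $\mathfrak{X}$ is affine, the formal analog of Theorem A applies on $\mathfrak{X}$: the coherent ideal $\mathcal{J}$ is associated to the finitely generated ideal $\mathfrak{J}:=H^0(\mathfrak{X},\mathcal{J})\subseteq\mathcal{A}$, so that $\mathcal{J}=\mathfrak{J}^{\Delta}$, where $(\cdot)^{\Delta}$ denotes the associated sheaf on the formal scheme.

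The heart of the argument is a compatibility statement between the two associated-sheaf constructions: the uniformly rigid generic fiber of $\mathfrak{J}^{\Delta}$ is the sheaf on $X$ associated to the extended ideal $I':=\mathfrak{J}\otimes_R K\subseteq A$. I would check this on the nested rational subdomains defining the uniformly rigid G-topology, verifying that the formation of $\mathcal{J}$-sections commutes with admissible blowups, completion morphisms and open immersions on flat models and with $\cdot\otimes_R K$; flatness of $\mathfrak{X}$ is what ensures that no spurious torsion appears. Granting this, $\mathcal{I}=\widetilde{I'}$ is associated to the finitely generated ideal $I'$. It then remains to identify $I'$ with $I=\mathcal{I}(X)$, and here I would use the acyclicity theorem (Theorem \ref{acyclicitytheorem}) together with the compatibility of $H^0$ with the generic fiber functor $\urig$ to compute $\mathcal{I}(X)=H^0(\mathfrak{X},\mathcal{J})\otimes_R K=\mathfrak{J}\otimes_R K=I'$, whence $\mathcal{I}=\widetilde{I}$ as desired.

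I expect the main obstacle to be precisely this last compatibility. While on an affine formal scheme the global sections of a coherent sheaf are computed purely algebraically, the passage to the uniformly rigid generic fiber is delicate, because semi-affinoid spaces carry non-quasi-compact open-disc directions and the comparison must keep track of the boundedness of sections. Concretely, the problem is to show that taking global sections commutes with $\urig$ up to $\cdot\otimes_R K$ and that the associated-module construction is preserved under the generic fiber functor; both rely on the flatness of the chosen model and on the acyclicity theorem, which lets one present $H^0$ as the kernel of a \v{C}ech differential for an admissible covering. Once the exactness of $\cdot\otimes_R K$ is used to commute the base change past this kernel, the identification of global sections follows and the theorem is proved.
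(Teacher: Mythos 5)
Your proposal is correct and follows essentially the same route as the paper's own proof of Corollary \ref{frameembassoccor} ($i$): invoke Theorem \ref{frameembthm} to produce a coherent formal model of the ideal on an affine flat $R$-model $\Spf\ul{A}$, use that coherent modules on affine noetherian formal schemes are associated to their finite modules of global sections, and descend through the generic fiber functor — the compatibility you single out as the main obstacle is essentially built into the paper's construction of $\ul{\sF}\mapsto\ul{\sF}^\urig$ on affine models, and the identification $\mathcal{I}(X)=\mathfrak{J}\otimes_RK$ is then immediate from the definition of the associated sheaf together with Theorem \ref{acyclicitytheorem}. One small slip worth correcting: the schematic closure $\mathcal{J}$ is not an \emph{open} ideal (an open ideal contains a power of $\pi$, which would force $\mathcal{I}=\O_X$); its characterizing property from Theorem \ref{frameembthm} is that the quotient $\O_\fX/\mathcal{J}$ is $R$-flat, i.e., $\pi$-torsion free.
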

In particular, closed uniformly rigid subspaces are well-behaved, cf.\ Proposition \ref{closedimprop}. Using fibered products and closed uniformly rigid subspaces, we define the notion of separatedness for uniformly rigid $K$-spaces, and we define the graph of a morphism $f:Y\rightarrow X$ of uniformly rigid $K$-spaces whose target is separated, cf.\ Section \ref{separatedsection}. Using Theorem \ref{frameembthm}, we show that if $\fX$ and $\fY$ are flat formal $R$-schemes of locally ff type such that $\fX^\urig$ is separated and if $f:\fY^\urig\rightarrow\fX^\urig$ is a morphism of uniformly rigid generic fibers, then the schematic closure of the graph of $f$ in $\fY\times\fX$ exists. As we have noted above, the corresponding statement is false if $\urig$ is replaced by Berthelot's generic fiber functor $\rig$.

Semi-affinoid algebras and some associated locally G-ringed $K$-spaces have already been studied by Lipshitz and Robinson in \cite{lipshitz_robinson}, where the terminology \emph{quasi-affinoid} is used. The approach in \cite{lipshitz_robinson} includes the situation where $R$ is not discrete and where the machinery of locally noetherian formal geometry is not available. However, no global theory is developed in \cite{lipshitz_robinson}, and the connection to formal geometry is not discussed. The proof of Theorem \ref{freesemaffthm} in the case of a possibly non-discrete valuation, given in \cite{lipshitz_robinson} I.5.2.3, is technically more involved, and it relies upon methods different from ours.  The definition of the G-topology in \cite{lipshitz_robinson} III.2.3.2 is less explicit than our definition, so that a deep quantifier elimination theorem \cite{lipshitz_robinson} II. Theorem 4.2 is needed in order to prove an acyclicity theorem. Our approach avoids quantifier elimination. 

It is unclear how to reflect uniformly rigid structures on the level of Ber\-ko\-vich's analytic spaces or on the level of Huber's analytic adic spaces, cf.\ Section \ref{berthhubercompsec}. Semi-affinoid $K$-algebras are equipped with unique $K$-Banach algebra structures, so that one may consider their valuation spectra. For instance, the spectrum $M(R[[S]]\otimes_RK)$ is the closure of the Berkovich open unit disc within the Berkovich closed unit disc; it is obtained by adding the Gauss point. However, inclusions of semi-affinoid subdomains need not induce injective maps of valuation spectra, so the formation of the valuation spectrum does not globalize. This corresponds to the fact that in the ff type situation, the functor $\cdot\otimes_RK$ does not commute with complete localization. Nonetheless, we suggest that a uniformly rigid $K$-space $X$ should be viewed as a compactification of its underlying rigid space $X^\r$. This point of view might be useful in order to obtain a better understanding of the quasi-compactifications considered in \cite{strauch_deformation} 3.1 and in \cite{huber_finiteness_ii} 3; it should be further developed within the framework of topos theory. We propose the study of the uniformly rigid topos as a topic for future research. 

The author has used uniformly rigid spaces in his doctoral thesis \cite{mythesis}, in order to lay the foundations for a theory of formal Néron models of locally ff type. The search for such a theory was strongly motivated by work of C.-L. Chai \cite{bisecartpre}, who had suggested that Néron models of ff type could be used to study the base change conductor of an abelian variety with potentially multiplicative reduction over a local field. Chai and the author are currently working on further developing the methods of \cite{bisecartpre} within the framework of uniformly rigid spaces.

The present paper contains parts of the first chapter of the author's dissertation \cite{mythesis}. He would like to express his gratitude to his thesis advisor Siegfried Bosch. Moreover, he would like to thank Brian Conrad, Ofer Gabber, Ulrich Görtz, Philipp Hartwig, Simon Hüsken, Christian Wahle and the referee for helpful discussions and comments, and he would like to thank the Massachusetts Institute of Technology for its hospitality. This work was financially supported by the German National Academic Foundation, by the Graduiertenkolleg Analytic Topology and Metageometry of the University of Münster and by the Hamburger Stiftung für internationale Forschungs- und Studienvorhaben; the author would like to extend his gratitude to these institutions.

\tableofcontents

\section{Uniformly rigid spaces}

Let $R$ be a discrete valuation ring with residue field $k$ and fraction field $K$, and let $\pi\in R$ be a uniformizer.

\subsection{Formal schemes of formally finite type}\label{fftypesec}

A morphism of locally noetherian formal schemes is said to be of locally \emph{formally finite} (ff) type if the induced morphism of smallest subschemes of definition is of locally finite type. Equivalently, any induced morphism of subschemes of definition is of locally finite type. A morphism of locally noetherian formal schemes is called of ff type if it is of locally ff type and quasi-compact. If $A$ is a noetherian adic ring and if $B$ is a noetherian adic topological $A$-algebra, then $\Spf B$ is of ff type over $\Spf A$ if and only if $B$ is a topological quotient of a mixed formal power series ring $A[[S_1,\ldots,S_m]]\langle T_1,\ldots,T_n\rangle$, where $A[[S_1,\ldots,S_m]]$ carries the $\a+(S_1,\ldots,S_m)$-adic topology for any ideal of definition $\a$ of $A$, cf.\ \cite{berkvancyc2} Lemma 1.2. In this case, we say that the topological $A$-algebra $B$ is of ff type. Morphisms of locally ff type are preserved under composition, base change and formal completion. 

We say that an $R$-algebra is of \emph{formally  finite} (ff) type if it admits a ring topology such that it becomes a topological $R$-algebra of ff type in the above sense, where $R$ carries the $\pi$-adic topology. Equivalently, an $R$-algebra is of ff type if it admits a presentation as a quotient of a mixed formal power series ring, as above. If $S$ and $T$ are finite systems of variables and if $\phi:R[[S]]\langle T\rangle\rightarrow A$ is a surjection, then the $\phi$-image of $(S,T)$ will be called a formal generating system for $A$.

\begin{lem}\label{topinducedlem}
If $A$ is a topological $R$-algebra of ff type, then the biggest ideal of definition of $A$ coincides with the Jacobson radical of $A$. Moreover, any $R$-homomorphism of topological $R$-algebras of ff type is continuous.
\end{lem}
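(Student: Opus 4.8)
The plan is to reduce both statements to the Nullstellensatz by passing to the quotient modulo an ideal of definition, where $A$ becomes a finitely generated algebra over the residue field $k$. First I would fix a presentation $A=R[[S]]\langle T\rangle/J$, so that $I:=(\pi,S)A$ is an ideal of definition; since $R[[S]]\langle T\rangle/(\pi,S)\cong k[T]$, the quotient $A/I\cong k[T]/\bar J$ is of finite type over $k$. Recall that the biggest ideal of definition is the radical $\sqrt I$ of any ideal of definition (it is open, being a superset of $I$, and noetherianity gives $\sqrt I^{\,m}\subseteq I$, so it defines the same topology); thus $A/\sqrt I=(A/I)_{\mathrm{red}}$ is a reduced finite type $k$-algebra.

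For the first assertion I would argue by two inclusions. The inclusion $\sqrt I\subseteq\operatorname{Jac}(A)$ is the easy direction: $\sqrt I$ is again an ideal of definition, and in a ring complete and separated for an adic topology one has $1-x$ invertible for every $x$ in an ideal of definition (geometric series), so that any ideal of definition lies in $\operatorname{Jac}(A)$; in particular every maximal ideal of $A$ contains $\sqrt I$. For the reverse inclusion I would use that $A/\sqrt I$ is a reduced finite type $k$-algebra, hence a Jacobson ring with vanishing Jacobson radical; combined with the previous inclusion, the maximal ideals of $A$ are exactly the preimages of those of $A/\sqrt I$, so $\operatorname{Jac}(A)$ is the preimage of $\operatorname{Jac}(A/\sqrt I)=0$, namely $\sqrt I$.

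For the second assertion, given an $R$-homomorphism $\phi\colon A\to B$ with biggest ideals of definition $\sqrt{I_A}$ and $\sqrt{I_B}$, I would note that since these define the topologies it suffices to prove $\phi(\sqrt{I_A})\subseteq\sqrt{I_B}$, which by the first part means $\phi(\operatorname{Jac}(A))\subseteq\operatorname{Jac}(B)$. The key reduction is that this follows as soon as the preimage of every maximal ideal of $B$ is maximal, for then each $\phi^{-1}(\mathfrak n)$ contains $\operatorname{Jac}(A)$, whence $\phi(\operatorname{Jac}(A))\subseteq\mathfrak n$ for all maximal $\mathfrak n$ and thus $\phi(\operatorname{Jac}(A))\subseteq\operatorname{Jac}(B)$. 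To establish maximality of preimages I would take a maximal $\mathfrak n\subset B$, observe that $\pi\in I_B\subseteq\mathfrak n$ so that $B/\mathfrak n$ is a quotient of the finite type $k$-algebra $B/I_B$, hence a finitely generated field extension of $k$ and therefore finite over $k$ by Zariski's lemma; then, using that $\phi$ is an $R$-homomorphism so that $\pi\mapsto 0$ in $B/\mathfrak n$, the injection $A/\phi^{-1}(\mathfrak n)\hookrightarrow B/\mathfrak n$ exhibits $A/\phi^{-1}(\mathfrak n)$ as a domain that is a $k$-subalgebra of a finite-dimensional $k$-algebra, hence itself a field, so $\phi^{-1}(\mathfrak n)$ is maximal.

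The step I expect to be the main obstacle, and the conceptual heart, is exactly this interplay between the two assertions: the reverse inclusion in the first part and the maximality of preimages in the second both rest on the structural fact that killing an ideal of definition turns an ff-type $R$-algebra into a \emph{finite type} $k$-algebra, together with the observation that $\pi$ lies in every maximal ideal, so all residue fields are finite over $k$. It is precisely this finiteness that makes the generally false statement ``$\phi^{-1}$ carries maximal ideals to maximal ideals'' hold in the present setting, and hence forces continuity; I would therefore spend the most care on verifying the finiteness claims, in particular that $A/I$ really is of finite type over $k$ for the chosen presentation.
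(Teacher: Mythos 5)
Your proof is correct and takes essentially the same route as the paper's: both hinge on the fact that $A$ modulo its biggest ideal of definition is a reduced finite type $k$-algebra (hence Jacobson with vanishing Jacobson radical), with the easy inclusion coming from completeness, and continuity deduced exactly as in the paper by showing that preimages of maximal ideals are maximal because each residue field $B/\mathfrak{n}$ is finite over $k$. Your write-up merely makes explicit a few steps the paper leaves implicit, namely the identification of the biggest ideal of definition with $\sqrt{I}$ for a chosen presentation and the geometric-series argument.
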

\begin{proof}
Let $\a$ denote the biggest ideal of definition of $A$; then $\a$ is contained in every maximal ideal of $A$ since $A$ is $\a$-adically complete. On the other hand, $A/\a$ is a Jacobson ring since it is of finite type over the residue field $k$ of $R$; it follows that $\a$ coincides with the Jacobson radical of $A$, as claimed. In particular, the topology on $A$ is determined by the ring structure of $A$. Let now $A\rightarrow B$ be a homomorphism of $R$-algebras of ff type; by what we have seen so far, it suffices to see that $\phi$ is continuous for the Jacobson-adic topologies. However, for any maximal ideal $\n\subseteq B$, the preimage $\m:=\n\cap A$ of $\n$ in $A$ is maximal, since $k\subseteq A/\m\subseteq B/\n$, where $B/\n$ is a finite field extension of $k$ because the quotient $B/\jac B$ is of finite type over $k$.
\end{proof}

In particular, the topology on $A$ an be recovered from the ring structure on $A$, and the category of $R$-algebras of ff type is canonically equivalent to the category of \emph{topological} $R$-algebras of ff type.
Lemma \ref{topinducedlem} implies that the category of $R$-algebras of ff type admits amalgamated sums $\hat{\otimes}$. 

\subsection{Semi-affinoid algebras}

We define semi-affinoid $K$-algebras as the generic fibers of $R$-algebras of ff type, and we define the category of semi-affinoid $K$-spaces as the dual of the category of semi-affinoid $K$-algebras:

\begin{defi}\label{saffalgdefi}
Let $A$ be a $K$-algebra.
\begin{packed_enum}
\item An \emph{$R$-model} of $A$ is an $R$-subalgebra $\ul{A}\subseteq A$ such that the natural homomorphism $\ul{A}\otimes_RK\rightarrow A$ is an isomorphism.
\item The $K$-algebra $A$ is called \emph{semi-affinoid} if it admits an $R$-model of ff type.
\item A homomorphism of semi-affinoid $K$-algebras is a homomorphism of underlying $K$-algebras.
\item The category of semi-affinoid $K$-spaces is the dual of the category of semi-affinoid $K$-algebras. If $A$ is a semi-affinoid $K$-algebra, we write $\sSp A$ to denote the corresponding semi-affinoid $K$-space, and if $\phi:\sSp B\rightarrow\sSp A$ is a morphism of semi-affinoid $K$-spaces, we write $\phi^*$ to denote the corresponding $K$-algebra homomorphism.
\end{packed_enum}
\end{defi}

By Definition \ref{saffalgdefi} ($i$) above, any $R$-model of a $K$-algebra is flat over $R$.

There exists no general analog of the Noether normalization theorem for semi-affinoid $K$-algebras,
cf.\ \cite{lipshitz_robinson} I.2.3.5. However, if $A$ is a semi-affinoid $K$-algebra admitting a \emph{local} $R$-model of ff type, then there exist finitely many variables $S_1,\ldots,S_m$ and a finite $K$-monomorphism
\[
R[[S_1,\ldots,S_m]]\otimes_RK\hookrightarrow A\;.
\]
Indeed, if $\ul{A}$ is a local $R$-model of ff type for $A$ with maximal ideal $\m$ and if $s_0,\ldots,s_m$ is a system of parameters for $\ul{A}$ such that $s_0=\pi$, then there exists a unique continuous $R$-homomorphism $\phi\colon R[[S_1,\ldots,S_m]]\rightarrow\ul{A}$ sending $S_i$ to $s_i$, for $1\leq i\leq m$. If $\fr$ denotes the maximal ideal of $R[[S_1,\ldots,S_m]]$, then $\ul{A}/\fr\ul{A}$ is $k$-finite because $\ul{A}/\m\ul{A}$ is $k$-finite and because $\fr$ is $\m$-primary.  By the formal version of Nakayama's Lemma, cf.\ \cite{eisenbudca} Ex.\ 7.2, it follows that $\phi$ is finite; here we use that $R[[S_1,\ldots,S_m]]$ is $\fr$-adically complete and that $\ul{A}$ is $\fr$-adically separated. Since $R[[S_1,\ldots,S_m]]$ and $\ul{A}$ have the same dimension, it follows that $\phi$ is finite, so we obtain the desired finite monomorphism by extending scalars from $R$ to $K$.

\subsubsection{The specialization map}\label{specmapsec}
For the following statement, cf.\ \cite{dejong_crystalline} 7.1.9: 
\begin{lem}\label{specmaplem}
Let $A$ be a semi-affinoid $K$-algebra, and let $\ul{A}\subseteq A$ be an $R$-model of ff type. If $\m$ is a maximal ideal in $A$, then
\[
\sp_{\ul{A}}(\m)\,:=\sqrt{(\ul{A}\cap\m)+\pi\ul{A}}
\]
is a maximal ideal in $\ul{A}$, and $A/\m$ is a finite extension of $K$.
\end{lem}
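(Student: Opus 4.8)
The plan is to pull the maximal ideal $\m$ down to $\ul A$ and to study the resulting quotient as an $R$-model of the residue field $A/\m$. Put $\mathfrak p:=\ul A\cap\m$. Since $\pi$ is a unit in $A=\ul A[1/\pi]$ we have $\pi\notin\m$, hence $\pi\notin\mathfrak p$; under the localization correspondence for $A=\ul A[1/\pi]$ the prime $\mathfrak p$ satisfies $\m=\mathfrak pA$, and therefore $A/\m\cong(\ul A/\mathfrak p)\otimes_RK$. Setting $\ul B:=\ul A/\mathfrak p$, the ideal $\sp_{\ul A}(\m)=\sqrt{\mathfrak p+\pi\ul A}$ is the preimage in $\ul A$ of $\sqrt{\pi\ul B}\subseteq\ul B$, and $\mathfrak p\cap R=0$ shows that $\ul B$ is an $R$-flat domain of ff type with $\pi\neq 0$. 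Thus the two assertions reduce to: $\sqrt{\pi\ul B}$ is a maximal ideal of $\ul B$, and $L:=\ul B\otimes_RK$ is finite over $K$.

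The heart of the matter is that $\ul B$ is \emph{local of Krull dimension one}. Since $L=\ul B[1/\pi]$ is a field lying between $\ul B$ and its fraction field, it equals $\mathrm{Frac}(\ul B)$; consequently the only prime of $\ul B$ not containing $\pi$ is $(0)$, so every nonzero prime contains $\pi$. In particular every height-one prime is nonzero, contains $\pi$, and is therefore minimal over $(\pi)$; as $\ul B$ is noetherian there are only finitely many such primes $\mathfrak q_1,\dots,\mathfrak q_s$. If $\dim\ul B\geq 2$ there would be a prime $\mathfrak P$ of height $\geq 2$, whence $\mathfrak P\not\subseteq\mathfrak q_i$ for all $i$; by prime avoidance one could pick $0\neq x\in\mathfrak P\setminus\bigcup_i\mathfrak q_i$, and a minimal prime over $(x)$ would have height one (Krull's principal ideal theorem) while containing $x$, contradicting $x\notin\mathfrak q_i$. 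Hence $\dim\ul B=1$ (it is $\geq 1$ because $\pi$ is a nonzero non-unit), every maximal ideal has height one and is minimal over $(\pi)$, so $\ul B/\pi\ul B$ is artinian and $\ul B$ has only finitely many maximal ideals $\mathfrak n_1,\dots,\mathfrak n_r$.

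To upgrade dimension one to locality I would use completeness. By Lemma \ref{topinducedlem}, $\jac(\ul B)$ is the biggest ideal of definition of $\ul B$, so $\ul B$ is $\jac(\ul B)$-adically complete and idempotents lift along $\ul B\to\ul B/\jac(\ul B)$. As $\ul B$ is a domain it has no nontrivial idempotents; but $\jac(\ul B)=\mathfrak n_1\cap\dots\cap\mathfrak n_r$ gives $\ul B/\jac(\ul B)\cong\prod_{i=1}^r\ul B/\mathfrak n_i$ by the Chinese remainder theorem, which carries nontrivial idempotents once $r\geq 2$. Hence $r=1$ and $\ul B$ is local with maximal ideal $\mathfrak n=\jac(\ul B)$. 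Being a one-dimensional local domain, its unique nonzero prime $\mathfrak n$ is the minimal prime over $(\pi)$, so $\sqrt{\pi\ul B}=\mathfrak n$ is maximal; pulling back to $\ul A$ shows that $\sp_{\ul A}(\m)$ is maximal, which settles the first assertion.

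For the finiteness of $A/\m=L$ over $K$ I would invoke the finite monomorphism established before the statement: $\ul B$ is a \emph{local} $R$-model of ff type for $L$, so there are variables $S_1,\dots,S_m$ and a finite $K$-monomorphism $R[[S_1,\dots,S_m]]\otimes_RK\hookrightarrow L$. Since $L$ is a field that is module-finite over the domain $D:=R[[S_1,\dots,S_m]]\otimes_RK$, the ring $D$ must itself be a field; but for $m\geq 1$ the continuous $K$-homomorphism $D\to K$ sending each $S_i$ to $0$ has nonzero kernel, so $m=0$ and $D=K$, giving $L$ finite over $K$. The main obstacle is the middle step, namely showing that $\ul B$ is local of dimension one; the surrounding steps are either formal bookkeeping with the localization $\ul A[1/\pi]$ or a direct appeal to the finite monomorphism and to Lemma \ref{topinducedlem}.
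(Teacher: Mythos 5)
Your proof is correct, and while it follows the same skeleton as the paper's --- pass to $\p:=\m\cap\ul{A}$ and $\ul{B}:=\ul{A}/\p$, show that $\ul{B}$ is local of dimension one with $\sqrt{\pi\ul{B}}$ maximal, then deduce finiteness of the residue field --- it substitutes genuinely different tools at each key stage. Where you prove semi-locality and $\dim\ul{B}=1$ by hand (every nonzero prime contains $\pi$ because $\ul{B}[1/\pi]=\mathrm{Frac}(\ul{B})$ is a field, then Krull's principal ideal theorem plus prime avoidance), the paper simply cites the Artin--Tate theorem, \cite{egaiv} 0.16.3.3, to get that $\ul{B}$ is semi-local of dimension $\leq 1$. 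Where you obtain locality by lifting idempotents along the $\jac(\ul{B})$-adic completeness supplied by Lemma \ref{topinducedlem} and applying the Chinese remainder theorem, the paper applies Hensel's lemma (\cite{bourbakica} III.4.6 Proposition 8) to the $\pi$-adically complete ring $\ul{B}$ to conclude that $\ul{B}/\pi\ul{B}$ is local because $(\ul{B})_\pi=A/\m$ is; both arguments exploit completeness to rule out a disconnected special fibre, and both are valid since $\ul{B}$, a quotient of $\ul{A}$, is again of ff type. For the $K$-finiteness of $A/\m$ the paper argues directly: by formal Nakayama (\cite{eisenbudca} Ex.\ 7.2) it suffices that $\ul{B}$ modulo the maximal ideal $\sqrt{\pi\ul{B}}$ is $k$-finite, which holds because $\ul{B}$ is of ff type; you instead reuse the finite monomorphism $R[[S_1,\ldots,S_m]]\otimes_RK\hookrightarrow L$ established just before Lemma \ref{specmaplem} and force $m=0$ by the standard fact that a domain admitting a module-finite field extension is itself a field. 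This is legitimate and non-circular (that discussion precedes the lemma and does not use it), though note that it runs through the same formal Nakayama argument, now hidden inside the cited fact. Two points you leave implicit are immediate and worth recording: $\pi$ is a non-unit in $\ul{B}$ because it lies in an ideal of definition, so that $\ul{B}$ is $\pi$-adically separated; and $\ul{B}$ is indeed a local $R$-model of ff type for $L=A/\m$, being $\pi$-torsion-free and hence $R$-flat. Net effect: your route trades the Artin--Tate and Hensel citations for longer but self-contained commutative algebra, while the paper's version is shorter at the price of leaning on references.
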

\begin{proof}
Let us write $\p\mathrel{\mathop:}=\m\cap\ul{A}$; then $(\ul{A}/\p)_\pi=A/\m$ is a field, and by the Artin-Tate Theorem \cite{egaiv} 0.16.3.3 it follows that $\ul{A}/\p$ is a semi-local ring of dimension $\leq 1$. Moreover, $\ul{A}/\p$ is of ff type over $R$ and, hence, $\pi$-adically complete. Since $\ul{A}/\p\subseteq A/\m$ is $R$-flat and since $(\ul{A}/\p)_\pi$ is local, it thus follows from Hensel's Lemma that $(\ul{A}/\p)/\pi(\ul{A}/\p)$ is local as well, cf.\ \cite{bourbakica} III.4.6 Proposition 8. Since $\p A=\m$, the class of $\pi$ in $\ul{A}/\p$ is nonzero, and so the local noetherian ring $(\ul{A}/\p)/\pi(\ul{A}/\p)$ is zero-dimensional. Thus its quotient modulo its nilradical is a field, and it follows that the radical of $\p+\pi\ul{A}$ is maximal in $\ul{A}$, as desired.

To prove that $A/\m$ is finite over $K$, it suffices to show that $\ul{A}/\p$ is finite over $R$. Since $R$ is $\pi$-adically complete and since $\ul{A}/\p$ is $\pi$-adically separated, it thus suffices to show that $\ul{A}/(\p+\pi\ul{A})$ is finite over $k$, cf.\ \cite{eisenbudca} Ex. 7.2. The ring $\ul{A}/(\p+\pi\ul{A})$ is noetherian; hence its nilradical is nilpotent, and it thereby suffices to see that the quotient of $\ul{A}$ modulo the maximal ideal $\sqrt{\p+\pi\ul{A}}$ is $k$-finite. Since $\ul{A}$ is of ff type over $R$, since maximal ideals are open and since field extensions of finite type are finite, the desired statement follows.
\end{proof}

\begin{defi}
If $A$ is a semi-affinoid $K$-algebra, we call $|X|:=\Max A$ the set of physical points of its corresponding semi-affinoid $K$-space $X$. We will often write $X$ instead of $|X|$ if no confusion is likely to result. 
\end{defi}

\begin{remark}\label{specmaprem}
Lemma \ref{specmaplem} implies that a morphism $\phi:\sSp A\rightarrow \sSp B$ induces a map on sets of physical points such that for $R$-models of ff type $\ul{A}$ and $\ul{B}$ with $\phi^*(\ul{B})\subseteq\ul{A}$, the specialization maps $\sp_{\ul{A}}$ and $\sp_{\ul{B}}$ are compatible with respect to $\phi$ and the induced morphism $\ul{\phi}:\Spf\ul{A}\rightarrow\Spf\ul{B}$. This functoriality implies that $\sp_{\ul{A}}$ is surjective onto the set of maximal ideals in $\ul{A}$. Indeed, let $\r\subseteq \ul{A}$ be a maximal ideal, and let $\ul{A}|_\r$ denote the $\r$-adic completion of $A$; then $\Max(\ul{A}|_\r\otimes_RK)$ is nonempty, and any element in this set maps to an element in $\Max (A)$ that maps to $\r$ under $\sp_{\ul{A}}$. Let us moreover remark that for $x\in X=\sSp A$ with specialization $\n\subseteq\ul{A}$, the valuation ring of the residue field of $A$ in $x$ coincides with the integral closure of $\ul{A}_\n$ in that residue field, so that the intersection of $\ul{A}_\n$ with the valuation ideal is precisely $\n \ul{A}_\n$.
\end{remark}

\subsubsection{Power-boundedness and topological quasi-nilpotency}\label{pbsec}
Let $X$ be a se\-mi-affinoid $K$-space with corresponding semi-affinoid $K$-algebra $A$. By Lemma \ref{specmaplem}, $A/\m$ is $K$-finite for $\m\subseteq A$ maximal; hence the discrete valuation on $K$ extends uniquely to $A/\m$, so we can define $|f(x)|\in\mathbb{R}_{\geq 0}$ for any $f\in A$, $x\in X$.
\begin{defi}
An element $f\in A$ is called \emph{power-bounded} if $|f(x)|\leq 1$ for all $x\in X$. It is called \emph{topologically quasi-nilpotent} if $|f(x)|<1$ for all $x\in X$. We let $\mathring{A}\subseteq A$ denote the $R$-subalgebra of power-bounded functions, and we let $\check{A}\subseteq\mathring{A}$ denote the ideal of topologically quasi-nilpotent functions.
\end{defi}
For example, $S\in A=R[[S]]\otimes_RK$ is topologically quasi-nilpotent, while the supremum of the absolute values $|S(x)|$, with $x$ ranging over $X$, is equal to $1$. Thus we see that the classical maximum principle fails for semi-affinoid $K$-algebras. However, the maximum principle holds if we let $x$ vary in the Berkovich spectrum $M(A)$ of $A$, where $A$ is equipped with its unique $K$-Banach algebra topology, cf.\ Section \ref{berthhubercompsec}. Indeed, this follows trivially from the fact that $M(A)$ is compact.

\begin{remark}
If $A$ is a non-reduced semi-affinoid $K$-algebra, then $\mathring{A}$ cannot be of ff type over $R$: If $f\in A$ is a nonzero nilpotent function, then $f\in\mathring{A}$ is infinitely $\pi$-divisible in $\mathring{A}$, but $R$-algebras of ff type are $\pi$-adically separated. 
\end{remark}

\begin{remark}\label{topqnilpotrem}
If $\ul{A}\subseteq A$ is an $R$-model of ff type, then $\ul{A}\subseteq\mathring{A}$, and $\check{A}\cap\ul{A}\subseteq\ul{A}$ is the biggest ideal of definition. Indeed, by Lemma \ref{topinducedlem} and its proof, the biggest ideal of definition of $\ul{A}$ is given by the Jacobson radical, and hence it suffices to observe that for any $f\in\ul{A}$ and any $x\in\sSp A$ with specialization $\n\subseteq\ul{A}$, we have $|f(x)|\leq 1$, where $|f(x)|<1$ if and only if $f\in\n$. This however is clear from the final statement in Remark \ref{specmaprem}.
\end{remark}

For the notion of \emph{normality} for formal $R$-schemes of locally ff type, we refer to the discussion in \cite{conrad_irred} 1.2, which is based on the fact that $R$-algebras of ff type are \emph{excellent}. This excellence result is a consequence of \cite{valabrega1} Proposition 7 if $R$ has equal characteristic, and it follows from \cite{valabrega2} Theorem 9 if $R$ has mixed characteristic. In the following, excellence of $R$-algebras of ff type will be used without further comments. 

The following result is fundamental:

\begin{prop}\label{normalmodelprop}
Let $A$ be a semi-affinoid $K$-algebra. If $A$ admits a \emph{normal} $R$-model of ff type, then this model coincides with $\mathring{A}$.
\end{prop}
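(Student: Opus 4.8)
The inclusion $\ul A\subseteq\mathring A$ is already available from Remark \ref{topqnilpotrem}, so the entire content lies in the reverse inclusion $\mathring A\subseteq\ul A$. My plan is to deduce this from normality by an integrality argument. Since $\ul A$ is normal it is in particular reduced, and as $\pi$ is a nonzerodivisor on the flat $R$-algebra $\ul A$ I may regard $A=\ul A[1/\pi]$ as a subring of the total ring of fractions of $\ul A$. A normal ring is integrally closed in its total ring of fractions; hence, to prove that a power-bounded $f\in A$ lies in $\ul A$, it suffices to show that $f$ is \emph{integral} over $\ul A$. Because the formation of integral closure commutes with localization and because $\ul A=\bigcap_{\m}\ul A_\m$ over the maximal ideals $\m\subseteq\ul A$, I would reduce to proving that $f$ is integral over each normal local ring $\ul A_\m$.

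Fixing $\m$, I would pass to the $\m$-adic completion $\hat{\ul A}_\m$ — again a normal local domain, by excellence of $R$-algebras of ff type — and invoke the finite $K$-monomorphism recorded before Lemma \ref{specmaplem}: a system of parameters for $\hat{\ul A}_\m$ beginning with $\pi$ yields a continuous finite $R$-monomorphism $R[[S_1,\dots,S_m]]\hookrightarrow\hat{\ul A}_\m$, so that $\hat A:=\hat{\ul A}_\m[1/\pi]$ is finite over $D:=R[[S_1,\dots,S_m]]\otimes_RK$, with $\ul D:=R[[S_1,\dots,S_m]]$ lying under $\hat{\ul A}_\m$. By the surjectivity and functoriality of the specialization map (Remark \ref{specmaprem}), the physical points of $\hat A$ above $\m$ are among those of $A$ and carry the same absolute values, so $f$ remains power-bounded in $\hat A$. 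Writing the characteristic polynomial $X^d+a_{d-1}X^{d-1}+\dots+a_0$ of $f$ over $\mathrm{Frac}(D)$, normality of $\ul D$ gives $a_i\in D$; evaluating at a physical point $y$ of $D$ and expressing $a_i(y)$ as an elementary symmetric function of the values of $f$ at the finitely many physical points of $\hat A$ above $y$, the ultrametric inequality together with power-boundedness forces $|a_i(y)|\le1$. Thus each $a_i$ is power-bounded in $D$.

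This reduces everything to the base case $\mathring D=\ul D$ for $\ul D=R[[S_1,\dots,S_m]]$. Here the physical points of $D$ are precisely the points of the open unit polydisc over finite extensions of $K$, and for $g\in\ul D$ I would compute $\sup_x|g(x)|$ by exhausting the open polydisc by closed subpolydiscs of polyradius $r<1$: on each such subpolydisc the maximum principle for the associated affinoid algebra identifies the supremum with the corresponding Gauss norm, and letting $r\to1$ shows that $\sup_x|g(x)|$ equals the Gauss norm $\max_\alpha|c_\alpha|$ of $g$. Consequently a power-bounded $g/\pi^n\in D$ has Gauss norm $\le|\pi|^n$, i.e.\ $\pi^n\mid g$ in $\ul D$, so $g/\pi^n\in\ul D$. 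Feeding this back, the $a_i$ lie in $\ul D\subseteq\hat{\ul A}_\m$, so $f$ is integral over $\hat{\ul A}_\m$ and hence belongs to it by normality; descending along the faithfully flat map $\ul A_\m\to\hat{\ul A}_\m$ gives $f\in\ul A_\m$, and therefore $f\in\ul A$.

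The conceptual skeleton — reducing to integrality via normality and testing integrality locally — is routine; the genuine difficulty is the analytic input that converts the algebraic notion of integrality into the metric notion of power-boundedness. Concretely, the main obstacle is to control $\sup_x|g(x)|$ over the \emph{physical} points: one must know that these finite-extension points already see the full Gauss norm on the open polydisc, which rests on the maximum principle for affinoid algebras and on the density of such points, and one must carefully check that power-boundedness survives both the completion at $\m$ and the finite projection onto $R[[S_1,\dots,S_m]]$. In a coordinate-free reformulation the same point appears as the assertion that, for every height-one prime $\p\ni\pi$ of $\ul A$ — the generic points of the components of the special fibre — the associated discrete valuation satisfies $v_\p(f)\ge0$ whenever $f$ is power-bounded; this is exactly where the geometry of the normal model enters, all remaining height-one primes being harmless since $f\in\ul A[1/\pi]$.
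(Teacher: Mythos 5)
Your proposal is correct, but it takes a genuinely different route from the paper. The paper disposes of the statement in two lines by citation: by \cite{dejong_crystalline} 7.1.8 one views $A$ inside the global functions on $(\Spf\ul{A})^\rig$, and de Jong's theorem \cite{dejong_crystalline} 7.4.1 (with its erratum) identifies a normal $\ul{A}$ with the power-bounded functions there. You instead re-prove the substance of de Jong's theorem in the discretely valued case: localize at each maximal ideal, pass to the completion (normality survives by excellence, and $\hat{\ul{A}}_{\mathfrak{m}}$ is again local of ff type, so the finite monomorphism $R[[S_1,\dots,S_m]]\otimes_RK\hookrightarrow\hat{A}$ recorded before Lemma \ref{specmaplem} applies), push power-boundedness down to the coefficients of a monic equation over $D=R[[S_1,\dots,S_m]]\otimes_RK$, and settle the base case by the identity between the supremum over physical points of the open polydisc and the Gauss norm --- which is exactly what controls the single height-one prime $(\pi)$ of $R[[S_1,\dots,S_m]]$, the only possible obstruction to integrality since $A=\ul{A}[1/\pi]$. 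The individual reductions check out: power-boundedness passes to $\hat{A}$ by Remark \ref{specmaprem} and Lemma \ref{specmaplem} (contractions of maximal ideals are maximal, with compatible valued residue fields), the Gauss-norm computation on $R[[S]]$ is correct, and faithfully flat descent of $\pi^n\mid g$ from $\hat{\ul{A}}_{\mathfrak{m}}$ to $\ul{A}_{\mathfrak{m}}$ finishes. The paper's route buys brevity; yours buys a self-contained argument in the style of the classical sup-norm arguments of \cite{bgr} 6.2.2 that makes explicit where normality and discreteness of the valuation enter.

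One step should be phrased more carefully, precisely at the spot you yourself flag as the crux. Since $\hat{\ul{A}}_{\mathfrak{m}}$ is normal but need not be Cohen--Macaulay, it need not be finite \emph{flat} over $R[[S_1,\dots,S_m]]$; hence the characteristic polynomial of $f$ does not commute with specialization, and the claim that $a_i(y)$ \emph{is} an elementary symmetric function of the values of $f$ at the physical points above $y$ is not literally true (neither the number of points nor the multiplicities need match the degree). What is true, and suffices, is the inequality: let $q\in D[T]$ be the minimal polynomial of $f$ over $\operatorname{Frac}(D)$, with coefficients in $D$ by normality of $D$; then $D[T]/(q)$ is a domain, free over $D$, embedding into $\hat{A}$ via $T\mapsto f$, and lying-over for this integral extension shows that \emph{every} root of the specialized polynomial $q_y$ equals $f(x)$ for some physical point $x$ of $\hat{A}$ above $y$, whence $|a_i(y)|\leq 1$; the characteristic polynomial is a power of $q$ over $\operatorname{Frac}(D)$ and contributes no further roots. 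With this standard repair your argument is complete.
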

\begin{proof}
Let $\ul{A}$ be a normal $R$-model of ff type for $A$. By \cite{dejong_crystalline} 7.1.8, we may view $A$ as a subring of the ring of global functions on $(\Spf\ul{A})^\rig$, and by \cite{dejong_crystalline} 7.4.1, \cite{dejong_crystalline_err}, $\ul{A}$ coincides with the ring of power-bounded global functions under this identification.  
\end{proof}

\begin{cor}\label{model1cor}
Let $A$ be a semi-affinoid $K$-algebra, and let $\ul{A}\subseteq A$ be an $R$-model of ff type; then the inclusion $\ul{A}\subseteq\mathring{A}$ is integral. If moreover $A$ is reduced, then this inclusion is finite.
\end{cor}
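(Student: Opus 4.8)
The plan is to derive both assertions from Proposition \ref{normalmodelprop} by passing to the normalization of a model, after first reducing the integrality statement to the case where $A$ is reduced. For this reduction, note that every nilpotent element of $A$ is topologically quasi-nilpotent, since it maps to $0$ in each residue field $A/\m$; consequently $\mathring{A}$ is the preimage of $\mathring{A_{\mathrm{red}}}$ under the projection $A\rightarrow A_{\mathrm{red}}:=A/\mathrm{nil}(A)$, using that $\Max A=\Max A_{\mathrm{red}}$ and that absolute values are computed on residue fields. The image $\ul{A}_{\mathrm{red}}$ of $\ul{A}$ in $A_{\mathrm{red}}$ is a reduced $R$-model of ff type for $A_{\mathrm{red}}$: it is $\pi$-torsion-free as a subring of a $K$-algebra, it is a quotient of $\ul{A}$ and hence of ff type, and inverting $\pi$ recovers $A_{\mathrm{red}}$ because forming the nilradical commutes with localizing at $\pi$. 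Granting the reduced case, any $f\in\mathring{A}$ satisfies a monic relation over $\ul{A}_{\mathrm{red}}$; lifting its coefficients to $\ul{A}$ produces a monic $P\in\ul{A}[T]$ with $P(f)\in\mathrm{nil}(A)$, so some power $P(f)^n$ vanishes and $f$ satisfies the monic polynomial $P^n\in\ul{A}[T]$. Thus integrality of $\ul{A}\subseteq\mathring{A}$ follows from integrality in the reduced case.

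So I may assume $A$ is reduced; then $\ul{A}$ is reduced as a subring of $A$, and $\pi$ is a nonzerodivisor in $\ul{A}$ by $R$-flatness, so $\ul{A}$ and $A=\ul{A}[\pi^{-1}]$ share the same total ring of fractions. Let $\ul{B}$ denote the normalization of $\ul{A}$. Since $\ul{A}$ is excellent, $\ul{B}$ is a module-finite $\ul{A}$-algebra; being finite over $\ul{A}$ it is again of ff type over $R$ and $\pi$-torsion-free, hence a normal $R$-model of ff type for the semi-affinoid algebra $B:=\ul{B}[\pi^{-1}]$, which is the normalization of $A$. Proposition \ref{normalmodelprop} therefore yields $\ul{B}=\mathring{B}$.

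It remains to identify $\mathring{A}$ inside $B$. The inclusion $A\hookrightarrow B$ is module-finite, so the induced map $\Max B\rightarrow\Max A$ is surjective and, for $f\in A$, the residue field extensions $A/\m_x\hookrightarrow B/\n_y$ together with the uniqueness of the extended absolute value give $\{|f(x)|:x\in\Max A\}=\{|f(y)|:y\in\Max B\}$. Hence an element of $A$ is power-bounded as an element of $A$ if and only if it is power-bounded as an element of $B$, which shows $\mathring{A}=A\cap\mathring{B}=A\cap\ul{B}$ inside $B$. The right-hand side is an $\ul{A}$-submodule of $\ul{B}$, and $\ul{B}$ is integral over $\ul{A}$; this gives integrality in the reduced case and completes the proof of the first assertion. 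When $A$ is reduced, $\ul{B}$ is moreover a finite module over the noetherian ring $\ul{A}$, so its submodule $A\cap\ul{B}=\mathring{A}$ is finite over $\ul{A}$ as well, which is the second assertion.

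I expect the step requiring the most care to be this last identification $\mathring{A}=A\cap\ul{B}$, i.e. the fact that power-boundedness of a function in $A$ is faithfully detected after passing to the possibly larger normalization $B$. Everything else is bookkeeping with excellence, the preservation of ff type under finite morphisms, and Noetherianity of $\ul{A}$. (It is consistent with the preceding remark that finiteness genuinely requires $A$ reduced: a nonzero nilpotent is infinitely $\pi$-divisible in $\mathring{A}$, so $\mathring{A}$ fails to be $\pi$-adically separated and cannot be finite over $\ul{A}$.)
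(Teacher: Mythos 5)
Your proof is correct and follows essentially the same route as the paper's: pass to the normalization $\ul{B}$ of the model (finite by excellence, hence of ff type), identify $\ul{B}=\mathring{B}$ via Proposition \ref{normalmodelprop}, dispose of nilpotents by raising the monic equation to a power, and obtain finiteness in the reduced case from noetherianity of $\ul{A}$. The only cosmetic difference is that you perform the reduction to the reduced case up front (proving along the way the two-way transfer of power-boundedness under the finite injection $A\hookrightarrow B$), whereas the paper runs the argument directly by factoring $\phi\colon A\rightarrow B$ through $A/\rad(A)$ and using only the one direction of that transfer which it needs.
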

\begin{proof}
Let $\ul{\phi}\colon\ul{A}\rightarrow\ul{B}$ denote the normalization of $\ul{A}$. Then $\ul{\phi}$ is finite since $\ul{A}$ is excellent, and hence $\ul{B}$ is of ff type over $R$. Extension of scalars yields an induced homomorphism of semi-affinoid $K$-algebras  $\phi\colon A\rightarrow B$. Since $\ul{\phi}$ factors through an injective $R$-homomorphism $\ul{A}/\rad(\ul{A})\hookrightarrow\ul{B}$, since $K$ is $R$-flat and since $\rad(A)=\rad(\ul{A}) A$, we see that $\phi$ factors through an injective $K$-homomorphism $A/\rad(A)\hookrightarrow B$. By Proposition \ref{normalmodelprop}, $\ul{B}$ coincides with the ring of power-bounded functions in $B$. Let us consider a power-bounded function $f$ in $A$; then $\phi(f)\in\ul{B}$. Since $\ul{\phi}$ is finite, there exists an integral equation $P(T)\in\ul{A}[T]$ for $\phi(f)$ over $\ul{A}$. By the factorization of $\phi$ mentioned above, we conclude that $P(f)\in A$ is nilpotent. If $s\in\N$ is an integer such that $P(f)^s=0$; then $P(T)^s\in\ul{A}[T]$ is an integral equation for $f$ over $\ul{A}$. Finally, if $A$ is reduced, then $\phi$ is injective, and hence $\mathring{A}$ is an $\ul{A}$-submodule of the finite $\ul{A}$-module $\ul{B}$. Since $\ul{A}$ is noetherian, it follows that $\mathring{A}$ is a finite $\ul{A}$-module. 
\end{proof}

We immediately obtain the following:
\begin{cor}
The ring of power-bounded functions in a reduced semi-affinoid $K$-algebra is a canonical $R$-model of ff type containing any other $R$-model of ff type. 
\end{cor}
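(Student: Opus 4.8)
The plan is to show that for a reduced semi-affinoid $K$-algebra $A$, the ring $\mathring{A}$ of power-bounded functions is itself an $R$-model of ff type, and that it contains every other $R$-model of ff type. The heavy lifting has already been done in Corollary \ref{model1cor}: for any $R$-model of ff type $\ul{A}\subseteq A$, the inclusion $\ul{A}\subseteq\mathring{A}$ is finite when $A$ is reduced. So the final corollary should follow almost formally from this finiteness, together with the basic facts established earlier.

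First I would verify that $\mathring{A}$ is an $R$-model of ff type. Starting from any given $R$-model of ff type $\ul{A}$ (which exists by the definition of semi-affinoid), Corollary \ref{model1cor} tells us that $\mathring{A}$ is a finite $\ul{A}$-module. Since $\ul{A}$ is of ff type over $R$, finiteness of $\mathring{A}$ over $\ul{A}$ implies $\mathring{A}$ is again of ff type over $R$ (ff type is preserved under finite, hence finite type, extensions, and finiteness propagates the adic/complete structure). It remains to check that $\mathring{A}$ is genuinely an $R$-model, i.e.\ that $\mathring{A}\otimes_RK\rightarrow A$ is an isomorphism; this is clear because $\ul{A}\otimes_RK\xrightarrow{\sim}A$ and $\ul{A}\subseteq\mathring{A}\subseteq A$, so $\mathring{A}$ already spans $A$ over $K$ and injects into $A$, whence the isomorphism.

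Next I would argue that $\mathring{A}$ contains any other $R$-model of ff type. This is essentially tautological from the very definition of power-boundedness together with Remark \ref{topqnilpotrem}: if $\ul{A}'\subseteq A$ is any $R$-model of ff type, then Remark \ref{topqnilpotrem} gives $\ul{A}'\subseteq\mathring{A}$. So every $R$-model of ff type sits inside $\mathring{A}$, which yields both the containment and the canonicity (independence of the auxiliary choice of $\ul{A}$). The word \emph{canonical} is justified precisely because $\mathring{A}$ is defined intrinsically as the subring of functions $f$ with $|f(x)|\leq 1$ for all $x\in X$, with no reference to a chosen model.

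I do not expect a serious obstacle here, since the essential input — finiteness of $\ul{A}\subseteq\mathring{A}$ in the reduced case — is already in hand. The one point that warrants care is the passage from ``finite over $\ul{A}$'' to ``of ff type over $R$'': one must confirm that the ff type property transfers correctly under a finite extension, using that $\ul{A}$ is noetherian adic and $\mathring{A}$ is a finite $\ul{A}$-algebra, so that $\mathring{A}$ inherits the adic topology from $\ul{A}$ (by Lemma \ref{topinducedlem}, the topology is in any case intrinsic) and admits a presentation as a quotient of a mixed power series ring. Beyond this routine verification, the corollary is a direct reassembly of Corollary \ref{model1cor} and Remark \ref{topqnilpotrem}.
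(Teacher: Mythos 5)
Your proof is correct and follows the same route the paper intends: the paper derives this corollary immediately from Corollary \ref{model1cor} (finiteness of $\ul{A}\subseteq\mathring{A}$ in the reduced case, whence $\mathring{A}$ is of ff type and spans $A$ over $K$) together with Remark \ref{topqnilpotrem} (every $R$-model of ff type lies in $\mathring{A}$). Your spelled-out verifications — that ff type passes to finite extensions and that $\mathring{A}\otimes_RK\to A$ is an isomorphism — are exactly the routine checks the paper leaves implicit.
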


We conclude that any $R$-model of ff type can be enlarged so that it contains any given finite set of power-bounded functions:

\begin{cor}\label{model2cor}
Let $\ul{A}$ be an $R$-model of ff type in a semi-affinoid $K$-algebra $A$, and let $M\subseteq A$ be a finite set of power-bounded functions. Then the $\ul{A}$-subalgebra $\ul{A}[M]$ generated by $M$ over $\ul{A}$ is finite over $\ul{A}$ and, hence, an $R$-model of ff type for $A$.
\end{cor}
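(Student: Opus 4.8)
The plan is to reduce everything to the integrality statement already proved in Corollary \ref{model1cor}. By that corollary, the inclusion $\ul{A}\subseteq\mathring{A}$ is integral, so every power-bounded function in $A$ is integral over $\ul{A}$. In particular, each of the finitely many elements of $M$ satisfies a monic polynomial equation with coefficients in $\ul{A}$.

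First I would invoke the standard fact from commutative algebra that an algebra generated by finitely many integral elements is finite as a module: if $m_1,\ldots,m_r$ enumerate $M$ and $m_i$ is a root of a monic polynomial of degree $d_i$ over $\ul{A}$, then the finitely many monomials $m_1^{e_1}\cdots m_r^{e_r}$ with $0\le e_i<d_i$ generate $\ul{A}[M]$ as an $\ul{A}$-module. This yields the asserted finiteness of $\ul{A}[M]$ over $\ul{A}$, with no noetherian hypothesis needed at this stage.

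It then remains to see that $\ul{A}[M]$ is an $R$-model of ff type. For the ff type property I would argue exactly as in the proof of Corollary \ref{model1cor}: $\ul{A}$ is of ff type over $R$, and a finite $\ul{A}$-algebra is again of ff type, being a topological quotient of a mixed formal power series ring via a presentation of $\ul{A}$ together with the integral equations for the $m_i$. For the model property, note that $\ul{A}[M]$ is an $R$-subalgebra of $A$ containing $\ul{A}$; since $\ul{A}$ is already a model we have $\ul{A}\otimes_RK\xrightarrow{\sim}A$, so the natural map $\ul{A}[M]\otimes_RK\to A$ is surjective, and it is injective because $\ul{A}[M]$, being a subring of the $K$-algebra $A$, is $R$-torsion-free while $K$ is $R$-flat. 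Hence $\ul{A}[M]\otimes_RK\to A$ is an isomorphism, as required by Definition \ref{saffalgdefi}.

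Since the substance of the argument is carried entirely by Corollary \ref{model1cor}, there is no serious obstacle; the only point demanding any care is the verification of the ff type property for the finite extension $\ul{A}[M]$, which rests on the fact --- used repeatedly in the preceding results --- that finite algebras over $R$-algebras of ff type are themselves of ff type.
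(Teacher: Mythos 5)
Your proposal is correct and follows essentially the same route as the paper, whose entire proof is the one-line observation that $\ul{A}\subseteq\ul{A}[M]$ is finite because it is generated by finitely many elements that are integral over $\ul{A}$ by Corollary \ref{model1cor}. The additional verifications you spell out (that a finite algebra over an $R$-algebra of ff type is again of ff type, and that $\ul{A}[M]\otimes_RK\rightarrow A$ is an isomorphism) are exactly the steps the paper leaves implicit, and your arguments for them are sound.
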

\begin{proof}
The ring extension $\ul{A}\subseteq\ul{A}[M]$ is finite since it is generated by finitely may integral elements.
\end{proof}

\subsubsection{Free semi-affinoid algebras} Using the results of Section \ref{pbsec}, we can now establish the universal properties of free semi-affinoid $K$-algebras; these are semi-affinoid $K$-algebras of the form $R[[S]]\langle T\rangle\otimes_RK$, for finite systems of variables $S$ and $T$:
\begin{thm}\label{freesemaffthm}
Let $m$ and $n$ be natural numbers. The semi-affinoid $K$-algebra $R[[S_1,\ldots,S_m]]\langle T_1,\ldots,T_n\rangle\otimes_RK$, together with the pair of tuples of functions $((S_1,\ldots,S_m),(T_1,\dots,T_n))$, is initial among all semi-affinoid $K$-algebras $A$ equipped with a pair $((f_1,\ldots,f_m),(g_1,\ldots,g_n))$ satisfying the property that the $g_j$ are power-bounded and that the $f_i$ are topologically quasi-nilpotent.
\end{thm}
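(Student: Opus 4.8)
The plan is to establish existence and uniqueness of the comparison homomorphism separately, deriving existence from the universal property of the mixed formal power series ring at the level of $R$-models, and deriving uniqueness from a density argument whose only non-formal ingredient is a boundedness statement.

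For existence, given a semi-affinoid $K$-algebra $A$ with a pair $((f_i),(g_j))$ as in the statement, I would first choose an $R$-model $\ul{A}$ of ff type and enlarge it by Corollary \ref{model2cor} so that it contains the finitely many power-bounded functions $f_1,\ldots,f_m,g_1,\ldots,g_n$; the result is again an $R$-model of ff type. Writing $\a$ for its biggest ideal of definition, which by Lemma \ref{topinducedlem} is the Jacobson radical, Remark \ref{topqnilpotrem} shows that the topologically quasi-nilpotent $f_i$ all lie in $\a$, while the $g_j$ merely lie in $\ul{A}$. The standard universal property of the mixed formal power series ring $\ul{B}:=R[[S_1,\ldots,S_m]]\langle T_1,\ldots,T_n\rangle$ — continuous $R$-homomorphisms into an adic $R$-algebra of ff type correspond to sending the $S_i$ into an ideal of definition and the $T_j$ to arbitrary elements — then produces a unique continuous $R$-homomorphism $\ul{\varphi}\colon\ul{B}\to\ul{A}$ with $S_i\mapsto f_i$ and $T_j\mapsto g_j$. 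Applying $\cdot\otimes_RK$ and identifying $\ul{A}\otimes_RK=A$ yields the desired $K$-homomorphism $\varphi\colon\ul{B}\otimes_RK\to A$, which sends $S_i\mapsto f_i$ and $T_j\mapsto g_j$.

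For uniqueness, let $\psi$ be any $K$-homomorphism with the same values on the $S_i$ and $T_j$. Then $\psi$ and $\varphi$ agree on the $K$-subalgebra $K[S,T]$, and a truncation argument shows that every $h\in\ul{B}$ is congruent to a polynomial $p_N\in R[S,T]$ modulo $\fr^N\ul{B}$, where $\fr=(\pi,S_1,\ldots,S_m)$ is the ideal of definition of $\ul{B}$. Hence $\psi(h)-\varphi(h)=(\psi-\varphi)(h-p_N)$ lies in $(\pi,f_1,\ldots,f_m)^N\cdot(\psi(\ul{B})+\varphi(\ul{B}))\subseteq\a^N(\psi(\ul{B})+\varphi(\ul{B}))$ for every $N$, using $\pi,f_i\in\a$. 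Provided the images $\psi(\ul{B})$ and $\varphi(\ul{B})$ are bounded, say contained in $\pi^{-M}\ul{A}$, this places $\psi(h)-\varphi(h)$ in $\bigcap_N\pi^{-M}\a^N\ul{A}$, which vanishes by Krull's intersection theorem since $\a$ lies in the Jacobson radical of the noetherian ring $\ul{A}$. Thus $\psi=\varphi$ on $\ul{B}$, and inverting $\pi$ gives $\psi=\varphi$. For $\varphi$ the bound is immediate since $\varphi(\ul{B})\subseteq\ul{A}$; the content is entirely the boundedness of $\psi(\ul{B})$.

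I expect this last boundedness to be the \emph{main obstacle}. Using the functoriality of the specialization map (Lemma \ref{specmaplem} and Remark \ref{specmaprem}), $\psi$ preserves power-boundedness, so that $\psi(\ul{B})\subseteq\mathring{A}$, since the elements of $\ul{B}$ are power-bounded in $\ul{B}\otimes_RK$ by Remark \ref{topqnilpotrem}; moreover $\psi(\ul{B})$ is a quotient of $\ul{B}$ and hence an $R$-flat subalgebra of ff type. When $A$ is reduced this already suffices: by Corollary \ref{model1cor}, $\mathring{A}$ is then a finite $\ul{A}$-module and in particular bounded, so $\psi(\ul{B})\subseteq\mathring{A}$ is bounded. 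The difficulty is the non-reduced case, where $\mathring{A}$ is genuinely unbounded and this direct argument breaks down. Here I would filter by the nilradical $N$ of $A$: the induced map to $A/N$ is bounded by the reduced case, each graded piece $(\psi(\ul{B})\cap N^i)/(\psi(\ul{B})\cap N^{i+1})$ is a finite module over the bounded reduced image and embeds into the finite $A/N$-module $N^i/N^{i+1}$, and assembling these finitely many pieces (using that $\psi(\ul{B})$ is noetherian, so the filtration is finite and each subquotient finitely generated) shows that $\psi(\ul{B})$ is bounded. Making the passage between boundedness of subquotients and boundedness of the extension precise, in the absence of a maximum principle and of Noether normalization, is the technical heart of the argument.
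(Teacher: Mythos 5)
Your existence argument coincides with the paper's (enlarge a model via Corollary \ref{model2cor}, invoke the universal property of $R[[S_1,\ldots,S_m]]\langle T_1,\ldots,T_n\rangle$, tensor with $K$), and your reduction of uniqueness to boundedness of $\psi(\ul{B})$ is correct as far as it goes: the truncation $h\equiv p_N \bmod \fr^N\ul{B}$ is valid, the containment $\psi(h)-\varphi(h)\in\pi^{-M}\a^N\ul{A}$ follows since $\pi$ and the $f_i$ lie in $\a$, and Krull's intersection theorem then kills the difference. This is a genuinely different framing from the paper, which never argues via boundedness or approximation: it shows instead that, after enlarging $\ul{A}$, one has the actual containment $\ul{A}':=\psi(\ul{B})\subseteq\ul{A}$, and then concludes at once from the uniqueness clause in the universal property of $\ul{B}$ (continuity of the restricted map being automatic by Lemma \ref{topinducedlem}). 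Your reduced case is also sound and close to the paper's, which simply takes $\ul{A}=\mathring{A}$ by Corollary \ref{model1cor}.

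The genuine gap is exactly where you placed your bet, and it is more than a technicality: the assembly of the graded pieces fails as sketched. Bounding the image of $\psi(\ul{B})\cap N^i$ in $N^i/N^{i+1}$ lets you correct an element $x\in\psi(\ul{B})$ by some $u\in\pi^{-M_0}\ul{A}$ with $x-u\in N$, but $x-u$ no longer lies in $\psi(\ul{B})$, only in $\psi(\ul{B})+\pi^{-M_0}\ul{A}$, and $(\psi(\ul{B})+\ul{A})\cap N$ is in general strictly larger than $(\psi(\ul{B})\cap N)+(\ul{A}\cap N)$. So the filtration whose subquotients you bounded, namely $\psi(\ul{B})\cap N^\bullet$, is not the one your successive corrections live in, and the downward induction never closes; boundedness of an extension cannot be read off from boundedness of these subquotients by module-theoretic bookkeeping alone. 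The paper's resolution uses two ideas absent from your sketch. First, the reduced case yields more than boundedness: by uniqueness for the reduced target $A/N$, the maps induced by $\psi$ and $\varphi$ on $A/N$ are \emph{equal}, so the image of $\psi(\ul{B})$ modulo $N$ is contained in that of $\ul{A}$ with no denominators at all. Second, one enlarges $\ul{A}$ by Corollary \ref{model2cor} so that it contains a finite generating system $n_1,\ldots,n_r$ of the ideal $\psi(\ul{B})\cap N$ of the noetherian ring $\psi(\ul{B})$ (legitimate, since these elements are power-bounded); then every $a'\in\psi(\ul{B})$ is written as $a+\sum_{i=1}^r a_i'n_i$ with $a\in\ul{A}$ and $a_i'\in\psi(\ul{B})$, and this decomposition is \emph{iterated}: products of elements of $\ul{A}$ with the $n_i$ stay in $\ul{A}$, so after $s$ steps the error terms are multiples of $s$-fold products of the nilpotent $n_i$, which vanish for $s$ large. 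This gives $\psi(\ul{B})\subseteq\ul{A}$ outright, after which your Krull machinery is superfluous. The ring-theoretic iteration through the nilpotents, enabled by putting the $n_i$ into the model, is the substantive missing idea; without it, or some equivalent device, your proof of the non-reduced case does not go through.
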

\begin{proof}
Let us write $S$ and $T$ to denote the systems of the $S_i$ and the $T_j$. By Corollary \ref{model2cor}, $A$ admits an $R$-model of ff type $\ul{A}$ containing the $f_i$ and the $g_j$. By Remark \ref{topqnilpotrem}, the $f_i$ are topologically nilpotent in $\ul{A}$; hence there exists a unique $R$-homomorphism $\ul{\phi}\colon R[[S]]\langle T\rangle\rightarrow\ul{A}$ sending $S_i$ to $f_i$ and $T_j$ to $g_j$ for all $i$ and $j$, and so $\phi\mathrel{\mathop:}=\ul{\phi}\otimes_RK$ is a $K$-homomorphism with the desired properties. It remains to show that these properties determine $\phi$ uniquely. Let $\phi'\colon R[[S]]\langle T\rangle\otimes_RK\rightarrow A$ be any $K$-homomorphism sending $S_i$ to $f_i$ and $T_j$ to $g_j$ for all $i$ and $j$, and let us set $\ul{A}':=\phi'(R[[S]]\langle T\rangle)$ which is of ff type over $R$. If $\phi=\phi'$, then $\ul{A}'\subseteq\ul{A}$. On the other hand, to show that $\phi=\phi'$, it suffices to see that, after possibly enlarging $\ul{A}$, we have $\ul{A}'\subseteq\ul{A}$, in virtue of the universal property of $R[[S]]\langle T\rangle$. 
If $A$ is reduced, Corollary \ref{model1cor} says that we may set $\ul{A}$ equal to the ring of power-bounded functions in $A$; in this case the inclusion $\ul{A}'\subseteq\ul{A}$ is obvious. In the general case, we let $N$ denote the nilradical of $A$; then by what we have shown so far,
\[
\ul{A}'/(\ul{A}'\cap N)\,\subseteq\ul{A}/(\ul{A}\cap N)\quad\quad(*)
\]
within $\mathring{A}/N$. The ideal $\ul{A}'\cap N$ is finitely generated since $\ul{A}'$ is noetherian; after enlarging $\ul{A}$ using Corollary \ref{model2cor}, we may thus assume that $\ul{A}$ contains a generating system $n_1,\ldots,n_r$ of $\ul{A}'\cap N$. The inclusion $(*)$ shows that every element $a'\in\ul{A}'$ is the sum of an element $a\in\ul{A}$ and a linear combination $\sum_{i=1}^r a_i'n_i$ with coefficients $a_i'\in \ul{A}'$. Let us write the coefficients $a_i'$ in the analogous way, and let us iterate the procedure. Using the fact that the $n_i$ lie in $\ul{A}$, the only summands possibly not lying in $\ul{A}$ after $s$-fold iteration are multiples of products of the $n_i$ involving $s$ factors. Since the $n_i$ are nilpotent, these summands are zero for $s$ big enough; hence $\ul{A}'\subseteq\ul{A}$, as desired.
\end{proof}

With the universal property of the free semi-affinoid $K$-algebras at hand, we can now describe the category of semi-affinoid $K$-algebras in terms of the category of $R$-models of ff type. Let us recall that a formal blowup in the sense of \cite{temkin_desing} 2.1 is called \emph{admissible} if it can be defined by a $\pi$-adically open coherent ideal.

\begin{cor}\label{freesemaffcor}
Let $\phi:A\rightarrow B$ be a homomorphism of semi-affinoid $K$-algebras.
\begin{enumerate}
\item Let $\ul{A}_1$, $\ul{A}_2$ be $R$-models of ff type for $A$. If $\ul{A}_2$ contains a formal generating system of $\ul{A}_1$, then $\ul{A}_1$ is contained in $\ul{A}_2$.
\item An inclusion of $R$-models of ff type for $A$ corresponds to a finite admissible blowup of associated formal spectra.
\item Let $\ul{A}\subseteq A$, $\ul{B}\subseteq B$ be $R$-models of ff type such that there exists a formal generating system of $\ul{A}$ mapping to $\ul{B}$ via $\phi$. Then $\phi(\ul{A})\subseteq\ul{B}$.
\item Let $\ul{A}$ be an $R$-model of ff type for $A$. There exists an $R$-model of ff type $\ul{B}$ for $B$ such that $\phi(\ul{A})\subseteq\ul{B}$. Moreover, if $\ul{B}'$ is any $R$-model of ff type for $B$, we can choose $\ul{B}$ such that $\ul{B}'\subseteq\ul{B}$. 
\end{enumerate}
\end{cor}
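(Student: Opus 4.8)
The plan is to prove the four assertions in the order (i), (iii), (ii), (iv), since (ii) relies on (i) and (iv) on (iii). The recurring engine is the uniqueness clause of Theorem~\ref{freesemaffthm}, together with the elementary observation that a $K$-homomorphism $\phi\colon A\to B$ preserves power-boundedness and topological quasi-nilpotency: for $f\in A$ and $y\in\sSp B$ one has $|\phi(f)(y)|=|f(\phi(y))|$, where $\phi(y)\in\sSp A$ is the image point furnished by Lemma~\ref{specmaplem} and Remark~\ref{specmaprem}. For (i), write $(s,t)$ for the formal generating system of $\ul{A}_1$ coming from a surjection $\psi\colon R[[S]]\langle T\rangle\twoheadrightarrow\ul{A}_1$, so the $s_i$ are topologically nilpotent in $\ul{A}_1$, hence topologically quasi-nilpotent in $A$. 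By hypothesis the $s_i,t_j$ lie in $\ul{A}_2$; by Remark~\ref{topqnilpotrem} the $s_i$ lie in the biggest ideal of definition of $\ul{A}_2$ and are topologically nilpotent there, so there is a unique continuous $R$-homomorphism $\psi_2\colon R[[S]]\langle T\rangle\to\ul{A}_2$ carrying $(S,T)$ to $(s,t)$. As maps into $A$, $\psi$ and $\psi_2$ send $(S,T)$ to the same topologically quasi-nilpotent, respectively power-bounded, images, so by the uniqueness in Theorem~\ref{freesemaffthm} they agree; thus $\ul{A}_1=\mathrm{im}(\psi)=\mathrm{im}(\psi_2)\subseteq\ul{A}_2$.

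For (iii), let $(s,t)$ be the given formal generating system of $\ul{A}$. By the observation above, $\phi(s_i)$ is topologically quasi-nilpotent and $\phi(t_j)$ power-bounded in $B$; since by hypothesis both lie in $\ul{B}$, Remark~\ref{topqnilpotrem} shows the $\phi(s_i)$ are topologically nilpotent in $\ul{B}$. The universal property of $R[[S]]\langle T\rangle$ then yields a continuous $R$-homomorphism $R[[S]]\langle T\rangle\to\ul{B}$ sending $(S,T)$ to $(\phi(s_i),\phi(t_j))$; after inverting $\pi$ it agrees on generators, hence everywhere by Theorem~\ref{freesemaffthm}, with $R[[S]]\langle T\rangle\otimes_RK\twoheadrightarrow A\xrightarrow{\phi}B$, so $\phi(\ul{A})$ equals the image of $R[[S]]\langle T\rangle$ in $\ul{B}$ and is contained in $\ul{B}$. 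Statement (iv) follows at once: given any $R$-model $\ul{B}'$ of $B$ and a formal generating system $(s,t)$ of $\ul{A}$, the finite set $M=\{\phi(s_i),\phi(t_j)\}$ consists of power-bounded functions, so by Corollary~\ref{model2cor} the algebra $\ul{B}:=\ul{B}'[M]$ is a finite $R$-model of ff type containing $\ul{B}'$ and the image of a formal generating system of $\ul{A}$; now (iii) gives $\phi(\ul{A})\subseteq\ul{B}$.

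The substance lies in (ii). Given $\ul{A}_1\subseteq\ul{A}_2$, I first show the inclusion is finite. Choosing a formal generating system $g_1,\ldots,g_r$ of $\ul{A}_2$, each $g_k\in\ul{A}_2\subseteq\mathring{A}$ is power-bounded, hence integral over $\ul{A}_1$ by Corollary~\ref{model1cor}, so $\ul{C}:=\ul{A}_1[g_1,\ldots,g_r]$ is a finite $R$-model of ff type by Corollary~\ref{model2cor}. Since $\ul{C}\subseteq\ul{A}_2$ contains the formal generating system $(g_k)$ of $\ul{A}_2$, part (i) gives $\ul{A}_2\subseteq\ul{C}$, whence $\ul{A}_2=\ul{C}$ is finite over $\ul{A}_1$. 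Therefore $\ul{A}_2/\ul{A}_1$ is a finite $\ul{A}_1$-module vanishing after inverting $\pi$, so $\pi^N\ul{A}_2\subseteq\ul{A}_1$ for some $N$. Set $\mathfrak{a}:=\pi^N\ul{A}_2$, an ideal of $\ul{A}_1$ containing $\pi^N$, hence a $\pi$-adically open admissible center. A direct computation of the Rees algebra gives $\mathfrak{a}^d=\pi^{Nd}\ul{A}_2$, so the chart of the blowup of $\Spf\ul{A}_1$ along $\mathfrak{a}$ on which $\pi^N$ generates $\mathfrak{a}$ has coordinate ring $\bigcup_d\pi^{-Nd}\mathfrak{a}^d=\ul{A}_2$; this chart is an open formal subscheme $\Spf\ul{A}_2$ of the blowup whose generic fiber is all of $\sSp A$.

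The main obstacle is to show that this distinguished chart exhausts the blowup, i.e.\ that the blowup is affine and finite rather than merely dominating, or being dominated by, $\Spf\ul{A}_2$. For this I would invoke the surjectivity and functoriality of the specialization map from Remark~\ref{specmaprem}: every closed point of the blowup is the specialization of some point of its generic fiber $\sSp A$, and compatibility of specialization maps under the open immersion $\Spf\ul{A}_2\hookrightarrow\textup{blowup}$ forces that closed point to lie in the chart $\Spf\ul{A}_2$. Since a formal scheme of ff type is determined by its closed points, the chart equals the entire blowup, so $\ul{A}_1\subseteq\ul{A}_2$ corresponds to the finite admissible blowup of $\Spf\ul{A}_1$ along $\mathfrak{a}$, as claimed.
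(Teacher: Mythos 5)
Your proof is correct, and for parts (i), (iii) and (iv) it is the same argument as the paper's: fix a formal generating system, use Remark~\ref{topqnilpotrem} to get topological nilpotency of the images in the target model, invoke the uniqueness clause of Theorem~\ref{freesemaffthm} to identify the two induced maps, and for (iv) enlarge $\ul{B}'$ via Corollary~\ref{model2cor} and apply (iii). For (ii) the first half (finiteness of $\ul{A}_1\subseteq\ul{A}_2$ via $\ul{A}_1[M]$ and part (i)) also matches the paper exactly; the difference is in the endgame, where the paper simply says ``arguing exactly as in the proof of \cite{frg1} 4.5'' while you reconstruct the blowup identification explicitly. Your Rees-algebra computation $\mathfrak{a}^d=\pi^{Nd}\ul{A}_2$ is precisely the engine of the cited lemma, but your exhaustion step is heavier than necessary: since $\mathfrak{a}^d=\pi^{Nd}\ul{A}_2$, every chart $D_+(a)$ with $a=\pi^Nb$, $b\in\ul{A}_2$, has coordinate ring $(\ul{A}_2)_b$, in which $\mathfrak{a}$ is already generated by $\pi^N$; equivalently the graded isomorphism $\bigoplus_d\mathfrak{a}^d\cong\ul{A}_2[T]$, $\pi^{Nd}b\mapsto bT^d$ (injective by $R$-flatness of $\ul{A}_1$), shows the blowup \emph{is} $\Spf\ul{A}_2$, with no point-set argument needed. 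Your alternative via surjectivity of specialization onto closed points and very density of closed points does work and is an argument pattern the paper itself uses (in the proof of Corollary~\ref{subdomunivcor}), but note two points of care: those facts for non-affine flat formal schemes are only assembled later in the paper (from Remark~\ref{specmaprem} plus the Jacobson-space references) — independently of this corollary, so there is no circularity — and the ``compatibility'' you invoke needs the well-definedness of the specialization point across overlapping affine charts of the blowup, which deserves a sentence (e.g.\ via uniqueness of the $R_x$-point lifting through the blowup) rather than bare assertion. Also, ``a formal scheme of ff type is determined by its closed points'' should be stated as very density: a nonempty open complement of the chart would contain a closed point, contradicting that all closed points specialize into the chart.
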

\begin{proof}
To prove the first statement, let us fix a formal generating system $(f,g)$ of $\ul{A}_1$ that is contained in $\ul{A}_2$. The components of $f$ are topologically quasi-nilpotent in $A$; since $\ul{A}_2$ is an $R$-model of ff type for $A$, they are topologically nilpotent in $\ul{A}_2$. Let $\alpha:R[[S]]\langle T\rangle\rightarrow\ul{A}_1$ and $\beta:R[[S]]\langle T\rangle\rightarrow\ul{A}_2$ be the associated $R$-homomorphisms, where $\alpha$ is surjective because $(f,g)$ formally generates $\ul{A}_1$. By Theorem \ref{freesemaffthm}, $\alpha\otimes_RK$ and $\beta\otimes_RK$ coincide as homomorphisms from $R[[S]]\langle T\rangle\otimes_RK$ to $A$, so we conclude that $\ul{A}_1\subseteq\ul{A}_2$: given $a\in\ul{A}_1$, we choose an $\alpha$-preimage $a'$ of $a$; then $a=\alpha(a')=\beta(a')\in\ul{A}_2$.

To prove the second claim, let $\ul{A}_1\subseteq\ul{A}_2$ be an inclusion of $R$-models of ff type for $A$, and let $M\subseteq\ul{A}_2$ be a finite set whose elements are the components of a formal generating system for $\ul{A}_2$ over $R$. Then by Corollary \ref{model2cor}, $\ul{A}_1[M]\subseteq\ul{A}_2$ is an $R$-model of ff type for $A$ which is finite over $\ul{A}_1$. By statement ($i$), $\ul{A}_2=\ul{A}_1[M]$ and hence $\ul{A}_2$ is finite over $\ul{A}_1$. Arguing exactly as in the proof of \cite{frg1} 4.5, we see that $\ul{A}_1\subseteq\ul{A}_2$ corresponds to an admissible formal blowup.

To prove part $(iii)$, let us choose a formal generating system  $(f,g)$ of $\ul{A}$ such that the components of $\phi(f)$ and $\phi(g)$ are contained in $\ul{B}$. The components of $\phi(f)$ are topologically nilpotent in $\ul{B}$ since they are topologically quasi-nilpotent in $B$. Let $\alpha:R[[S]]\langle T\rangle\rightarrow\ul{A}$ and $\beta:R[[S]]\langle T\rangle\rightarrow\ul{B}$ be the $R$-homomorphisms defined by $(f,g)$ and $(\phi(f),\phi(g))$ respectively; then $\alpha$ is surjective, and Theorem \ref{freesemaffthm} shows that $\beta\otimes_RK$ coincides with $\phi\circ(\alpha\otimes_RK)$. As is the proof of statement ($i$), we conclude that $\phi(\ul{A})\subseteq\ul{B}$.

To prove statement ($iv$), let us choose a formal generating system  $(f,g)$ of $\ul{A}$. The components of $\phi(f)$ are topologically quasi-nilpotent, and the components of $\phi(g)$ are power-bounded in $B$. According to Corollary \ref{model2cor}, there exists an $R$-model $\ul{B}$ of ff type for $B$ containing $\ul{B}'$ and the components of $\phi(f)$ and $\phi(g)$; by statement ($iii$), $\phi(\ul{A})\subseteq\ul{B}$, as desired.
\end{proof}


We can now show that $R$-models of ff type for affinoid $K$-algebras are automatically of tf type:

\begin{cor}\label{afflattfintypecor}
Let $A$ be an affinoid $K$-algebra, and let $\ul{A}\subseteq A$ be an $R$-model of ff type. Then $\ul{A}$ is of tf type over $R$.
\end{cor}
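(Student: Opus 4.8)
The plan is to replace the geometric statement by a mod-$\pi$ criterion and then to exploit the fact that an affinoid algebra automatically possesses a model of tf type. The key intermediate claim is a criterion: \emph{an $R$-algebra $\underline{A}$ of ff type is of tf type if and only if $\underline{A}/\pi\underline{A}$ is of finite type over $k$}. Granting this, I would argue as follows. Since $A$ is affinoid, write $A=K\langle T_1,\dots,T_n\rangle/\mathfrak a$ and let $\underline{A}_0\subseteq A$ be the image of $R\langle T_1,\dots,T_n\rangle$. Then $\underline{A}_0$ is an $R$-model of $A$ of tf type, and $\underline{A}_0/\pi\underline{A}_0$ is a quotient of $k[T_1,\dots,T_n]$, hence of finite type over $k$. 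Applying Corollary~\ref{freesemaffcor}~$(iv)$ to the identity morphism of $A$, with the given model $\underline{A}$ as source and $\underline{A}_0$ as the prescribed model $\underline{B}'$, I obtain an $R$-model $\underline{A}''$ of ff type containing both $\underline{A}$ and $\underline{A}_0$; by Corollary~\ref{freesemaffcor}~$(ii)$ the two inclusions $\underline{A}\subseteq\underline{A}''$ and $\underline{A}_0\subseteq\underline{A}''$ are finite.

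Next I would propagate finite type through these inclusions. As $\underline{A}''$ is finite over $\underline{A}_0$, the ring $\underline{A}''/\pi\underline{A}''$ is a finite module over the finite-type $k$-algebra $\underline{A}_0/\pi\underline{A}_0$, hence of finite type over $k$. The crux is the descent along $\underline{A}\subseteq\underline{A}''$, and the main obstacle is that reduction modulo $\pi$ need not be injective. To control this I would use that $\underline{A}''/\underline{A}$ is a finite $\underline{A}$-module that vanishes after inverting $\pi$ (both rings are $R$-models of $A$), so it is annihilated by some $\pi^k$; thus $\pi^k\underline{A}''\subseteq\underline{A}$. This makes the kernel $J$ of $\underline{A}/\pi\underline{A}\to\underline{A}''/\pi\underline{A}''$ nilpotent: for $a=\pi b$ with $b\in\underline{A}''$ one has $a^{k+1}=\pi\,(\pi^{k}b^{k+1})\in\pi\underline{A}$. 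Since $\underline{A}/\pi\underline{A}$ is noetherian, $J$ is finitely generated; and since the image $C$ of $\underline{A}/\pi\underline{A}$ in $\underline{A}''/\pi\underline{A}''$ satisfies the hypotheses of the Artin--Tate lemma (the finite-type $k$-algebra $\underline{A}''/\pi\underline{A}''$ is finite over $C$), the ring $C$ is of finite type over $k$. Therefore $\underline{A}/\pi\underline{A}$ is an extension of a finite-type $k$-algebra by a finitely generated nilpotent ideal, hence itself of finite type over $k$. The criterion then yields that $\underline{A}$ is of tf type.

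I expect the criterion itself to be the main technical point; its nontrivial implication is that $\underline{A}/\pi\underline{A}$ of finite type over $k$ forces tf type. First I would record that every $R$-algebra of ff type is $\pi$-adically complete and separated: a mixed power series ring $R[[S]]\langle T\rangle$ is $\pi$-adically complete because $R$ is complete and one may complete coefficientwise, and $\pi$-adic completeness passes to noetherian quotients. Choosing lifts $t_1,\dots,t_n\in\underline{A}$ of a finite set of $k$-algebra generators of $\underline{A}/\pi\underline{A}$, the $\pi$-adic completeness of $\underline{A}$ allows me to define a continuous $R$-homomorphism $R\langle T_1,\dots,T_n\rangle\to\underline{A}$ sending $T_i$ to $t_i$. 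This map is surjective modulo $\pi$ by the choice of the $t_i$, and a topological Nakayama argument—writing a given element as a $\pi$-adically convergent sum of elements of the image and lifting it to the complete ring $R\langle T\rangle$—upgrades this to surjectivity. Hence $\underline{A}$ is a quotient of $R\langle T\rangle$, that is, of tf type, which closes the argument.
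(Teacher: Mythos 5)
Your proof is correct, and after the shared opening moves it takes a genuinely different route from the paper in the decisive step. Both arguments begin identically: a tf type model is produced (the paper takes an arbitrary one, you take the image of $R\langle T\rangle$), Corollary~\ref{freesemaffcor}~($iv$) applied to the identity of $A$ yields a common ff type model $\underline{A}''$ containing it and $\underline{A}$, and Corollary~\ref{freesemaffcor}~($ii$) makes both inclusions finite. From there the paper stays entirely integral: it mimics the Artin--Tate argument at the level of the models themselves, taking integral equations over $\underline{A}$ for topological generators of the tf type overring, letting the coefficients of these equations generate (via $\pi$-adic completeness of $\underline{A}$) a tf type subalgebra $\underline{B}\subseteq\underline{A}$, showing the overring is finite over $\underline{B}$, and concluding by noetherianness that the submodule $\underline{A}$ is finite over $\underline{B}$, hence of tf type. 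You instead pass to the special fibers: your criterion (ff type together with $\underline{A}/\pi\underline{A}$ of finite type over $k$ implies tf type, proved by lifting generators and successive approximation, exactly the formal Nakayama argument of \cite{eisenbudca} Ex.~7.2 that the paper also uses) is sound, and your handling of the failure of injectivity modulo $\pi$ --- using $\pi^k\underline{A}''\subseteq\underline{A}$ to make the kernel $J$ of $\underline{A}/\pi\underline{A}\rightarrow\underline{A}''/\pi\underline{A}''$ nilpotent, the classical Artin--Tate lemma at the residue level, and the standard fact that an extension of a finite type $k$-algebra by a finitely generated nilpotent ideal is again of finite type --- is all correct. What each approach buys: the paper's proof is shorter, avoids mod-$\pi$ descent entirely, and exhibits $\underline{A}$ as finite over an explicit tf type subalgebra; yours isolates a reusable criterion (``ff type with finite type special fiber implies tf type'') and localizes all the work in reductions where the classical Artin--Tate lemma applies verbatim. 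Two cosmetic points: your elementwise computation in fact gives $J^{k+1}=0$ outright, since a product of $k+1$ elements $\pi b_i$ with $b_i\in\underline{A}''$ lies in $\pi\cdot\pi^k\underline{A}''\subseteq\pi\underline{A}$, so noetherianness is not needed for the nilpotence of $J$; and the nilpotent-extension lemma, which you assert without proof, deserves its one-line induction on the nilpotency order (treat $J^2=0$ by lifting algebra generators of $C$ and module generators of $J$, then iterate with $J^2$ in place of $J$).
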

\begin{proof}
Let $\ul{A}'$ be an $R$-model of tf type for $A$, and let $\ul{A}''$ be an $R$-model of ff type for $A$ containing both $\ul{A}$ and $\ul{A}'$; such an $\ul{A}''$ exists by Corollary \ref{freesemaffcor} ($iv$) applied to the identity on $A$. By Corollary \ref{freesemaffcor} ($ii$), $\ul{A}''$ is finite over $\ul{A}'$ and, hence, an $R$-algebra of tf type. After replacing $\ul{A}'$ by $\ul{A}''$, we may thus assume that $\ul{A}\subseteq\ul{A}'$. Again by Corollary \ref{freesemaffcor} ($ii$), this inclusion is finite. We now mimic the proof of the classical Artin-Tate Lemma: Let $a_1,\ldots,a_m$ be a system of topological generators of $\ul{A}'$ over $R$, and for each $i$ let $P_i\in\ul{A}[T]$ be an integral equation for $a_i$ over $\ul{A}$. Let $b_1,\ldots,b_n$ be the coefficients of the $P_i$ in some ordering. Since the $R$-algebra $\ul{A}$ is of ff type, it is $\pi$-adically complete; hence there exists a unique $R$-homomorphism $R\langle T_1,\ldots,T_n\rangle\rightarrow\ul{A}$ sending $T_j$ to $b_j$ for $1\leq j\leq n$. Let $\ul{B}\subseteq\ul{A}$ denote its image; then $\ul{B}$ is an $R$-algebra of tf type. Since the $a_i$ topologically generate $\ul{A}'$ over $R$, they also topologically generate $\ul{A}'$ over $\ul{B}$. The $a_i$ are, by construction, integral over $\ul{B}$; hence $\ul{A}'$ is in fact finite over $\ul{B}$. Since $\ul{B}$ is noetherian, the $\ul{B}$-submodule $\ul{A}$ of $\ul{A}'$ is finite as well, and it follows that $\ul{A}$ is of tf type as a $\ul{B}$-algebra. We conclude that $\ul{A}$ is of tf type over $R$.
\end{proof}

\subsubsection{Amalgamated sums}

\begin{prop}\label{amalgsumsprop}
The category of semi-affinoid $K$-algebras admits amalgamated sums. More precisely speaking, if $\phi_1\colon A\rightarrow B_1$ and $\phi_2\colon A\rightarrow B_2$ are homomorphisms of semi-affinoid $K$-algebras, then the colimit of the resulting diagram is represented by $(\ul{B}_1\hat{\otimes}_{\ul{A}}\ul{B}_2)\otimes_RK$, where $\ul{A}$ and the $\ul{B}_i$ are $R$-models of ff type for $A$ and the $B_i$ respectively such that $\phi(\ul{A})\subseteq\ul{B}_1,\ul{B}_2$.
%
\end{prop}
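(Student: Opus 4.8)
The plan is to construct the candidate amalgamated sum explicitly as $C := (\ul{B}_1 \hat{\otimes}_{\ul{A}} \ul{B}_2)\otimes_R K$ and to verify its universal property directly, reducing everything to two universal properties already at our disposal: that of the completed tensor product $\hat{\otimes}$ in the category of $R$-algebras of ff type, and that of the free semi-affinoid $K$-algebras from Theorem \ref{freesemaffthm}. First I would fix the models. Starting from an arbitrary $R$-model of ff type $\ul{A}$ for $A$, two applications of Corollary \ref{freesemaffcor} ($iv$), one for $\phi_1$ and one for $\phi_2$, produce $R$-models of ff type $\ul{B}_1,\ul{B}_2$ for $B_1,B_2$ with $\phi_i(\ul{A})\subseteq \ul{B}_i$; the $\phi_i$ thus restrict to $R$-homomorphisms $\ul{\phi}_i\colon \ul{A}\to \ul{B}_i$, which are continuous by Lemma \ref{topinducedlem}. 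Since the category of $R$-algebras of ff type admits amalgamated sums, $\ul{C}:=\ul{B}_1\hat{\otimes}_{\ul{A}}\ul{B}_2$ is of ff type, so $C = \ul{C}\otimes_R K$ is semi-affinoid, and the structural maps $\ul{\psi}_i\colon \ul{B}_i\to \ul{C}$ give, after $\otimes_R K$, homomorphisms $\psi_i\colon B_i\to C$ with $\psi_1\circ\phi_1 = \psi_2\circ\phi_2$, the square already commuting at the level of models. This defines the cocone.

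For existence of the mediating morphism, I would take a competing cocone $\theta_i\colon B_i\to D$ with $\theta_1\circ\phi_1=\theta_2\circ\phi_2$ into a semi-affinoid $K$-algebra $D$, choose an $R$-model of ff type $\ul{D}$, and enlarge it by two further applications of Corollary \ref{freesemaffcor} ($iv$) so that $\theta_1(\ul{B}_1)\subseteq\ul{D}$ and $\theta_2(\ul{B}_2)\subseteq\ul{D}$ simultaneously. This yields continuous $R$-homomorphisms $\ul{\theta}_i\colon \ul{B}_i\to\ul{D}$, and the two resulting maps $\ul{A}\to\ul{D}$ obtained by precomposing with $\ul{\phi}_1$ and $\ul{\phi}_2$ agree: they become equal after $\otimes_R K$, namely equal to $\theta_1\circ\phi_1=\theta_2\circ\phi_2$ restricted to $\ul{A}$, and $\ul{D}\hookrightarrow D$ is injective since $\ul{D}$ is an $R$-model. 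The universal property of $\hat{\otimes}$ then furnishes a unique continuous $\ul{\chi}\colon\ul{C}\to\ul{D}$ with $\ul{\chi}\circ\ul{\psi}_i=\ul{\theta}_i$, and $\chi:=\ul{\chi}\otimes_R K$ is a mediating $K$-homomorphism with $\chi\circ\psi_i=\theta_i$.

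Uniqueness is where I would invoke Theorem \ref{freesemaffthm}. Concatenating formal generating systems of $\ul{B}_1$ and $\ul{B}_2$ yields a formal generating system of $\ul{C}$, since the completed tensor product of quotients of free ff-type algebras is a quotient of the free ff-type algebra on the combined variables. Hence $C$ is a quotient of a free semi-affinoid algebra $R[[S]]\langle T\rangle\otimes_R K$ via a surjection $q$, under which the $S_i$ (topologically quasi-nilpotent) and the $T_j$ (power-bounded) map into $\psi_1(B_1)\cup\psi_2(B_2)$. Any two mediating morphisms $\chi,\chi'$ agree on $\psi_i(B_i)$, hence on the $q(S_i)$ and $q(T_j)$; by the uniqueness clause of Theorem \ref{freesemaffthm} the composites $\chi\circ q$ and $\chi'\circ q$ coincide, and surjectivity of $q$ forces $\chi=\chi'$.

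The main obstacle I anticipate is the bookkeeping at the model level: arranging, through the repeated enlargements of Corollary \ref{freesemaffcor} ($iv$), that all relevant images land inside a single model, and checking that continuity (Lemma \ref{topinducedlem}) legitimizes the use of the universal property of $\hat{\otimes}$ together with the compatibility over $\ul{A}$. Once the universal property of $C$ is verified, independence from the chosen models, and hence the representability of the colimit by $C$, is automatic.
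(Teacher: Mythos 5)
Your proposal is correct, and its existence half coincides with the paper's argument: the same applications of Corollary \ref{freesemaffcor} ($iv$) to arrange all images inside suitable models, the same observation that the two induced maps $\ul{A}\rightarrow\ul{D}$ agree because they agree after inverting $\pi$ and the model is $\pi$-torsion free, and the same use of the universal property of $\hat{\otimes}$ in the category of $R$-algebras of ff type to produce the mediating morphism. Where you genuinely diverge is the uniqueness step. The paper stays at the model level: given a competing mediating morphism $\tau'$, it invokes Corollary \ref{freesemaffcor} ($iv$) once more to enlarge the model of the target so that $\tau'$ restricts to a morphism of models $\ul{\tau}'$ out of $\ul{B}_1\hat{\otimes}_{\ul{A}}\ul{B}_2$, and then concludes $\ul{\tau}'=\ul{\iota}\circ\ul{\tau}$ from the uniqueness clause of the universal property of $\hat{\otimes}$, again via the torsion-freeness trick. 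You instead exhibit $\ul{B}_1\hat{\otimes}_{\ul{A}}\ul{B}_2$ as a quotient of a free ff-type algebra on the concatenated generating systems (which is legitimate: surjections of noetherian adic rings induce surjections on completed tensor products) and apply the uniqueness clause of Theorem \ref{freesemaffthm} to the composites $\chi\circ q$, $\chi'\circ q$. This is a valid and arguably more economical route -- it avoids the extra model enlargement -- but note one point you left implicit: Theorem \ref{freesemaffthm} applies only if the common images $\chi(q(S_i))$, $\chi(q(T_j))$ are respectively topologically quasi-nilpotent and power-bounded in $D$. This does hold, since they are $\theta_k$-images of elements of $\ul{B}_k$ (resp.\ of its biggest ideal of definition, cf.\ Remark \ref{topqnilpotrem}), and $K$-homomorphisms of semi-affinoid algebras preserve power-boundedness and topological quasi-nilpotency because maximal ideals pull back to maximal ideals by Lemma \ref{specmaplem} -- exactly the fact the paper itself uses in the proof of Corollary \ref{freesemaffcor} ($iii$)--($iv$). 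With that sentence added, your proof is complete; in effect you have replaced the paper's model-level uniqueness argument by the generator-level argument that underlies Corollary \ref{freesemaffcor} ($i$), so the two proofs draw on the same source (Theorem \ref{freesemaffthm}) at different removes.
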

\begin{proof}
By Corollary \ref{freesemaffcor} ($iv$), we may choose $R$-models $\ul{A}$, $\ul{B}_1$ and $\ul{B}_2$ as in the statement of the proposition. Let $C$ be a semi-affinoid $K$-algebra, and for $i=1,2$ let $\tau_i\colon B_i\rightarrow C$ be a $K$-homomorphism such that $\tau_1\circ\phi_1=\tau_2\circ\phi_2$. By Corollary \ref{freesemaffcor} ($iv$), there exists an $R$-model $\ul{C}$ of ff type for $C$ such that $\tau_i(\ul{B}_i)\subseteq\ul{C}$ for $i=1,2$; we let $\ul{\tau}_i\colon\ul{B}_i\rightarrow\ul{C}$ denote the induced $R$-homomorphism. Then $\ul{\tau}_1\circ\ul{\phi}_1=\ul{\tau}_2\circ\ul{\phi}_2$, since the same holds after inverting $\pi$ and since $\pi$ is not a zero divisor in $\ul{A}$. By the universal property of the complete tensor product in the category of $R$-algebras of ff type, there exists a unique $R$-homomorphism $\ul{\tau}\colon\ul{B}_1\hat{\otimes}_{\ul{A}}\ul{B}_2\rightarrow\ul{C}$ such that $\ul{\tau}_i=\ul{\tau}\circ\ul{\sigma}_i$ for $i=1,2$, where $\ul{\sigma}_i:\ul{B}_i\rightarrow\ul{B}_1\hat{\otimes}_{\ul{A}}\ul{B}_2$ is the $i$th coprojection. Setting $\tau\mathrel{\mathop:}=\ul{\tau}\otimes_RK$ and $\sigma_i:=\ul{\sigma}_i\otimes_RK$, we obtain $\tau_i=\tau\circ\sigma_i$ for $i=1,2$. We must show that $\tau$ is uniquely determined by this property. Let 
\[
\tau'\colon(\ul{B}_1\hat{\otimes}_{\ul{A}}\ul{B}_2)\otimes_RK\rightarrow C
\]
be any $K$-homomorphism satisfying $\tau_i=\tau'\circ\sigma_i$ for $i=1,2$. By Corollary \ref{freesemaffcor} ($iv$), there exists an $R$-model $\ul{C}'$ of ff type for $C$ containing $\ul{C}$ such that $\tau'$ restricts to an $R$-morphism $\ul{\tau}'\colon\ul{B}_1\hat{\otimes}_{\ul{A}}\ul{B}_2\rightarrow\ul{C}'$; then $\tau'=\ul{\tau}'\otimes_RK$. It suffices to show that $\ul{\tau}'$ coincides with $\ul{\tau}$ composed with the inclusion $\ul{\iota}:\ul{C}\subseteq\ul{C}'$. For $i=1,2$, the compositions $\ul{\tau}'\circ\ul{\sigma}_i$ and $\ul{\iota}\circ\ul{\tau}\circ\ul{\sigma}_i$ coincide after inverting $\pi$, hence they coincide because $\pi$ is not a zero divisor in $\ul{B}_i$, for $i=1,2$. The universal property of $(\ul{B}_1\hat{\otimes}_{\ul{A}}\ul{B}_2,\ul{\sigma}_1,\ul{\sigma}_2)$ implies that $\ul{\tau}'=\ul{\iota}\circ\ul{\tau}$, as desired.
\end{proof}

Passing to the opposite category, we see that the category of semi-affinoid $K$-spaces has fibered products.
\subsubsection{The Nullstellensatz}

\begin{prop}\label{hilbertprop}
Semi-affinoid $K$-algebras are Jacobson rings.
\end{prop}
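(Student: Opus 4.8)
The plan is to reduce the statement to a non-vanishing assertion and then import the (classical) Jacobson property of affinoid algebras through Berthelot's rigid generic fiber. First I note that quotients of semi-affinoid $K$-algebras are again semi-affinoid: if $I\subseteq A$ is an ideal and $\ul{A}$ is an $R$-model of ff type, then $A/I=(\ul{A}/(\ul{A}\cap I))\otimes_RK$, and $\ul{A}/(\ul{A}\cap I)$ is of ff type. Since for a prime $\p$ the quotient $A/\p$ is a semi-affinoid \emph{domain}, it therefore suffices to prove that $\jac(D)=0$ for every reduced semi-affinoid $K$-algebra $D$: applying this to each $A/\p$ shows that every prime ideal of an arbitrary semi-affinoid algebra is the intersection of the maximal ideals containing it, which is exactly the Jacobson property.

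So let $D$ be reduced and let $0\neq f\in D$; I must produce a maximal ideal not containing $f$. I first pass to a \emph{normal} model: normalizing any $R$-model of ff type of $D$ gives, by excellence of $R$-algebras of ff type (as in the proof of Corollary \ref{model1cor}), a finite, hence ff-type, extension $\ul{D}$ that is normal. By Proposition \ref{normalmodelprop} and \cite{dejong_crystalline} 7.1.8, the natural map identifies $D$ with a subring of the ring of global functions $\Gamma((\Spf\ul{D})^\rig,\mathcal{O})$, and since $\ul{D}$ is normal its rigid generic fiber is a reduced (in fact normal) rigid $K$-space. In particular the image of $f$ in this reduced ring of global functions is nonzero.

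Now I transfer the problem to the affinoid level. Berthelot's generic fiber $(\Spf\ul{D})^\rig$ is a quasi-separated rigid $K$-space that is admissibly covered by affinoid opens, and global sections of the structure sheaf form a separated presheaf for this covering; hence $f$ has nonzero restriction to some affinoid open $U$ with coordinate ring $C$, which is a reduced affinoid $K$-algebra. Affinoid $K$-algebras are Jacobson \cite{bgr}, so there is a rigid point $x\in U$ with $f(x)\neq0$. Evaluation at $x$ yields a $K$-homomorphism $D\to C\to\kappa(x)$ into a finite extension $\kappa(x)$ of $K$; its kernel $\m$ is prime with $D/\m\hookrightarrow\kappa(x)$, so $D/\m$ is a finite-dimensional $K$-domain, hence a field, and $\m$ is maximal. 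As $f(x)\neq0$ we have $f\notin\m$. Thus no nonzero element of $D$ lies in every maximal ideal, i.e.\ $\jac(D)=0$, which completes the argument.

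The substantive step, and the main obstacle, is the passage through $(\Spf\ul{D})^\rig$: one must know that $D$ injects into the global functions of the rigid generic fiber, so that a nonzero element remains nonzero there, and that a nonzero global function on the \emph{non-quasi-compact} generic fiber is detected by a single classical rigid point. The first is the comparison of \cite{dejong_crystalline}; the second is where the Jacobson property of affinoid algebras is actually used, propagated from one affinoid member of the covering. A more self-contained alternative would reduce by quotients to the free algebras $R[[S_1,\ldots,S_m]]\langle T_1,\ldots,T_n\rangle\otimes_RK$ and induct on the number of power-series variables, but controlling the non-quasi-compact $S$-directions makes that route considerably heavier, so I would prefer the comparison argument above.
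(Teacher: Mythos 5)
Your proof is correct and takes essentially the same route as the paper: reduce via quotients to the reduced case, embed the algebra into the global sections of Berthelot's rigid generic fiber, and detect non-vanishing at a classical rigid point via the classical affinoid Jacobson property. The only cosmetic differences are that you secure the embedding and reducedness of the generic fiber by passing through the normalization (Proposition \ref{normalmodelprop} together with \cite{dejong_crystalline} 7.1.8), whereas the paper invokes \cite{dejong_crystalline} 7.1.9 and excellence directly, and that you spell out the separated-presheaf plus affinoid-Jacobson step that the paper compresses into the assertion that a global function vanishing at every point of the reduced space $X$ is zero.
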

\begin{proof}
Any quotient of a semi-affinoid $K$-algebra is again semi-affinoid; hence it suffices to show that if $A$ is a semi-affinoid $K$-algebra and if $f\in A$ is a semi-affinoid function such that $f(x)=0$ for all $x\in\sSp A$, then $f$ is nilpotent. We may divide $A$ by its nilradical and thereby assume that $A$ is reduced. Let $\ul{A}$ be an $R$-model of ff type for $A$, and let $X=(\Spf\ul{A})^\rig$ denote the rigid-analytic generic fiber of $\Spf \ul{A}$. Since $A$ is excellent, being a localization of the excellent ring $\ul{A}$, and since rigid $K$-spaces are excellent (cf.\ \cite{conrad_irred} 1.1), it follows from \cite{dejong_crystalline} Lemma 7.1.9 that the space $X$ is reduced and that we may view $A$ as a subring of $\Gamma(X,\O_X)$ such that the value of $f$ in a point $x\in X$ agrees with the value of $f$ in the corresponding maximal ideal of $A$. Since $f(x)=0$ for all $x\in X$, we see that $f=0$ as a function on $X$ and, hence, in $A$.
\end{proof}

\subsection{Semi-affinoid spaces}

\subsubsection{The rigid space associated to a semi-affinoid $K$-space}\label{rigspaceofsemaffspacesec}

Let $X=\sSp A$ be a semi-affinoid $K$-space. An affine flat formal model of ff type for $X$ is an affine flat formal $R$-scheme of ff type $\fX$ together with an identification of $\Gamma(\fX,\O_\fX)$ with an $R$-model of ff type for $A$. By Definition \ref{saffalgdefi}, every semi-affinoid $K$-space admits an affine flat model of ff type. There is an obvious generic fiber functor $\urig$ from the category of affine flat formal $R$-schemes of ff type to the category of semi-affinoid $K$-spaces, given by
\[
(\Spf\ul{A})^\urig\,:=\,\sSpns(\ul{A}\otimes_RK)\;.
\]
Let $\fX$ be a flat affine $R$-model of ff type for $X$. Berthelot's construction yields a rigid $K$-space $X^\r\mathrel{\mathop:}=\fX^\rig$ together with a $K$-homomorphism 
\[
\phi\colon A\rightarrow\Gamma(X^\r,\O_{X^\r})\;,
\]
cf.\ \cite{dejong_crystalline} 7.1.8. By our discussion in Section \ref{specmapsec} and by \cite{dejong_crystalline} 7.1.9, the homomorphism $\phi$ induces a bijection $|X^\r|\rightarrow|X|$ and local homomorphisms $A_\m\rightarrow\O_{X^\r,x}$ which are isomorphisms on maximal-adic completions, where $x$ is a point of $X^\r$ and where $\m\in\Max A$ is the image of $x$ under the above bijection. We say that $X^\r$ is the \emph{rigid space associated to} $X$ via Berthelot's construction.\index{rigid space!of a semi-affinoid space} It is independent of the choice of $\fX$, the pair $(X^\r,\phi)$ being characterized by the following universal property:

\begin{prop}\label{semaffspaceassocprop}
Let $Y$ be a rigid $K$-space, and let $\psi\colon A\rightarrow\Gamma(Y,\O_Y)$ be a $K$-algebra homomorphism. There exists a unique morphism of rigid $K$-spaces $\sigma\colon Y\rightarrow X^\r$ such that $\psi=\Gamma(\sigma^\sharp)\circ\phi$. 
\end{prop}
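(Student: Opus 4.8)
The plan is to reduce to the case where $Y$ is affinoid by a gluing argument, to construct $\sigma$ on affinoids by passing through formal models and applying Berthelot's functor, and to deduce uniqueness from the fact that Berthelot's generic fiber is exhausted by affinoid subdomains whose function rings are topologically generated by the images of $A$. I would first treat the affinoid case, say $Y=\operatorname{Sp}B$, so that $\psi$ is a $K$-homomorphism $A\to B$. Fixing the $R$-model of ff type $\ul{A}$ with $\fX=\Spf\ul{A}$, Corollary \ref{freesemaffcor} ($iv$) provides an $R$-model of ff type $\ul{B}\subseteq B$ with $\psi(\ul{A})\subseteq\ul{B}$; by Corollary \ref{afflattfintypecor} this $\ul{B}$ is automatically of tf type, so that $\Spf\ul{B}$ is an admissible formal $R$-scheme of tf type with $(\Spf\ul{B})^\rig=\operatorname{Sp}B$. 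The restriction $\ul{\psi}\colon\ul{A}\to\ul{B}$ of $\psi$ is continuous by Lemma \ref{topinducedlem} and hence defines a morphism of formal $R$-schemes $\Spf\ul{B}\to\fX$; applying Berthelot's functor $\rig$ and using $(\Spf\ul{B})^\rig=\operatorname{Sp}B$ yields the desired morphism $\sigma\colon\operatorname{Sp}B\to X^\r$. The identity $\psi=\Gamma(\sigma^\sharp)\circ\phi$ then expresses the naturality of the canonical map $\phi$ with respect to the formal morphism induced by $\ul{\psi}$, which is part of the functoriality of Berthelot's construction, cf.\ \cite{dejong_crystalline} 7.1.8; concretely it amounts to the commutativity of the square relating $\ul{\psi}\otimes_RK=\psi$ with the canonical maps into the rings of global functions.

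For uniqueness on $\operatorname{Sp}B$ I would use the structure of $X^\r$ as an admissible increasing union $X^\r=\bigcup_{c\ge 1}U_c$ of affinoid subdomains coming out of Berthelot's construction, cf.\ \cite{dejong_crystalline} 7.1. Since $\operatorname{Sp}B$ is quasi-compact, any morphism to $X^\r$ factors through some $U_c$; and the affinoid algebra $A_c=\Gamma(U_c,\O_{X^\r})$ is topologically generated over $K$ by the images under $\phi$ of the components of a formal generating system of $\ul{A}$, since these coordinate functions generate the affinoid pieces of Berthelot's construction. Consequently a continuous $K$-homomorphism out of $A_c$ is determined by its restriction along the map $A\to A_c$, so two morphisms $\operatorname{Sp}B\to X^\r$ with the same composite $\Gamma(\sigma^\sharp)\circ\phi=\psi$ (taking $c$ large enough to factor both) induce the same homomorphism $A_c\to B$ and hence coincide.

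Finally I would globalize: choosing an admissible affinoid covering of $Y$, the affinoid case produces morphisms $\sigma_i$ that agree on overlaps by the affinoid uniqueness statement, applied over an affinoid covering of each intersection, and hence glue to a morphism $\sigma\colon Y\to X^\r$ satisfying $\psi=\Gamma(\sigma^\sharp)\circ\phi$; the same affinoid uniqueness, read on a covering, gives global uniqueness. I expect the main obstacle to be the uniqueness step: it hinges on the two facts that a morphism out of the quasi-compact source factors through a single affinoid piece $U_c$ and that the function ring of $U_c$ is topologically generated by the image of $A$, so that maps to $B$ are pinned down by their effect on $A$ through $\phi$.
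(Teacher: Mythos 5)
Your proposal is correct, and your existence step in the affinoid case is exactly the paper's: restrict $\psi$ to an $R$-homomorphism $\ul{\psi}\colon\ul{A}\rightarrow\ul{B}$ via Corollary \ref{freesemaffcor} ($iv$) and Corollary \ref{afflattfintypecor}, and set $\sigma=(\Spf\ul{\psi})^\rig$. Where you genuinely diverge is the uniqueness argument, and consequently the global logic. The paper proves uniqueness \emph{first} and for arbitrary rigid $Y$, in one line: it was recorded just before the proposition (via \cite{dejong_crystalline} 7.1.9) that $\phi$ induces a bijection $|X^\r|\rightarrow|X|$ and isomorphisms of maximal-adic completions of stalks, so $\psi$ already pins down the point map and the maps on completed stalks of any candidate $\sigma$; with uniqueness in hand globally, the reduction to affinoid $Y$ for existence is automatic and no explicit gluing on overlaps is needed. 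You instead prove uniqueness only on affinoids, by unwinding Berthelot's construction: $X^\r=\bigcup_c U_c$ is an increasing admissible affinoid exhaustion, quasi-compactness of $\Sp B$ forces any morphism to factor through a single $U_c$, and the image of $A$ is dense in $A_c=\Gamma(U_c,\O_{X^\r})$ (it contains $\ul{A}$ together with the divided elements $\fa^c/\pi$, which generate a dense subring of the completion in de Jong's construction), so two morphisms with the same composite $\Gamma(\sigma^\sharp)\circ\phi$ induce the same continuous map $A_c\rightarrow B$ and coincide; you then glue over an admissible affinoid covering, checking agreement on overlaps affinoid-locally. Both routes are sound. The paper's buys brevity and a uniqueness statement that needs no quasi-compactness or exhaustion bookkeeping, since it exploits the already-established local comparison between $A$ and $\O_{X^\r}$; yours is more self-contained and concrete, making visible the density mechanism inside Berthelot's affinoid pieces that ultimately explains \emph{why} $\psi$ determines $\sigma$, at the cost of the extra factoring-through-$U_c$ and gluing steps. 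One small imprecision worth fixing: $A_c$ is not topologically generated merely by the images of a formal generating system of $\ul{A}$ in the naive sense; what you actually need (and what holds, since $A_c$ is a quotient of the corresponding free piece, where truncation of power series gives density of polynomials) is density of the image of $A$ in $A_c$, and your argument uses only that.
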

\begin{proof}
Uniqueness of $\sigma$ follows from the above-mentioned fact that $\phi$ induces a bijection of points and isomorphisms of completed stalks; we may thus assume that $Y$ is affinoid, $Y=\Sp B$.  Let $\ul{A}\subseteq A$ be the $R$-model of ff type corresponding to $\fX$. By Corollary \ref{freesemaffcor} ($iv$) and Corollary \ref{afflattfintypecor}, $\psi$ restricts to an $R$-homomorphism $\ul{\psi}\colon\ul{A}\rightarrow\ul{B}$, where $\ul{B}$ is a suitable $R$-model of tf type for $B$; now $\sigma\mathrel{\mathop:}=(\Spf\ul{\psi})^\rig$ has the required properties.
\end{proof}

If $\ul{\tau}\colon\fY\rightarrow\fX$ is a morphism of affine flat formal $R$-schemes of ff type and if $\ul{\tau}^\urig$ denotes the induced morphism of associated semi-affinoid $K$-spaces, we easily see that the unique morphism $(\ul{\tau}^\srig)^r$ provided by Proposition \ref{semaffspaceassocprop} is given by $\ul{\tau}^\rig$.

\subsubsection{Semi-affinoid subdomains}

Closed subspaces of semi-affinoid $K$-spa\-ces are easily defined in the usual way:

\begin{defi}\label{semaffclosedimdefi}
A morphism of semi-affinoid $K$-spaces is called a \emph{closed immersion} if it corresponds to a surjective homomorphism of semi-affinoid $K$-algebras. A \emph{closed semi-affinoid subspace} of a semi-affinoid $K$-space is an equivalence class of closed immersions, where two closed immersions of uniformly rigid $K$-spaces $i_1:Y_1\rightarrow X$ and $i_2:Y_2\rightarrow X$ are called equivalent if there exists an isomorphism $\phi:Y_1\overset{\sim}{\rightarrow}Y_2$ such that $i_1=i_2\circ\phi$.
\end{defi}

If $A$ is a semi-affinoid $K$-algebra and if $I\subseteq A$ is an ideal, then the natural closed immersion $\sSp A/I\rightarrow\sSp A$ is clearly injective onto the set of maximal ideals containing $I$. Moreover, if $A\rightarrow C$ is a homomorphism of semi-affinoid $K$-algebras, then $A/I\hat{\otimes}_AC=C/IC$, because this quotient already represents the amalgamated sum of $C$ and $A/I$ over $A$ in the category of all $K$-algebras. In particular, closed immersions of semi-affinoid $K$-spaces are stable under the formation of fibered products.

To define a reasonable structure of G-topological $K$-space on the set of physical points of a semi-affinoid $K$-space $X$, it is natural to consider subsets $U$ of $X$ that canonically inherit a structure of semi-affinoid $K$-space:

\begin{defi}\label{semaffpresubdomdefi}
A subset $U$ in a semi-affinoid $K$-space $X$ is called \emph{re\-presentable} if there exists a morphism of semi-affinoid $K$-spaces to $X$ whose image lies in $U$ and which is final with this property. Such a morphism is said to represent all semi-affinoid morphisms to $X$ with image in $U$.
\end{defi}

\begin{remark}
Here we differ from the terminology used in the author's PhD thesis; there the representable subsets are called \emph{semi-affinoid pre-subdomains}, cf.\ \cite{mythesis} Section 1.3.3.
\end{remark}

Clearly $X$ and $\emptyset$ are representable subsets of $X$. Copying the proof of \cite{bgr} 7.2.2/1, we see that a morphism representing a subset $U\subseteq X$ is injective with image $U$ and that it induces isomorphisms of infinitesimal neighborhoods of points. Using the existence of fibered products in the category of semi-affinoid $K$-spaces, we see that representable subsets are preserved under pullback with respect to morphisms of semi-affinoid spaces. The universal property of representable subsets yields a presheaf $\O_X$ in semi-affinoid $K$-algebras on the category of representable subsets in $X$. 

In the category of affinoid $K$-spaces, the representable subsets are called affinoid subdomains (cf.\ \cite{bgr} 7.2.2/2), and they play a predominant role in the foundations of rigid geometry. In the uniformly rigid setting, we are unable to handle general representable subsets; for instance, we do not know whether representable subsets induce admissible open subsets via the functor $\r$ which is induced by Berthelot's construction. We will thus only consider representable subsets of a specific kind, which we call semi-affinoid subdomains:

\begin{defi}\label{semaffsubdomdefi}
A subset $U$ of a semi-affinoid $K$-space $X$ is called a semi-affinoid subdomain if there is an affine flat $R$-model of ff type $\fX$ for $X$ and a finite composition of open immersions, completion morphisms and admissible blowups $\phi:\fU\rightarrow\fX$ such that $\fU$ is affine and such that $U$ is equal to the image of $\phi^\urig$. We say that $\phi$ represents $U$ as a semi-affinoid subdomain in $X$. We say that $U$ is an elementary semi-affinoid subdomain in $X$ if $\phi$ can be chosen as an open immersion into an admissible blowup, and we say that $U$ is a retrocompact semi-affinoid subdomain in $X$ if $\phi$ can be chosen as a composition of open immersions and admissible blowups; such a $\phi$ is said to represent $U$ as an elementary or as a retrocompact semi-affinoid subdomain in $X$ respectively.
\end{defi}

In Corollary \ref{subdomunivcor}, we will see that semi-affinoid subdomains are actually representable in the sense of Definition \ref{semaffpresubdomdefi}. 

Open immersions of formal $R$-schemes of ff type induce retrocompact open immersions of rigid generic fibers, cf.\ \cite{dejong_crystalline} 7.2.2 and 7.2.4 (d). Moreover, completion morphisms induce (possibly non-retrocompact) open immersions of rigid generic fibers, cf.\ \cite{dejong_crystalline} 7.2.5, and admissible blowups induce isomorphisms of rigid generic fibers, cf.\ \cite{nicaise_traceformula} 2.19. Hence a semi-affinoid subdomain $U\subseteq X$ is admissibly open in $X^\r$. In particular, the $K$-homomorphism $\phi^{\urig,*}$ corresponding to $\phi^\urig$ is flat, since flatness is seen on the level of completions of stalks. Semi-affinoid subdomains may be regarded as nested rational subdomains defined in terms of strict or non-strict inequalities involving semi-affinoid functions. For example, the blowup of $\fX=\Spf R[[S]]$ in the ideal $(\pi, S)$ is covered by the affine open formal subschemes $\fX_1=\Spf (R[[S]]\langle V\rangle/(\pi V-S))\cong\Spf R\langle V\rangle$ and $\fX_2=\Spf (R[[S]]\langle W\rangle/(SW-\pi))$; the completion of $\fX_1$ along the ideal $(\pi, V)$ represents the open disc with radius $|\pi|$ within the open unit disc, while the completion of $\fX_2$ along $(\pi,W)$ defines the open annulus $|\pi|<|S|<1$.

\begin{remark}\label{nestedrequrem}
It is necessary to consider iterations as in Definition \ref{semaffsubdomdefi} because if $\fU$ is an open subset of a flat formal $R$-scheme of ff type $\fX$, then an admissible blowup of $\fU$ needs not extend to an admissible blowup of $\fX$, cf.\ \cite{mythesis} Example 1.1.3.12.
\end{remark}

In order to understand semi-affinoid subdomains, it will be useful to interpret strict transforms with respect to admissible blowups as pullbacks:

\begin{lem}\label{stricttrafocartlem}
Let $\fY\rightarrow\fX$ be a morphism of flat formal $R$-schemes of locally ff type, let $\fX'\rightarrow\fX$ be an admissible blowup, and let $\fY'\rightarrow\fY$ denote the induced admissible blowup of $\fY$, that is, the strict transform of $\fY$. Then the resulting square
\[
\begin{diagram}
\fY'&\rTo&\fX'\\
\dTo&&\dTo\\
\fY&\rTo&\fX
\end{diagram}
\]
is cartesian in the category of flat formal $R$-schemes of locally ff type.
\end{lem}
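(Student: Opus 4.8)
The plan is to verify directly the universal property of the fibered product, but only against test objects in the category of flat formal $R$-schemes of locally ff type. Since blowups, strict transforms and flatness are all local on the base, I would first reduce to the case where $\fX$ is affine and the admissible blowup $\beta\colon\fX'\to\fX$ is the blowup of a single $\pi$-adically open coherent ideal $\mathcal{I}\subseteq\O_\fX$. In this situation $\mathcal{I}\O_{\fX'}$ is invertible and again $\pi$-adically open; the strict transform $\gamma\colon\fY'\to\fY$ is, by construction, the admissible blowup of $\fY$ along $\mathcal{I}\O_\fY$, hence flat over $R$ and of locally ff type; and $\fY'$ carries a canonical morphism $p\colon\fY'\to\fX'$, produced by the universal property of $\beta$ from the fact that $\mathcal{I}\O_{\fY'}$ is invertible, which makes the square commute. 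Writing $\psi\colon\fY\to\fX$ for the given morphism, the goal is then to show: for every flat formal $R$-scheme $\fZ$ of locally ff type equipped with $a\colon\fZ\to\fX'$ and $b\colon\fZ\to\fY$ such that $\beta\circ a=\psi\circ b$, there is a unique $c\colon\fZ\to\fY'$ with $p\circ c=a$ and $\gamma\circ c=b$.

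The key technical point, and the place where flatness is genuinely used, would be the following invertibility assertion: whenever $\fZ$ is flat over $R$ and $a\colon\fZ\to\fX'$ is any morphism, the ideal $\mathcal{I}\O_\fZ$ is invertible. To see this I would work locally and write $\mathcal{I}\O_{\fX'}=(g)$ with $g$ a non-zero-divisor; since $\mathcal{I}\O_{\fX'}$ is $\pi$-adically open, $g$ divides $\pi^N$ for some $N$, whence its pullback $a^\sharp(g)$ divides $\pi^N$ in $\O_\fZ$. As $\fZ$ is flat, i.e.\ $\pi$-torsion free, $\pi^N$ is a non-zero-divisor, and therefore so is $a^\sharp(g)$; thus $\mathcal{I}\O_\fZ=(a^\sharp(g))$ is invertible. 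This is precisely the step that fails for a non-flat $\fZ$, and it is the reason the square can only be cartesian in the category of \emph{flat} formal $R$-schemes and not in the category of all formal $R$-schemes.

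With this in hand the remaining verification becomes purely formal. Given a test datum $(\fZ,a,b)$, the structure morphism $\fZ\to\fX$ factors as $\beta\circ a=\psi\circ b$, so $\mathcal{I}\O_\fZ=(\mathcal{I}\O_\fY)\O_\fZ$, and by the previous paragraph this ideal is invertible. The universal property of the admissible blowup $\gamma\colon\fY'\to\fY$ of $\mathcal{I}\O_\fY$ then yields a unique $c\colon\fZ\to\fY'$ with $\gamma\circ c=b$. For compatibility with $a$ I would compute $\beta\circ(p\circ c)=(\psi\circ\gamma)\circ c=\psi\circ b=\beta\circ a$, so that $p\circ c$ and $a$ are two morphisms $\fZ\to\fX'$ lying over one and the same morphism $\fZ\to\fX$ along which $\mathcal{I}$ becomes invertible; the uniqueness half of the universal property of $\beta$ then forces $p\circ c=a$. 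Finally, any $c'$ with $\gamma\circ c'=b$ coincides with $c$ by the uniqueness in the universal property of $\gamma$, giving uniqueness of $c$.

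The only real obstacle I anticipate is to have at one's disposal the universal property of admissible formal blowups in the formally-finite-type setting, together with the identification of the strict transform with the blowup of $\mathcal{I}\O_\fY$ and the flatness of admissible blowups of flat formal schemes. All of these transfer essentially verbatim from the classical topologically-finite-type situation, cf.\ \cite{frg1}, since the only features that enter are the $\pi$-adic openness of the centre and the noetherian, $\pi$-torsion-free nature of the rings involved; once they are granted, the argument above is entirely categorical.
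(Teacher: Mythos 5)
Your proof is correct, but it verifies the universal property by a different mechanism than the paper. The paper's own proof is a one-liner: it checks the universal property of the fibered product using the universal property of admissible blowups together with two facts about Berthelot's functor, namely that $\rig$ maps admissible blowups to isomorphisms and that $\rig$ is \emph{faithful} on flat formal $R$-schemes of locally ff type; the compatibility and uniqueness verifications are thus delegated to the rigid generic fiber. You instead stay entirely on the formal side: your key invertibility lemma --- locally $\mathcal{I}\O_{\fX'}=(g)$ with $g\mid\pi^N$ by openness, so $a^\sharp(g)$ divides $\pi^N$ and is a non-zero-divisor on any $\pi$-torsion-free $\fZ$ --- supplies the hypothesis for the universal property of the blowup $\gamma\colon\fY'\to\fY$, and you then settle both $p\circ c=a$ and uniqueness of $c$ by the uniqueness halves of the universal properties of $\beta$ and $\gamma$, rather than by faithfulness of $\rig$. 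This buys two things the paper's terse argument leaves implicit: it is self-contained (no appeal to Berthelot's construction or its faithfulness), and it isolates exactly where flatness of the test object enters, thereby explaining why the square is cartesian only in the category of \emph{flat} formal $R$-schemes and not among all formal $R$-schemes. The price is the list of transferred facts you flag at the end (universal property of admissible formal blowups in the ff setting, identification of the strict transform with the blowup of $\mathcal{I}\O_\fY$, flatness of admissible blowups of flat formal schemes); these do transfer as you say --- the paper itself uses them freely, citing \cite{temkin_desing} 2.1 --- and note that no reduction to a single ideal is actually needed, since an admissible blowup is by definition the blowup of one $\pi$-adically open coherent ideal.
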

\begin{proof}
The universal property of the fibered product in the category of flat formal $R$-schemes of locally ff type is readily verified using the universal property of admissible blowups, the fact that the functor $\rig$ maps admissible blowups to isomorphisms and the fact that $\rig$ is faithful on the category of flat formal $R$-schemes of locally ff type.
\end{proof}

In the following, we write $\times'$ to denote the fibered product in the category of flat formal $R$-schemes of locally ff type. It is obtained from the usual fibered product by dividing out the coherent ideal of $\pi$-torsion; in particular, fibered products of affine flat formal $R$-schemes of ff type in the category of flat formal $R$-schemes of locally ff type are again affine. 

As we have just observed, admissible blowups of flat formal $R$-schemes are preserved under pullback in the category of flat formal $R$-schemes of locally ff type. The same is true for open immersions and completion morphisms, since they are flat and since they are preserved under pullback in the category of all formal $R$-schemes of locally ff type.

\begin{cor}\label{subdomunivcor}
Let $X$ be a semi-affinoid $K$-space, let $U\subseteq X$ be a semi-affinoid subdomain, and let $Y\rightarrow X$ be a morphism of semi-affinoid $K$-spaces.
\begin{enumerate}
\item The preimage of $U$ in $Y$ is a semi-affinoid subdomain in $Y$.
\item If $\fU\rightarrow\fX$ represents $U$ as a semi-affinoid subdomain in $X$ and if $\fY\rightarrow\fX$ is a model of $Y\rightarrow X$, then the projection $\fU\times'_\fX\fY\rightarrow\fY$ represents the preimage of $U$ as a semi-affinoid subdomain in $Y$.
\item If $\phi$ re\-pre\-sents $U$ as a semi-affinoid subdomain in $X$, then $\phi^\urig$ re\-pre\-sents all semi-affinoid morphisms to $X$ with image in $U$. In particular, semi-affinoid subdomains are representable in the sense of Definition \ref{semaffpresubdomdefi}.
\end{enumerate}
The analogous statements hold if we consider retrocompact or elementary semi-affinoid subdomains and their retrocompact or elementary representations.
\end{cor}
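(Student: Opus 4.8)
The plan is to prove statement $(ii)$ first, since it contains the essential construction; statement $(i)$ is then immediate, and statement $(iii)$ follows by a formal argument. Let $\phi\colon\fU\to\fX$ represent $U$, written as a finite composition
\[
\fU=\fX_r\to\fX_{r-1}\to\cdots\to\fX_0=\fX
\]
of open immersions, completion morphisms and admissible blowups, and let $\fY\to\fX$ be the given model of $Y\to X$. I would pull this chain back step by step in the category of flat formal $R$-schemes of locally ff type: set $\fY_0:=\fY$ and $\fY_{i+1}:=\fX_{i+1}\times'_{\fX_i}\fY_i$. By transitivity of the fibered product one has $\fY_i=\fX_i\times'_\fX\fY$, so that $\fY_r=\fU\times'_\fX\fY$.

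For $(ii)$ I would check two things. First, that $\fU\times'_\fX\fY\to\fY$ is again a representation of a semi-affinoid subdomain of the same kind: each step $\fY_{i+1}\to\fY_i$ is of the same type as $\fX_{i+1}\to\fX_i$, because open immersions, completion morphisms and admissible blowups are all preserved under pullback in the category of flat formal $R$-schemes of locally ff type (the first two by flatness, the last by Lemma \ref{stricttrafocartlem}); since $\fU$ and $\fY$ are affine, so is $\fU\times'_\fX\fY$, and this preservation of types covers the retrocompact and elementary cases as well. Second, that the image of $(\fU\times'_\fX\fY)^\urig$ in $Y$ is the preimage of $U$. Here I would pass to rigid generic fibers: the functor $\rig$ commutes with fibered products and kills the $\pi$-torsion by which $\times'$ differs from the ordinary product, and it sends the three types of morphisms to open immersions and isomorphisms, so $\phi^\rig$ is an open immersion with image the admissible open corresponding to $U$, and $(\fU\times'_\fX\fY)^\rig=\fU^\rig\times_{\fX^\rig}\fY^\rig\to\fY^\rig$ is its pullback, an open immersion onto the preimage of that image. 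Via the functorial identifications of physical points of semi-affinoid spaces with those of their associated rigid spaces (Remark \ref{specmaprem} and Section \ref{rigspaceofsemaffspacesec}), this image is exactly the preimage of $U$ in $Y$. Statement $(i)$ is the special case recording only that this preimage is a semi-affinoid subdomain.

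For $(iii)$ I would first note, again by passing to $\rig$, that $\phi^\urig\colon\fU^\urig\to X$ is injective with image $U$, induces isomorphisms on completed stalks, and is flat. Uniqueness of a lift of a morphism $\psi\colon Z\to X$ with image in $U$ is then formal: two lifts agree on physical points (as $\phi^\urig$ is injective) and on completed stalks (as $\phi^\urig$ is an isomorphism there), and a $K$-homomorphism into a semi-affinoid $K$-algebra $C$ is determined by the induced map on points together with the induced maps on completed stalks, since $C\hookrightarrow\prod_\m\widehat{C}_\m$ is injective. For existence I would form $P:=\fU^\urig\times_X Z$, which by Proposition \ref{amalgsumsprop} is the semi-affinoid generic fiber of the corresponding $\times'$-product of models; by $(ii)$ applied to $Z\to X$ the projection $p_2\colon P\to Z$ represents the preimage of $U$ in $Z$, and since $\psi(Z)\subseteq U$ this preimage is all of $Z$. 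Granting that $p_2$ is then an isomorphism, I would set the lift equal to $p_1\circ p_2^{-1}$, where $p_1\colon P\to\fU^\urig$ is the other projection, and read off compatibility with $\psi$ from the defining square.

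The main obstacle is the claim that $p_2$ is an isomorphism. By construction $p_2$ is flat, bijective on physical points, and an isomorphism on completed stalks (its rigid avatar is an open immersion that is surjective, hence an isomorphism of rigid spaces). On the level of algebras $p_2^*\colon C\to D$, flatness together with surjectivity on prime spectra — which follows from the bijection on maximal spectra, the going-down property, and the fact that semi-affinoid algebras are Jacobson (Proposition \ref{hilbertprop}) — shows that $p_2^*$ is faithfully flat, in particular injective. To see surjectivity it suffices to check that the cokernel $N$ vanishes after localizing at each maximal ideal $\m$ of $C$; after $\m$-adic completion the localized map becomes the given isomorphism $\widehat{C}_\m\xrightarrow{\sim}\widehat{D}_\n$, whence $N_\m\otimes_{C_\m}\widehat{C}_\m=0$, and faithful flatness of completion over a Noetherian local ring forces $N_\m=0$, so $N=0$. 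The delicate point in this last step is the identification of the $\m$-adic completion of $D\otimes_C C_\m$ with $\widehat{D}_\n$, which I expect to follow from the fact that the isomorphism on completed stalks forces the fiber of $p_2$ over each point to be a single reduced point.
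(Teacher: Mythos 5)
Your treatment of parts ($i$) and ($ii$) is essentially the paper's: pull the representing chain back step by step in the category of flat formal $R$-schemes of locally ff type, using that open immersions, completion morphisms and admissible blowups are preserved under $\times'$-pullback (Lemma \ref{stricttrafocartlem} for blowups), and identify the image by passing to rigid generic fibers. The uniqueness half of ($iii$) also matches the paper (Krull intersection via the injection $C\hookrightarrow\prod_\m\widehat{C}_\m$). The existence half of ($iii$), however, is where you diverge, and your argument there has a genuine gap. You reduce everything to the claim that a morphism $p_2\colon P\rightarrow Z$ of semi-affinoid $K$-spaces which is flat, bijective on physical points, an isomorphism on completed stalks, and has single reduced points as fibers must be an isomorphism. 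This is false, and the paper itself supplies the counterexample: the canonical map from $\sSp(R[[T]]\otimes_RK)\amalg\sSp K\langle T,T^{-1}\rangle$ to $\sSp K\langle T\rangle$ of Example \ref{nonadmdisccovex} (see also the remark following Proposition \ref{closedimprop}) is flat, bijective on points, induces isomorphisms of stalks, and has scheme-theoretic fibers equal to single reduced points -- e.g.\ for $\m=(T-a)$ with $|a|<1$ one has $D/\m D=K(\m)\times 0$ -- yet it is not an isomorphism. Concretely, your step ``$N_\m\otimes_{C_\m}\widehat{C}_\m=0$'' breaks down because $D\otimes_C\widehat{C}_\m$ is not the $\m$-adic completion of $D_\m$ when $D$ is not finite over $C$: in the counterexample the factor $K\langle T,T^{-1}\rangle\otimes_C\widehat{C}_\m$ is nonzero (it is a nonzero localization tensored with a faithfully flat extension), so the cokernel survives completion even though $\varprojlim D/\m^nD\cong\widehat{C}_\m$. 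Your anticipated rescue via reduced one-point fibers therefore cannot work, since the counterexample satisfies that condition too. (Your observation that $p_2^\r$ is an isomorphism of rigid spaces is correct but does not help either, since $\r$ is faithful but not full, and its conservativity on semi-affinoid spaces is not established.)

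What actually closes the gap -- and this is the paper's route -- is to exploit the chain structure on the level of formal models rather than stalkwise data on generic fibers. Given $Y\rightarrow X$ with image in $U$, choose by Corollary \ref{freesemaffcor} ($iv$) a model $\fY\rightarrow\fX$ (this citation is also the missing ingredient in your deduction of ($i$) from ($ii$)), and pull back the factorization $(*)$ of $\fU\rightarrow\fX$ under it. Because $Y\rightarrow X$ factors through $U$, because specialization maps of flat formal $R$-schemes of locally ff type are surjective onto closed points, and because closed points lie very dense, every $\psi_i$ in the pulled-back chain that is an open immersion or a completion morphism is an \emph{isomorphism} of formal schemes. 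The composite $\fY_{n+1}\rightarrow\fY$ is then a composition of admissible blowups, hence an admissible blowup by \cite{temkin_desing} 2.1.6, and finite by \cite{egaiii} 3.4.2 since $\fY_{n+1}$ is affine; a finite admissible blowup of affine flat models corresponds to an inclusion of $R$-models of ff type of the same semi-affinoid algebra, so applying $\urig$ gives exactly the isomorphism you needed for $p_2$, and the desired factorization follows. So your scaffolding for ($iii$) is sound, but the key isomorphism must be proven on the formal side; no purely generic-fiber argument of the kind you attempt can succeed.
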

\begin{proof}
Statement ($ii$) implies statement ($i$) in view of Corollary \ref{freesemaffcor} ($iv$). To show ($ii$), let us consider a factorization 
\[
\fU\overset{\phi_n}{\longrightarrow}\fU_{n}\overset{\phi_{n-1}}{\longrightarrow}\cdots\overset{\phi_1}{\longrightarrow}\fU_1\overset{\phi_0}{\longrightarrow}\fX\quad(*)
\]
of $\fU\rightarrow\fX$, where the $\phi_i$ are admissible blowups, open immersions or completion morphisms. By the remarks preceding this Corollary, we see that the projection $\fU\times'_\fX\fY\rightarrow\fY$ defines a semi-affinoid subdomain in $Y$. Passing to associated rigid spaces, we see that this semi-affinoid subdomain coincides with the preimage of $U$ in $Y$. To prove ($iii$), let us write $\phi$ to denote $\fU\rightarrow\fX$, and let us assume that the image of $Y\rightarrow X$ lies in $U$; we must show that $Y\rightarrow X$ factors uniquely through $\phi^\urig$. Since $\phi^\urig$ induces an injection of physical points and isomorphisms of completed stalks, uniqueness  follows from Krull's Intersection Theorem. Let us show that the desired factorization exists. Again, Corollary \ref{freesemaffcor} ($iv$) shows that $Y\rightarrow X$ admits a model $\fY\rightarrow\fX$ with target $\fX$. Let us consider the pullback
\[
\fY_{n+1}\overset{\psi_n}{\longrightarrow}\fY_{n}\overset{\psi_{n-1}}{\longrightarrow}\cdots\overset{\psi_1}{\longrightarrow}\fY_1\overset{\phi_0}{\longrightarrow}\fY
\]
of $(*)$ under $\fY\rightarrow\fX$ in the category of flat formal $R$-schemes of locally ff type; then $\fY_{n+1}$ is affine, and all $\psi_i$ that are open immersions or completion morphisms are isomorphisms: Indeed, $Y\rightarrow X$ factors through $U$, specialization maps are surjective onto the sets of closed points of flat formal $R$-schemes of locally ff type, and  the closed points lie very dense in formal $R$-schemes of this type. Hence, the composition $\fY_{n+1}\rightarrow\fY$ is a composition of admissible blowups; by \cite{temkin_desing} 2.1.6, it is an admissible blowup. Since $\fY_{n+1}$ is affine, \cite{egaiii} 3.4.2 shows that $\fY_{n+1}\rightarrow\fY$ is a finite admissible blowup. After applying $\urig$, we thus obtain the desired factorization of $Y\rightarrow X$.
\end{proof}

By Corollary \ref{subdomunivcor} ($iii$), every semi-affinoid subdomain may be viewed as a semi-affinoid $K$-space in a natural way. 

\begin{question}\label{reprimpliessubdomrem}
One may ask whether every representable subset of a semi-affinoid $K$-space is in fact a semi-affinoid subdomain. Unfortunately, we do not know the answer.
\end{question}

\begin{cor}\label{semaffsubdomcor}
Let $X$ be a semi-affinoid $K$-space, let $U\subseteq X$ be a semi-affinoid subdomain, and let $\fX$ be a flat affine $R$-model of ff type for $X$. Then there exists a representation of $U$ as a semi-affinoid subdomain in $X$ with target $\fX$.
\end{cor}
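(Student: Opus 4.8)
The plan is to take whatever representation of $U$ Definition \ref{semaffsubdomdefi} provides and to transport it onto the prescribed model $\fX$ by a single base change followed by one admissible blowup. Write $X=\sSp A$ and $\fX=\Spf\ul{A}$. By hypothesis there is \emph{some} affine flat $R$-model $\fX'=\Spf\ul{A}'$ of ff type for $X$ together with a chain
\[
\fU'=\fU'_n\longrightarrow\cdots\longrightarrow\fU'_0=\fX'
\]
of open immersions, completion morphisms and admissible blowups, with $\fU'$ affine, whose image under $\urig$ is $U$. The only issue is that its target is $\fX'$ and not the given $\fX$; both, however, are $R$-models of ff type for one and the same $A$.

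First I would produce a common dominating model. Applying Corollary \ref{freesemaffcor} ($iv$) to the identity homomorphism of $A$, I obtain an $R$-model $\ul{A}''$ of ff type for $A$ containing both $\ul{A}$ and $\ul{A}'$; put $\fX''=\Spf\ul{A}''$. By Corollary \ref{freesemaffcor} ($ii$), the inclusions $\ul{A}\subseteq\ul{A}''$ and $\ul{A}'\subseteq\ul{A}''$ correspond to finite admissible blowups $\fX''\rightarrow\fX$ and $\fX''\rightarrow\fX'$.

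Next I would pull the chain above back along the admissible blowup $\fX''\rightarrow\fX'$ in the category of flat formal $R$-schemes of locally ff type, forming successive fibered products $\times'$. By the remarks preceding Corollary \ref{subdomunivcor}, open immersions, completion morphisms and admissible blowups are each stable under this pullback---admissible blowups passing to their strict transforms via Lemma \ref{stricttrafocartlem}---so I obtain a new chain of the same three types of maps, with target $\fX''$ and affine source $\fU'':=\fU'\times'_{\fX'}\fX''$; affineness is preserved because $\times'$ of affine flat formal $R$-schemes of ff type is again affine. By Corollary \ref{subdomunivcor} ($ii$) this chain represents the preimage of $U$ under the map on semi-affinoid generic fibers induced by $\fX''\rightarrow\fX'$. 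Since $\fX''\rightarrow\fX'$ is an admissible blowup, that induced map is the identity of $X=\sSp A$, so the preimage of $U$ is $U$ itself; thus $\fU''\rightarrow\fX''$ represents $U$ with target $\fX''$.

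Finally I would prepend the admissible blowup $\fX''\rightarrow\fX$. The composite $\fU''\rightarrow\fX''\rightarrow\fX$ is then a finite composition of open immersions, completion morphisms and admissible blowups with affine source, and its image under $\urig$ is $U$, because $(\fX'')^\urig\rightarrow(\fX)^\urig$ is an isomorphism carrying the image $U$ to $U$; this is the representation with target $\fX$ that we seek. The step demanding the most care---and the closest thing here to an obstacle---is the bookkeeping of the pullback: one must invoke Lemma \ref{stricttrafocartlem} to guarantee that the blowups occurring in the chain pull back to their flat strict transforms rather than to total transforms, and one must track affineness through the iterated $\times'$. The identical argument, which introduces no new completion morphisms and only composes admissible blowups, yields the analogous statements for retrocompact and elementary semi-affinoid subdomains.
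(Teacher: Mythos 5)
Your proof is correct and follows essentially the same route as the paper: dominate both models via Corollary \ref{freesemaffcor} ($iv$) applied to the identity on $A$, realize the inclusions as finite admissible blowups via Corollary \ref{freesemaffcor} ($ii$), pull the representation back (i.e., take the strict transform) along $\fX''\rightarrow\fX'$ using Corollary \ref{subdomunivcor} ($ii$), and compose with the blowup $\fX''\rightarrow\fX$. Your extra bookkeeping on affineness and on Lemma \ref{stricttrafocartlem} only makes explicit what the paper compresses into the phrase ``strict transform.''
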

\begin{proof}
Let $\fU'\rightarrow\fX'$ be a representation of $U$ as a semi-affinoid subdomain in $X$, let us write $X=\sSp A$, and let $\ul{A},\ul{A}'\subseteq A$ be the $R$-models of ff type of $A$ corresponding to $\fX$ and $\fX'$ respectively. By Corollary \ref{freesemaffcor} ($iv$) applied to the identity on $A$, there exists an $R$-model of ff type $\ul{A}''$ of $A$ containing both $\ul{A}$ and $\ul{A}'$. By Corollary \ref{freesemaffcor} ($ii$), the inclusions $\ul{A}\subseteq\ul{A}''$ and $\ul{A}'\subseteq\ul{A}''$ correspond to finite admissible blowups $\fX''\rightarrow\fX$ and $\fX''\rightarrow\fX'$. By Corollary \ref{subdomunivcor} ($ii$), the strict transform $\fU''\rightarrow\fX''$ of $\fU'\rightarrow\fX'$ under $\fX''\rightarrow\fX'$ represents $U$ as a semi-affinoid subdomain in $X$. Composing this representation with the admissible blowup $\fX''\rightarrow\fX$, we obtain a representation $\fU''\rightarrow\fX$ of $U$ as a semi-affinoid subdomain in $X$ with target $\fX$, as desired.
\end{proof}

\begin{remark}
One can easily show that if $U\subseteq X$ is a semi-affinoid subdomain and if $\fY\rightarrow\fX$ is a model of the inclusion of $U$ into $X$, then there exists a finite admissible blowup $\fY'$ of $\fY$ such that the composition $\fY'\rightarrow\fX$ represents $U$ as a semi-affinoid subdomain in $X$; this fact will not be needed in the following. 
\end{remark}

\begin{cor}\label{semaffsubdomtranscor}
Let $X$ be a semi-affinoid $K$-space.
\begin{enumerate}
\item Let $U\subseteq X$ be a semi-affinoid subdomain, and let $V$ be a subset of $U$. Then $V$ is a semi-affinoid subdomain in $U$ if and only if it is a semi-affinoid subdomain in $X$.
\item The set of semi-affinoid subdomain in $X$ is stable under the formation of finite intersections.
 \end{enumerate}
\end{cor}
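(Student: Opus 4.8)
The plan is to reduce both statements to the pullback/transitivity mechanism already established in Corollary \ref{subdomunivcor}, especially part ($ii$), together with the normalization of targets provided by Corollary \ref{semaffsubdomcor}. The two statements are independent, so I would handle them in turn.

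For statement ($i$), the nontrivial direction is that a semi-affinoid subdomain $V\subseteq U$ is also a semi-affinoid subdomain in $X$; the converse will follow by viewing the representing morphism for $X$ as one for $U$ via base change. So first I would pick a flat affine $R$-model of ff type $\fU$ for $U$ and a representation $\phi\colon\fV\rightarrow\fU$ of $V$ as a semi-affinoid subdomain in $U$, a finite composition of open immersions, completion morphisms and admissible blowups with $\fV$ affine. The key point is to produce such a $\fU$ that sits over a chosen affine model $\fX$ of $X$ via a representation of $U$ as a semi-affinoid subdomain in $X$. This is exactly what Corollary \ref{subdomunivcor} ($iii$) guarantees: since $U$ is representable by $\fU\rightarrow\fX$ for some representation, I may take $\fU$ to be the affine source of a representation $\psi\colon\fU\rightarrow\fX$ of $U$ in $X$. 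Then the composite $\psi\circ\phi\colon\fV\rightarrow\fX$ is again a finite composition of open immersions, completion morphisms and admissible blowups with affine source, and its image under $\urig$ is $V$; hence $V$ is a semi-affinoid subdomain in $X$. The only subtlety is matching the model of $U$ used by $\phi$ with the source of $\psi$: if they differ, I would invoke Corollary \ref{semaffsubdomcor} to re-represent $V$ over the given model $\fU$, using that any two $R$-models of ff type are dominated by a common one via finite admissible blowups (Corollary \ref{freesemaffcor} ($ii$), ($iv$)), and pull back along the resulting blowup using Corollary \ref{subdomunivcor} ($ii$).

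For statement ($ii$), let $U_1,U_2\subseteq X$ be semi-affinoid subdomains; I want $U_1\cap U_2$ to be a semi-affinoid subdomain. The natural approach is to realize the intersection as a preimage. By Corollary \ref{semaffsubdomcor} I may choose representations $\fU_i\rightarrow\fX$ with a common affine target $\fX$. Then $U_1$, viewed as a semi-affinoid $K$-space via Corollary \ref{subdomunivcor} ($iii$), comes with a morphism $U_1\rightarrow X$, and $U_1\cap U_2$ is precisely the preimage of $U_2$ under this morphism. By Corollary \ref{subdomunivcor} ($i$), this preimage is a semi-affinoid subdomain in $U_1$; by statement ($i$), already proved, it is then a semi-affinoid subdomain in $X$. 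Concretely, the fibered product $\fU_1\times'_\fX\fU_2$ in the category of flat formal $R$-schemes of locally ff type is affine (as noted after Lemma \ref{stricttrafocartlem}), and its projection to $\fX$ is a representation of $U_1\cap U_2$; Corollary \ref{subdomunivcor} ($ii$) identifies its $\urig$-image with the preimage of $U_2$ in $U_1$, which set-theoretically is $U_1\cap U_2$.

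The main obstacle in both parts is bookkeeping of models rather than any deep geometric input: the compositions defining semi-affinoid subdomains are taken relative to a chosen affine model, and I must ensure that the models can be aligned so that the composite of two representing chains is again a valid representing chain with affine source over the correct base. The crucial enabling facts are that open immersions and completion morphisms are flat and stable under the pullback $\times'$, that admissible blowups are stable under $\times'$ and compose to admissible blowups (Lemma \ref{stricttrafocartlem} and the remarks following it, together with \cite{temkin_desing} 2.1.6), and that $\times'$ preserves affineness and admissible blowups preserve affineness onto an affine base (\cite{egaiii} 3.4.2), exactly as used in the proof of Corollary \ref{subdomunivcor}. Once these stability properties are in hand, both statements are formal consequences of Corollaries \ref{subdomunivcor} and \ref{semaffsubdomcor}, and there is no genuine analytic difficulty to confront.
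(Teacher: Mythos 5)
Your proposal is correct and takes essentially the same route as the paper's own proof: the nontrivial direction of ($i$) is handled exactly as in the paper by using Corollary \ref{semaffsubdomcor} to re-target a representation of $V$ to the affine source $\fU$ of a representation $\fU\rightarrow\fX$ of $U$ and then composing, and ($ii$) is deduced, as in the paper, from Corollary \ref{subdomunivcor} ($i$) followed by part ($i$). Your additional explicit description of $U_1\cap U_2$ via the fibered product $\fU_1\times'_\fX\fU_2$ is a harmless concretization that the paper leaves implicit.
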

\begin{proof}
If $V$ is semi-affinoid in $X$, then $V=V\cap U$ is semi-affinoid in $U$ by  Corollary \ref{subdomunivcor} ($i$). Conversely, assume that $V$ is semi-affinoid in $U$, and let $\fU\rightarrow\fX$ be a representation of $U$ as a semi-affinoid subdomain in $X$. By Corollary \ref{semaffsubdomcor}, there exists a representation $\fV\rightarrow\fU$ of $V$ as a semi-affinoid subdomain in $U$; the composition $\fV\rightarrow\fU\rightarrow\fX$ represents $V$ as a semi-affinoid subdomain in $X$. This settles the first statement. To show $(ii)$, let us consider two semi-affinoid subdomains $U$ and $V$ in $X$. By Corollary \ref{subdomunivcor} ($i$), $U\cap V$ is a semi-affinoid subdomain in $U$; by part ($i$), $U\cap V$ is thus a semi-affinoid subdomain in $X$.
\end{proof}

These results obviously remain true if we only consider retrocompact semi-affinoid subdomains instead of general semi-affinoid subdomains. Similarly, elementary semi-affinoid subdomains are preserved under pullback with respect to morphisms of semi-affinoid spaces. However, if $U$ is an elementary semi-affinoid subdomain in a semi-affinoid $K$-space $X$ and if $V$  is an an elementary semi-affinoid subdomain in $U$, then $V$ needs not be elementary in $X$. Likewise, if $U$ is a semi-affinoid subdomain in $X$ and if $V$ is a retrocompact semi-affinoid subdomain in $U$, then $V$ needs not be retrocompact in $X$.

We conclude this section by identifying retrocompact semi-affinoid subdomains in affinoid $K$-spaces:

\begin{lem}\label{veryspecialinafflem}
Let $A$ be an affinoid $K$-algebra; then a retrocompact semi-affinoid subdomain $U$ in $\sSp A$ is an affinoid subdomain in $\Sp A$.
\end{lem}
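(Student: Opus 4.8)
The plan is to use retrocompactness to rule out completion morphisms, so that the chosen formal model of $U$ remains of tf type and hence has affinoid rigid generic fiber; the universal property of $U$ as an affinoid subdomain will then be deduced directly from the universal property of $U$ as a semi-affinoid subdomain.

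First I would fix a representation $\phi\colon\fU\rightarrow\fX$ of $U$ as a retrocompact semi-affinoid subdomain, so that $\phi$ is a finite composition of open immersions and admissible blowups, $\fU$ is affine and $\fX$ is a flat affine $R$-model of ff type for $A$. Since $A$ is affinoid, Corollary \ref{afflattfintypecor} shows that $\fX$ is of tf type. Because being of locally tf type is preserved under open immersions and under admissible blowups, the affine formal scheme $\fU$ is again of tf type; hence $A':=\Gamma(\fU,\O_\fU)\otimes_RK$ is an affinoid $K$-algebra and $\fU^\rig=\Sp A'$. Passing to rigid generic fibers, $\phi^\rig\colon\Sp A'\rightarrow\Sp A=\fX^\rig$ is a composition of retrocompact open immersions and isomorphisms, and, under the canonical identification $|\sSp A|=|\Sp A|$, its image is precisely $U$.

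It then remains to check the universal property defining affinoid subdomains, cf.\ \cite{bgr} 7.2.2/2: every morphism of affinoid $K$-spaces $\Sp B\rightarrow\Sp A$ with image contained in $U$ factors uniquely through $\phi^\rig$. The decisive point is that affinoid $K$-algebras are in particular semi-affinoid and that, in both categories, morphisms are nothing but $K$-algebra homomorphisms; thus $\Sp B\rightarrow\Sp A$ is the underlying rigid morphism of a morphism $\sSp B\rightarrow\sSp A$ of semi-affinoid $K$-spaces whose image lies in $U$. By Corollary \ref{subdomunivcor} ($iii$) this morphism factors uniquely through $\phi^\urig$, that is, through the $K$-algebra homomorphism $A\rightarrow A'$. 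As $A'$ and $B$ are affinoid, the resulting factorization is itself a morphism of affinoid $K$-spaces $\Sp B\rightarrow\Sp A'$, and composing it with $\phi^\rig$ recovers the original morphism, since $\phi^\rig$ and $\phi^\urig$ both correspond to $A\rightarrow A'$. Uniqueness transports along the same identification of Hom-sets.

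The \emph{crux}, and the only real obstacle, is establishing that $\fU$ is of tf type: this is exactly where retrocompactness is indispensable, for a completion morphism would destroy tf type (replacing, say, $R\langle S\rangle$ by $R[[S]]$) and produce a non-affinoid, typically non-quasi-compact admissible open in $\Sp A$. Once $\fU^\rig$ is known to be affinoid, the remainder is a transparent translation between the affinoid and semi-affinoid universal properties, made possible by the coincidence of the two relevant categories of morphisms.
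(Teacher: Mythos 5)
Your proof is correct and follows essentially the same route as the paper: the paper likewise invokes Corollary \ref{afflattfintypecor} to see that $\fX$ is of tf type, observes that $\phi$ is adic (your preservation of tf type under open immersions and admissible blowups is the same point, with completion morphisms excluded by retrocompactness) so that $\phi^\rig$ is a morphism of affinoid $K$-spaces, and then deduces the universal property from Corollary \ref{subdomunivcor} ($iii$) via the coincidence of morphisms in the affinoid and semi-affinoid categories as $K$-algebra homomorphisms. Your write-up merely spells out in more detail the transfer of the universal property between the two categories, which the paper compresses into one sentence.
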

\begin{proof}
Let $\phi\colon\fY\rightarrow \fX$ be a morphism defining $U$ as a retrocompact semi-affinoid subdomain in $X$. By Corollary \ref{afflattfintypecor}, $\fX$ is of tf type over $R$. Since $\phi$ is adic, $\fY$ is of tf type over $R$ as well, such that $\phi^\rig$ is a morphism of affinoid $K$-spaces. By  Corollary \ref{subdomunivcor} ($iii$), $\phi$ represents all semi-affinoid maps with image in $U$; in particular it represents all affinoid maps with image in $U$. Hence, $U$ is an affinoid subdomain in $\Sp A$.
\end{proof}

Conversely, it is clear that for any affinoid $K$-algebra $A$, the rational subdomains in $\Sp A$ define semi-affinoid subdomains in $\sSp A$. Let $U\subseteq\Sp A$ be a general affinoid subdomain in $\Sp A$. By the Theorem of Gerritzen and Grauert (\cite{bgr} 7.3.5/1), $U$ is a finite union of rational subdomains. Let $\fX$ be any affine flat formal $R$-model of tf type for $\Sp A$. By \cite{frg1} Lemma 4.4, there exist an admissible formal blowup $\fX'\rightarrow\fX$ of $\fX$ and an open formal subscheme $\fU\subseteq\fX'$ such that $U=\fU^\rig$. However, we do not know whether $\fU$ is affine, so we do not know whether a general affinoid subdomain $U$ in $\Sp A$ is a semi-affinoid subdomain or even a representable subset in $\sSp A$. Nonetheless, we will see that affinoid subdomains in $\Sp A$ are admissible open in the uniformly rigid G-topology on $\sSp A$, cf.\ Proposition \ref{retroadmprop}.

\subsubsection{G-topologies on semi-affinoid spaces}

We first define an auxiliary G-to\-po\-lo\-gy $\sT_\aux$ on the category of semi-affinoid $K$-spaces equipped with the physical points functor, cf.\ \cite{bgr} 9.1.2. The $\sT_\aux$-admissible subsets of a semi-affinoid $K$-space are the semi-affinoid subdomains of that space. If $I$ is a rooted tree and if $i\in I$ is a vertex, we let $\children(i)$ denote the set of children of $i$.
\begin{defi}\label{maintauxdefi}
Let $X$ be a semi-affinoid $K$-space, and let $(X_i)_{i\in I}$ be a finite family of semi-affinoid subdomains in $X$.
\begin{enumerate}
\item We say that $(X_i)_{i\in I}$ is an elementary covering of $X$ if there exist an affine flat $R$-model of ff type $\fX$ for $X$, an admissible blowup $\fX'\rightarrow\fX$ and an affine open covering $(\fX_i)_{i\in I}$ of $\fX'$ such that for each $i\in I$, $\fX_i\subseteq\fX'\rightarrow\fX$ represents $X_i$ as a semi-affinoid subdomain in $X$.
\item We say that $(X_i)_{i\in I}$ is a treelike covering of $X$ if there exists a rooted tree structure on $I$ such that $X_r=X$, where $r$ is the root of $I$, and such that $(X_j)_{j\in\children(i)}$ is an elementary covering of $X_i$ for all $i\in I$ which are not leaves. A rooted tree structure on $I$ with these properties is called suitable for $(X_i)_{i\in I}$.
\item We say that $(X_i)_{i\in I}$ is a leaflike covering if it extends to a treelike covering $(X_i)_{i\in J}$, $J\supseteq I$, where $J$ admits a suitable rooted tree structure such that $I$ is identified with the set of leaves of $J$.
\item We say that $(X_i)_{i\in I}$ is $\sT_\aux$-admissible if it admits a leaflike refinement.
\end{enumerate}
\end{defi}

If $(X_i)_{i\in I}$ is an elementary, treelike or leaflike covering of $X$, then by definition all $X_i$ are retrocompact in $X$. For trivial reasons, condition ($iv$) in Definition \ref{maintauxdefi} can  be checked after refinement.

Arguing as in the proof of Corollary \ref{semaffsubdomcor}, we see that an elementary covering can be represented with respect to any flat affine $R$-model of ff type $\fX$ of $X$. It follows that any treelike covering $(X_i)_{i\in I}$ together with a suitable rooted tree structure on $I$ admits a model; that is, we have
\begin{enumerate}
\item for each $i\in I$, an affine flat $R$-model of ff type $\fX_i$ for $X_i$,
\item for each inner $i\in I$, an admissible blowup $\fX_i'\rightarrow\fX_i$ and
\item for each inner $i\in I$ and for each child $j$ of $i$, an open immersion  $\fX_j\hookrightarrow\fX_i'$
such that $\fX_j\subseteq\fX_i'\rightarrow\fX_i$ represents $X_j$ as a semi-affinoid subdomain in $X_i$.
\end{enumerate}

Arguing as in the proof of Corollary \ref{subdomunivcor}, we see that elementary, treelike and leaflike coverings, suitable rooted tree structures and models in the above sense are preserved under pullback with respect to morphisms $Y\rightarrow X$ of semi-affinoid $K$-spaces and their models $\fY\rightarrow\fX$, where $\fY$ and $\fX$ are flat affine models of ff type for $Y$ and $X$ respectively.

\begin{lem}\label{tauxtranslem}
Let $X$ be a semi-affinoid $K$-space, let $(U_i)_{i\in I}$ be a covering of $X$ by semi-affinoid subdomains, and for each $i\in I$, let $(V_{ij})_{j\in J_i}$ be a covering of $U_i$. If all of these coverings are leaflike or $\sT_\aux$-admissible, then the same holds for the covering $(V_{ij})_{i\in I,j\in J_i}$ of $X$.
\end{lem}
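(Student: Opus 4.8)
The plan is to prove the two assertions separately, handling leaflike coverings first and then reducing the $\sT_\aux$-admissible case to the leaflike one. For the leaflike case the essential idea is to \emph{graft} rooted trees at their leaves. Suppose $(U_i)_{i\in I}$ extends to a treelike covering of $X$ indexed by a rooted tree $J$ whose leaf set is identified with $I$, and that for each $i\in I$ the covering $(V_{ij})_{j\in J_i}$ of $U_i$ extends to a treelike covering indexed by a rooted tree $K_i$ whose leaf set is identified with $J_i$. Since the root of $K_i$ and the leaf $i$ of $J$ both represent $U_i$, I form a new rooted tree $\tilde{J}$ by gluing, for each $i\in I$, the tree $K_i$ to $J$ along the identification of the root of $K_i$ with the leaf $i$ of $J$; concretely, the children of the root of $K_i$ become the children of the vertex $i$ in $\tilde{J}$, and the root of $\tilde{J}$ is the root of $J$.

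It remains to check that the family indexed by $\tilde{J}$ is a treelike covering of $X$ whose leaf set is $\{(i,j)\mid i\in I,\ j\in J_i\}$. The root of $\tilde{J}$ represents $X$ by construction, so the point is the elementary-covering condition of Definition \ref{maintauxdefi} ($i$) at every inner vertex. For vertices inherited from $J$ this is the treelike structure of $J$; for vertices inherited from the interior of some $K_i$ it is the treelike structure of $K_i$; and at each junction vertex $i$ the children are exactly the children of the root of $K_i$ in $K_i$, whose associated subdomains form an elementary covering of $U_i$ because $K_i$ is treelike. The leaves of $\tilde{J}$ are precisely the leaves of the various $K_i$, i.e.\ the $V_{ij}$, so the composite family $(V_{ij})$ is leaflike. (If some $K_i$ is a single vertex, then $J_i$ is a singleton with $V_{ij}=U_i$, and the leaf $i$ of $J$ simply survives as a leaf of $\tilde{J}$; no condition is imposed there.)

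For the $\sT_\aux$-admissible case I reduce to the leaflike case just proved. Choose a leaflike refinement $(U'_a)_{a\in A}$ of $(U_i)_{i\in I}$, with refinement map $\iota\colon A\to I$ and $U'_a\subseteq U_{\iota(a)}$, and for each $i$ a leaflike refinement $(V'_{ib})_{b\in B_i}$ of $(V_{ij})_{j\in J_i}$, with refinement map $\beta_i\colon B_i\to J_i$ and $V'_{ib}\subseteq V_{i,\beta_i(b)}$. By Corollary \ref{semaffsubdomtranscor} ($i$), $U'_a$ is a semi-affinoid subdomain of $U_{\iota(a)}$, so there is an inclusion morphism $U'_a\hookrightarrow U_{\iota(a)}$; pulling back the leaflike covering $(V'_{\iota(a),b})_b$ along it yields, by the stability of leaflike coverings under pullback recorded just before this lemma, a leaflike covering $(V'_{\iota(a),b}\cap U'_a)_b$ of $U'_a$. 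Since $(U'_a)_a$ is leaflike and each $(V'_{\iota(a),b}\cap U'_a)_b$ is leaflike, the leaflike case applied to $X$ shows that the composite family $(V'_{\iota(a),b}\cap U'_a)_{a,b}$ is a leaflike covering of $X$. As $V'_{\iota(a),b}\cap U'_a\subseteq V_{\iota(a),\beta_{\iota(a)}(b)}$, this family refines $(V_{ij})$, which is therefore $\sT_\aux$-admissible.

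The conceptual content lies entirely in the leaflike grafting; once leaflike transitivity is available, the admissible case is a formal consequence of pullback-stability and Corollary \ref{subdomunivcor} ($i$). The step requiring the most care is the verification of the elementary-covering condition at the junction vertices of $\tilde{J}$: one must confirm that the ambient space seen from $J$ (the leaf $i$, representing $U_i\subseteq X$) coincides with the ambient space seen from $K_i$ (its root, representing $U_i$), so that the elementary covering of $U_i$ supplied by $K_i$ may legitimately be inserted below $i$. This compatibility is precisely what Corollary \ref{semaffsubdomtranscor} ($i$) guarantees, and I expect no further obstacle, since suitable rooted tree structures and their models are already known to be preserved under the relevant pullbacks.
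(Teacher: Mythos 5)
Your proof is correct and follows essentially the same route as the paper: the leaflike case is handled by exactly the same grafting of the rooted trees $J_i'$ (your $K_i$) onto the leaves of $I'$ (your $J$), with suitability checked locally at the three kinds of vertices, and the transitivity of subdomains from Corollary \ref{semaffsubdomtranscor} ($i$) supplying that all members are subdomains of $X$. Your treatment of the $\sT_\aux$-admissible case merely spells out, via pullback-stability of leaflike coverings, what the paper compresses into the single sentence ``follows by passing to leaflike refinements.''
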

\begin{proof}
Let us first consider the case where the given coverings are leaflike.
 Let us choose a treelike covering $(U_i)_{i\in I'}$  of $U$ extending $(U_i)_{i\in I}$ together with a suitable rooted tree structure on $I'$ such that $I\subseteq I'$ is the set of leaves. Similarly, for each $i\in I$ we choose a treelike covering $(V_{ij})_{j\in J_i'}$ extending $(V_{ij})_{j\in J_i}$ together with a suitable rooted tree structure on $J_i'$ such that $J_i\subseteq J_i'$ is identified with the set of leaves for all $i\in I$. For each $i\in I$, we glue the rooted tree $J_i'$ to the rooted tree $I'$ by identifying the root of $J_i'$ with the leaf $i$ of $I'$. We obtain a rooted tree $J'$ whose set of leaves is identified with the disjoint union of the sets $J_i$, $i\in I$. For each $i\in I$, $U_i=V_{i r_i}$, where $r_i$ is the root of $J_i'$; hence we obtain a covering $(V_j)_{j\in J'}$ such that the given rooted tree structure on $J'$ is suitable for $(V_j)_{j\in J'}$; indeed, this can be checked locally on the rooted tree $J'$. We conclude that the composite covering $(V_{ij})_{i\in I,j\in J_i}$ of $X$ is leaflike. The statement for $\sT_\aux$-admissible coverings now follows by passing to leaflike refinements.
\end{proof}

Combining Lemma \ref{tauxtranslem} and the fact that $\sT_\aux$-admissible coverings are stable under pullback, we see that the semi-affinoid subdomains and the $\sT_\aux$-admissible coverings define a G-topology on the category of semi-affinoid $K$-spaces equipped with the physical points functor. The following proposition suggests that $\sT_\aux$ should be viewed as an analog of the weak G-topology in rigid geometry. We first define:

\begin{defi}
A retrocompact covering of a semi-affinoid $K$-space $X$ is a finite family of retrocompact semi-affinoid subdomains of $X$ that covers $X$ on the level of physical points.
\end{defi}

If $I$ is a rooted tree, we write $\leaves(I)$ to denote the set of leaves of that tree, and we write $v(I)$ denote the volume of the tree, that is, its number of vertices.

\begin{prop}\label{retroadmprop}
Retrocompact coverings of semi-affinoid spaces are $\sT_\aux$-admissible.
\end{prop}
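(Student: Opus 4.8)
The plan is to induct on the maximal number $H$ of admissible blowups occurring in representations of the $U_i$. First I would fix a flat affine $R$-model of ff type $\fX$ for $X$ and, using Corollary \ref{semaffsubdomcor}, represent each $U_i$ as a retrocompact semi-affinoid subdomain with target $\fX$, that is, by a chain of open immersions and admissible blowups with affine source; let $H_i$ denote the number of blowups in this chain and set $H=\max_i H_i$. In the base case $H=0$ every $U_i$ is the generic fibre of an affine open of $\fX$, and since the $U_i$ cover $X$ and closed points lie very dense in $\fX$ (so that the surjective-specialisation argument used in the proof of Corollary \ref{subdomunivcor} applies), these affine opens cover $\fX$; this already exhibits $(U_i)$ as an elementary, hence leaflike, covering.

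For the inductive step I would first arrange, for each $i$ with $H_i\ge 1$, that the bottom-most map of the chain (the one with target $\fX$) is an admissible blowup $\fX_i'\to\fX$. This is possible because the coherent $\pi$-adically open ideal defining the first blowup on the relevant open of $\fX$ extends to such an ideal on the noetherian formal scheme $\fX$, and blowing up commutes with restriction to that open; here one must be slightly careful in view of Remark \ref{nestedrequrem}, which forbids collapsing the \emph{entire} nested chain but not pulling a single blowup down to the base. I would then dominate the finitely many blowups $\fX_i'\to\fX$ by one admissible blowup $\fX'\to\fX$, choose any finite affine open covering $(\fU_\alpha)$ of $\fX'$, and set $U_\alpha:=\fU_\alpha^\urig$. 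By construction $(U_\alpha)$ is an elementary covering of $X$, covering $X$ by the same very-dense specialisation argument; this will be the family of children of the root in the tree under construction.

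The heart of the matter is that inside each chart the restricted covering has strictly smaller height. Fix $\alpha$ and $i$. Since $\fX'\to\fX$ factors through every $\fX_i'$, the structural map $\fU_\alpha\to\fX$ factors through $\fX_i'$, so the strict transform of $\fX_i'\to\fX$ along $\fU_\alpha\to\fX$ admits a section; by Lemma \ref{stricttrafocartlem} together with the faithfulness of $\rig$ on flat formal $R$-schemes, an admissible blowup with a section is an isomorphism, whence this strict transform is an isomorphism. Pulling back the representation of $U_i$ along $\fU_\alpha\to\fX$ via Corollary \ref{subdomunivcor} ($ii$) therefore trivialises its bottom blowup and, since the flat fibred product of affine objects is again affine, yields a representation of $U_i\cap U_\alpha$ as a retrocompact semi-affinoid subdomain of $U_\alpha$ with affine source and at most $H_i-1\le H-1$ blowups. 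Thus $(U_i\cap U_\alpha)_i$ is a retrocompact covering of $U_\alpha$ of height at most $H-1$, to which the inductive hypothesis applies.

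Finally I would glue. The induction supplies, for each chart $U_\alpha$, a leaflike refinement of $(U_i\cap U_\alpha)_i$ whose leaves each lie in some $U_i$; grafting these trees onto the elementary covering $(U_\alpha)$ and invoking Lemma \ref{tauxtranslem} produces a single leaflike covering of $X$ refining $(U_i)$, which is exactly the assertion that $(U_i)$ is $\sT_\aux$-admissible. The step I expect to be the main obstacle is the strict decrease of height: everything rests on being able to pull one blowup down to the affine base $\fX$ against the grain of Remark \ref{nestedrequrem}, and on the trivialisation of that blowup after restriction to the charts, which is what makes the complexity drop and forces the recursion to terminate.
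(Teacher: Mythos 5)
Your overall architecture --- dominate the bottom-level blowups by a single admissible blowup, cover it by affine charts, trivialize strict transforms on the charts, induct on a complexity measure, and graft the resulting trees via Lemma \ref{tauxtranslem} --- is essentially the paper's, but your inductive step rests on a false normalization. You claim that the first blowup of each chain, which a priori is an admissible blowup of some \emph{open} formal subscheme $\fV_i\subseteq\fX$, can be pulled down to the base by extending its admissible ideal from $\fV_i$ to $\fX$. In the ff type setting this is precisely what fails: Remark \ref{nestedrequrem} (citing \cite{mythesis} Example 1.1.3.12) says that an admissible blowup of an open formal subscheme of a flat formal $R$-scheme of ff type need not extend to an admissible blowup of the ambient space; the obstruction is that coherent ideals on opens of formal schemes cannot in general be extended, for convergence reasons (compare the discussion of Kiehl's theorem in Section \ref{cohmodsec}, where exactly this extension problem is identified as the failure point of L\"utkebohmert's argument). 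So your parenthetical reading of the remark is backwards: ``pulling a single blowup down to the base'' is exactly what it forbids, and if that extension were available one could flatten every nested chain, making the treelike formalism of Definition \ref{maintauxdefi} unnecessary --- a strong sign the step cannot work. Note also that the cheap fix of inserting an identity blowup at the bottom does not help you: then the strict transform of the bottom map is trivially an isomorphism, but the genuinely nested first blowup of chain $i$ still lives over an open of $\fX$, your charts $\fU_\alpha$ do not factor through it, and the height $H$ fails to drop, so your induction stalls.

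The paper circumvents this with weaker but sufficient bookkeeping. It writes each chain in alternating form $\phi_i=\beta_{i1}\circ\psi_{i1}\circ\cdots\circ\beta_{in_i}\circ\psi_{in_i}$ with $\beta_{i1}$ a (possibly trivial) blowup of $\fX$ itself --- which costs nothing, since identities may be inserted --- dominates the finitely many $\beta_{i1}$ by one blowup $\fX'\rightarrow\fX$, and takes as chart adapted to the $i$th chain the preimage $\fU_i\subseteq\fX'$ of $\fX_{i1}$. Over $\fU_i$ only the chain of $X_i$ is guaranteed to lose its bottom pair $(\beta_{i1},\psi_{i1})$; the chains of the $X_j$ for $j\neq i$ are pulled back whole. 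Accordingly the induction runs not on the maximal number of blowups per chain but on the \emph{total} length $v=\sum_i n_i$, which drops by exactly one per step. Your base case, the strict-transform trivialization mechanism (via Lemma \ref{stricttrafocartlem} and faithfulness of $\rig$), the affineness of flat fibered products, and the final grafting step are all sound; if you replace the simultaneous height induction by this per-chain total-length induction with per-chain charts, you recover the paper's proof.
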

\begin{proof}
Let $X$ be a semi-affinoid $K$-space, and let $(X_i)_{i\in I}$ be a finite family of retrocompact semi-affinoid subdomains in $X$ covering $X$ on the level of sets; we have to show that $(X_i)_{i\in I}$ is $\sT_\aux$-admissible. For each $i\in I$, we choose a retrocompact representation $\phi_i$ of $X_i$ in $X$, such that the targets of the $\phi_i$ all coincide with a fixed flat affine target $\fX$. For each $i\in I$, we choose a factorization
\[
\phi_i\,=\,\beta_{i1}\circ\psi_{i1}\circ\cdots\circ\beta_{in_i}\circ\psi_{in_i}\;,
\]
where the $\psi_{ij}$ are open immersions and the $\beta_{ij}$ are admissible blowups,
\[
\fX_{ij}\overset{\psi_{ij}}{\hookrightarrow}\fX'_{ij}\overset{\beta_{ij}}\rightarrow\fX_{i,j-1}\;,
\]
with $\fX_{i0}=\fX$. Let $v$ denote the sum of the $n_i$; we say that $v$ is the total length of the given retrocompact representation. Let $\fX'\rightarrow\fX$ be an admissible blowup dominating all $\beta_{i1}:\fX'_{i1}\rightarrow\fX$, and let $\fU_{i}\subseteq\fX'$ denote the preimage of $\fX_{i1}\subseteq\fX'_{i1}$. The $\fX_{i1}^\rig$ cover $\fX^\rig$, the specialization map $\sp_{\fX'}$ is surjective onto the closed points of $\fX'$, and the closed points in $\fX'$ lie very dense; hence $\fX'$ is covered by the $\fU_i$. For each $i\in I$, we consider the pullback $\psi_i'$ of
\[
\beta_{i2}\circ\psi_{i2}\circ\cdots\circ\beta_{in_i}\circ\psi_{in_i}
\]
under $\fU_i\subseteq\fX'\rightarrow\fX_{i1}'$, and moreover for each $j\in I$ different from $i$ we consider the pullback $\phi_{ij}'$ of
\[
\phi_j\,=\,\beta_{j1}\circ\psi_{j1}\circ\cdots\circ\beta_{jn_j}\circ\psi_{jn_j}
\]
under $\fU_i\subseteq\fX'\rightarrow\fX$, both in the category of flat formal $R$-schemes of ff type. For each $i\in I$, we choose a finite affine covering of $\fU_i$. For each constituent $\fV_{is}$ of this covering with semi-affinoid generic fiber $V_{is}$, we choose finite affine coverings of $(\psi'_i)^{-1}(\fV_{is})$ and of $(\phi'_{ij})^{-1}(\fV_{is})$, for $j\in I\setminus\{i\}$. We obtain a retrocompact covering of $V_{is}$, together with retrocompact representations as above of total length $v-1$. If we let $i$ and $s$ vary, the resulting retrocompact covering of $X$ refines $(X_i)_{i\in I}$. Since the $V_{is}$, for varying $i$ and $s$, form an elementary covering of $X$, it suffices to see that the given retrocompact covering of $V_{is}$ is $\sT_\aux$-admissible, which now follows by induction on $v$; the case $v=1$ is trivial.
\end{proof}

\begin{defi}Let $\sT_\urig$ denote the finest G-topology on the category of semi-affinoid $K$-spaces which is slightly finer than $\sT_\aux$ in the sense of \cite{bgr} 9.1.2/1.
\end{defi}

The G-topology $\sT_\urig$ is called the uniformly rigid G-topology. It exists by \cite{bgr} 9.2.1/2, and it is saturated in the sense that it satisfies conditions (G$_0$)--(G$_2$) in \cite{bgr} 9.1.2, saying that $\sT_\urig$-admissibility of subsets can be checked locally with respect to $\sT_\urig$-admissible coverings and that admissibility of a covering by $\sT_\urig$-admissible subsets can be checked after refinement.

As a corollary of [BGR] 9.1.2/3, we obtain the following explicit description of the uniformly rigid G-topology on a semi-affinoid $K$-space:

\begin{prop}\label{explicitprop}
Let $X$ be a semi-affinoid $K$-space.
\begin{enumerate}
\item A subset $U\subseteq X$ is $\sT_\srig$-admissible if and only if it admits a covering $(U_i)_{i\in I}$ by semi-affinoid subdomains such that for any morphism $\phi\colon Y\rightarrow X$ of semi-affinoid $K$-spaces with $\phi(Y)\subseteq U$, the induced covering of $Y$ has a leaflike refinement.
\item A covering $(U_i)_{i\in I}$ of a $\sT_\srig$-admissible subset $U$ in $X$ by $\sT_\srig$-admissible subsets is $\sT_\srig$-admissible if and only if for any morphism $\phi\colon Y\rightarrow X$ of semi-affinoid $K$-spaces with $\phi(Y)\subseteq U$, the induced covering of $Y$ has a leaflike refinement.
\end{enumerate}
\end{prop}

\begin{cor}\label{srigpropcor}
Let $X$ be a semi-affinoid $K$-space. 
\begin{enumerate}
\item For any semi-affinoid subdomain $U$ of $X$, the uniformly rigid G-topology on $X$ restricts to the uniformly rigid G-topology on $U$.
\item If $U\subseteq X$ is a finite union of retrocompact semi-affinoid subdomains in $X$, then $U$ is $\sT_\urig$-admissible, and every finite covering of $U$ by retrocompact semi-affinoid subdomains in $X$ is $\sT_\urig$-admissible.
\end{enumerate}
\end{cor}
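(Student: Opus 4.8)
The plan is to reduce both assertions to the explicit description of $\sT_\urig$ furnished by Proposition \ref{explicitprop}, feeding into it the fact from Proposition \ref{retroadmprop} that retrocompact coverings are $\sT_\aux$-admissible, i.e.\ leaflike-refinable. The two auxiliary inputs I will repeatedly invoke are the transitivity of semi-affinoid subdomains (Corollary \ref{semaffsubdomtranscor} ($i$)) and the pullback/representability package for semi-affinoid subdomains (Corollary \ref{subdomunivcor}), together with their retrocompact analogs noted after Corollary \ref{semaffsubdomtranscor}.

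For part ($i$), I would fix a subset $V\subseteq U$ and compare the criterion of Proposition \ref{explicitprop} ($i$) taken relative to $X$ with the same criterion taken relative to $U$. On the one hand, by Corollary \ref{semaffsubdomtranscor} ($i$) a covering of $V$ by semi-affinoid subdomains of $U$ is the same datum as a covering of $V$ by semi-affinoid subdomains of $X$, since every $V_i\subseteq U$ is a semi-affinoid subdomain in $X$ if and only if it is one in $U$. On the other hand, because $U$ is representable (Corollary \ref{subdomunivcor} ($iii$)), a morphism $Y\to U$ with image in $V$ is the same datum as a morphism $Y\to X$ with image in $V\subseteq U$; and under this identification the induced coverings of $Y$ coincide, because the $V_i$ lie in $U$ and hence their preimages along $Y\to X$ and along $Y\to U$ agree (Corollary \ref{subdomunivcor} ($i$)). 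Consequently the leaflike-refinement conditions on $Y$ are verbatim the same, so $V$ is $\sT_\urig$-admissible in $X$ exactly when it is in $U$. The identical comparison applied to Proposition \ref{explicitprop} ($ii$) shows that a covering of such a $V$ is $\sT_\urig$-admissible in $X$ if and only if it is in $U$; together these give that $\sT_\urig$ on $X$ restricts to $\sT_\urig$ on $U$.

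For part ($ii$), write $U=\bigcup_{i}X_i$ with the $X_i$ retrocompact semi-affinoid subdomains in $X$. To see that $U$ is $\sT_\urig$-admissible I would apply Proposition \ref{explicitprop} ($i$) to the covering $(X_i)$: for any morphism $\phi\colon Y\to X$ with $\phi(Y)\subseteq U$, the preimages $\phi^{-1}(X_i)$ are retrocompact semi-affinoid subdomains in $Y$ by the retrocompact version of Corollary \ref{subdomunivcor} ($i$), and they cover $Y$ since $\phi(Y)\subseteq\bigcup X_i$; hence $(\phi^{-1}(X_i))$ is a retrocompact covering of $Y$, which is $\sT_\aux$-admissible by Proposition \ref{retroadmprop} and therefore leaflike-refinable. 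This verifies the criterion, so $U$ is $\sT_\urig$-admissible. For a finite covering $(U_j)$ of $U$ by retrocompact semi-affinoid subdomains in $X$, each $U_j$ is in particular a semi-affinoid subdomain and thus $\sT_\urig$-admissible, and $U$ is $\sT_\urig$-admissible by what was just shown, so Proposition \ref{explicitprop} ($ii$) applies; for any $\phi\colon Y\to X$ with image in $U$ the preimages $\phi^{-1}(U_j)$ again form a retrocompact covering of $Y$, leaflike-refinable by Proposition \ref{retroadmprop}. Hence $(U_j)$ is $\sT_\urig$-admissible.

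I expect the only genuinely delicate point to be the matching of test objects and induced coverings in part ($i$): one must know that a morphism into the subdomain $U$ carrying the prescribed image is exactly the same data as a morphism into $X$ with that image, and that pulling back the $V_i$ gives the same covering in either picture. This is precisely what representability (Corollary \ref{subdomunivcor} ($iii$)) and the pullback compatibility (Corollary \ref{subdomunivcor} ($i$)) supply; once these are in hand, both parts are direct instantiations of Propositions \ref{explicitprop} and \ref{retroadmprop}, with no further obstacle.
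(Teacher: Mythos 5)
Your proof is correct and follows essentially the same route as the paper's: part ($i$) via the dictionary between subdomains of $U$ and subdomains of $X$ contained in $U$ (Corollary \ref{semaffsubdomtranscor} ($i$)) together with representability (Corollary \ref{subdomunivcor} ($iii$)) fed into Proposition \ref{explicitprop}, and part ($ii$) via pulled-back retrocompact coverings and Proposition \ref{retroadmprop}. Your more explicit matching of test morphisms and induced coverings in part ($i$) merely spells out what the paper compresses into a single sentence.
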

\begin{proof}
By Corollary \ref{semaffsubdomtranscor} ($i$), the semi-affinoid subdomains in $U$ are the semi-affinoid subdomains in $X$ contained in $U$, and by Corollary \ref{subdomunivcor} ($iii$) the semi-affinoid morphisms to $X$ with image in $U$ correspond to the semi-affinoid morphisms to $U$. Hence, statement ($i$) follows from Proposition \ref{explicitprop} ($i$) and ($ii$). 

To prove the second statement, let $(U_i)_{i\in I}$ be a finite family of retrocompact semi-affinoid subdomains of $X$ such that $U$ is the union of the $U_i$. Let $Y$ be any semi-affinoid $K$-space, and let $\phi\colon Y\rightarrow X$ be any semi-affinoid morphism whose image is lies in $U$. Then $(\phi^{-1}(U_i))_{i\in I}$ is a retrocompact covering of $Y$; by Propostion \ref{retroadmprop}, it admits a leaflike refinement. By Proposition \ref{explicitprop} ($i$), we conclude that $U$ is a $\sT_\srig$-admissible subset of $X$, and by Proposition \ref{explicitprop} ($ii$) we see that the covering $(U_i)_{i\in I}$ of $U$ is $\sT_\srig$-admissible.
\end{proof}

In particular, Corollary \ref{srigpropcor} ($ii$) and the theorem of Gerritzen and Grauert \cite{bgr} 7.3.5/1 show that if $A$ is an affinoid $K$-algebra and if $U\subseteq\Sp A$ is an affinoid subdomain, then $U\subseteq\sSp A$ is $\sT_\urig$-admissible.

\begin{remark}[quasi-compactness]\label{qcrem} Proposition \ref{explicitprop} ($ii$) shows that semi-affinoid $K$-spaces are quasi-compact in $\sT_\urig$, cf.\ \cite{bgr} p.\ 337. By the maximum principle for affinoid $K$-algebras, it follows that $\sSp (R[[S]]\otimes_RK)$ has no $\sT_\urig$-admissible covering by semi-affinoid subdomains whose rings of functions are affinoid. In particular, the covering of $\sSp (R[[S]]\otimes_RK)$ provided by Berthelot's construction is not $\sT_\urig$-admissible. 
\end{remark}

\begin{remark}[bases for $\sT_\urig$]
Proposition \ref{explicitprop} implies that the semi-affinoid subdomains form a basis for the uniformly rigid G-topology on a semi-affinoid $K$-space, cf.\ \cite{bgr} p.\ 337. The retrocompact semi-affinoid subdomains in $\sSp (K\langle S\rangle)$ do not form a basis for $\sT_\urig$: Indeed, $\sSp (R[[S]]\otimes_RK)$ is a semi-affinoid subdomain in $\sSp (K\langle S\rangle)$; by Lemma \ref{veryspecialinafflem} and Remark \ref{qcrem}, it does not admit a $\sT_\urig$-admissible covering by retrocompact semi-affinoid subdomains in $\sSp (K\langle S\rangle)$. Thus, even though the $K$-algebra $K\langle S\rangle$ is affinoid, the uniformly rigid G-topology on $\sSp (K\langle S\rangle)$ turns out to be strictly coarser than the rigid G-topology on $\Sp (K\langle S\rangle)$. We do not know whether this discrepancy already appears on the level of admissible subsets.
\end{remark}

We conclude our discussion of the uniformly rigid G-topology $\sT_\urig$ by showing that it is finer than the Zariski topology $\sT_\Zar$ which, on a semi-affinoid $K$-space $X$, is generated by the non-vanishing loci $D(f)$ of semi-affinoid functions $f$ on $X$:

\begin{prop}
The uniformly rigid G-topology $\sT_\urig$ is finer than the Zariski topology $\sT_\rig$.
\end{prop}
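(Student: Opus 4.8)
The plan is to verify directly, via the explicit description of $\sT_\urig$ furnished by Proposition \ref{explicitprop}, the two conditions defining \emph{finer}: that every Zariski-open subset of $X=\sSp A$ is $\sT_\urig$-admissible, and that every covering by the basic opens $D(f)$ is $\sT_\urig$-admissible. Since $A$ is noetherian, the complement of a Zariski-open $W$ is the vanishing locus of a finitely generated ideal $(f_1,\dots,f_r)$, so $W=\bigcup_{i=1}^r D(f_i)$ is a \emph{finite} union of basic opens, and it suffices to treat finite families $(f_i)$. Replacing $f_i$ by $\pi^{m_i}f_i$ leaves $D(f_i)$ unchanged, and since $\ul{A}\subseteq\mathring{A}$ for any $R$-model of ff type (Remark \ref{topqnilpotrem}) and $A=\ul{A}[\pi^{-1}]$, such a rescaling puts all $f_i$ into a fixed $R$-model $\ul{A}$ of ff type; so I may assume the $f_i$ are power-bounded and lie in $\ul{A}$, and set $\fX:=\Spf\ul{A}$.

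First I would exhibit the basic building blocks as semi-affinoid subdomains. For $f\in\ul{A}$ and $n\geq 0$, the set $\{x\in X:|f(x)|\geq|\pi|^n\}$ is an elementary, hence retrocompact, semi-affinoid subdomain of $X$: it is represented by the chart of the admissible blowup of $\fX$ in the $\pi$-adically open ideal $(\pi^n,f)$ on which $f$ generates the pulled-back ideal, namely the affine open $\Spf(\ul{A}\langle V\rangle/(fV-\pi^n))$ modulo $\pi$-torsion, exactly as in the annulus chart of the example following Definition \ref{semaffsubdomdefi}. On physical points one has $D(f)=\bigcup_{n\geq 0}\{|f|\geq|\pi|^n\}$, since each value $|f(x)|$ is either $0$ or bounded below by a power of $|\pi|$.

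The crucial observation, which turns these a priori infinite coverings into finite ones after pullback, is that semi-affinoid functions are globally bounded: if $g\in A$ then $g=a\pi^{-m}$ with $a\in\ul{A}\subseteq\mathring{A}$, so $|g(x)|\leq|\pi|^{-m}$ for all $x$. Consequently, if $g$ is a unit then $g^{-1}$ is bounded, whence $|g(x)|\geq|\pi|^N$ for all $x$ and a fixed $N$; and if $1=\sum_i h_ig_i$ in some semi-affinoid algebra with the $h_i$ bounded by $|\pi|^{-N}$, then the ultrametric inequality forces $\max_i|g_i(x)|\geq|\pi|^N$ for all $x$. I expect this boundedness fact to be the heart of the matter: it is what makes the non-quasi-compact-looking sets $D(f)$ behave like retrocompact data once tested against the quasi-compact semi-affinoid spaces that arise in Proposition \ref{explicitprop}.

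Finally I would feed these estimates into Proposition \ref{explicitprop}. Let $\phi:Y\to X$ be any morphism of semi-affinoid $K$-spaces with $\phi(Y)\subseteq W$. Since the $D(\phi^*f_i)$ cover $Y$ and $A_Y$ is a Jacobson ring by the Nullstellensatz (Proposition \ref{hilbertprop}), the $\phi^*f_i$ have no common zero and hence generate the unit ideal, say $1=\sum_i h_i\phi^*f_i$. By the estimate above there is a fixed $N$ with $Y=\bigcup_{i=1}^r\{|\phi^*f_i|\geq|\pi|^N\}$, a finite covering of $Y$ by retrocompact semi-affinoid subdomains; by Proposition \ref{retroadmprop} it is $\sT_\aux$-admissible and therefore admits a leaflike refinement (Definition \ref{maintauxdefi}). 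This leaflike covering refines the pullback to $Y$ of the covering of $W$ by the $\{|f_i|\geq|\pi|^n\}$, as well as the pullback of $(D(f_i))_i$. Proposition \ref{explicitprop} (i) then shows that $W$ is $\sT_\urig$-admissible; taking $r=1$ and $W=D(f_i)$ shows in particular that each $D(f_i)$ is $\sT_\urig$-admissible, so Proposition \ref{explicitprop} (ii) shows that the Zariski covering $(D(f_i))_i$ of $W$ is $\sT_\urig$-admissible. A general $\sT_\Zar$-covering is refined by one of basic opens, so by saturation of $\sT_\urig$ (conditions (G$_0$)--(G$_2$) of \cite{bgr} 9.1.2) it too is $\sT_\urig$-admissible; hence $\sT_\urig$ is finer than $\sT_\Zar$.
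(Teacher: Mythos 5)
Your proposal is correct and follows essentially the same route as the paper's own proof: cover $Y$ using the unit-ideal relation $1=\sum_i h_i\phi^*f_i$, exploit the boundedness of semi-affinoid functions together with the ultrametric inequality to get a uniform lower bound $\max_i|\phi^*f_i(y)|\geq|\pi|^N$, and conclude via the admissibility of retrocompact coverings (Proposition \ref{retroadmprop}) and the explicit description of $\sT_\urig$ (Proposition \ref{explicitprop}). The only differences are cosmetic: you make explicit some steps the paper leaves implicit, namely the blowup charts representing $\{|f_i|\geq|\pi|^n\}$ as elementary subdomains, the rescaling of the $f_i$ into an $R$-model, and the appeal to the Nullstellensatz for the unit-ideal generation.
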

\begin{proof}
Let $X=\sSp A$ be a semi-affinoid $K$-space, let $U\subseteq X$ be a Zariski-open subset, and let $f_1,\ldots,f_n\in A$ be semi-affinoid functions such that $U$ is the union of the Zariski-open subsets $D(f_i)=\{x\in\Max A\,;\,f_i(x)\neq 0\}$. Let $Y$ be a nonempty semi-affinoid $K$-space, and let $\phi\colon Y\rightarrow X$ be a semi-affinoid morphism whose image is contained in $U$.
For each $i$, the preimage $\phi^{-1}(D(f_i))$ is the set of points $y\in Y$ where $\phi^*f_i\neq 0$. Since $Y$ is covered by the $\phi^{-1}(D(f_i))$, the $\phi^*f_i$ generate the unit ideal in $B$. That is, there exist elements $b_1,\ldots,b_n$ in $B$ such that $b_1\phi^*f_1+\ldots +b_n\phi^*f_n=1$. Let us set $\gamma\mathrel{\mathop:}=(\max_i|b_i|_\sup)^{-1}$; this number is well-defined since the $b_i$ are bounded functions on $Y$ without a common zero. By the strict triangle inequality, $\max_i|\phi^*f_i(y)|\geq\gamma$ for all $y\in Y$. For each $i$, let $Y_i\subseteq Y$ denote the set of points $y\in Y$ where $|\phi^*f_i(y)|\geq\gamma$; then $(Y_i)_{1\leq i\leq n}$ is a retrocompact covering of $Y$ refining $(\phi^{-1}(D(f_i)))_{1\leq i\leq n}$. By Proposition \ref{retroadmprop}, retrocompact coverings are $\sT_\srig$-admissible; hence $U\subseteq X$ is $\sT_\srig$-admissible. If $(U_j)_{j\in J}$ is a Zariski-covering of $U$, we may pass to a refinement and assume that for all $j\in J$, $U_j=D(g_j)\subseteq X$ for some semi-affinoid function $g_j$ on $X$; we can then argue along the same lines to prove that $(U_j)_{j\in J}$ is a $\sT_\srig$-admissible covering of $U$.
\end{proof}

The above argument works even though the maximum principle might fail on $Y$. Let us point out that our proof shows the following: If $f_1,\ldots,f_n$ are semi-affinoid functions on $X$, if 
\[
U\,=\,\bigcup_{i=1}^nD(f_i)
\]
is the associated Zariski-open subset of $X$, and if we set 
\[
U_{\geq\varepsilon}\,=\,\bigcup_{i=1}^n\,\{x\in X\,;\,|f_i(x)|\geq\varepsilon\}
\]
for $\varepsilon\in\sqrt{|K^*|}$, then the resulting covering $(U_{\geq\varepsilon})_{\varepsilon}$ of $U$ by finite unions of retrocompact semi-affinoid subdomains of $X$ is $\sT_\srig$-admissible. In particular, Zariski-open subsets in semi-affinoid spaces need not be quasi-compact in the uniformly rigid G-topology. As a consequence, the sheaf of uniformly rigid functions on a semi-affinoid $K$-space, to be defined in the following section, may have unbounded sections on Zariski-open subsets.


\subsubsection{The acyclicity theorem}
Let $X$ be a semi-affinoid $K$-space. We show that the presheaf $\O_X$ that we introduced after Definition \ref{semaffpresubdomdefi} is a sheaf for $\sT_\aux$ and, hence, extends uniquely to a sheaf for $\sT_\urig$. More generally, we show that every $\O_X$-module associated to a finite module over the ring of global functions on $X$ is \emph{acyclic} for any $\sT_\urig$-admissible covering of $X$ in the sense of \cite{bgr} p.\ 324. Adopting methods from \cite{luetkebohmertformalrigid}, we derive our acyclicity theorem from results in formal geometry; we also use ideas from \cite{lipshitz_robinson} III.3.2.

Let us recall from \cite{bgr} p. 324 that if $\sF$ is a presheaf in $\O_X$-modules on $\sT_\aux$, a covering $(X_i)_{i\in I}$ of $X$ by semi-affinoid subdomains is called $\sF$-\emph{acyclic} if the associated augmented \v{C}ech complex is acyclic. The covering $(X_i)_{i\in I}$ is called \emph{universally} $\sF$-acyclic if $(X_i\cap U)_{i\in I}$ is $\sF|_U$-acyclic for any semi-affinoid subdomain $U\subseteq X$.

\begin{thm}\label{acyclicitytheorem}
For a semi-affinoid $K$-space $X$, $\sT_\aux$-admissible coverings are $\O_X$-acyclic. 
\end{thm}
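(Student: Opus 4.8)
The plan is to reduce the statement to the case of \emph{elementary} coverings and to settle that case by passing to formal models, following the strategy of \cite{luetkebohmertformalrigid}. The reduction is purely combinatorial and rests on the standard machinery of \v{C}ech acyclicity, cf.\ \cite{bgr} 8.1.4. Since $\sT_\aux$-admissibility is defined through the existence of a leaflike refinement, and since a leaflike covering is by definition the set of leaves of a treelike covering, it suffices to treat these three cases in turn. First I would prove that every elementary covering is $\O_X$-acyclic; because elementary coverings are stable under pullback to semi-affinoid subdomains, applying this on each subdomain shows that every elementary covering is in fact \emph{universally} $\O_X$-acyclic. Treelike coverings are then handled by induction on the height of the tree: the children of the root form an elementary, hence universally acyclic, covering, each child being universally acyclically covered by the leaves below it by the inductive hypothesis, so the transitivity of \v{C}ech acyclicity (a double-complex argument) shows that the covering by leaves is universally $\O_X$-acyclic. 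This disposes of leaflike coverings. Finally, for a general $\sT_\aux$-admissible covering $\mathfrak{U}$ one selects a leaflike refinement $\mathfrak{V}$; as finite intersections of semi-affinoid subdomains are again semi-affinoid subdomains (Corollary \ref{semaffsubdomtranscor}) and $\mathfrak{V}$ is universally acyclic, the restriction of $\mathfrak{V}$ to each intersection of members of $\mathfrak{U}$ is acyclic, and the refinement comparison in \cite{bgr} 8.1.4 yields acyclicity of $\mathfrak{U}$.

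The heart of the matter is therefore the acyclicity of an elementary covering $(X_i)_{i\in I}$ of $X=\sSp A$. Here I would fix a model: an affine flat $R$-scheme $\fX=\Spf\ul A$ of ff type, an admissible blowup $\fX'\rightarrow\fX$, and an affine open covering $(\fX_i)_{i\in I}$ of $\fX'$ with $\fX_i^\urig=X_i$. Since $\fX'$ is separated over the affine $\fX$, every finite intersection $\fX_{i_0\cdots i_p}$ is affine, and on global sections $\O_X(X_{i_0\cdots i_p})=\Gamma(\fX_{i_0\cdots i_p},\O_{\fX'})\otimes_RK$. As $\cdot\otimes_RK$ is exact, the augmented \v{C}ech complex of $(X_i)$ with coefficients in $\O_X$ is obtained from the augmented \v{C}ech complex of the affine covering $(\fX_i)$ with coefficients in $\O_{\fX'}$ by inverting $\pi$. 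Since $\ul A$ is noetherian and coherent cohomology vanishes on affine noetherian formal schemes, this latter complex computes the formal cohomology $H^\bullet(\fX',\O_{\fX'})$. It thus remains to show that $H^q(\fX',\O_{\fX'})\otimes_RK$ vanishes for $q>0$ and equals $A$ for $q=0$.

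To compute these cohomology groups I would realise $\fX'$ as the formal completion, along an ideal of definition $\fa$ of $\ul A$, of the \emph{algebraic} blowup $X'\rightarrow\operatorname{Spec}\ul A$ of the corresponding admissible coherent ideal, cf.\ \cite{temkin_desing} 2.1. This morphism is projective, hence proper, so each $H^q(X',\O_{X'})$ is a finite $\ul A$-module; being finite over the $\fa$-adically complete ring $\ul A$ it is $\fa$-adically complete, and the theorem on formal functions, together with the finiteness statement \cite{egaiii} 3.4.2, identifies $H^q(\fX',\O_{\fX'})$ with $H^q(X',\O_{X'})$. Now the centre of the blowup contains a power of $\pi$, so after inverting $\pi$ it becomes the unit ideal and $X'[\pi^{-1}]\rightarrow\operatorname{Spec}A$ is an isomorphism; hence $H^q(X',\O_{X'})\otimes_RK=H^q(\operatorname{Spec}A,\O)$, which vanishes for $q>0$ and equals $A$ for $q=0$. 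This gives the acyclicity of the elementary covering and, through the reduction above, the theorem.

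I expect the main obstacle to lie precisely in this last step: one must carefully match the formal blowup $\fX'$ with the completion of the algebraic blowup $X'$ and verify the hypotheses of the formal-functions comparison and of the finiteness theorem, the triviality of the blowup after inverting $\pi$ then being the clean geometric input that forces the higher cohomology to be $\pi$-power torsion. The generalization to $\O_X$-modules associated to finite $A$-modules, asserted after the statement, is obtained by running the identical argument with $\O_{\fX'}$ replaced by the coherent pullback to $\fX'$ of a finite $\ul A$-model of the given module.
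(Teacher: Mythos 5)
Your argument is correct, and its skeleton coincides with the paper's: both reduce to the acyclicity of elementary coverings via the refinement machinery of \cite{bgr} 8.1.4/3, and both settle the elementary case by identifying the augmented \v{C}ech complex of $(X_i)_{i\in I}$ with that of the affine covering $(\fX_i)_{i\in I}$ of the blowup $\fX'$ tensored with $K$, so that everything comes down to $H^q(\fX',\O_{\fX'})\otimes_RK$ being $A$ for $q=0$ and zero for $q>0$. The differences are organizational. For the elementary case the paper simply cites the ff-type transcription of \cite{luetkebohmertformalrigid} 2.1 (together with \cite{egaiii} 1.4.11) for the facts that $\beta^\sharp\otimes_RK$ is an isomorphism and that $R^q\beta_*\O_{\fX'}$ is $\pi$-torsion for $q\geq 1$; your algebraization argument --- completing the projective blowup $X'\rightarrow\Spec\ul{A}$ along an ideal of definition, invoking finiteness and the comparison theorem, and using that the centre becomes the unit ideal after inverting $\pi$ --- is essentially the proof of that cited lemma, so you have inlined the black box rather than replaced it. For the combinatorial reduction the paper inducts on the \emph{volume} of the tree, pruning the children of the parent of a deepest leaf and handling the intersections $X_{i_0\cdots i_r}$ by an explicit case analysis, each step an application of \cite{bgr} 8.1.4/3; you induct on the \emph{height} and appeal to transitivity of \v{C}ech acyclicity. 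That transitivity is the one place where your sketch is thinner than it looks: the restriction of the full leaf covering to an intersection $X_{i_0\cdots i_p}$ of children of the root is strictly larger than the leaf covering of the single child $X_{i_0}$, since leaves from other subtrees may meet it, so the double-complex argument needs an extra comparison --- the leaf covering of $X_{i_0}$ is a subcovering of the restricted family whose restrictions to all relevant intersections are acyclic by the inductive (universal) hypothesis, whence \cite{bgr} 8.1.4/3 transfers acyclicity to the full restricted family before the columns of your double complex can be declared exact. This is routine and is in substance the same care the paper expends in its case analysis, so nothing is missing mathematically; the paper's volume induction merely makes this bookkeeping explicit.
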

\begin{proof}
Let us first consider an elementary covering $(X_i)_{i\in I}$. Let us choose a formal representation $(\fX,\beta\colon\fX'\rightarrow\fX,(\fX_i)_{i\in I})$ of $(X_i)_{i\in I}$, where $\beta$ is an admissible blowup and where $(\fX_i)_{i\in I}$ is a finite affine covering of $\fX'$ such that $\fX_i\subseteq\fX'\rightarrow\fX$ represents $X_i$ in $X$. By the ff type transcription of \cite{luetkebohmertformalrigid} 2.1, $\beta^\sharp\otimes_RK$ is an isomorphism; hence $\beta$ induces a natural identification of augmented \v{C}ech complexes
\[
C^\bullet_{\textup{aug}}((X_i)_{i\in I},\O_X)\cong C^\bullet_{\textup{aug}}((\fX_i)_{i\in I},\O_{\fX'}\otimes_RK)\;.
\]
We have to show that the complex on the right hand side is acyclic. Since $\O_{\fX'}\otimes_RK$ is a sheaf on $\fX'$, it suffices to show that
\[
\check{H}^q((\fX_i)_{i\in I},\O_{\fX'}\otimes_RK)\,=\,0
\]
for all $q\geq 1$. Since $I$ is finite, we have an identification
\[
\check{H}^q((\fX_i)_{i\in I},\O_{\fX'}\otimes_RK)\,=\,\check{H}^q((\fX_i)_{i\in I},\O_{\fX'})\otimes_RK\;.
\]
By the Comparison Theorem \cite{egaiii} 4.1.5, 4.1.7 and by the Vanishing Theorem \cite{egaiii} 1.3.1, the higher cohomology groups of a coherent sheaf on an affine noetherian formal scheme vanish. Since the $\fX_i$ are affine, Leray's theorem implies that
\[
\check{H}^q((\fX_i)_{i\in I},\O_{\fX'})\,=\,H^q(\fX',\O_{\fX'})\;.
\]
By \cite{egaiii} 1.4.11, $H^q(\fX',\O_{\fX'})=\Gamma(\fX,R^q\beta_*\O_{\fX'})$, and by the ff type transcription of \cite{luetkebohmertformalrigid} 2.1 this module is $\pi$-torsion. We have thus finished the proof in the case where $(X_i)_{i\in I}$ is an elementary covering.

Let us turn towards the general case. By definition, every $\sT_\aux$-admissible covering of $X$ has a leaflike refinement; by \cite{bgr} 8.1.4/3 it is enough to show that the leaflike coverings of $X$ are universally $\O_X$-acyclic. Since leaflike coverings are preserved with respect to pullback under morphisms of semi-affinoid $K$-spaces, it suffices to show that any leaflike covering $(X_i)_{i\in I}$ of $X$ is $\O_X$-acyclic. 

Let $(X_j)_{j\in J}$ be a treelike covering of $X$ extending $(X_i)_{i\in I}$, and let us choose a suitable rooted tree structure on $J$ such that $I\subseteq J$ is identified with the set of leaves of $J$. We argue by induction on the volume of $J$. If $J$ has only one vertex, the covering $(X_i)_{i\in I}$ is trivial and, hence, $\O_X$-acyclic. Let us assume that $J$ has more than one vertex. Let $\iota\in I$ be a leaf of $J$ such that the length $l(\iota)$ of the path from $\iota$ to the root is maximal in $\{l(i)\,;\,i\in I\}$. Let $\iota'\mathrel{\mathop:}=\parent(\iota)$ denote the parent of $\iota$. By maximality of $l(\iota)$, all siblings $i\in\children(\iota')$ of $\iota$ are leaves of $J$. Let $J'\mathrel{\mathop:}=J\setminus\children(\iota')$ be the rooted subtree of $J$ that is obtained by removing the siblings of $\iota$ (including $\iota$ itself). Then
\begin{packed_enum}
\item the set of leaves of $J'$ is $I'\mathrel{\mathop:}=(I\setminus\children(\iota'))\cup\{\iota'\}$,
\item $(X_j)_{j\in J'}$ is a treelike covering of $X$, and
\item $v(J')<v(J)$.
\end{packed_enum}
By our induction hypothesis, the covering $(X_i)_{i\in I'}$ is $\O_X$-acyclic. Since $(X_i)_{i\in I}$ is a refinement of $(X_i)_{i\in I'}$, \cite{bgr} 8.1.4/3 says that it suffices to prove that for any $r\geq 0$ and any tuple $(i_0,\ldots,i_r)\in(I')^{r+1}$, the covering $(X_i\cap X_{i_0\cdots i_r})_{i\in I}$ of $X_{i_0\cdots i_r}$ is $\O_X$-acyclic, where $X_{i_0\cdots i_r}$ denotes the intersection $X_{i_0}\cap\ldots\cap X_{i_r}$. Let us assume that there exists some $0\leq s\leq r$ such that $i_s\neq\iota'$. Then $i_s\in I$. Since $X_{i_0\cdots i_r}\subseteq X_{i_s}$, we see that the trivial covering of $X_{i_0\cdots i_r}$ refines $(X_i\cap X_{i_0\cdots i_r})_{i\in I}$. Since trivial coverings restrict to trivial coverings and since trivial coverings are acyclic, we deduce from \cite{bgr} 8.1.4/3 that $(X_i\cap X_{i_0\cdots i_r})_{i\in I}$ is acyclic. It remains to consider the case where all $i_s$, $0\leq s\leq r$, coincide with $\iota'$. That is, it remains to see that the covering $(X_i\cap X_{\iota'})_{i\in I}$ of $X_{\iota'}$ is $\O_X$-acyclic. It admits the elementary covering $(X_i)_{i\in\children(\iota')}$ as a refinement. Since elementary coverings restrict to elementary coverings and since elementary coverings are $\O_X$-acyclic by what we have shown so far, we conclude by \cite{bgr} 8.1.4/3 that $(X_i\cap X_{\iota'})_{i\in I}$ is $\O_X$-acyclic, as desired.
\end{proof}

By \cite{bgr} 9.2.3/1, $\O_X$ extends uniquely to a sheaf for $\sT_\urig$ which we again denote by $\O_X$ and which we call the structural sheaf or the sheaf of uniformly rigid functions. We can now easily discuss a fundamental example of a non-admissible finite covering of a semi-affinoid $K$-space by semi-affinoid subdomains:

\begin{example}\label{nonadmdisccovex}
The canonical covering of the semi-affinoid closed unit disc $\sSp (K\langle T\rangle)$ by the semi-affinoid open unit disc $\sSp (R[[T]]\otimes_RK)$ and the semi-affinoid unit circle $\sSp (K\langle T,T^{-1}\rangle)$ is not $\sT_\srig$-admissible and, hence, not $\sT_\textup{aux}$-admissible. Indeed, the two covering sets are nonempty and disjoint, while the ring of functions $K\langle T\rangle$ on the closed semi-affinoid unit disc has no nontrivial idempotents. 
\end{example}

If $X$ is a semi-affinoid $K$-space with ring of global functions $A$ and if $M$ is a finite $A$-module, the presheaf  $M\otimes\O_X$ sending a semi-affinoid subdomain $U$ in $X$ to $M\otimes_A\O_X(U)$ is an $\O_X$-module, which we call the $\O_X$-module \emph{associated} to $M$. A presheaf $\sF$ equipped with an $\O_X$-module structure is called \emph{associated}\index{module!associated} if it is isomorphic to $M\otimes\O_X$ for some finite $A$-module $M$. We sometimes abbreviate $\tilde{M}\mathrel{\mathop:}=M\otimes\O_X$.

\begin{cor}\label{moduleacythmcor}
Let $X$ be a semi-affinoid $K$-space, and let $\sF$ be an associated $\O_X$-module. Then every $\sT_\aux$-admissible covering $(X_i)_{i\in I}$ of $X$ is $\sF$-acyclic.
\end{cor}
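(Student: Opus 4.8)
The plan is to deduce this from Theorem \ref{acyclicitytheorem} by a standard dimension-shifting argument, the only geometric input being the flatness of restriction to semi-affinoid subdomains. Write $A=\O_X(X)$ for the ring of global functions and $\sF=\tilde{M}=M\otimes\O_X$ for a finite $A$-module $M$. Recall that $A$ is noetherian and that the restriction homomorphisms $A\rightarrow\O_X(U)$ to semi-affinoid subdomains $U\subseteq X$ are flat (cf.\ the discussion following Definition \ref{semaffsubdomdefi}).

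First I would use flatness to reduce the statement about $\tilde{M}$ to the statement about $\O_X$ that is already known. Since $A$ is noetherian, $M$ admits a presentation $0\rightarrow N\rightarrow A^n\rightarrow M\rightarrow 0$ with $N$ a finite $A$-module. For every semi-affinoid subdomain $U$ the functor $\cdot\otimes_A\O_X(U)$ is exact, so applying it yields a short exact sequence $0\rightarrow\tilde{N}(U)\rightarrow\O_X(U)^n\rightarrow\tilde{M}(U)\rightarrow 0$. The finite intersections $X_{i_0\cdots i_q}=X_{i_0}\cap\ldots\cap X_{i_q}$ occurring in the \v{C}ech complex are again semi-affinoid subdomains of $X$ by Corollary \ref{semaffsubdomtranscor} ($ii$); hence this exactness holds on every term of the augmented \v{C}ech complexes. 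Because $I$ is finite, each \v{C}ech term is a finite product, so exactness is preserved degreewise, and I obtain a short exact sequence of augmented \v{C}ech complexes
\[
0\rightarrow C^\bullet_{\textup{aug}}((X_i)_{i\in I},\tilde{N})\rightarrow C^\bullet_{\textup{aug}}((X_i)_{i\in I},\O_X)^n\rightarrow C^\bullet_{\textup{aug}}((X_i)_{i\in I},\tilde{M})\rightarrow 0\;.
\]

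By Theorem \ref{acyclicitytheorem} the covering $(X_i)_{i\in I}$ is $\O_X$-acyclic, so the middle complex, being a finite direct sum of copies of the augmented \v{C}ech complex of $\O_X$, is exact. The associated long exact cohomology sequence then produces isomorphisms $\check{H}^q((X_i)_{i\in I},\tilde{M})\cong\check{H}^{q+1}((X_i)_{i\in I},\tilde{N})$ for every $q$. Since $I$ is finite, all of these augmented \v{C}ech complexes are bounded (they vanish in high degrees), which lets me run a descending induction on $q$, carried out simultaneously over all finite $A$-modules: the top cohomology vanishes trivially, and the displayed isomorphism propagates the vanishing downward once applied to the syzygy module $N$ at each step. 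This shows $\check{H}^q((X_i)_{i\in I},\tilde{M})=0$ for all $q$, that is, that $(X_i)_{i\in I}$ is $\sF$-acyclic.

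The substance of the argument is entirely contained in Theorem \ref{acyclicitytheorem} and in the flatness of the restriction maps; once these are in hand the corollary is formal. The only points requiring a little care are the reduction of acyclicity of $\tilde{M}$ to that of the syzygy $\tilde{N}$ — this is where flatness, rather than mere right-exactness of the tensor product, is essential, since it yields a short exact (not merely right exact) sequence of complexes — and the observation that the finiteness of the covering bounds the \v{C}ech complexes and thereby makes the dimension-shifting induction terminate.
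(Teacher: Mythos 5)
Your proof is correct and is essentially the paper's own argument: the paper reduces to finite $I$ via \cite{bgr} 8.1.4/3 and then declares the proof ``literally the same as the proof of \cite{bgr} 8.2.1/5,'' which is precisely the syzygy/dimension-shifting argument you have written out --- finite presentation $0\rightarrow N\rightarrow A^n\rightarrow M\rightarrow 0$, flatness of restriction to the semi-affinoid subdomains $X_{i_0\cdots i_q}$ giving a short exact sequence of augmented \v{C}ech complexes, Theorem \ref{acyclicitytheorem} killing the middle term, and descending induction over all finite $A$-modules simultaneously. The only cosmetic difference is that you assume finiteness of $I$ outright (consistent with Definition \ref{maintauxdefi}) rather than first recording the reduction to a finite covering as the paper does.
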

\begin{proof}
By \cite{bgr} 8.1.4/3, we may assume that $I$ is finite. Using Theorem \ref{acyclicitytheorem}, the proof is now literally the same as the proof of \cite{bgr} 8.2.1/5.
\end{proof}

In particular, $M\otimes\O_X$ is a $\sT_\aux$-sheaf. By \cite{bgr} 9.2.3/1, $M\otimes\O_X$ extends uniquely to a $\sT_\srig$-sheaf on $X$ which we again denote by $M\otimes\O_X$ or by $\tilde{M}$ and which we call the sheaf associated to $M$.

\begin{remark}
If $U\subseteq X$ is a representable subset that is $\sT_\srig$-admissible, then $\O_X(U)=\O_U(U)$. Indeed, $U$ admits a $\sT_{\srig,X}$-admissible covering by semi-affinoid subdomains in $X$; since morphisms of semi-affinoid spaces are continuous for $\sT_\srig$, this covering is also $\sT_{\srig,U}$-admissible, so the statement follows from the fact that both $\O_X$ and $\O_U$ are $\sT_\srig$-sheaves. However, it is not clear whether $\sT_{\srig,X}$ restricts to $\sT_{\srig,U}$; for example, we do not know whether a semi-affinoid subdomain of $U$ is $\sT_{\srig,X}$-admissible. Of course, this does not affect our theory since we do not deal with general representable subsets.
\end{remark}

The category of abelian sheaves on $(X,\sT_\srig|_X)$ has enough injective objects, so the functor $\Gamma(X,\cdot)$ from the category of abelian sheaves on $X$ to the category of abelian groups has a right derived functor $H^\bullet(X,\cdot)$. By the Acyclicity Theorem and its Corollary \ref{moduleacythmcor}, this right derived functor can, for associated $\O_X$-modules, be calculated in terms of \v{C}ech cohomology:

\begin{cor}
Let $X$ be a semi-affinoid $K$-space, and let $\sF$ be an associated $\O_X$-module. Then the natural homomorphism $\check{H}^q(U,\sF)\rightarrow H^q(U,\sF)$
is an isomorphism for all $\sT_\srig$-admissible subsets $U\subseteq X$. In particular, $H^q(U,\sF)\,=\,0$ for all $q>0$ and all semi-affinoid subdomains $U\subseteq X$.
\end{cor}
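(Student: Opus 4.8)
The plan is to deduce this comparison statement from the Acyclicity Theorem \ref{acyclicitytheorem} and its Corollary \ref{moduleacythmcor} by the standard homological machinery relating \v{C}ech and derived functor cohomology, exactly as in the rigid-analytic situation (\cite{bgr} 8.1.4). All the geometric content has already been absorbed into Theorem \ref{acyclicitytheorem}; what remains is to verify the hypotheses of the abstract comparison criterion on the basis of semi-affinoid subdomains, namely that $\sF$ is a sheaf for which a \emph{cofinal} family of admissible coverings is $\sF$-acyclic.

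First I would arrange the inputs so as to work intrinsically on subdomains. By Corollary \ref{srigpropcor} ($i$) the uniformly rigid G-topology restricts from $X$ to any semi-affinoid subdomain $V$, so I may view $V$ as a semi-affinoid $K$-space in its own right; moreover, since $\sF=M\otimes\O_X$ and $\O_X(V)=\O_V(V)$ by the remark preceding the statement, the restriction $\sF|_V$ is the $\O_V$-module associated to the finite $\O_V(V)$-module $M\otimes_A\O_X(V)$, so Corollary \ref{moduleacythmcor} applies to $\sF|_V$ as well. The semi-affinoid subdomains form a basis of $\sT_\urig$ (Proposition \ref{explicitprop}), stable under finite intersection by Corollary \ref{semaffsubdomtranscor} ($ii$), and by Definition \ref{maintauxdefi} every $\sT_\urig$-admissible covering of a subdomain by subdomains admits a leaflike, hence $\sT_\aux$-admissible, refinement. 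Thus the $\sT_\aux$-admissible coverings are cofinal among all admissible coverings of a subdomain by subdomains, and by Corollary \ref{moduleacythmcor} each of them is $\sF$-acyclic; by \cite{bgr} 8.1.4/3 it follows that every admissible covering of a subdomain by subdomains has an $\sF$-acyclic refinement.

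With these inputs in place, I would run the usual induction on $q$ via the \v{C}ech-to-derived-functor spectral sequence
\[
E_2^{p,q}\,=\,\check{H}^p\big((U_i)_{i\in I},\underline{H}^q(\sF)\big)\;\Longrightarrow\;H^{p+q}(U,\sF)\;,
\]
where $\underline{H}^q(\sF)$ denotes the presheaf $V\mapsto H^q(V,\sF)$ and $(U_i)_{i\in I}$ runs over $\sF$-acyclic coverings of a subdomain $U$ by subdomains. The case $q=0$ is the sheaf property, which is Corollary \ref{moduleacythmcor}. Assuming inductively that $H^i(V,\sF)=0$ for $0<i<q$ and all subdomains $V$, the presheaf $\underline{H}^i(\sF)$ vanishes on all members of an acyclic covering and on all their finite intersections, which are again subdomains; hence $E_2^{p,i}=0$ for $0<i<q$, while $E_2^{p,0}=\check{H}^p((U_i)_i,\sF)=0$ for $p>0$ by acyclicity. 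Reading off the abutment then yields $H^q(U,\sF)=\check{H}^q((U_i)_i,\sF)=0$ for subdomains $U$, and the same edge-map analysis, carried out on an arbitrary $\sT_\urig$-admissible $U$ and passed to the colimit over the cofinal family of acyclic coverings, shows that $\check{H}^q(U,\sF)\to H^q(U,\sF)$ is an isomorphism in general.

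The argument is entirely formal, and the one point I would check most carefully is the interplay with the non-saturated behaviour of $\sT_\urig$: finite coverings of a subdomain by subdomains need not be admissible (Example \ref{nonadmdisccovex}), so acyclicity is available only for the leaflike coverings. This is precisely why the reduction to a \emph{cofinal} acyclic family, together with the universal acyclicity built into the proof of Theorem \ref{acyclicitytheorem}, is indispensable. Once cofinality and stability under intersection are secured, the standard comparison (\cite{bgr} 8.1.4) carries the induction through verbatim.
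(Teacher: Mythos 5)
Your proposal is correct and follows essentially the same route as the paper: the paper's proof consists precisely of recording that the system of semi-affinoid subdomains is stable under intersection, that every $\sT_\srig$-admissible covering of a $\sT_\srig$-admissible subset admits a $\sT_\srig$-admissible refinement by subdomains on which higher \v{C}ech cohomology vanishes (via Theorem \ref{acyclicitytheorem} and Corollary \ref{moduleacythmcor}), and then invoking ``the standard \v{C}ech spectral sequence argument,'' which you simply write out in full. The one small citation correction: the cofinality of leaflike refinements for $\sT_\urig$-admissible coverings of a subdomain $U$ follows from Proposition \ref{explicitprop}~($ii$) applied to the identity morphism of $U$ (using Corollary \ref{subdomunivcor}~($iii$) to view $U$ as a semi-affinoid space), not from Definition \ref{maintauxdefi}, which concerns only $\sT_\aux$.
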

\begin{proof}
The system $S$ of semi-affinoid subdomains in $X$ satisfies the following properties:
\begin{packed_enum}
\item $S$ is stable under the formation of intersections,
\item every $\sT_\srig$-admissible covering $(U_i)_{i\in I}$ of a $\sT_\srig$-admissible subset $U\subseteq X$ admits a $\sT_\srig$-admissible refinement by sets in $S$, and
\item  $\check{H}^q(U,\sF)$ vanishes for all $q>0$ and all $U\in S$;
\end{packed_enum}
hence the statement follows by means of the standard \v{C}ech spectral sequence argument.
\end{proof}

Transcribing the proof of 7.3.2/1, we see that if $A$ is a semi-affinoid $K$-algebra with associated semi-affinoid $K$-space $X$ and if $\m\subseteq A$ is a maximal ideal corresponding to a point $x\in X$, then the stalk $\O_{X,x}$ is local with maximal ideal $\m\O_{X,x}$ which coincides with the ideal of germs of functions vanishing in $x$. The arguments in the proof of \cite{bgr} 7.3.2/3 are also seen to work in our situation, showing that the natural homomorphisms $A/\m^{n+1}\rightarrow\O_{X,x}/\m^{n+1}\O_{X,x}$ are isomorphisms for all $n\in\N$. The rings $\O_{X,x}$ are noetherian, which can for example be seen by imitating the proof of \cite{bgr} 7.3.2/7. 

Transcribing the discussion at the beginning of \cite{bgr} 9.3.1, we see that the uniformly rigid G-topology and the sheaf of uniformly rigid functions define a functor from the category of semi-affinoid $K$-spaces to the category of locally ringed G-topological $K$-spaces. The proof of \cite{bgr} 9.3.1/2 carries over verbatim to the semi-affinoid situation, showing that this functor is fully faithful. We call a locally ringed G-topological $K$-space \emph{semi-affinoid} if it lies in the essential image of this functor. 

\subsection{Uniformly rigid spaces} We are now able to define the category of uniformly rigid $K$-spaces:
\begin{defi}\label{srigspacedefi}\index{uniformly rigid space}\index{category!of uniformly rigid $K$-spaces}
Let $X$ be a locally ringed G-topological $K$-space.
\begin{enumerate}
\item An admissible  \emph{semi-affinoid covering}\index{covering!semi-affinoid} of $X$ is an admissible covering $(X_i)_{i\in I}$ of $X$ such that for each $i\in I$, $(X_i,\O_X|_{X_i})$ is a semi-affinoid $K$-space.
\item The space $X$ is called \emph{uniformly rigid} if it satisfies conditions (G$_0$)--(G$_1$) in \cite{bgr} 9.1.2 and if it admits an admissible semi-affinoid covering.
\item An admissible open subset $U$ of a uniformly rigid $K$-space $X$ is called an \emph{open semi-affinoid subspace}\index{subspace!semi-affinoid} of $X$ if $(U,\O_X|_U)$ is a semi-affinoid $K$-space.
\end{enumerate}
\end{defi}

\begin{remark}
In the author's PhD thesis, open semi-affinoid subspaces were simply called semi-affinoid subspaces, cf.\ \cite{mythesis} Section 1.3.9.
\end{remark}

The category $\sRig_K$ of uniformly rigid $K$-spaces is a full subcategory of the category of locally G-topological $K$-spaces, and it contains the category of semi-affinoid $K$-spaces as a full subcategory.

\begin{remark}
We do not know whether an open semi-affinoid subspace $U$ of a semi-affinoid $K$-space $X$ is necessarily a semi-affinoid subdomain in $X$.  However, one easily verifies that $U$ is a representable subset in $X$. Moreover, one sees that $U$ is locally a semi-affinoid subdomain in $X$, cf.\ Lemma \ref{semaffsubspacecharlem} for a precise statement. In rigid geometry, the open affinoid subvarieties (cf.\ \cite{bgr} p.\ 357) of an affinoid space are precisely the affinoid subdomains, which means that there is no need to distinguish between the two notions in the affinoid setting.
\end{remark}

\begin{remark}
Let $X=\sSp A$ be a semi-affinoid $K$-space, and let $U=\sSp B$ be an open semi-affinoid subspace of $X$; then the restriction homomorphism $A\rightarrow B$ is flat. Indeed, for every maximal ideal $\n\subseteq B$ with corresponding point $x\in U$ and preimage $\m\subseteq A$, the induced homomorphism $A_\m\rightarrow B_\n$ induces an isomorphism of maximal-adic completions; by the Flatness Criterion \cite{bourbakica} III.5.2 Theorem 1, we conclude that $A\rightarrow B_\n$ is flat for all maximal ideals $\n$ in $B$, which implies that $A\rightarrow B$ is flat. 
\end{remark}






\begin{lem}\label{saffbasislem}
The open semi-affinoid subspaces of a uniformly rigid $K$-space $X$ form a \emph{basis} for the G-topology on $X$.
\end{lem}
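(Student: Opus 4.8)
The goal is to show that every admissible open subset $U\subseteq X$ admits an admissible covering by open semi-affinoid subspaces; a routine refinement argument then upgrades this to the basis property in the sense of \cite{bgr} p.\ 337. By Definition \ref{srigspacedefi}, $X$ carries an admissible semi-affinoid covering $(X_i)_{i\in I}$, each $X_i$ being a semi-affinoid $K$-space with structure sheaf $\O_X|_{X_i}$. The first step is to restrict this covering to $U$: since $U$ is admissible open and $(X_i)_{i\in I}$ is an admissible covering of $X$, the family $(U\cap X_i)_{i\in I}$ is an admissible covering of $U$ by the axioms of a G-topology (\cite{bgr} 9.1.1), and each $U\cap X_i$ is an admissible open subset of $X$ contained in $X_i$.

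The key point is the passage between $X_i$ and $X$. Since the semi-affinoid structure on $X_i$ is by construction the one induced by the G-topology relatively on $X_i\subseteq X$, a subset of $X_i$ is admissible open for the semi-affinoid space $X_i$ precisely when it is admissible open in $X$; in particular $U\cap X_i$ is admissible open in the semi-affinoid space $X_i$. By Proposition \ref{explicitprop}, the semi-affinoid subdomains form a basis for the uniformly rigid G-topology on $X_i$, so $U\cap X_i$ admits an admissible covering $(V_{ik})_{k\in K_i}$ by semi-affinoid subdomains $V_{ik}\subseteq X_i$. I claim each $V_{ik}$ is an open semi-affinoid subspace of $X$. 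Indeed, $V_{ik}$ is admissible open in $X$ by the identification of relative and ambient admissibility just recalled; moreover, by Corollary \ref{subdomunivcor} ($iii$) it is a semi-affinoid $K$-space, and by Corollary \ref{srigpropcor} ($i$) the uniformly rigid G-topology and structural sheaf of $X_i$ restrict to those of $V_{ik}$, whence $\O_X|_{V_{ik}}=\O_{X_i}|_{V_{ik}}$ is the semi-affinoid structure sheaf of $V_{ik}$.

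It remains to assemble the pieces. The family $(U\cap X_i)_{i\in I}$ is an admissible covering of $U$, and for each $i$ the family $(V_{ik})_{k\in K_i}$ is an admissible covering of $U\cap X_i$; by transitivity of admissible coverings (\cite{bgr} 9.1.1) the composite family $(V_{ik})_{i\in I,\,k\in K_i}$ is an admissible covering of $U$. As every $V_{ik}$ is an open semi-affinoid subspace, this is the covering we sought.

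I expect the main obstacle to be the local-to-global bookkeeping concentrated in the second paragraph: one must know that for the admissible open $X_i$, carrying the G-topology and structure sheaf induced from $X$, admissibility of subsets agrees with admissibility in $X$ and the restricted structure sheaf is the intrinsic semi-affinoid one. This is precisely what allows the basis property for semi-affinoid spaces (Proposition \ref{explicitprop}) and the restriction compatibility of $\sT_\urig$ along semi-affinoid subdomains (Corollary \ref{srigpropcor} ($i$)) to be transported to the global space $X$; the remaining manipulations are formal consequences of the G-topology axioms and the saturation conditions (G$_0$)--(G$_1$) of Definition \ref{srigspacedefi}.
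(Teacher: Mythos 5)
Your proof is correct and follows essentially the same route as the paper's: restrict the admissible semi-affinoid covering $(X_i)_{i\in I}$ to $U$, then cover each $U\cap X_i$ by semi-affinoid subdomains of $X_i$, which are open semi-affinoid subspaces of $X$, and conclude by transitivity of admissible coverings. The extra bookkeeping you supply in the second paragraph (relative versus ambient admissibility, and the restriction of the structure sheaf via Corollary \ref{srigpropcor} ($i$)) is exactly what the paper's terser proof leaves implicit, so there is no gap.
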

\begin{proof}
Let $(X_i)_{i\in I}$ be an admissible semi-affinoid covering of $X$, and let $U\subseteq X$ be an admissible open subset. Then $(X_i\cap U)_{i\in I}$ is an admissible covering of $U$. For each $i\in I$, $X_i\cap U$ is admissible open in $X_i$ and, hence, admits an admissible covering by semi-affinoid subdomains of $X_i$. Hence, $U$ has an admissible semi-affinoid covering.
\end{proof}

It follows that if $X$ is a uniformly rigid $K$-space and if $U\subseteq X$ is an admissible open subset, then $(U,\O_X|_U)$ is a uniformly rigid $K$-space, again.

It is now clear that the Glueing Theorem \cite{bgr} 9.3.2/1 and its proof carry over verbatim to the uniformly rigid setting. Similarly, a morphism of uniformly rigid spaces can be defined locally on the domain; this is the uniformly rigid version of \cite{bgr} 9.3.3/1, and again the proof is obtained by literal transcription. Furthermore, a uniformly rigid $K$-space is determined by its functorial points with values in semi-affinoid $K$-spaces. 

We can also copy the proof of \cite{bgr} 9.3.3/2 to see that if $X$ is a semi-affinoid $K$-space and if $Y$ is a uniformly rigid $K$-space, then the set of morphisms from $Y$ to $X$ is naturally identified with the set of $K$-algebra homomorphisms from $\O_X(X)$ to $\O_Y(Y)$. 

Let $\fX$ be an affine formal $R$-scheme of ff type with semi-affinoid generic fiber $X$. The associated specialization map $\sp_\fX$ which we discussed in Section \ref{specmapsec} is naturally enhanced to a morphism of G-ringed $R$-spaces $\sp_\fX\colon X\rightarrow\fX$. Morphisms of uniformly rigid $K$-spaces being defined locally on the domain, we see that $\sp_\fX$ is \emph{final} among all morphisms of G-ringed $R$-spaces from uniformly rigid $K$-spaces to $\fX$. Using this universal property, we can invoke glueing techniques to construct the \emph{uniformly rigid generic} fiber $\fX^\srig$\index{generic fiber!uniformly rigid} of a general formal $R$-scheme of locally ff type $\fX$, together with a functorial specialization map $\sp_\fX\colon\fX^\srig\rightarrow\fX$ which is universal among all morphisms of G-ringed $R$-spaces from uniformly rigid $K$-spaces to $\fX$; this process does not involve Berthelot's construction. It is easily seen that $\urig$ is \emph{faithful} on the category of \emph{flat} formal $R$-schemes of locally ff type. A formal \emph{$R$-model}\index{model!of a uniformly rigid space} of a uniformly rigid $K$-space $X$ is a formal $R$-scheme $\fX$ of locally ff type together with an isomorphism $X\cong\fX^\srig$. The map $\sp_\fX$ is surjective onto the closed points of $\fX$ whenever $\fX$ is flat over $R$. This follows from Remark \ref{specmaprem}, together with the remark that the underlying topological space of $\fX$ is a Jacobson space, cf.\ \cite{egain} 0.2.8 and 6.4, so that the condition on a point in $\fX$ of being closed is local.

\begin{question}
Under what conditions does a uniformly rigid $K$-space admit a formal $R$-model?
\end{question}

By Proposition \ref{amalgsumsprop}, the category of semi-affinoid $K$-spaces has \emph{fibered products}\index{fibered product!of uniformly rigid spaces}; following the method outlined in \cite{bgr} 9.3.5, we see that the category of uniformly rigid $K$-spaces has fibered products as well and that these are constructed by glueing semi-affinoid fibered products of open semi-affinoid subspaces. It is clear from this description that the $\urig$-functor preserves fibered products.

Open semi-affinoid subspaces of semi-affinoid spaces can be described in the style of the Gerritzen-Grauert Theorem \cite{bgr} 7.3.5/3:

\begin{lem}\label{semaffsubspacecharlem}
Let $X$ be a semi-affinoid $K$-space, and let $U\subseteq X$ be an open semi-affinoid subspace. Then $U$ admits a leaflike covering $(U_i)_{i\in I}$ such that each $U_i$ is a semi-affinoid subdomain in $X$. 
\end{lem}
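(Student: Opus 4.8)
The plan is to deduce this from the explicit description of the uniformly rigid G-topology, Proposition \ref{explicitprop}, applied to the inclusion of $U$ into $X$, and then to promote the semi-affinoid subdomains of $U$ thereby obtained to semi-affinoid subdomains of the ambient space $X$ by means of the transitivity result Corollary \ref{semaffsubdomtranscor}.

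First I would note that the inclusion $\iota\colon U\hookrightarrow X$ is a morphism of semi-affinoid $K$-spaces: since $U$ is an open semi-affinoid subspace, both $U$ and $X$ are objects of the full subcategory of semi-affinoid $K$-spaces inside the category of locally G-ringed $K$-spaces, and the inclusion $\iota$ is a morphism there. As $U$ is admissible open in $X$, Proposition \ref{explicitprop} ($i$) furnishes a covering $(U_i)_{i\in I}$ of $U$ by semi-affinoid subdomains in $X$ with the property that for every morphism $\phi\colon Y\rightarrow X$ of semi-affinoid $K$-spaces satisfying $\phi(Y)\subseteq U$, the induced covering of $Y$ admits a leaflike refinement. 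Taking $\phi=\iota$ and observing that $\iota^{-1}(U_i)=U_i$, I obtain a leaflike refinement $(W_k)_{k\in K}$ of $(U_i)_{i\in I}$; by construction this is a leaflike covering of $U$, and its members are semi-affinoid subdomains in $U$.

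The remaining and essential point is to verify that each $W_k$ is in fact a semi-affinoid subdomain in $X$. Since $(W_k)_{k\in K}$ refines $(U_i)_{i\in I}$, for each $k$ there is an index $i(k)$ with $W_k\subseteq U_{i(k)}$. By Corollary \ref{subdomunivcor} ($i$) the preimage $\iota^{-1}(U_{i(k)})=U_{i(k)}$ is a semi-affinoid subdomain in $U$, while $U_{i(k)}$ is a semi-affinoid subdomain in $X$ by the choice of the covering. Applying Corollary \ref{semaffsubdomtranscor} ($i$) with ambient space $U$ and subdomain $U_{i(k)}$, the set $W_k$---being a semi-affinoid subdomain in $U$ contained in $U_{i(k)}$---is a semi-affinoid subdomain in $U_{i(k)}$; applying the same corollary with ambient space $X$ and subdomain $U_{i(k)}$ then shows that $W_k$ is a semi-affinoid subdomain in $X$. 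Hence $(W_k)_{k\in K}$ is the desired leaflike covering of $U$ by semi-affinoid subdomains in $X$.

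The main obstacle is conceptual rather than computational. A priori $U$ itself need not be a semi-affinoid subdomain in $X$---indeed this is precisely the point left open in the remark preceding the lemma---so one cannot transfer semi-affinoid subdomains of $U$ directly to semi-affinoid subdomains of $X$. The device that resolves this is to route the argument through the intermediate sets $U_{i(k)}$, which are simultaneously semi-affinoid subdomains in $U$ and in $X$, and to invoke transitivity on each side separately.
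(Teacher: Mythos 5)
Your proof is correct and follows essentially the same route as the paper's: both obtain a leaflike covering of $U$ via the explicit description of $\sT_\urig$ in Proposition~\ref{explicitprop} (the paper first invokes an admissible covering of $U$ by semi-affinoid subdomains of $X$ and then refines, which amounts to your single application of Proposition~\ref{explicitprop}~($i$) to the inclusion $\iota$), and both then promote the members to subdomains of $X$ by routing through intermediate sets that are subdomains of both $U$ and $X$ and applying the transitivity statement Corollary~\ref{semaffsubdomtranscor}~($i$) on each side. No gaps.
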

\begin{proof}
By Lemma \ref{saffbasislem}, $U$ admits an admissible covering $(V_j)_{j\in J}$ by semi-affinoid subdomains $V_j$ of $X$; by  Proposition \ref{explicitprop}, this covering is refined by a leaflike covering $(U_i)_{i\in I}$ of $U$. Via pullback, the $V_j$ are semi-affinoid subdomains of $U$. Let $\phi\colon I\rightarrow J$ denote a refinement map.  By Corollary \ref{semaffsubdomtranscor} ($i$), for each $i\in I$ the set $U_i$ is a semi-affinoid subdomain in $V_{\phi(i)}$ and, hence, in $X$, as desired. 
\end{proof}

A morphism of uniformly rigid $K$-spaces is called \emph{flat}\index{morphism!flat} in a point of its domain if it induces a flat homomorphism of stalks in this point, and it is called flat if it is flat in all points. Clearly a morphism of semi-affinoid $K$-spaces is flat in this sense if and only if the underlying homomorphism of rings of global sections is flat.
\subsubsection{Comparison with rigid geometry}\label{compfuncsec}

In Section \ref{rigspaceofsemaffspacesec}, we have defined the rigid space $X^\r$ associated to a semi-affinoid $K$-space $X=\sSp A$ together with a universal $K$-homomorphism $A\rightarrow\Gamma(X^\r,\O_{X^\r})$ which induces a bijection $X^\r\rightarrow X$ of physical points and isomorphisms of completed stalks. 
We will show that this universal homomorphism extends to a morphism $\comp_X:X^\r\rightarrow X$ of locally G-ringed $K$-spaces which is final among all morphisms from rigid $K$-spaces to $X$. To do so, we first show that the above bijection is continuous, that is, that the rigid G-topology $\sT_\rig$ is finer than $\sT_\urig$. We will need the following elementary fact from rigid geometry; the proof is left as an exercise to the reader:

\begin{lem}\label{easyrigidlem}
Let $X$ be an affinoid $K$-space, and let $U\subseteq X$ be a subset admitting a covering $(U_i)_{i\in I}$ by admissible open subsets $U_i\subseteq X$ such that for any affinoid $K$-space $Y$ and any morphism $\phi\colon Y\rightarrow X$ with image in $U$, the induced covering $(\phi^{-1}(U_i))_{i\in I}$ of $Y$ has a refinement which is a finite covering by affinoid subdomains. Then $U\subseteq X$ is admissible.
\end{lem}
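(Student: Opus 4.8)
The plan is to reduce the hypothesis — which is phrased in terms of the possibly coarse admissible open sets $U_i$ — to the standard criterion for $\sT_\rig$-admissibility phrased in terms of affinoid subdomains, and then to verify that criterion directly. Since affinoid subdomains form a basis for the rigid G-topology (cf.\ \cite{bgr} p.\ 337), each admissible open $U_i\subseteq X$ carries an admissible covering $(U_{i\lambda})_{\lambda\in\Lambda_i}$ by affinoid subdomains of $X$. Collecting these, I obtain a covering $(U_{i\lambda})_{(i,\lambda)}$ of $U$ by affinoid subdomains of $X$, and by the explicit description of the strong G-topology (cf.\ \cite{bgr} 9.1.2/3, the rigid analog of Proposition \ref{explicitprop}) it suffices to prove the following: for every morphism $\phi\colon Y\rightarrow X$ of affinoid $K$-spaces with $\phi(Y)\subseteq U$, the pullback family $(\phi^{-1}(U_{i\lambda}))_{(i,\lambda)}$ admits a finite refinement by affinoid subdomains of $Y$.

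So fix such a $\phi$. First I would apply the hypothesis to $\phi$ itself, which furnishes a finite refinement $(Z_k)_{k=1}^n$ of $(\phi^{-1}(U_i))_{i\in I}$ by affinoid subdomains of $Y$; for each $k$ I choose an index $i(k)$ with $Z_k\subseteq\phi^{-1}(U_{i(k)})$. Then $\phi$ restricts to a morphism $\psi_k\colon Z_k\rightarrow X$ whose image lies in $U_{i(k)}$, so that $\psi_k$ factors through the admissible open $U_{i(k)}$. Since $(U_{i(k)\lambda})_\lambda$ is an \emph{admissible} covering of $U_{i(k)}$ and since morphisms of rigid spaces are continuous for $\sT_\rig$, the preimages $\psi_k^{-1}(U_{i(k)\lambda})=\phi^{-1}(U_{i(k)\lambda})\cap Z_k$ form an admissible covering of $Z_k$ by affinoid subdomains of $Z_k$. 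As $Z_k$ is affinoid and hence quasi-compact in $\sT_\rig$, this covering admits a finite subcovering $(\psi_k^{-1}(U_{i(k)\lambda}))_{\lambda\in\Lambda_k'}$ with $\Lambda_k'\subseteq\Lambda_{i(k)}$ finite.

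By transitivity of the formation of affinoid subdomains, each member $\psi_k^{-1}(U_{i(k)\lambda})$ is an affinoid subdomain of $Y$. Letting $k$ range over $1,\ldots,n$ and $\lambda$ over the finite sets $\Lambda_k'$, I obtain a finite family of affinoid subdomains of $Y$ that covers $Y$ and refines $(\phi^{-1}(U_{i\lambda}))_{(i,\lambda)}$, since each member is contained in the corresponding $\phi^{-1}(U_{i(k)\lambda})$. This is exactly the finite affinoid-subdomain refinement demanded by the criterion, so $U\subseteq X$ is $\sT_\rig$-admissible, as claimed.

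The genuinely delicate point, and the step I expect to be the main obstacle, is the passage from control of the pullbacks of the coarse sets $U_i$ to control of the pullbacks of the fine sets $U_{i\lambda}$: the hypothesis only tests the $U_i$, whereas the criterion must be checked for the affinoid-subdomain covering. The resolution is to localize onto each member $Z_k$ of the first refinement, where $\phi$ factors through a \emph{single} admissible open $U_{i(k)}$; there the admissibility of the chosen affinoid-subdomain covering of $U_{i(k)}$ may be invoked through continuity, and the quasi-compactness of the affinoid $Z_k$ supplies the finiteness. The only bookkeeping left is to confirm that the two refinement steps compose to a genuinely finite and genuinely refining family, which is immediate because $n$ and each $\Lambda_k'$ are finite.
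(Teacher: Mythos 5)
There is no proof in the paper to compare against: Lemma \ref{easyrigidlem} is stated with the explicit remark that its proof ``is left as an exercise to the reader.'' Your argument is a correct solution of that exercise, and it is the expected one: it unwinds the definition of the strong G-topology of \cite{bgr} 9.1.4/2--3 (the rigid counterpart of Proposition \ref{explicitprop}), replacing each coarse admissible open $U_i$ by a covering $(U_{i\lambda})_\lambda$ by affinoid subdomains of $X$ satisfying the finiteness test, and then composing the two refinement steps. All the individual steps check out: preimages of affinoid subdomains under morphisms of affinoid spaces are affinoid subdomains (\cite{bgr} 7.2.2/4), affinoid subdomains are transitive (\cite{bgr} 7.2.2/3), and the composed family is finite, covers $Y$, and refines $(\phi^{-1}(U_{i\lambda}))_{(i,\lambda)}$ as you say. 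One small streamlining is possible at the step you flag as delicate: rather than invoking continuity of $\psi_k$ for $\sT_\rig$ and quasi-compactness of the affinoid $Z_k$, you can apply the defining condition of \cite{bgr} 9.1.4/2 --- which the covering $(U_{i(k)\lambda})_\lambda$ of $U_{i(k)}$ satisfies by its very choice --- directly to the affinoid morphism $\psi_k\colon Z_k\rightarrow X$, which at once produces a finite refinement of $(\psi_k^{-1}(U_{i(k)\lambda}))_\lambda$ by affinoid subdomains of $Z_k$; this is harmless in your write-up anyway, since the quasi-compactness of affinoid spaces in $\sT_\rig$ that you invoke is itself proved by exactly this test applied to the identity morphism.
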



\begin{prop}\label{srigrigcompprop}
The rigid G-topology $\sT_\rig$ on $X$ is finer than the uniformly rigid G-topology $\sT_\srig$. 
\end{prop}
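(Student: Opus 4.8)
The plan is to establish both halves of the assertion that $\sT_\rig$ is finer than $\sT_\srig$: that every $\sT_\srig$-admissible subset of $X=\sSp A$ is admissibly open in $X^\r$, and that every $\sT_\srig$-admissible covering is $\sT_\rig$-admissible; throughout we identify $|X^\r|$ with $|X|=\Max A$ along the canonical bijection of Section \ref{rigspaceofsemaffspacesec}. By the explicit description of $\sT_\srig$ in Proposition \ref{explicitprop}, a $\sT_\srig$-admissible subset $U$ comes equipped with a covering $(U_i)_{i\in I}$ by semi-affinoid subdomains whose pullbacks along semi-affinoid test morphisms admit leaflike refinements, and likewise for $\sT_\srig$-admissible coverings. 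Each $U_i$ is already known to be admissibly open in $X^\r$: as recorded after Definition \ref{semaffsubdomdefi}, open immersions and completion morphisms induce admissible open immersions of rigid generic fibers while admissible blowups induce isomorphisms. It therefore remains to verify the pullback criterion of Lemma \ref{easyrigidlem} and its analogue for coverings.

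The bridge between the two G-topologies is the following translation. For an affinoid $K$-space $Y=\Sp B$, viewed also as the semi-affinoid space $\sSp B$ with $(\sSp B)^\r=\Sp B$, Proposition \ref{semaffspaceassocprop} identifies a rigid morphism $\phi\colon Y\rightarrow X^\r$ with a $K$-homomorphism $A\rightarrow B$, that is, with a semi-affinoid morphism $\phi'\colon\sSp B\rightarrow X$; since the two morphisms induce the same map $\Max B\rightarrow\Max A$, they are compatible with the formation of preimages of subdomains, so that $(\phi')^{-1}(U_i)$ and $\phi^{-1}(U_i^\r)$ agree as subsets of $Y$. Now a leaflike covering is by definition finite and consists of retrocompact semi-affinoid subdomains; by Lemma \ref{veryspecialinafflem}, a retrocompact semi-affinoid subdomain of the affinoid space $\sSp B$ is an affinoid subdomain of $\Sp B=Y$. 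Hence a leaflike refinement of $((\phi')^{-1}(U_i))_i$ is precisely a finite refinement of $(\phi^{-1}(U_i^\r))_i$ by affinoid subdomains of $Y$, which is exactly the kind of refinement that Lemma \ref{easyrigidlem} requires.

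Since $X^\r=\fX^\rig$ is not affinoid, I would then localize: Berthelot's construction furnishes an admissible affinoid covering $(X^{(m)})$ of $X^\r$, and admissibility of subsets and of coverings is local for the saturated G-topology $\sT_\rig$. Given a $\sT_\srig$-admissible $U$ with $U^\r=\bigcup_iU_i^\r$, it then suffices to show that each $U^\r\cap X^{(m)}$ is admissible in $X^{(m)}$; for any affinoid $Y$ and any morphism $\phi\colon Y\rightarrow X^{(m)}$ with image in $U^\r$, the associated $\phi'\colon\sSp B\rightarrow X$ has image in $U$, so Proposition \ref{explicitprop}($i$) supplies a leaflike, hence finite affinoid-subdomain, refinement of $(\phi^{-1}(U_i^\r))_i$, and Lemma \ref{easyrigidlem} yields admissibility of $U^\r\cap X^{(m)}$. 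The admissibility of a $\sT_\srig$-admissible covering follows by the same localization, invoking Proposition \ref{explicitprop}($ii$) and the covering counterpart of the criterion built into the definition of the strong rigid G-topology, cf.\ \cite{bgr} 9.1.2/1. The main obstacle is exactly this change of test objects: Proposition \ref{explicitprop} tests admissibility against semi-affinoid spaces and returns retrocompact (leaflike) refinements, whereas Lemma \ref{easyrigidlem} tests against affinoid spaces and demands affinoid-subdomain refinements. The two are reconciled by Lemma \ref{veryspecialinafflem} together with the fact that, although $X^\r$ itself need not be quasi-compact, it is covered by the Berthelot affinoids on which Lemma \ref{easyrigidlem} applies.
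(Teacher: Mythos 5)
Your proof is correct and follows essentially the same route as the paper's: localize on $X^\r$ to an affinoid situation, reinterpret affinoid test morphisms as semi-affinoid ones via Proposition \ref{semaffspaceassocprop}, extract leaflike refinements from Proposition \ref{explicitprop}, convert them into affinoid-subdomain refinements via Lemma \ref{veryspecialinafflem}, and conclude with Lemma \ref{easyrigidlem}. The only (cosmetic) difference is that you test admissibility directly on the affinoid pieces $X^{(m)}$ of the Berthelot covering, whereas the paper pulls $U$ back along the semi-affinoid morphism $V\rightarrow X$ attached to an affinoid subspace $V'\subseteq X^\r$ so as to reduce to the case where the algebra of global functions on $X$ is affinoid.
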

\begin{proof}
It is clear that $\sT_\aux$-admissible subsets and $\sT_\aux$-admissible coverings are $\sT_\rig$-admissible. Let $U\subseteq X$ be a $\sT_\urig$-admissible subset. To check that $U$ is $\sT_\rig$-admissible, we  may work locally on $X^\r$. Let $V'\subseteq X^\r$ be an affinoid subspace; by Proposition \ref{semaffspaceassocprop}, the open immersion $V'\hookrightarrow X^\r$ corresponds to a morphism $V\rightarrow X$, where $V$ denotes the semi-affinoid $K$-space associated to $V'$ such that $V'=V^\r$. After pulling $U$ back under this morphism, we may thus assume that the $K$-algebra of global functions on $X$ is affinoid. Let $(U_i)_{i\in I}$ be a covering of $U$ by semi-affinoid subdomains in $X$ such that condition ($i$) of Proposition \ref{explicitprop} is satisfied. Let $Y$ be an affinoid $K$-space, and let $\phi\colon Y\rightarrow X^\r$ be a morphism of rigid spaces that factors through $U$. By Proposition \ref{semaffspaceassocprop}, we may also view $\phi$ as a morphism of semi-affinoid $K$-spaces. By assumption, the covering $(\phi^{-1}(U_i))_{i\in I}$ of $Y$ has a leaflike refinement; by Lemma \ref{veryspecialinafflem}, this refinement is affinoid. It now follows from Lemma \ref{easyrigidlem} that $U\subseteq X$ is $\sT_\rig$-admissible.

Let now $(U_i)_{i\in I}$ be a $\sT_\srig$-admissible covering of $U$ by $\sT_\srig$-admissible subsets $U_i$. We have seen that $U$ and the $U_i$ are $\sT_\rig$-admissible; we claim that the covering $(U_i)_{i\in I}$ is $\sT_\rig$-admissible as well. Again, we may work locally on $X^\r$ and thereby assume that the $K$-algebra of functions on $X$ is affinoid. Let $Y$ be an affinoid $K$-space, and let $\phi\colon Y\rightarrow X^\r$ be a morphism of affinoid $K$-spaces, which we may also view as a morphism of semi-affinoid $K$-spaces. Since $(U_i)_{i\in I}$ is $\sT_\srig$-admissible, we see by Proposition \ref{explicitprop} ($ii$) that $(\phi^{-1}(U_i))_{i\in I}$ has a leaflike and, hence, affinoid refinement. It follows that $(U_i)_{i\in I}$ is $\sT_\rig$-admissible.
\end{proof}

If $U\subseteq X$ is a semi-affinoid subdomain, then the morphism $U^\r\rightarrow X^\r$ provided by Proposition \ref{semaffspaceassocprop} is an open immersion onto the preimage of $U$ under the continuous bijection $\comp_X:X^\r\rightarrow X$; hence $\comp_X$ extends to a morphism of G-ringed $K$-spaces with respect to $\sT_\aux$, which then again extends uniquely to a morphism of G-ringed $K$-spaces with respect to $\sT_\urig$. One easily verifies that $\comp_X$ is local.

\begin{prop}\label{compmorunivprop}\index{rigid space!of a uniformly rigid space}
The morphism $\comp_X$ is final among all morphisms from rigid $K$-spaces to $X$.
\end{prop}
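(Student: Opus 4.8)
The goal is to show that $\comp_X\colon X^\r\to X$ is final among all morphisms of locally G-ringed $K$-spaces from rigid $K$-spaces to $X$. That is, for any rigid $K$-space $Z$ and any morphism $\psi\colon Z\to X$ of locally G-ringed $K$-spaces, there is a unique morphism $\tilde\psi\colon Z\to X^\r$ of rigid $K$-spaces with $\comp_X\circ\tilde\psi=\psi$. The plan is to reduce everything to the semi-affinoid case, where the universal property of $X^\r$ has essentially already been established in Proposition \ref{semaffspaceassocprop}, and then to globalize by working locally on $Z$ and invoking the fact that $\comp_X$ is a continuous bijection inducing isomorphisms of completed stalks.

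First I would treat the case where $X=\sSp A$ is semi-affinoid. Given $\psi\colon Z\to X$ with $Z$ rigid, I would work locally on $Z$ and assume $Z=\Sp B$ is affinoid. Since $\psi$ is a morphism of locally G-ringed $K$-spaces, it supplies a $K$-algebra homomorphism on global sections $\psi^\sharp\colon\O_X(X)=A\to\O_Z(Z)=B$. Proposition \ref{semaffspaceassocprop} then provides a \emph{unique} morphism of rigid $K$-spaces $\sigma\colon Z\to X^\r$ such that the composition $A\to\Gamma(X^\r,\O_{X^\r})\to B$ equals $\psi^\sharp$. The content of the argument is to verify that $\comp_X\circ\sigma=\psi$ as morphisms of locally G-ringed $K$-spaces, not merely that they agree on global sections. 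For this I would use that $\comp_X$ is a continuous bijection on physical points inducing isomorphisms of completed stalks: on physical points the map $X^\r\to X$ is a bijection, so a morphism to $X$ is determined by its composition with the inverse bijection on points, and the two morphisms $\comp_X\circ\sigma$ and $\psi$ induce the same map $A\to B$ on global functions, hence the same maps on physical points and, via the isomorphisms of completed stalks furnished in Section \ref{rigspaceofsemaffspacesec}, the same local homomorphisms of stalks. Uniqueness of $\sigma$ follows because any rigid morphism lifting $\psi$ must induce $\psi^\sharp$ on global sections, and Proposition \ref{semaffspaceassocprop} pins this down uniquely.

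Next I would globalize in the target $X$. A general uniformly rigid $K$-space admits an admissible semi-affinoid covering $(X_\alpha)$, and by Proposition \ref{srigrigcompprop} the preimages $X_\alpha^\r\subseteq X^\r$ form an admissible covering of $X^\r$ for $\sT_\rig$, with $\comp_X$ restricting to $\comp_{X_\alpha}\colon X_\alpha^\r\to X_\alpha$ on each piece. Given $\psi\colon Z\to X$, I would pull this covering back to an admissible covering of $Z$, apply the semi-affinoid case over each $X_\alpha$ to obtain rigid morphisms $Z_\alpha\to X_\alpha^\r\hookrightarrow X^\r$, and glue. The gluing is legitimate because the pieces agree on overlaps by the uniqueness already proved, and because morphisms of rigid $K$-spaces can be defined locally on the domain; the resulting $\tilde\psi$ satisfies $\comp_X\circ\tilde\psi=\psi$ and is unique.

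The main obstacle I anticipate is the passage in the semi-affinoid case from the equality of global-section homomorphisms to the equality of morphisms of locally G-ringed $K$-spaces, i.e.\ showing that $\comp_X\circ\sigma$ and $\psi$ genuinely coincide as morphisms and not just on $\O(X)$. The leverage here is entirely that $\comp_X$ is a bijection on points inducing isomorphisms of completed stalks, together with the separatedness provided by Krull's Intersection Theorem for the noetherian local rings $\O_{X,x}$: a morphism of locally G-ringed $K$-spaces into $X$ is determined by its effect on physical points and on completed stalks, and both $\comp_X\circ\sigma$ and $\psi$ induce the same data. A secondary technical point is ensuring that the locally-defined lifts glue without compatibility obstructions, but this is controlled by the uniqueness clause in the semi-affinoid case and by the standard fact, already noted in the excerpt, that uniformly rigid morphisms are determined locally on the domain.
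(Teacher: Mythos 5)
Your proof is correct and follows essentially the same route as the paper's: Proposition \ref{semaffspaceassocprop} supplies the unique rigid morphism agreeing with $\psi$ on global sections, and the equality $\comp_X\circ\sigma=\psi$ as morphisms of locally G-ringed $K$-spaces is then forced because the physical points and the completed stalks of $X$ are recovered from its $K$-algebra of global functions (with Krull's Intersection Theorem giving the needed separatedness). Note only that the paper states this proposition for semi-affinoid $X$, so your preliminary reduction to affinoid $Z$ is unnecessary (Proposition \ref{semaffspaceassocprop} already treats an arbitrary rigid $Y$), and your globalization in the target is precisely the ``standard glueing'' step the paper carries out separately after the proposition.
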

\begin{proof}
Let $Y$ be a rigid $K$-space, and let $\psi\colon Y\rightarrow X$ be a morphism of locally G-ringed $K$-spaces. By Proposition \ref{semaffspaceassocprop}, there is a unique morphism $\psi^\r\colon Y\rightarrow X^\r$ such that $\psi$ and $\comp_X\circ\psi^\r$ coincide on global sections. Since the points and the completed stalks of $X$ are recovered from the $K$-algebra of global sections of $X$, it follows that $\psi$ and $\comp_X\circ\psi^\r$ coincide.
\end{proof}

Let $X$ be any uniformly rigid $K$-space. Since the open semi-affinoid subspaces of $X$ form a basis for the G-topology on $X$, we can use standard glueing arguments to show that the comparison morphisms attached to these open semi-affinoid subspaces glue to a universal comparison morphism
\[
\comp_X\colon X^\r\rightarrow X
\]
from a rigid $K$-space to $X$.

\begin{remark}
The functor $X\mapsto X^\r$ is faithful, yet not fully faithful. For example, it is easily seen that an unbounded function on the rigid open unit disc induces a morphism to the rigid projective line over $K$ which is not induced by a morphism from the semi-affinoid open unit disc $\sSp (R[[S]]\otimes_RK)$ to the uniformly rigid projective line over $K$. Likewise, the functor $\r$ forgets the distinction between the semi-affinoid open unit disc just mentioned and the uniformly rigid open unit disc that is the generic fiber of a quasi-paracompact formal $R$-model of locally tf type for the rigid open unit disc. One can prove that $X\mapsto X^\r$ is fully faithful on the full subcategory of reduced semi-affinoid $K$-spaces.
\end{remark}

\begin{remark}
The functor $X\mapsto X^\r$ preserves fibered products. Indeed, this may be checked in the semi-affinoid situation, where it follows from the fact that fibered products of semi-affinoid spaces are uniformly rigid generic fibers of fibered products of affine flat formal $R$-models, together with the fact that Berthelot's generic fiber functor preserves fibered products, cf.\ \cite{dejong_crystalline} 7.2.4 (g). In particular, $X\mapsto X^\r$ preserves group structures.
\end{remark}

\begin{remark}
We have seen that $\comp_X$ induces isomorphisms of completed stalks. Examining Berthelot's construction, one easily sees that $\comp_X$ in fact already induces isomorphisms of non-completed stalks; the proof of this statement is left as an exercise.
\end{remark}

We have seen that every uniformly rigid $K$-space $X$ has an underlying classical rigid $K$-space $X^\r$ such that $X$ and $X^\r$ share all local properties. That is, a uniformly rigid $K$-space can be seen as a rigid $K$-space equipped with an additional global uniform structure. Every quasi-paracompact and quasi-separated rigid $K$-space carries a canonical uniformly rigid structure, which may be called the Raynaud-type uniform structure: let $\fC$ temporarily denote the category of quasi-paracompact flat formal $R$-schemes of locally tf type, and let $\fC_\Bl$ denote its localization with respect to the class of admissible formal blowups. It follows easily from the definitions that the functor $\srig|_\fC\colon\fC\rightarrow\sRig_K$ factors through a functor $\sr'\colon\fC_\Bl\rightarrow\sRig_K$. By \cite{bosch_frgnotes} Theorem 2.8/3, the functor $\rig$ induces an equivalence $\rig_\Bl$ between $\fC_\Bl$ and the category $\Rig'_K$ of quasi-paracompact and quasi-separated rigid $K$-spaces. The functor $\rig_\Bl$ will be called the \emph{Raynaud equivalence}. Composing $\sr'$ with a quasi-inverse of $\rig_\Bl$, we obtain a functor $\sr\colon\Rig'_K\rightarrow\sRig_K$; if $Y$ is in $\Rig'_K$, we say that $Y^\sr\mathrel{\mathop:}=\sr(Y)$ is the \emph{uniformly rigid $K$-space associated to $Y$}\index{uniformly rigid space!of a rigid space}. Of course, it depends on the choice of a quasi-inverse of the Raynaud equivalence.


\begin{prop}\label{srigspaceqisoidprop}
The composite functor $\r\circ\sr$ is quasi-isomorphic to the identity on $\Rig'_K$.
\end{prop}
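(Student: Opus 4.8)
The plan is to reduce the whole statement to a single natural isomorphism $\r\circ\srig\cong\rig$ of functors defined on flat formal $R$-schemes of locally tf type, and then to transport this isomorphism across the Raynaud equivalence $\rig_\Bl$. First I would record the affine comparison. For a morphism $\ul{\tau}\colon\fY\rightarrow\fX$ of affine flat formal $R$-schemes of ff type, the closing remark of Section \ref{rigspaceofsemaffspacesec} identifies the rigid morphism $(\ul{\tau}^\srig)^\r$ with $\ul{\tau}^\rig$; equivalently, the comparison morphism $\comp_{\fX^\srig}$ exhibits $(\fX^\srig)^\r$ as Berthelot's generic fiber $\fX^\rig$, naturally in $\fX$. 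Next I would globalize this to all of $\fC$: choosing an affine open cover of $\fX$, the induced open immersions of semi-affinoid generic fibers are carried by $\r$ to the open immersions used to glue $\fX^\rig$ — this is exactly the compatibility of $\comp$ with semi-affinoid subdomains recorded just before Proposition \ref{compmorunivprop} — so the affine isomorphisms glue to a natural isomorphism
\[
\eta\colon \r\circ\srig\;\overset{\sim}{\longrightarrow}\;\rig
\]
of functors $\fC\rightarrow\Rig_K$.

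Then I would descend $\eta$ along the localization $\fC\rightarrow\fC_\Bl$. Both its source and its target invert admissible formal blowups: the functor $\rig$ does so by the defining property of Berthelot's construction, while $\srig|_\fC$ factors through $\fC_\Bl$ by the very factorization through $\sr'$ that was used to define $\sr$, whence $\r\circ\srig$ factors through $\fC_\Bl$ as well. Since a natural isomorphism between two functors that both kill admissible blowups descends automatically to the localization, $\eta$ induces a natural isomorphism $\r\circ\sr'\cong\rig_\Bl$ of functors $\fC_\Bl\rightarrow\Rig_K$. Writing $\sr=\sr'\circ q$ for the chosen quasi-inverse $q$ of the Raynaud equivalence, I then obtain
\[
\r\circ\sr\;=\;\r\circ\sr'\circ q\;\cong\;\rig_\Bl\circ q\;\cong\;\mathrm{id}_{\Rig'_K},
\]
the last isomorphism being the defining property of a quasi-inverse. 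This gives the asserted quasi-isomorphism with the identity.

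The categorical skeleton of the argument is entirely formal; I expect the only genuine work to lie in the globalization step, that is, in checking that the affine isomorphisms $(\fX^\srig)^\r\cong\fX^\rig$ are compatible with restriction to an affine open formal subscheme and therefore glue naturally. Concretely this amounts to matching two gluing recipes — the one defining $\r$ on a uniformly rigid space through the comparison morphisms of its open semi-affinoid subspaces, and the one defining $\fX^\rig$ through Berthelot's construction on an affine open cover — both of which are governed by the fact that open immersions of flat formal $R$-schemes of locally ff type induce open immersions of rigid generic fibers. Beyond this bookkeeping I anticipate no real obstacle.
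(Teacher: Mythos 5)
Your proposal is correct and follows essentially the same route as the paper: the paper's proof likewise reduces to the functorial identification $(\fX^\srig)^\r=\fX^\rig$ for (quasi-paracompact flat) formal $R$-schemes of locally tf type and then concludes $\r\circ\sr\cong\rig\circ\rig_\Bl^{-1}\cong\id$ via the quasi-inverse of the Raynaud equivalence. Your version merely makes explicit two steps the paper compresses into the phrase ``functorially in $Y$'' -- the gluing of the affine comparison isomorphisms and the descent of the natural isomorphism along the localization $\fC\rightarrow\fC_\Bl$ -- which is a harmless (indeed slightly more careful) elaboration rather than a different argument.
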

\begin{proof} Let $\rig_\Bl^{-1}$ denote the chosen inverse of the Raynaud equivalence. Let $Y$ be an object of $\Rig'_K$; then $\rig_\Bl^{-1}(Y)$ is a quasi-paracompact flat formal $R$-model of locally tf type for $Y$, and $Y^\sr=\rig_\Bl^{-1}(Y)^\srig$, which implies that $(Y^\sr)^\r=\rig_\Bl^{-1}(Y)^\rig$, functorially in $Y$. That is, $\r\circ\sr=\rig\circ \rig_\Bl^{-1}$, which is isomorphic to the identity functor.
\end{proof}

In particular, after choosing an isomorphism $\r\circ\sr\cong\id$, the comparison morphisms $\comp_{Y^\sr}$ induce functorial comparison morphisms
\[
\comp_Y\colon Y\cong (Y^\sr)^r\rightarrow Y^\sr
\]
for all quasi-paracompact and quasi-separated rigid $K$-spaces $Y$.

\begin{cor}\label{assocsrigspacecompcor}
For $Y\in \Rig'_K$, the morphism $\comp_Y$ is the initial morphism from $Y$ to a uniformly rigid $K$-space.
\end{cor}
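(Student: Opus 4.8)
The plan is to verify the universal property directly: for every uniformly rigid $K$-space $X$ and every morphism $\psi\colon Y\to X$ of locally G-ringed $K$-spaces, I must produce a unique morphism $f\colon Y^\sr\to X$ of uniformly rigid $K$-spaces with $f\circ\comp_Y=\psi$. First I would reduce this to a statement about the functor $\r$ alone. By the construction of $\r$ on morphisms the comparison morphisms form a natural transformation, so $f\circ\comp_{Y^\sr}=\comp_X\circ f^\r$ for every $f$; under the identification $Y\cong(Y^\sr)^\r$ we have $\comp_Y=\comp_{Y^\sr}$, hence $f\circ\comp_Y=\psi$ is equivalent to $\comp_X\circ f^\r=\psi$. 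By Proposition \ref{compmorunivprop} there is a unique rigid morphism $\psi^\r\colon Y\to X^\r$ with $\comp_X\circ\psi^\r=\psi$, and the uniqueness in that finality statement shows $\comp_X\circ f^\r=\psi$ holds if and only if $f^\r=\psi^\r$. Thus the Corollary is equivalent to the assertion that
\[
\r\colon\operatorname{Hom}_{\sRig_K}(Y^\sr,X)\longrightarrow\operatorname{Hom}_{\Rig_K}(Y,X^\r),\qquad f\mapsto f^\r,
\]
is a bijection. Injectivity is the faithfulness of $\r$, so only surjectivity will require work.

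For surjectivity I would start from a rigid morphism $g\colon Y\to X^\r$ and build $f$ with $f^\r=g$ by gluing local pieces on the domain, which is legitimate since morphisms of uniformly rigid spaces are defined locally on the domain. Fix a quasi-paracompact flat formal $R$-model $\fY$ of locally tf type with $Y=\fY^\rig$ and $Y^\sr=\fY^\srig$; an affine covering of $\fY$ yields an admissible covering of $Y^\sr$ by open semi-affinoid subspaces $W_k$ whose underlying rigid spaces $W_k^\r$ are \emph{affinoid}, hence quasi-compact. Choose an admissible covering of $X$ by open semi-affinoid subspaces $X_j=\sSp A_j$; then $(X_j^\r)_j$ is an admissible covering of $X^\r$. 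Since each $W_k^\r$ is quasi-compact affinoid, the pulled-back admissible covering $(g^{-1}(X_j^\r))_j$ of $W_k^\r$ admits a finite refinement by rational subdomains, each mapping under $g$ into a single $X_j^\r$. These rational subdomains are retrocompact semi-affinoid subdomains $W_{kl}$ of $W_k$, and by the theorem of Gerritzen--Grauert together with Corollary \ref{srigpropcor}~($ii$) the family $(W_{kl})_{k,l}$ is a $\sT_\srig$-admissible covering of $Y^\sr$.

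On each piece I would then lift by hand. Fix $(k,l)$ with $g(W_{kl}^\r)\subseteq X_j^\r$; the restriction $W_{kl}^\r\to X_j^\r$ is a morphism from an affinoid space to $(\sSp A_j)^\r$, so by Proposition \ref{semaffspaceassocprop} it corresponds to a $K$-homomorphism $A_j\to\O(W_{kl}^\r)$. Because $W_{kl}$ is quasi-compact, Lemma \ref{veryspecialinafflem} identifies it with an affinoid subdomain, so $\O_{W_{kl}}(W_{kl})$ equals the affinoid algebra $\O(W_{kl}^\r)$ and there is \emph{no} discrepancy between the rigid and the uniformly rigid rings of global functions. Hence the same homomorphism defines, via the description of morphisms into semi-affinoid spaces by their effect on global functions, a morphism $f_{kl}\colon W_{kl}\to X_j\hookrightarrow X$ with $f_{kl}^\r=g|_{W_{kl}^\r}$. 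Since $\r$ is faithful and all the $f_{kl}^\r$ are restrictions of the single morphism $g$, the $f_{kl}$ agree on overlaps and glue to a morphism $f\colon Y^\sr\to X$ with $f^\r=g$, completing the argument.

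The main obstacle is the mismatch of G-topologies: $\sT_\rig$ on $Y$ is strictly finer than $\sT_\srig$ on $Y^\sr$, so a $g$-preimage of an admissible open of $X^\r$ need not be $\sT_\srig$-admissible in $Y^\sr$, and one cannot simply pull the covering $(X_j)$ back along $g$. The resolution, and the step that must be arranged with care, is to cover $Y^\sr$ by its \emph{own} semi-affinoid opens whose underlying rigid spaces are quasi-compact affinoid; such a covering exists precisely because $Y$ is quasi-paracompact and therefore carries a formal model of locally tf type. On these quasi-compact pieces the uniformly rigid and rigid function theories coincide and the admissibility of rational refinements is furnished by Gerritzen--Grauert, so the local lifting is free of any boundedness obstruction and faithfulness of $\r$ disposes of the gluing.
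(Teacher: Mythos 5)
Your argument is correct, but it resolves the one genuinely delicate point by a different mechanism than the paper. Both proofs share the same skeleton: uniqueness comes from $\comp_Y$ being bijective on points and an isomorphism on stalks (your reformulation as bijectivity of $\r$ on $\Hom$-sets via Proposition \ref{compmorunivprop} is a harmless repackaging), and the local lifting in the base case ``affinoid source, semi-affinoid target'' is in both cases just the identification of morphisms with $K$-algebra homomorphisms of global functions. The crux is producing a $\sT_\srig$-admissible covering of $Y^\sr$ adapted to the given morphism, and here the routes diverge. The paper first chooses an admissible \emph{affinoid} covering $(Y_j)$ of $Y$ refining $(\psi^{-1}(X_i))$ and then invokes \cite{bosch_frgnotes} Lemma 2.8/4 to realize $(Y_j)$ by an open covering of a single quasi-paracompact formal $R$-model of locally tf type, which makes $(Y_j^\sr)$ admissible in $Y^\sr$ essentially by the construction of $\urig$; the adaptation of the covering is thus pushed into formal geometry. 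You instead fix an \emph{arbitrary} formal model, take the semi-affinoid charts $W_k$ coming from its affine covering, and perform the adaptation inside each affinoid chart: a finite rational refinement of $(g^{-1}(X_j^\r))$ exists by the definition of the strong G-topology plus Gerritzen--Grauert, these rational pieces are retrocompact semi-affinoid subdomains whose $\sT_\srig$-admissibility is Corollary \ref{srigpropcor}~($ii$) (i.e.\ Proposition \ref{retroadmprop}), and the matching of rigid and uniformly rigid function rings on the pieces is Lemma \ref{veryspecialinafflem} (together with Corollary \ref{afflattfintypecor}); gluing is then disposed of by faithfulness of $\r$. So your proof trades the external Raynaud-theoretic input (realizing a prescribed covering on a model) for the intrinsic G-topological machinery already developed in the paper -- notably the same toolkit used there to prove Proposition \ref{srigrigcompprop} -- at the cost of the Gerritzen--Grauert refinement step; the paper's version is shorter, while yours needs no covering-adapted model and makes the verification $f^\r=g$ explicit. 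Two cosmetic remarks: Lemma \ref{veryspecialinafflem} applies to $W_{kl}$ because it is a \emph{retrocompact} semi-affinoid subdomain of $\sSp B_k$ with $B_k$ affinoid, not because $W_{kl}$ is quasi-compact (and for rational subdomains the identification of function rings is immediate anyway); and your naturality square $f\circ\comp_{Y^\sr}=\comp_X\circ f^\r$ holds by the very definition of $f^\r$ through the global finality of $\comp_X$, so no separate verification is needed.
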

\begin{proof}
Let $X$ be a uniformly rigid $K$-space, and let $\psi\colon Y\rightarrow X$ be a morphism of locally G-ringed $K$-spaces. The morphism $\comp_Y$ is a bijection on points, and it induces isomorphisms of stalks; hence the morphism $Y^\sr\rightarrow X$ that we seek is unique if it exists. If $Y$ is affinoid and $X$ is semi-affinoid, there is nothing to show. Let $(X_i)_{i\in I}$ be an admissible semi-affinoid covering of $X$, and let $(Y_j)_{j\in J}$ be an admissible affinoid covering of $Y$ refining $(\psi^{-1}(X_i))_{i\in I}$. It suffices to see that $(Y_j^\sr)_{j\in J}$ is an admissible covering of $Y^\sr$. By \cite{bosch_frgnotes} Lemma 2.8/4, there exists a flat quasi-paracompact $R$-model of locally tf type $\fY$ for $Y$ such that $(Y_j)_{j\in J}$ is induced by an open covering of $\fY$. Since $\fY^\srig=Y^\sr$, it follows that $(Y_j^\sr)_{j\in J}$ is an admissible covering of $Y^\sr$, as desired. 
\end{proof}

\begin{cor}\label{ffcor}
The functor $\sr$ is fully faithful.
\end{cor}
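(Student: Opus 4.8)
The plan is to deduce both faithfulness and fullness formally from the two universal properties of the comparison morphisms together with Proposition \ref{srigspaceqisoidprop}. Fix a natural isomorphism $\eta\colon\r\circ\sr\xrightarrow{\ \sim\ }\id$ as in that proposition, so that $\eta_Y\colon(Y^\sr)^\r\xrightarrow{\ \sim\ }Y$ and, by the compatibility recorded there, $\comp_Y=\comp_{Y^\sr}\circ\eta_Y^{-1}$ for every $Y\in\Rig'_K$. Faithfulness is then immediate: naturally isomorphic functors are simultaneously faithful, and the identity is faithful, so $\r\circ\sr$ is faithful; since a functor whose post-composition with another functor is faithful is itself faithful, $\sr$ is faithful.

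For fullness I would, for fixed $Y,Z\in\Rig'_K$, assemble the chain of bijections
\[
\operatorname{Hom}_{\sRig_K}(Y^\sr,Z^\sr)\;\xrightarrow{\ \sim\ }\;\operatorname{Hom}(Y,Z^\sr)\;\xrightarrow{\ \sim\ }\;\operatorname{Hom}_{\Rig_K}(Y,(Z^\sr)^\r)\;\xrightarrow{\ \sim\ }\;\operatorname{Hom}_{\Rig'_K}(Y,Z),
\]
where the middle term is taken in the category of locally G-ringed $K$-spaces. The first bijection, $g\mapsto g\circ\comp_Y$, is the initiality of $\comp_Y$ (Corollary \ref{assocsrigspacecompcor}); the second is inverse to $h\mapsto\comp_{Z^\sr}\circ h$, which is a bijection by the finality of $\comp_{Z^\sr}$ (Proposition \ref{compmorunivprop}); the third is induced by $\eta_Z$ together with the fullness of $\Rig'_K$ in $\Rig_K$. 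I denote the composite by $\Phi$.

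It then remains to verify that $\Phi$ is a two-sided inverse of the map $f\mapsto\sr(f)$ induced by the functor $\sr$; this is the step carrying the actual content. Tracing $\sr(f)$ through the chain, the first arrow sends it to $\sr(f)\circ\comp_Y$, which by functoriality of the comparison morphisms equals $\comp_Z\circ f=\comp_{Z^\sr}\circ\eta_Z^{-1}\circ f$. The uniqueness clause in the finality of $\comp_{Z^\sr}$ then forces the factorization $h$ to equal $\eta_Z^{-1}\circ f$, and applying $\eta_Z$ returns $f$; hence $\Phi(\sr(f))=f$. Since $\Phi$ is a bijection and is a right inverse to $f\mapsto\sr(f)$ on Hom-sets, the latter is a bijection, so $\sr$ is full.

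I expect the main obstacle to be precisely this last identification $\Phi(\sr(f))=f$, that is, matching the morphism produced abstractly by the two universal properties with the honest image of a rigid morphism under $\sr$. The decisive ingredient is the functoriality of the comparison morphisms, i.e.\ the naturality relation $\comp_Z\circ f=\sr(f)\circ\comp_Y$, combined with the compatibility $\comp_Y=\comp_{Y^\sr}\circ\eta_Y^{-1}$; once these are invoked, the verification collapses onto the uniqueness statements already contained in Corollary \ref{assocsrigspacecompcor} and Proposition \ref{compmorunivprop}, and no further geometric input is needed.
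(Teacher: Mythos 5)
Your proof is correct and takes essentially the same route as the paper: the identical chain of bijections coming from Proposition \ref{srigspaceqisoidprop}, the global finality of $\comp_{Z^\sr}$ (Proposition \ref{compmorunivprop}) and the initiality of $\comp_Y$ (Corollary \ref{assocsrigspacecompcor}), with your explicit check that the composite bijection inverts $f\mapsto\sr(f)$ being precisely the content the paper compresses into the word ``functorial''. One cosmetic slip: having shown $\Phi(\sr(f))=f$, you have exhibited $\Phi$ as a \emph{left} inverse of $f\mapsto\sr(f)$ (not a right inverse), but since $\Phi$ is bijective this still forces $f\mapsto\sr(f)$ to equal $\Phi^{-1}$, so the conclusion stands.
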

\begin{proof}
Let $X$ and $Y$ be objects in $\Rig'_K$. By Proposition \ref{srigspaceqisoidprop},  by the global variant of Proposition \ref{compmorunivprop} and by Corollary \ref{assocsrigspacecompcor}, we have functorial bijections
\begin{eqnarray*}
\Hom(Y,X)&\cong&\Hom(Y,(X^\sr)^\r)\\
&\cong&\Hom(Y,X^\sr)\\
&\cong&\Hom(Y^\sr,X^\sr)\;.
\end{eqnarray*}
\end{proof}

Of course, if $X$ is any uniformly rigid $K$-space, then the comparison morphism 
\[
\comp_X\colon X^\r\rightarrow X
\]
is \emph{not} initial all morphisms from $X^\r$ to uniformly rigid $K$-spaces. For example, if $X$ is the semi-affinoid open unit disc $\sSp (R[[S]]\otimes_RK)$, then the natural morphism $\comp_{X^\r}$ from the rigid open unit disc $X^\r$ to its uniform rigidification $(X^\r)^\sr$ does not extend to a morphism $X\rightarrow (X^\r)^\sr$. Indeed, such a morphism would have to be the identity on points, but $X$ is quasi-compact, while $(X^\r)^\sr$ is not quasi-compact.

The functor $Y\mapsto Y^\sr$ does \emph{not} respect arbitrary open immersions. For example, if $Y'\subseteq Y$ is the inclusion of the open rigid unit disc into the closed rigid unit disc, the morphism $(Y')^\sr\rightarrow Y^\sr$ is not an open immersion: its image is the semi-affinoid open unit disc, while $(Y')^\sr$ is not quasi-compact. However, it follows from \cite{frg2} 5.7 that $\sr$ preserves open immersions of \emph{quasi-compact} rigid $K$-spaces.

Quasi-separated rigid $K$-spaces are obtained from affinoid $K$-spaces by glueing along \emph{quasi-compact} admissible open subspaces, it thus follows that $\sr$ preserves fibered products. Indeed, this can now be checked in an affinoid situation, where the statement is clear from the construction of semi-affinoid fibered products. In particular, $Y\mapsto Y^\sr$ preserves group structures.

\section{Coherent modules on uniformly rigid spaces}\label{cohmodsec}

Let $X$ be a G-ringed $K$-space, and let $\sF$ be an $\O_X$-module. Let us recall some standard definitions concerning the coherence property, cf.\ \cite{bosch_frgnotes} 1.14/2:
\begin{packed_enum}\index{module!coherent}
\item $\sF$ is called of \emph{finite type} if there exists an admissible covering $(X_i)_{i\in I}$ of $X$ together with exact sequences
\[
\O_X^{s_i}|_{X_i}\rightarrow\sF|_{X_i}\rightarrow 0\;.
\]
\item $\sF$ is called \emph{coherent} if $\sF$ is of finite type and if for any admissible open subspace $U\subseteq X$, the kernel of any morphism $\O_X^s|_U\rightarrow\sF|_U$ is of finite type.
\end{packed_enum}

If $X$ is a semi-affinoid $K$-space with ring of functions $A$, then the functor $M\mapsto \tilde{M}$ on the category of finite $A$-modules is well-behaved, as it is shown by the following lemma. The proof of Lemma \ref{assocbasicpropertieslem} is identical to the proof of \cite{bosch_frgnotes} 1.14/1; one uses the fact that the restriction homomorphisms associated to semi-affinoid subdomains are flat:

\begin{lem}\label{assocbasicpropertieslem}\index{module!associated}
The functor $M\mapsto\tilde{M}$ from the category of finite $A$-modules to the category of $\O_X$-modules is fully faithful, and it commutes with the formation of kernels, images, cokernels and tensor products. Moreover, a sequence of finite $A$-modules
\[
0\rightarrow M'\rightarrow M\rightarrow M''\rightarrow 0
\]
is exact if and only if the associated sequence
\[
0\rightarrow \tilde{M}'\rightarrow \tilde{M}\rightarrow \tilde{M}''\rightarrow 0
\]
of $\O_X$-modules is exact.
\end{lem}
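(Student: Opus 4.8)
The plan is to reduce every assertion to the behaviour of the functor on sections over semi-affinoid subdomains, where by construction $\tilde{M}(U)=M\otimes_A\O_X(U)$, and then to invoke two facts established above: that the semi-affinoid subdomains form a basis for $\sT_\urig$ (cf.\ Proposition \ref{explicitprop}), so that morphisms, kernels, images, cokernels and tensor products of associated sheaves may be computed and compared over this basis; and that for a semi-affinoid subdomain $U\subseteq X$ the restriction homomorphism $A\to\O_X(U)$ is flat. Flatness is the crux: it guarantees that tensoring a sequence of finite $A$-modules with $\O_X(U)$ is exact. Hence, writing $K$, $I$, $C$ for the kernel, image and cokernel of a map $M\to N$ of finite $A$-modules, the basis-presheaf kernel $U\mapsto\ker(\tilde{M}(U)\to\tilde{N}(U))$ equals $K\otimes_A\O_X(U)=\tilde{K}(U)$, and likewise the basis-presheaf image and cokernel coincide with $\tilde{I}$ and $\tilde{C}$; since these associated presheaves are already sheaves by Corollary \ref{moduleacythmcor}, no sheafification intervenes and the functor commutes with all four operations. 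For tensor products one uses in addition the base-change identity $(M\otimes_A\O_X(U))\otimes_{\O_X(U)}(N\otimes_A\O_X(U))=(M\otimes_AN)\otimes_A\O_X(U)$, which exhibits the presheaf tensor product over the basis as the sheaf $\widetilde{M\otimes_AN}$.

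For full faithfulness I would argue exactly as in \cite{bgr} 8.2.1: choose a finite presentation $A^p\to A^q\to M\to 0$ and apply the functor, which is right-exact by the previous paragraph, to obtain $\O_X^p\to\O_X^q\to\tilde{M}\to 0$. Applying $\Hom_A(-,N)$ and $\Hom_{\O_X}(-,\tilde{N})$ to these two presentations, and using $\Hom_{\O_X}(\O_X^q,\tilde{N})=\tilde{N}(X)^q=N^q$ (since $\O_X(X)=A$ and $\tilde{N}(X)=N$), identifies both $\Hom_A(M,N)$ and $\Hom_{\O_X}(\tilde{M},\tilde{N})$ with the same kernel $\ker(N^q\to N^p)$, the two connecting maps being precomposition with the same presentation matrix. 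The induced map between them is precisely the one coming from the functor, which is therefore bijective.

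Finally, for the equivalence of exactness I would treat the two directions separately. The forward direction is immediate: flatness of the restriction maps preserves exactness over the basis, hence on stalks. For the converse, since the functor commutes with kernels and cokernels, the sequence $0\to\tilde{M}'\to\tilde{M}\to\tilde{M}''\to 0$ is exact precisely when the associated sheaves of $\ker(M'\to M)$, of the homology of $M'\to M\to M''$, and of $\operatorname{coker}(M\to M'')$ all vanish; as each of these is a finite $A$-module $N$ with $\tilde{N}(X)=N$, vanishing of $\tilde{N}$ forces $N=0$, which is exactly exactness of the original sequence. The only genuine subtlety — and the point where flatness is indispensable rather than formal — is the identification, made in the first paragraph, of the basis-presheaf images and cokernels with the associated sheaves, so that no sheafification is needed; once this is secured, each remaining step is the verbatim transcription of \cite{bosch_frgnotes} 1.14/1 promised in the statement.
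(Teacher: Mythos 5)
Your proposal is correct and takes essentially the same route as the paper, whose proof is simply the verbatim transcription of \cite{bosch_frgnotes} 1.14/1 with flatness of the restriction homomorphisms $A\rightarrow\O_X(U)$ for semi-affinoid subdomains $U$ as the key input --- exactly the skeleton you spell out (flat base change over the basis of semi-affinoid subdomains, full faithfulness via finite presentations, and the exactness equivalence via vanishing of the kernel, homology and cokernel sheaves recovered from global sections). Your only extra care, checking that no sheafification intervenes for images and cokernels, is a sound and welcome expansion of what the paper leaves implicit.
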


For a semi-affinoid $K$-space $X=\sSp A$, we have $\O_X^r=A^r\otimes\O_X$. Since $A$ is \emph{noetherian}, it follows from Lemma \ref{assocbasicpropertieslem} that kernels and cokernels of morphisms of type $\O_X^r\rightarrow\O_X^s$ are  associated. We thus conclude that an $\O_X$-module on a uniformly rigid $K$-space $X$ is coherent if and only if there exists an admissible semi-affinoid covering $(X_i)_{i\in I}$ of $X$ such that $\sF|_{X_i}$ is associated for all $i\in I$.

In particular, the structural sheaf $\O_X$ of any uniformly rigid $K$-space $X$ is coherent. Moreover, it follows from Lemma \ref{assocbasicpropertieslem} that kernels and cokernels of  morphisms of coherent $\O_X$-modules are coherent.

\begin{lem}
Let $\phi\colon Y\rightarrow X$ be a morphism of uniformly rigid $K$-spaces, and let $\sF$ be a coherent $\O_X$-module. Then $\phi^*\sF$ is a coherent $\O_Y$-module. 
\end{lem}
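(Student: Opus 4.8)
The plan is to verify coherence locally on $Y$ and thereby reduce the problem to computing the pullback of an associated module under a morphism of semi-affinoid $K$-spaces. Recall from the discussion preceding Lemma \ref{assocbasicpropertieslem} that an $\O_Y$-module is coherent precisely when there is an admissible semi-affinoid covering of $Y$ over whose members the module is associated; so it suffices to produce such a covering for $\phi^*\sF$.

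First I would exploit the coherence of $\sF$ on $X$: choose an admissible semi-affinoid covering $(X_i)_{i\in I}$ of $X$ with $\sF|_{X_i}\cong\tilde M_i$ associated to a finite $A_i$-module $M_i$, where $A_i=\O_X(X_i)$. Since $\phi$ is continuous for the uniformly rigid G-topology, the family $(\phi^{-1}(X_i))_{i\in I}$ is an admissible covering of $Y$; because the open semi-affinoid subspaces form a basis for the G-topology on $Y$ by Lemma \ref{saffbasislem}, this covering admits an admissible refinement $(Y_k)_{k\in K}$ by open semi-affinoid subspaces. For each $k$ I pick an index $i(k)$ with $Y_k\subseteq\phi^{-1}(X_{i(k)})$, so that $\phi$ restricts to a morphism of semi-affinoid $K$-spaces $\phi_k\colon Y_k\rightarrow X_{i(k)}$. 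Using that pullback commutes with restriction to admissible open subspaces, I obtain $(\phi^*\sF)|_{Y_k}\cong\phi_k^*(\sF|_{X_{i(k)}})=\phi_k^*\tilde M_{i(k)}$, so everything comes down to the semi-affinoid case.

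The heart of the argument is therefore to show that for a morphism $\phi\colon\sSp B\rightarrow\sSp A$ of semi-affinoid $K$-spaces and a finite $A$-module $M$, the pullback $\phi^*\tilde M$ is associated, namely $\phi^*\tilde M\cong\widetilde{M\otimes_A B}$. Here I would start from a finite presentation $A^s\rightarrow A^r\rightarrow M\rightarrow 0$; applying the functor $M\mapsto\tilde M$, which commutes with cokernels and sends $A^r$ to $\O_X^r$ by Lemma \ref{assocbasicpropertieslem}, yields an exact sequence $\O_X^s\rightarrow\O_X^r\rightarrow\tilde M\rightarrow 0$. Now $\phi^*$ is right exact and satisfies $\phi^*\O_X=\O_Y$, so pulling back gives an exact sequence $\O_Y^s\rightarrow\O_Y^r\rightarrow\phi^*\tilde M\rightarrow 0$ in which the map $\O_Y^s\rightarrow\O_Y^r$ is given by the image over $B$ of the presentation matrix of $M$. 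Comparing with the presentation of $M\otimes_A B$ over $B$ and applying $M\mapsto\tilde M$ on $Y$ identifies $\phi^*\tilde M$ with $\widetilde{M\otimes_A B}$, which is associated and hence coherent.

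I expect the main obstacle to be formal rather than conceptual: ensuring that the inverse-image formalism $\phi^*$ for morphisms of locally G-ringed $K$-spaces behaves as in the classical sheaf-theoretic setting, in particular that $\phi^*$ is right exact, commutes with the restriction functors to admissible open subspaces, and carries $\O_X$ to $\O_Y$ and $\O_X^r$ to $\O_Y^r$. These facts follow from the adjunction between $\phi^*$ and $\phi_*$ together with properties (G$_0$)--(G$_2$), but some care is needed because a sheafification intervenes in the construction of $\phi^*$; once they are in place, both the localization step and the final identification of cokernels are routine.
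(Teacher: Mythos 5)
Your proof is correct and follows essentially the same route as the paper: the paper's (very terse) proof likewise reduces to the semi-affinoid case $\phi\colon\sSp B\rightarrow\sSp A$ and identifies $\phi^*\tilde M$ with $\widetilde{M\otimes_AB}$. You have merely filled in the localization step via Lemma \ref{saffbasislem} and the presentation argument via Lemma \ref{assocbasicpropertieslem}, both of which the paper leaves implicit.
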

\begin{proof}
Indeed, we may assume that $X$ and $Y$ are semi-affinoid, $X=\sSp A$, $Y=\sSp B$, and that $\sF$ is associated to a finite $A$-module $M$. Then $\phi^*\sF$ is associated to $M\otimes_AB$, where $B$ is an $A$-algebra via $\phi^*$.
\end{proof}

\begin{defi}
Let $X$ be a uniformly rigid $K$-space. An $\O_X$-module $\sF$ is called \emph{strictly coherent}\index{module!strictly coherent} if for any open semi-affinoid subspace $U\subseteq X$, the restriction $\sF|_U$ is an associated module.
\end{defi}

For example, the structural sheaf of a uniformly rigid $K$-space is strictly coherent. Since we do not know whether an open semi-affinoid subspace of a semi-affinoid $K$-space is a semi-affinoid subdomain, it is not a priori clear whether any associated module on a semi-affinoid $K$-space is strictly coherent. In Corollary \ref{frameembassoccor}, however, we will show that this is indeed the case.

Let $X$ be a uniformly rigid $K$-space. We will be interested in coherent $\O_X$-modules $\sF$ with the property that there exists an injective $\O_X$-homomorphism $\sF\hookrightarrow\O_X^r$ for some $r\in\N$. This property is clearly satisfied by coherent ideals, and it is preserved under pullback with respect to flat morphisms of uniformly rigid spaces. We will study integral models of such $\sF$, and we will show that any such $\sF$ is strictly coherent.

If $\fX$ is a formal $R$-scheme of locally ff type and if $\ul{\sF}$ is a coherent $\O_\fX$-module, we obtain a coherent $\O_X$-module $\ul{\sF}^\srig$ on $\fX^\srig$ which we call the \emph{uniformly rigid generic fiber} of $\ul{\sF}$. If $X$ is a uniformly rigid $K$-space, if $\sF$ is a coherent $\O_X$-module and if $\fX$ is a flat formal $R$-model of locally ff type for $X$, then an $R$-\emph{model}\index{model!of a coherent module} of $\sF$ on $\fX$ is a coherent $\O_\fX$-module $\ul{\sF}$ together with an isomorphism $\ul{\sF}^\srig\cong\sF$ that is compatible with the given identification $\fX^\srig\cong X$. Sometimes we will not mention the isomorphism $\ul{\sF}^\srig\cong\sF$ explicitly. Clearly
\[
\sp_{\fX,*}(\sF)\,=\,\ul{\sF}\otimes_RK\;,
\]
and $\srig$ factors naturally through the functor $\ul{\sF}\mapsto\ul{\sF}\otimes_RK$. Let us abbreviate $\ul{\sF}_K:=\ul{\sF}\otimes_RK$.

For any $r\in\N$, the coherent $\O_X$-module $\O_X^r$ admits the natural model $\O_\fX^r$ on every flat formal $R$-model of locally ff type $\fX$ for $X$. We will show that coherent submodules $\sF\subseteq\O_X^r$ inherit this property by taking schematic closures. Let us first consider the affine situation: 

\begin{lem}\label{submodulecanmodellem}
Let $\ul{A}$ be an $R$-algebra, let $\ul{M}$ be an $\ul{A}$-module, and let $N\subseteq \ul{M}\otimes_RK$ be an $\ul{A}\otimes_RK$-submodule. Then there exists a unique $\ul{A}$-submodule $\ul{N}\subseteq\ul{M}$ such that the natural homomorphism $\ul{N}\otimes_RK\rightarrow\ul{M}\otimes_RK$ is an isomorphism onto $N$ and such that $\ul{M}/\ul{N}$ is $R$-flat.
\end{lem}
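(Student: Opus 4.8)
The plan is to construct $\ul N$ as the preimage of $N$ under the canonical localization map and then verify the two required properties together with uniqueness. Write $\iota\colon\ul M\to\ul M\otimes_RK$ for the map $m\mapsto m\otimes1$, and set $\ul N:=\iota^{-1}(N)$. Since $N$ is in particular an $\ul A$-submodule and $\iota$ is $\ul A$-linear, $\ul N$ is an $\ul A$-submodule of $\ul M$. The only input beyond straightforward diagram-chasing is the hypothesis that $R$ is a discrete valuation ring, so that over $R$ the notions of $R$-flatness, torsion-freeness and $\pi$-torsion-freeness all coincide; I would use this freely.

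For flatness of the quotient, I would observe that by the very definition of $\ul N$ the induced map $\ul M/\ul N\to(\ul M\otimes_RK)/N$ is injective. Its target is a $K$-vector space, hence torsion-free as an $R$-module, and a submodule of a torsion-free module over $R$ is again torsion-free and therefore $R$-flat. This yields the flatness of $\ul M/\ul N$.

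For the assertion that $\ul N\otimes_RK\to\ul M\otimes_RK$ is an isomorphism onto $N$, injectivity follows by tensoring the exact sequence $0\to\ul N\to\ul M\to\ul M/\ul N\to0$ with the flat $R$-algebra $K=R_\pi$. The image is contained in $N$ because $\iota(\ul N)\subseteq N$ and $N$ is stable under multiplication by $K$. For the reverse inclusion I would clear denominators: any $x\in N\subseteq\ul M\otimes_RK$ can be written as $\pi^{-k}\iota(m)$ with $m\in\ul M$ and $k\geq0$, and then $\pi^k x=\iota(m)$ lies in $N$, so $m\in\ul N$ and $x$ lies in the image of $\ul N\otimes_RK$. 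I expect this clearing-of-denominators step, keeping careful track of the $K$-stability of $N$, to be the main technical point, modest though it is.

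Uniqueness is the final step. Given any competing submodule $\ul N'$ with the two properties, evaluating the isomorphism $\ul N'\otimes_RK\xrightarrow{\sim}N$ on elements $m\otimes1$ shows $\iota(\ul N')\subseteq N$, whence $\ul N'\subseteq\iota^{-1}(N)=\ul N$. Tensoring the inclusion $\ul N'\hookrightarrow\ul N$ with $K$ gives an isomorphism, since both sides map isomorphically onto $N$ inside $\ul M\otimes_RK$; hence $(\ul N/\ul N')\otimes_RK=0$ and $\ul N/\ul N'$ is a torsion $R$-module. On the other hand $\ul N/\ul N'$ embeds into $\ul M/\ul N'$, which is flat and thus torsion-free, so $\ul N/\ul N'$ is torsion-free as well. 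Being simultaneously torsion and torsion-free it must vanish, and therefore $\ul N'=\ul N$, completing the argument.
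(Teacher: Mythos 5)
Your proof is correct and follows essentially the same route as the paper's: your $\ul{N}=\iota^{-1}(N)$ is exactly the paper's $\ker(\ul{M}\rightarrow(\ul{M}\otimes_RK)/N)$, flatness of the quotient is obtained in both cases by embedding $\ul{M}/\ul{N}$ into the torsion-free module $(\ul{M}\otimes_RK)/N$, and surjectivity onto $N$ by clearing $\pi$-denominators. Your uniqueness step is only a cosmetic variant: where the paper deduces $\ul{N}'=\ul{N}$ from injectivity of $\ul{M}/\ul{N}'\rightarrow(\ul{M}/\ul{N}')\otimes_RK$, you observe that $\ul{N}/\ul{N}'$ is simultaneously $\pi$-power torsion and torsion-free, which is the same flatness input repackaged.
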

\begin{proof}
Let us abbreviate $M\mathrel{\mathop:}=\ul{M}\otimes_RK$, and let us set
\[
\ul{N}\,\mathrel{\mathop:}=\,\ker(\ul{M}\rightarrow M/N)\;;
\]
then $\ul{N}$ is an $\ul{A}$-submodule of $\ul{M}$. For any $n\in N$, there exists an $s\in\N$ such that $\pi^sn$ lies in the image of $\ul{M}$ in $M$; the natural $K$-homomorphism $\ul{N}\otimes_RK\rightarrow N$ is thus bijective. As an $\ul{A}$-submodule of $M/N$, the quotient $\ul{M}/\ul{N}$ is free of $\pi$-torsion and, hence, $R$-flat. 

If $\ul{N}'\subseteq\ul{M}$ is another $\ul{A}$-submodule whose image in $M$ generates $N$ as an $\ul{A}\otimes_RK$-module, then $\ul{N}'$ lies in the kernel $\ul{N}$ of $\ul{M}\rightarrow M/N$. If in addition $\ul{M}/\ul{N}'$ is flat over $R$, then the natural homomorphism $\ul{M}/\ul{N}'\rightarrow \ul{M}/\ul{N}'\otimes_RK=M/N$ is injective, which proves that $\ul{N}'$ coincides with this kernel.
\end{proof}

\begin{thm}\label{frameembthm}
Let $X$ be a uniformly rigid $K$-space, let $\sF'\subseteq\sF$ be an inclusion of coherent $\O_X$-modules, and let $\fX$ be an $R$-model of locally ff type for $X$ such that $\sF$ admits be an $R$-model  $\ul{\sF}$ on $\fX$. Then there exists a unique coherent $\O_\fX$-submodule $\ul{\sF}'\subseteq\ul{\sF}$ such that $\ul{\sF}/\ul{\sF}'$ is $R$-flat and such that the given isomorphism $\ul{\sF}^\srig\cong\sF$ identifies $(\ul{\sF}')^\srig$ with $\sF'$.
\end{thm}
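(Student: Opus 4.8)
The plan is to reduce the statement to the affine case by means of Lemma~\ref{submodulecanmodellem}, which simultaneously yields uniqueness, and then to produce the schematic closure $\ul{\sF}'$ by descending a flat formal model of $\sF'$ along a suitable admissible blowup. On any affine open $\fU=\Spf\ul B\subseteq\fX$ the module $\ul{\sF}|_\fU$ is associated to a finite $\ul B$-module $\ul M$, so $\sF$ is associated to $M:=\ul M\otimes_RK$ over $\fU^\srig$; Lemma~\ref{submodulecanmodellem} then shows that a coherent submodule of $\ul{\sF}|_\fU$ with $R$-flat quotient is \emph{uniquely} determined by its generic fibre, being forced to equal the module associated to $\ker(\ul M\to M/N)$ with $N=\Gamma(\fU^\srig,\sF')$. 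Hence a submodule $\ul{\sF}'$ as in the statement is unique if it exists; and since restriction to affine opens is compatible with $\srig$ and preserves flatness of the quotient, any two local solutions restrict to solutions on overlaps and therefore agree by this uniqueness. It thus suffices to prove existence when $\fX=\Spf\ul A$ is affine, $\ul{\sF}$ is associated to a finite $\ul A$-module $\ul M$, and $\sF=\tilde M$.

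The heart of the matter is then to show that the naive candidate works, equivalently that $\sF'$ is associated to $N$; for this I would construct a flat formal model of $\sF'$ on an admissible blowup of $\fX$. Since $X$ is quasi-compact for $\sT_\srig$ (Remark~\ref{qcrem}) and $\sF'$ is coherent, there is a finite leaflike---hence \emph{retrocompact}---covering $(X_\nu)_\nu$ of $X$ by semi-affinoid subdomains on which $\sF'$ is associated (use coherence together with Proposition~\ref{explicitprop}, noting that restrictions of associated modules to semi-affinoid subdomains stay associated). Dominating the blowups occurring in a treelike model of this covering by a single admissible blowup $\beta\colon\fX'\to\fX$ (compositions of admissible blowups being admissible, cf.\ \cite{temkin_desing} 2.1.6), I may assume that $\fX'$ carries an affine open covering $(\fX'_\nu)$ with $(\fX'_\nu)^\srig=X_\nu$; here no completions intervene precisely because leaflike covering sets are retrocompact.

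On each $\fX'_\nu=\Spf\ul A'_\nu$ the pullback $\beta^*\ul{\sF}$ is associated to a finite $\ul A'_\nu$-module, and $\sF'|_{X_\nu}$ is associated to a submodule $N_\nu$ of its generic fibre, so Lemma~\ref{submodulecanmodellem} furnishes coherent submodules of $\beta^*\ul{\sF}|_{\fX'_\nu}$ with $R$-flat quotient. Because $\sF'$ is now \emph{genuinely} associated on the covering sets, the modules $N_\nu$ base-change correctly along the flat inclusions of the overlaps (by full faithfulness of $M\mapsto\tilde M$, Lemma~\ref{assocbasicpropertieslem}), so the uniqueness clause of Lemma~\ref{submodulecanmodellem} forces these local pieces to agree on intersections. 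They glue to a coherent submodule $\ul{\sF}'_{\fX'}\subseteq\beta^*\ul{\sF}$ with $R$-flat quotient and $(\ul{\sF}'_{\fX'})^\srig=\sF'$. I would then descend along $\beta$ by setting
\[
\ul{\sF}'\,:=\,\ker\bigl(\ul{\sF}\longrightarrow\beta_*(\beta^*\ul{\sF}/\ul{\sF}'_{\fX'})\bigr).
\]
Since $\beta$ is proper, the Comparison and Finiteness Theorems of \cite{egaiii} (already invoked in the proof of Theorem~\ref{acyclicitytheorem}) make $\beta_*(\beta^*\ul{\sF}/\ul{\sF}'_{\fX'})$ coherent, so $\ul{\sF}'$ is a coherent submodule of $\ul{\sF}$; and as $\ul{\sF}/\ul{\sF}'$ embeds into this pushforward, whose sections are $\pi$-torsion-free because those of $\beta^*\ul{\sF}/\ul{\sF}'_{\fX'}$ are, the quotient $\ul{\sF}/\ul{\sF}'$ is $R$-flat.

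The main obstacle is the final identification of the generic fibre: one must show that forming $\beta_*$ commutes with passage to the uniformly rigid generic fibre for the admissible blowup $\beta$, so that $\bigl(\beta_*(\beta^*\ul{\sF}/\ul{\sF}'_{\fX'})\bigr)^\srig=(\beta^*\ul{\sF}/\ul{\sF}'_{\fX'})^\srig=\sF/\sF'$, the resulting map $\sF\to\sF/\sF'$ being the natural projection because $\beta^\srig$ is an isomorphism; its kernel is then $\sF'$, giving $(\ul{\sF}')^\srig\cong\sF'$. This compatibility is the ff-type analog of \cite{luetkebohmertformalrigid} 2.1: the higher direct images $R^q\beta_*$ of a coherent module along an admissible blowup are $\pi$-torsion, so they vanish after $\otimes_RK$ and $\beta_*$ becomes exact on generic fibres. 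Granting this comparison, $\ul{\sF}'$ has all the required properties, and together with the uniqueness established at the outset this proves the theorem; the associatedness of $\sF'$ to its global sections emerges as a by-product, to be exploited in Corollary~\ref{frameembassoccor}.
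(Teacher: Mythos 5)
Your overall mechanism --- uniqueness from Lemma \ref{submodulecanmodellem}, a flat model of $\sF'$ upstairs, and descent via $\ul{\sF}':=\ker\bigl(\ul{\sF}\to\beta_*(\beta^*\ul{\sF}/\ul{\sF}'_{\fX'})\bigr)$ with the ff-type analog of L\"utkebohmert's comparison --- is exactly the engine of the paper's proof. But there is a genuine gap at the step where you collapse the treelike model to a \emph{single} admissible blowup: you claim that, the leaves being retrocompact, the blowups occurring in the treelike model can be dominated by one admissible blowup $\beta\colon\fX'\to\fX$ carrying an affine open covering $(\fX'_\nu)$ with $(\fX'_\nu)^\srig=X_\nu$. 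The citation of \cite{temkin_desing} 2.1.6 does not support this: that result concerns a composition of admissible blowups of the \emph{whole} successive spaces, whereas in a treelike model the second-level blowups are defined only on affine opens $\fX_j$ of the first-level blowup. In the ff-type setting such a blowup of an open formal subscheme need \emph{not} extend to an admissible blowup of the ambient formal scheme --- this is precisely Remark \ref{nestedrequrem} (and \cite{mythesis} Example 1.1.3.12), and it is the same obstruction (failure of extending coherent ideals from opens, due to convergence problems) that the paper later identifies as the reason L\"utkebohmert's proof of Kiehl's theorem breaks down here. Equivalently: a retrocompact semi-affinoid subdomain need not be \emph{elementary}, and a leaflike covering need not be an elementary covering, so in general no single blowup of $\fX$ realizes your $(X_\nu)$ as an affine open cover. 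Note that even Proposition \ref{retroadmprop} only dominates the \emph{first-level} blowups by one blowup and then proceeds by induction on total length, for exactly this reason.

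The paper's actual proof repairs this by never flattening the tree: it descends one level at a time, by induction on the volume $v(I)$. At an inner vertex $i$ all of whose children are leaves, the children's flat models (from Lemma \ref{submodulecanmodellem}) glue on $\fX_i'$ to $\ul{\sG}_i$, one sets $\ul{\sF}'_i:=\ker\bigl(\ul{\sF}|_{\fX_i}\to\beta_{i*}(\ul{\sF}|_{\fX_i'}/\ul{\sG}_i)\bigr)$ --- your formula, applied at depth one --- and then replaces $\subtree(i)$ by $\{i\}$. A second, smaller point: verifying $(\ul{\sF}'_i)^\srig=\sF'|_{X_i}$ needs more than the statement that $R^q\beta_*$ is $\pi$-torsion; that comparison only yields that the counit $(\beta_i^*\beta_{i*}\ul{\sG}_i)_K\to\ul{\sG}_{i,K}$ is an isomorphism, whereas one must also show $(\beta_i^*\ul{\sF}'_i)_K\to(\beta_i^*\beta_{i*}\ul{\sG}_i)_K$ is an isomorphism. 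The paper does this by algebraizing via \cite{egaiii} 5.1.4, passing to the scheme-theoretic blowup $b_i$ where $b_{i,K}$ is an isomorphism and $b^*_{i,K}(b_{i,K})_*\cong\id$, and completing back. So your proposal is, in effect, the paper's proof with the tree collapsed to depth one; the collapse is the invalid move, and restoring the induction over the tree (plus the algebraization step in the generic-fiber comparison) is what is needed to make it correct.
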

\begin{proof}
We may work locally on $\fX$ and thereby assume that $\fX$ is affine. Uniqueness of $\ul{\sF}'$ is a consequence of Lemma \ref{submodulecanmodellem}. Since $\sF'$ is coherent, there exists a treelike covering $(X_i)_{i\in I}$ of $X$ such that $\sF'|_{X_i}$ is associated for all $i\in\leaves(I)$. Let us choose a model of this covering, that is,
\begin{enumerate}
\item for each $i\in I$, an affine flat $R$-model of ff type $\fX_i$ for $X_i$,
\item for each inner $i\in I$ an admissible blowup $\beta_i:\fX_i'\rightarrow\fX_i$ and
\item for each inner $i\in I$ and for each child $j$ of $i$ an open immersion  $\phi_j:\fX_j\hookrightarrow\fX_i'$
such that $\fX_j\subseteq\fX_i'\rightarrow\fX_i$ represents $X_j$ in $X_i$.
\end{enumerate}
For each $i\in I$, we let $\ul{\sF}|_{\fX_i}$ denote the pullback of $\ul{\sF}$ to $\fX_i$, and for each inner vertex $i\in I$, we let $\ul{\sF}|_{\fX'_i}$ denote the pullback of $\ul{\sF}$ to $\fX_i'$. Let $i$ be an inner vertex of $I$, and let us \emph{assume} that for each child $j$ of $i$, we are given a coherent submodule
\[
\ul{\sF}'_j\,\subseteq\,\ul{\sF}|_{\fX_j}
\]
such that $\ul{\sF}|_{\fX_j}/\ul{\sF}_j'$ is $R$-flat and such that $(\ul{\sF}'_j)^\urig=\sF'|_{X_j}$. By Lemma \ref{submodulecanmodellem}, this assumption is satisfied if all children of $i$ are leaves in $I$. Using the uniqueness assertion in Lemma \ref{submodulecanmodellem}, we see that the $\ul{\sF}'_j$ glue to a unique coherent submodule 
\[
\ul{\sG}_i\subseteq\ul{\sF}|_{\fX'_i}\;.
\]
The quotient $\ul{\sF}|_{\fX'_i}/\ul{\sG}_i$ is $R$-flat; by \cite{egaiii} 3.4.2, $\beta_{i*}(\ul{\sF}|_{\fX'_i}/\ul{\sG}_i)$ thus is a co\-he\-rent $R$-flat $\O_{\fX_i}$-module. By definition, $\ul{\sF}|_{\fX'_i}=\beta_i^*\ul{\sF}|_{\fX_i}$, so we have a natural homomorphism of coherent $\O_{\fX_j}$-modules
\[
\ul{\sF}|_{\fX_i}\rightarrow \beta_{i*}\ul{\sF}|_{\fX'_i}\rightarrow\beta_{i*}(\ul{\sF}|_{\fX_i'}/\ul{\sG}_i)\;.
\]
Let $\ul{\sF}'_i$ denote its kernel; the resulting exact sequence of coherent $\O_{\fX_i}$-modules
\[
0\rightarrow \ul{\sF}'_i\rightarrow\ul{\sF}|_{\fX_i}\rightarrow\beta_{i*}(\ul{\sF}|_{\fX_i'}/\ul{\sG}_i)
\]
shows that $\ul{\sF}|_{\fX_i}/\ul{\sF}'_i$ is $R$-flat. We claim that the coherent $X_i$-module $\sF'|_{X_i}=\ul{\sG}_i^\srig$ is associated to $\ul{\sF}_i'$. To prove this, it suffices to show that the morphism
\[
(\beta_i^*\ul{\sF}_i')_K\rightarrow(\beta_i^*\beta_{i*}\ul{\sG}_i)_K\rightarrow\ul{\sG}_{i,K}\quad(*)
\]
induced by the natural morphism $\ul{\sF}'_i\rightarrow\beta_{i*}\ul{\sG}_i$ is an isomorphism. By the ff type variant of \cite{luetkebohmertformalrigid} 2.1, the second morphism in $(*)$ is an isomorphism, so we must show that the first morphism is an isomorphism as well. Let $\ul{X}_i$ be the spectrum of the ring of global functions on $\fX_i$, and let $b_i\colon\ul{X}_i'\rightarrow\ul{X}_i$ be the admissible blowup such that $\beta_i=b_i^\wedge$, where we use a wedge to denote the formal completion with respect to an ideal of definition of $\fX$. Let $\ul{F}_i$, $\ul{F}_i'$ and $\ul{G}_i$ denote the algebraizations of $\ul{\sF}|_{\fX_i}$, $\ul{\sF}_i'$ and $\ul{\sG}_i$ respectively, which exist by \cite{egaiii} 5.1.4; then 
\[
\ul{\sF}|_{\fX'_i}=(b_j^*\ul{F}_i)^\wedge\;. 
\]
By \cite{egaiii} 4.1.5,
\[
\beta_{i*}(\ul{\sF}|_{\fX_i'}/\ul{\sG}_i)\,=\,(b_{i*}((b^*_i\ul{F}_i)/\ul{G}_i)))^\wedge\;,
\]
so we have a short exact sequence
\[
0\rightarrow\ul{F}'_i\rightarrow\ul{F}_i\rightarrow b_{i*}((b^*_i\ul{F}_i)/\ul{G}_i))
\]
which under $\cdot\otimes_RK$ induces a short exact sequence
\[
0\rightarrow\ul{F}'_{i,K}\rightarrow\ul{F}_{i,K}\rightarrow (b_{i,K})_*((b^*_{i,K}\ul{F}_{i,K})/\ul{G}_{i,K}))\;.
\]
Since $b_{i,K}$ is an isomorphism and, hence, flat, we obtain an induced short exact sequence
\[
0\rightarrow b^*_{i,K}\ul{F}'_{i,K}\rightarrow b^*_{i,K}\ul{F}_{i,K}\rightarrow b^*_{i,K}(b_{i,K})_*((b^*_{i,K}\ul{F}_{i,K})/\ul{G}_{i,K}))\;;
\]
since $b^*_{i,K}(b_{i,K})_*$ is naturally isomorphic to the identity functor, this shows that $b^*_{i,K}\ul{F}'_{i,K}=\ul{G}_{i,K}$. Hence, the natural morphism
\[
b_i^*\ul{F}'_j\rightarrow b_i^* b_{i*}\ul{G}_j
\]
becomes an isomorphism under $\cdot\otimes_RK$. That is, its kernel and cokernel are $\pi$-torsion. It follows that kernel and cokernel of the completed morphism
\[
\beta_i^*\ul{\sF}_i'\rightarrow \beta_i^*\beta_{i*}\ul{\sG}_i
\]
are $\pi$-torsion as well, which yields our claim. 

Let us now prove the statement of the proposition by induction on the volume $v(I)$ of $I$. We may assume that $I$ has more than one vertex. Let $j$ be a leaf of $I$ whose path to the root has maximal length, and let $i$ be the parent of $j$. Then all children of $i$ are leaves of $I$, so the assumption in the argument above is satisfied. By what we have shown so far, $\sF'|_{X_i}$ is associated to a unique coherent $\O_{\fX_i}$-submodule $\ul{\sF}'_i\subseteq\ul{\sF}|_{\fX_i}$ such that $\ul{\sF}|_{\fX_i}/\ul{\sF}'_i$ is $R$-flat. We may thus replace $\subtree(i)$ by $\{i\}$. By induction on $v(I)$, the desired statement follows.
\end{proof}

\begin{cor}\label{frameembassoccor}
We conclude:
\begin{packed_enum}
\item A coherent submodule of an associated module on a semi-affinoid $K$-space is associated.
\item Coherent submodules and coherent quotients of strictly coherent mo\-dules are strictly coherent.
\item An associated module on a semi-affinoid $K$-space is strictly coherent.
\end{packed_enum}
\end{cor}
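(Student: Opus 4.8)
The plan is to establish the three assertions in the order (i), (ii), (iii), invoking Theorem~\ref{frameembthm} only for the first and then deducing the other two formally. This matches the phrasing ``We conclude'': the real content sits in part~(i), while (ii) and (iii) are bookkeeping on top of it.

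First I would prove (i). Let $X=\sSp A$ be semi-affinoid, let $\sF=\tilde M$ be associated to a finite $A$-module $M$, and let $\sF'\subseteq\sF$ be a coherent submodule. Since $X$ admits an affine flat $R$-model of ff type, fix $\fX=\Spf\ul A$ and choose a finite $\ul A$-module $\ul M$ with $\ul M\otimes_RK\cong M$ (for instance the $\ul A$-span of a finite generating system of $M$). The associated coherent $\O_\fX$-module $\ul{\sF}$ is then an $R$-model of $\sF$, because $\ul{\sF}^\srig$ is associated to $\ul M\otimes_RK=M$. Theorem~\ref{frameembthm} now furnishes a coherent $\O_\fX$-submodule $\ul{\sF}'\subseteq\ul{\sF}$ with $\ul{\sF}/\ul{\sF}'$ flat over $R$ and $(\ul{\sF}')^\srig=\sF'$. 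As $\fX$ is affine and $\ul A$ is noetherian, $\ul{\sF}'$ corresponds to a finite $\ul A$-module $\ul M'\subseteq\ul M$, so that $\sF'=(\ul{\sF}')^\srig$ is associated to the finite $A$-module $\ul M'\otimes_RK$, as desired.

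Next, (ii) follows from (i) together with Lemma~\ref{assocbasicpropertieslem}. For a coherent submodule $\sF'$ of a strictly coherent $\sF$ on a uniformly rigid $X$ and any open semi-affinoid subspace $U\subseteq X$, the module $\sF|_U$ is associated by hypothesis, while $\sF'|_U\subseteq\sF|_U$ is a coherent submodule (restriction to an admissible open is exact and preserves coherence); hence $\sF'|_U$ is associated by (i), and $\sF'$ is strictly coherent. For a coherent quotient $\sF''$ of $\sF$, I would set $\sG:=\ker(\sF\to\sF'')$, a coherent submodule, hence strictly coherent by the case just treated; restricting the exact sequence $0\to\sG\to\sF\to\sF''\to0$ to $U$ exhibits $\sF''|_U$ as the cokernel of a map of associated modules, which is associated by Lemma~\ref{assocbasicpropertieslem}. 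Finally, (iii) is the special case of (ii) applied to the presentation $\O_X^s\twoheadrightarrow\sF$ on $X=\sSp A$ coming from a surjection $A^s\twoheadrightarrow M$: the free module $\O_X^s$ is strictly coherent, since its restriction to any open semi-affinoid subspace is again free and hence associated, so $\sF$ is a coherent quotient of a strictly coherent module and therefore strictly coherent by (ii).

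The only genuinely nontrivial input is (i), and its entire weight is carried by Theorem~\ref{frameembthm}: the existence of a \emph{flat} integral model $\ul{\sF}'$ of $\sF'$, obtained as a schematic closure, is exactly what upgrades $\sF'$ from merely coherent to associated. The delicate point flagged earlier --- that an open semi-affinoid subspace need not be a semi-affinoid subdomain, which is why strict coherence was not clear a priori --- is neatly sidestepped in (iii): one never evaluates $\sF$ on subdomains of $U$ directly, because the free presentation reduces the claim to the strict coherence of the structural sheaf and to (i). I expect the remaining verifications in (ii) (exactness of restriction to admissible opens and preservation of coherence) to be routine transcriptions of the corresponding rigid-analytic arguments.
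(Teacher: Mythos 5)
Your proposal is correct and follows essentially the same route as the paper's own proof: part (i) is obtained by applying Theorem \ref{frameembthm} to a model $\ul{M}$ of $M$ over an affine flat $R$-model of ff type and using that coherent modules on affine noetherian formal schemes are associated, part (ii) is deduced formally via the kernel trick and Lemma \ref{assocbasicpropertieslem}, and part (iii) via the free presentation $\O_X^s\twoheadrightarrow\sF$ together with the strict coherence of the structural sheaf. The only (harmless) difference is that you spell out the construction of $\ul{M}$ as the $\ul{A}$-span of a finite generating system, which the paper leaves implicit.
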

\begin{proof}
Let us first show ($i$). Let $X=\sSp A$ be a semi-affinoid $K$-space, let $\ul{A}\subseteq A$ be an $R$-model of ff type, and let $\sF'$ be a coherent submodule of an associated module $\tilde{M}$. Since $\tilde{M}$ admits a model $\ul{M}$ over $\Spf \ul{A}$, Theorem \ref{frameembthm} implies that $\sF'\cong(\ul{\sF}')^\srig$ for a coherent module $\ul{\sF}'$ on $\Spf\ul{A}$. Since coherent modules on affine formal schemes are associated, it follows that $\sF'$ is associated.

Let us prove statement ($ii$). Let $X$ be a uniformly rigid $K$-space, let $\sF$ be a strictly coherent $\O_X$-module and let $\sF'\subseteq\sF$ be a coherent submodule. For every open semi-affinoid subspace  $U\subseteq X$, the restriction $\sF'|_U$ is a coherent submodule of $\sF|_U$, and $\sF|_U$ is associated by assumption on $\sF$. It follows from ($i$) that $\sF'|_U$ is associated; hence $\sF'$ is strictly coherent. Let now $\sF''$ be a coherent quotient of $\sF$. Then the kernel $\sF'$ of the projection $\sF\rightarrow \sF''$ is a coherent submodule of $\sF$ and, hence, strictly coherent by what we have seen so far. Let $U\subseteq X$ be an open semi-affinoid subspace; then we have a short exact sequence
\[
0\rightarrow\sF'|_U\rightarrow\sF|_U\rightarrow\sF''|_U\rightarrow 0
\]
where the first two modules are associated. It follows from Lemma \ref{assocbasicpropertieslem} that $\sF''|_U$ is associated as well.

Finally, statement ($iii$) follows from statement ($ii$) because by Lemma \ref{assocbasicpropertieslem}, an associated module is a quotient of a finite power of the structural sheaf.
\end{proof}

If $\fX$ is a flat formal $R$-scheme of locally ff type and if $\ul{\sF}$ is a coherent $\O_\fX$-module, we do not know in general whether $\ul{\sF}^\srig$ is strictly coherent. In particular, we unfortunately do not know whether the analog of Kiehl's Theorem \cite{kiehlab} 1.2 holds in general, that is to say whether every coherent module on a semi-affinoid $K$-space is associated. Let us point out that the analogous question for \emph{quasi-}coherent modules on \emph{rigid} spaces was open for a long time; it was finally settled in the negative by O.\ Gabber, cf.\ \cite{conrad_ampleness} Example 2.1.6.
\begin{conjecture} 
The general uniformly rigid analog of Kiehl's theorem does not hold. 
\end{conjecture}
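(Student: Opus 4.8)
To prove the conjecture one must produce a semi-affinoid $K$-space $X=\sSp A$ and a coherent $\O_X$-module $\sF$ for which the canonical morphism $\widetilde{\Gamma(X,\sF)}\to\sF$ fails to be an isomorphism. By Corollary~\ref{frameembassoccor} the counterexample can be neither a coherent ideal nor, more generally, a coherent submodule or quotient of a strictly coherent module; in particular it cannot embed into any free module $\O_X^r$. The plan is therefore to look first among \emph{invertible} sheaves, where a clean reduction is available. If $A$ is a principal ideal domain---as is the case for the semi-affinoid open unit disc, where $A=R[[S]]\otimes_RK$ is the localisation at $\pi$ of the two-dimensional regular local, hence factorial, ring $R[[S]]$, and is thus a one-dimensional Noetherian UFD, hence a principal ideal domain---then every invertible $A$-module is free. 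Combining this with the full faithfulness and tensor-compatibility of the association functor (Lemma~\ref{assocbasicpropertieslem}), one sees that a coherent invertible $\O_X$-module $\mathcal L$ is associated if and only if $\mathcal L\cong\O_X$: an isomorphism $\mathcal L\cong\widetilde M$ forces, by flatness of the restriction maps, $\mathcal L^{-1}\cong\widetilde{M^\vee}$ and hence $\widetilde{M\otimes_A M^\vee}\cong\O_X$, so that $M\otimes_A M^\vee\cong A$; thus $M$ is invertible and therefore free.

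This reduces the conjecture, at least for the open disc, to the single assertion that the \emph{uniformly rigid Picard group}
\[
\mathrm{Pic}(X)\,:=\,H^1(X,\O_X^*)
\]
is nonzero. The crucial point is that this cannot be settled on the underlying rigid space: since $X^\r$ is the rigid open unit disc, a quasi-Stein space, one has $\mathrm{Pic}(X^\r)=0$, so every invertible $\O_X$-module pulls back to the trivial bundle under the comparison morphism $\comp_X\colon X^\r\to X$. A nontrivial class in $\mathrm{Pic}(X)$ must therefore be a genuinely \emph{coarse-topological} phenomenon, visible only in the uniformly rigid G-topology $\sT_\srig$ and in the restriction to \emph{bounded} units, and it must disappear once one passes to the finer rigid G-topology. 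I would attempt to produce such a class as a \v{C}ech cocycle for a treelike covering of $X$, using an overlap unit that splits into a product of units only at the expense of unboundedness; the non-admissibility of the naive disc--annulus covering (Example~\ref{nonadmdisccovex}) and the appearance of unbounded functions on Zariski-open subsets (cf.\ Remark~\ref{qcrem}) are the structural levers one would exploit to force non-splitting.

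The main obstacle is precisely the computation of $\mathrm{Pic}(X)$, and this is what keeps the statement conjectural. The worry, which must be confronted head-on, is that every invertible sheaf on $X$ might descend from a line bundle on a formal model: a twist $\O_{\fX'}(n)$ along the exceptional divisor of an admissible blowup $\fX'\to\Spf R[[S]]$ has \emph{trivial} uniformly rigid generic fiber, the divisor lying entirely in the special fibre, so such formal constructions do not yield counterexamples and one has no a priori guarantee that $\mathrm{Pic}(X)\neq0$. Should the Picard group turn out to vanish, the rank-one strategy fails and one is forced into genuinely non-locally-free coherent modules; here the most promising route is to transfer O.\ Gabber's counterexample to the associatedness of quasi-coherent sheaves on rigid spaces (\cite{conrad_ampleness} Example~2.1.6) across $\comp_X$, exploiting that this morphism is a bijection on points inducing isomorphisms of stalks while collapsing the unbounded global sections on which the rigid construction relies. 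In either approach the decisive and delicate input is the control of \emph{bounded} global sections over $A$, rather than over the much larger ring of functions on $X^\r$---exactly the control that Lemma~\ref{submodulecanmodellem} and Theorem~\ref{frameembthm} furnish only for submodules of free modules, and which must here be established in a situation where no such embedding exists.
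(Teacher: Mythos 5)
First, a point of genre: the statement you were asked to prove is stated in the paper as a \emph{conjecture}, and the paper does not prove it. What the paper offers instead is the remark following the conjecture, which (a) reformulates the uniformly rigid analog of Kiehl's theorem as a model-gluing statement --- a coherent sheaf on $\fX^\urig$, for $\fX$ an admissible blowup of a flat affine formal $R$-scheme of ff type, that admits flat local models $\ul{\sF}_i$ should admit a global model on $\fX$ --- and (b) diagnoses precisely where L\"utkebohmert's proof breaks: one would have to extend the coherent subsheaf $\ul{\mathcal{G}}\subseteq(\ul{\sF}_2/\pi^n\ul{\sF}_2)|_{\fX_{12}}$ from $\fX_{12}$ to all of $\fX_2$, and in the ff type case the closed formal subscheme cut out by $\pi^n$ need not be a scheme, so the chasing-denominators argument of \cite{egai} 9.4.7 is unavailable and convergence obstructs the extension. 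So the honest comparison is between two incomplete arguments, and judged that way your route is genuinely different from the paper's. Your checkable reductions are essentially correct: by Corollary \ref{frameembassoccor} a counterexample cannot embed in $\O_X^r$; $R[[S]]\otimes_RK$ is indeed a one-dimensional noetherian UFD, hence a principal ideal domain; and an associated invertible module on $\sSp A$ with $A$ a PID must be trivial --- though your duality manipulation is looser than necessary, and is more cleanly done stalkwise ($\tilde M$ invertible forces $M\otimes_A\O_{X,x}$ free of rank one, hence $M_\m$ free of rank one by descent along $A_\m\rightarrow\hat{A}_\m\cong\hat{\O}_{X,x}$, so $M$ is projective of rank one and therefore free). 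Your observation that exceptional twists on admissible blowups of $\Spf R[[S]]$ have trivial generic fiber is also right; e.g.\ for the blowup in $(\pi,S)$ the transition function $S/\pi$ is already a unit on the annulus piece, since $S$ is invertible there.

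The genuine gaps are two. First, the whole rank-one strategy rests on $H^1(X,\O_X^*)\neq 0$ for the uniformly rigid G-topology, which you do not establish; worse, your own correct remark about exceptional twists eliminates the most natural supply of candidate cocycles, and nothing in the paper rules out that this Picard group vanishes (the non-admissibility of the disc--annulus covering in Example \ref{nonadmdisccovex} restricts which coverings can carry a cocycle, but does not by itself produce a non-split one). Second, the fallback of ``transferring Gabber's counterexample across $\comp_X$'' cannot work as stated: the example in \cite{conrad_ampleness} 2.1.6 concerns \emph{quasi-coherent, non-coherent} sheaves, and the paper cites it only as an analogy. For \emph{coherent} sheaves Kiehl's theorem is true on the rigid side, and the open disc $X^\r$ is quasi-Stein, so there is no coherent rigid counterexample to pull back; a uniformly rigid counterexample would have to be created by the coarser topology and the boundedness constraints themselves, which is exactly the new idea neither you nor the paper supplies. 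In short: your proposal is a reasonable and partly rigorous research plan, different in kind from the paper's formal-geometric reformulation --- yours isolates a clean rank-one test case invisible to the rigid theory, the paper's isolates the precise failure point of all known proofs --- but neither constitutes a proof, which is consistent with the statement being a conjecture.
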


\begin{remark}
The general uniformly rigid analog of Kiehl's theorem is equivalent to the following statement: let $\fX$ be an admissible blowup of a flat affine formal $R$-scheme of ff type, and let $\sF$ be a coherent sheaf on $X=\fX^\urig$ that admits flat models $\ul{\sF}_i$ locally with respect to an affine open covering $(\fX_i)_{i\in I}$ of $\fX$; then $\sF$ admits a model on $\fX$. Indeed, this equivalence follows by arguing as in the proof of \cite{luetkebohmertformalrigid} Theorem 2.3. However, it seems impossible in general to modify the models $\ul{\sF}_i$ such that they glue to a model of $\sF$ on $\fX$: Let us assume that $I=\{1,2\}$. After multiplying $\ul{\sF}_1$ by a suitable power of $\pi$, we may assume that $\ul{\sF}_1$ is contained in $\ul{\sF}_2$ on the intersection $\fX_{12}$ of $\fX_1$ and $\fX_2$. Let $n\in\N$ be big enough such that $\pi^n\ul{\sF}_2\subseteq\ul{\sF}_1$ on $\fX_{12}$;  then $\ul{\mathcal{G}}:=\ul{\sF}_1|_{\fX_{12}}/\pi^n\ul{\sF}_2|_{\fX_{12}}$ is a coherent subsheaf of $(\ul{\sF}_2/\pi^n\ul{\sF}_2)|_{\fX_{12}}$, cf.\ the proof of \cite{luetkebohmertformalrigid} Lemma 2.2. If $\fX$ is of tf type over $R$, then the closed formal subscheme of $\fX$ cut out by $\pi^n$ is a scheme, and by chasing denominators (cf.\ \cite{egai} 9.4.7) one can extend $\ul{\mathcal{G}}$ to a coherent subsheaf, again denoted by $\ul{\mathcal{G}}$, on all of $\fX_2$. Let $\ul{\sF}_2'$ denote the preimage of $\ul{\mathcal{G}}$ under the projection $\ul{\sF}_2\rightarrow\ul{\sF}_2/\pi^n\ul{\sF}_2$; then $\ul{\sF}_2'$ is a model of $\sF$ on $\fX_2$ which glues to $\ul{\sF}_1$, and we obtain a model of $\sF$ on all of $\fX$. In our situation, however, $\fX_2$ might not be of tf type, and hence the closed formal subscheme of $\fX_2$ cut out by $\pi^n$ might not be a scheme. On a formal scheme though it is in general not possible to extend coherent subsheaves because of convergence problems. Thus, Lütkebohmert's proof of Kiehl's theorem fails in the uniformly rigid situation. Similar problems occur if one tries to carry over Kiehl's original proof.
\end{remark}

\subsection{Closed uniformly rigid subspaces}

\begin{defi}\label{closedimdefi}
A morphism of uniformly rigid $K$-spaces $\phi\colon Y\rightarrow X$ is called a \emph{closed immersion}\index{closed immersion!of uniformly rigid spaces} if there exists an admissible semi-affinoid covering $(X_i)_{i\in I}$ of $X$ such that for each $i\in I$, the restriction $\phi^{-1}(X_i)\rightarrow X_i$ of $\phi$ is a closed immersion of semi-affinoid $K$-spaces in the sense of Definition \ref{semaffclosedimdefi}.
\end{defi}

We easily see that closed immersions are injective on the level of physical points. 

\begin{lem}\label{closedimstrictcohlem}
Let $\phi\colon Y\rightarrow X$ be a closed immersion of uniformly rigid $K$-spaces. Then $\phi^\sharp\colon \O_X\rightarrow\phi_*\O_Y$ is an epimorphism of sheaves. Moreover, the $\O_X$-modules $\phi_*\O_Y$ and $\ker\phi^\sharp$ are strictly coherent. 
\end{lem}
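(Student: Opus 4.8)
The plan is to reduce all three assertions to the semi-affinoid case along the defining covering, and then to promote local associatedness to strict coherence by invoking Corollary \ref{frameembassoccor} ($ii$). First I would fix an admissible semi-affinoid covering $(X_i)_{i\in I}$ of $X$ as in Definition \ref{closedimdefi}, so that each restriction $\phi_i\colon\phi^{-1}(X_i)\to X_i$ is a closed immersion of semi-affinoid $K$-spaces. Writing $X_i=\sSp A_i$ and $\phi^{-1}(X_i)=\sSp B_i$, the map $\phi_i^*\colon A_i\to B_i$ is surjective with some kernel $I_i$; thus $B_i=A_i/I_i$ is a finite $A_i$-module, and $I_i$ is a finite $A_i$-module because $A_i$ is noetherian.

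Next I would carry out the local identification. Since pushforward commutes with restriction to admissible open subsets, $(\phi_*\O_Y)|_{X_i}=\phi_{i*}\O_{\phi^{-1}(X_i)}$. For a semi-affinoid subdomain $U\subseteq X_i$, the remark following Definition \ref{semaffclosedimdefi} together with Corollary \ref{subdomunivcor} gives $\phi_i^{-1}(U)=\sSp\!\bigl(B_i\otimes_{A_i}\O_{X_i}(U)\bigr)$, so that $(\phi_{i*}\O)(U)=B_i\otimes_{A_i}\O_{X_i}(U)=\tilde{B}_i(U)$; as both sides are $\sT_\aux$-sheaves agreeing on the basis of semi-affinoid subdomains, they coincide. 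Hence $(\phi_*\O_Y)|_{X_i}$ is the module associated to $B_i$, and under this identification $\phi^\sharp|_{X_i}=\phi_i^\sharp$ is the map induced by the surjection $A_i\to B_i$. By Lemma \ref{assocbasicpropertieslem}, the functor $M\mapsto\tilde{M}$ commutes with kernels and cokernels, so $\phi_i^\sharp$ is an epimorphism of sheaves and $(\ker\phi^\sharp)|_{X_i}=\ker\phi_i^\sharp=\tilde{I}_i$. As being an epimorphism is a local property, this already proves that $\phi^\sharp$ is an epimorphism; moreover, since $(X_i)_{i\in I}$ is an admissible semi-affinoid covering on which both $\phi_*\O_Y$ and $\ker\phi^\sharp$ are associated, both sheaves are coherent $\O_X$-modules.

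The only genuinely global step is to pass from local associatedness to strict coherence, which is tested on all open semi-affinoid subspaces $U\subseteq X$ rather than just on the $X_i$; this is exactly what Corollary \ref{frameembassoccor} ($ii$) provides, ultimately resting on the schematic-closure Theorem \ref{frameembthm}. Since $\O_X$ is strictly coherent and $\ker\phi^\sharp$ is a coherent submodule of $\O_X$, Corollary \ref{frameembassoccor} ($ii$) yields that $\ker\phi^\sharp$ is strictly coherent. Using the epimorphism property, $\phi_*\O_Y\cong\O_X/\ker\phi^\sharp$ is a coherent quotient of the strictly coherent module $\O_X$, so Corollary \ref{frameembassoccor} ($ii$) applies once more to show that $\phi_*\O_Y$ is strictly coherent. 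The point to be careful about---and the reason a naive glueing over $(X_i)_{i\in I}$ is insufficient---is precisely that strict coherence quantifies over arbitrary open semi-affinoid subspaces, so the globalization must go through the schematic-closure machinery rather than through the chosen covering alone.
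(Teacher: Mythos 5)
Your proof is correct and takes essentially the same route as the paper's: reduce the epimorphism and coherence assertions to the semi-affinoid case along the covering from Definition \ref{closedimdefi}, conclude there via Lemma \ref{assocbasicpropertieslem}, and then upgrade coherence to strict coherence by Corollary \ref{frameembassoccor} ($ii$) applied to the strictly coherent module $\O_X$. The only difference is cosmetic: you make explicit the identification $(\phi_*\O_Y)|_{X_i}\cong\tilde{B}_i$ on semi-affinoid subdomains (via $A/I\hat{\otimes}_AC=C/IC$ and unique extension of $\sT_\aux$-sheaves), which the paper leaves implicit in its appeal to Lemma \ref{assocbasicpropertieslem}, and you perform the local reduction before invoking the corollary rather than after.
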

\begin{proof}
The $\O_X$-module $\O_X$ is strictly coherent. By Corollary \ref{frameembassoccor} ($ii$), it thus suffices to show that $\phi^\sharp$ is an epimorphism and that both $\ker\phi^\sharp$ and $\phi_*\O_Y$ are coherent. Considering an admissible semi-affinoid covering $(X_i)_{i\in I}$ of $X$ such that for all $i\in I$, the restriction $\phi^{-1}(X_i)\rightarrow X_i$ of $\phi$ is a closed immersion of semi-affinoid $K$-spaces, we reduce to the case where both $X$ and $Y$ are semi-affinoid and where $\phi$ corresponds to a surjective homomorphism of semi-affinoid $K$-algebras. Now the desired statements follow from Lemma \ref{assocbasicpropertieslem}.
\end{proof}

\begin{prop}\label{closedimprop}
Let $\phi\colon Y\rightarrow X$ be a morphism of uniformly rigid $K$-spaces. Then the following are equivalent:
\begin{packed_enum}
\item $\phi$ is a closed immersion.
\item For each open semi-affinoid subspace $U\subseteq X$, the restriction $\phi^{-1}(U)\rightarrow U$ is a closed immersion of semi-affinoid $K$-spaces in the sense of Definition \ref{semaffclosedimdefi}.
\end{packed_enum}
\end{prop}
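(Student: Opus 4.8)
The plan is to prove the two implications separately, with all the content lying in ($i$)$\Rightarrow$($ii$). The reverse implication ($ii$)$\Rightarrow$($i$) is immediate: since $X$ is uniformly rigid it admits an admissible semi-affinoid covering $(X_i)_{i\in I}$, and applying ($ii$) to the open semi-affinoid subspaces $X_i$ shows that each $\phi^{-1}(X_i)\to X_i$ is a closed immersion of semi-affinoid $K$-spaces, so $\phi$ is a closed immersion by Definition \ref{closedimdefi}. For the forward direction I would fix a covering $(X_i)_{i\in I}$ witnessing that $\phi$ is a closed immersion together with an arbitrary open semi-affinoid subspace $U=\sSp A\subseteq X$, and then show that $\phi^{-1}(U)$ is semi-affinoid and that $\phi^{-1}(U)\to U$ corresponds to a surjection of semi-affinoid $K$-algebras.

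The first step is to produce a global quotient presentation. By Lemma \ref{closedimstrictcohlem}, $\phi^\sharp\colon\O_X\to\phi_*\O_Y$ is an epimorphism and $\phi_*\O_Y$ is strictly coherent. Restricting to the semi-affinoid space $U$, the module $(\phi_*\O_Y)|_U$ is therefore associated, and since its global sections are $B:=\O_Y(\phi^{-1}(U))$ it must equal $\tilde B$. The epimorphism $\O_U\to\tilde B$ then corresponds, via the full faithfulness and exactness of $M\mapsto\tilde M$ from Lemma \ref{assocbasicpropertieslem}, to a surjective $K$-algebra homomorphism $A\to B$ with kernel some ideal $I\subseteq A$; in particular $B=A/I$ is semi-affinoid and $\iota\colon\sSp(A/I)\to U$ is a closed immersion. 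Using that morphisms from a uniformly rigid space to a semi-affinoid space are given by $K$-algebra homomorphisms out of its ring of functions, the identity $B\to B=\O_Y(\phi^{-1}(U))$ yields a canonical morphism $\theta\colon\phi^{-1}(U)\to\sSp(A/I)$ satisfying $\iota\circ\theta=\phi|_{\phi^{-1}(U)}$. The entire problem now reduces to showing that $\theta$ is an isomorphism.

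To verify this I would work over an admissible covering of $U$ by semi-affinoid subdomains $W$, each contained in some $X_i$; such a covering exists because the sets $U\cap X_i$ are admissible open in the semi-affinoid space $U$ and the semi-affinoid subdomains form a basis for $\sT_\urig$ on $U$. Writing $A_W:=\O_X(W)$, I reconcile two descriptions of $\phi^{-1}(W)$. Since $W$ is an open semi-affinoid subspace of $X_i$, pulling the closed immersion $\phi^{-1}(X_i)\to X_i$ back along the open immersion $W\hookrightarrow X_i$ — a semi-affinoid fibered product by Proposition \ref{amalgsumsprop} — exhibits $\phi^{-1}(W)$ as semi-affinoid with $A_W\to\O_Y(\phi^{-1}(W))$ surjective. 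On the other hand, because $W$ is a semi-affinoid subdomain of $U$ and $A\to A_W$ is flat, one has $\O_Y(\phi^{-1}(W))=(\phi_*\O_Y)(W)=\tilde B(W)=A_W/IA_W$. Comparing, the kernel of the surjection $A_W\to\O_Y(\phi^{-1}(W))$ is exactly $IA_W$, so $\phi^{-1}(W)=\sSp(A_W/IA_W)=\iota^{-1}(W)$, and over $W$ the morphism $\theta$ is the identity of $\sSp(A_W/IA_W)$.

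Since the $\iota^{-1}(W)$ form an admissible covering of $\sSp(A/I)$ and $\theta$ restricts to an isomorphism over each of them, $\theta$ is an isomorphism; hence $\phi^{-1}(U)\cong\sSp(A/I)$ is semi-affinoid and $\phi^{-1}(U)\to U$ is the closed immersion attached to $A\to A/I$, which is ($ii$). The main obstacle is precisely that $\phi^{-1}(U)$ is a priori only a uniformly rigid space glued from the semi-affinoid pieces $\phi^{-1}(W)$, and that the intersections $U\cap X_i$ with the witnessing covering need not be semi-affinoid, so one cannot simply restrict the closed immersions $\phi^{-1}(X_i)\to X_i$ to $U$. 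What resolves this is the strict coherence of $\phi_*\O_Y$ from Lemma \ref{closedimstrictcohlem}: it simultaneously furnishes the \emph{global} presentation $B=A/I$ and, through the subdomain evaluation $\tilde B(W)=A_W/IA_W$, forces the locally constructed closed subspaces $\phi^{-1}(W)$ to be cut out by the single global ideal $I$, which is what lets the local isomorphisms glue.
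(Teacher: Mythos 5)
Your proof is correct and follows essentially the same route as the paper: both obtain the global presentation $A\to B=A/I$ from Lemma \ref{closedimstrictcohlem} (the paper invokes strict coherence of the kernel sheaf $\sI$ together with $\phi_*\O_Y$ to get the associated short exact sequence, where you derive surjectivity of $A\to B$ directly from the epimorphism property of $\phi^\sharp$), then construct the canonical morphism $\phi^{-1}(U)\to\sSp B$ via the global-functions description of morphisms to semi-affinoid spaces, and verify it is an isomorphism locally over a covering of $U$ by semi-affinoid subdomains refining $(U\cap X_i)_{i\in I}$. Your explicit computation $\phi^{-1}(W)=W\times_{X_i}\phi^{-1}(X_i)=\sSp(A_W/IA_W)$ merely spells out the local check that the paper compresses into ``mimic the proof of \cite{bgr} 9.4.4/1'' with respect to a leaflike refinement.
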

\begin{proof}
The implication ($ii$)$\Rightarrow$($i$) is trivial, the open semi-affinoid subspaces forming a basis for the G-topology on $X$. Let us assume that ($i$) holds, let $\sI$ denote the kernel of $\phi^\sharp$, and let $U\subseteq X$ be an open semi-affinoid subspace; then $\phi$ induces a short exact sequence
\[
0\rightarrow\sI|_U\rightarrow\O_U\rightarrow\phi_*\O_Y|_U\rightarrow 0\;.
\]
Let $A$ denote the ring of functions on $U$. By Lemma \ref{closedimstrictcohlem}, $\sI$ and $\phi_*\O_Y$ are strictly coherent; hence the above short exact sequence is associated to a short exact sequence of $A$-modules
\[
0\rightarrow I\rightarrow A\rightarrow B\rightarrow 0\;.
\]
Since morphisms from uniformly rigid $K$-spaces to semi-affinoid $K$-spaces correspond to $K$-homomorphisms of rings of global functions, we can now mimic the proof of \cite{bgr} 9.4.4/1 to see that the restriction $\phi^{-1}(U)\rightarrow U$ of $\phi$ is associated to the projection $A\rightarrow B$: it suffices to see that the natural morphism $\phi^{-1}(U)\rightarrow\sSp B$ is an isomorphism. This can be checked locally on $\sSp B$ with respect to the preimage under $\sSp B\rightarrow U$ of a leaflike refinement of $(U\cap X_i)_{i\in I}$, where $(X_i)_{i\in I}$ is an admissible semi-affinoid covering of $X$ satisfying the conditions of Definition \ref{closedimdefi}.
\end{proof}

\begin{remark}
The proof of \cite{bgr} 9.4.4/1 uses \cite{bgr} 8.2.1/4. However, as our above argument shows, this reference to \cite{bgr} 8.2.1/4 is in fact unnecessary -- which is to our advantage, because the statement of \cite{bgr} 8.2.1/4 fails to hold in the semi-affinoid situation: Example \ref{nonadmdisccovex} yields a bijective morphism of semi-affinoid $K$-spaces which induces isomorphisms of stalks and which is not an isomorphism.
\end{remark}

In particular, a morphism of semi-affinoid $K$-spaces is a closed immersion in the sense of Definition \ref{closedimdefi} if and only if it is a closed immersion of semi-affinoid $K$-spaces in the sense of Definition \ref{semaffclosedimdefi}. We can now define a \emph{closed uniformly rigid subspace}\index{subspace!closed uniformly rigid} as an equivalence class of closed immersions, in the usual way. By standard glueing arguments, we see that the closed uniformly rigid subspaces of a uniformly rigid $K$-space $X$ correspond to the coherent $\O_X$-ideals. We easily see that closed immersions of uniformly rigid $K$-spaces are preserved under base change.

It is clear that closed immersions of formal $R$-schemes of locally ff type induce closed immersions on uniformly rigid generic fibers. Conversely, given a uniformly rigid $K$-space $X$ together with an $R$-model of locally ff type $\fX$ and a closed uniformly rigid subspace $V\subseteq X$, there exists a unique $R$-flat closed formal subscheme $\fV\subseteq\fX$ such that the given isomorphism $\fX^\srig\cong X$ identifies $\fV^\srig$ with $V$. Indeed, this is an immediate consequence of Theorem \ref{frameembthm}. We say that $\fV$ is the \emph{schematic closure}\index{schematic closure!of a closed subspace} of $V$ in $\fX$.

The comparison functors studied in Section \ref{compfuncsec} preserve closed immersions. This can be verified in the semi-affinoid and affinoid situations respectively. In the case of the functor $\sr$, there is nothing to show. In the case of the functor $\r$, the statement follows by looking at schematic closures and using the fact that Berthelot's construction preserves closed immersions, cf.\ \cite{dejong_crystalline} 7.2.4 (e).

\subsubsection{Separated uniformly rigid spaces}\label{separatedsection}

As usual, a morphism $\phi\colon Y\rightarrow X$ of uniformly rigid $K$-spaces is called \emph{separated} if its \emph{diagonal morphism} 
\[
\Delta_\phi\colon Y\rightarrow Y\times_XY
\]
is a closed immersion. A uniformly rigid $K$-space $X$ is called separated\index{uniformly rigid space!separated} if its structural morphism $X\rightarrow\sSp K$ is separated. If $X$ is a uniformly rigid $K$-space, we let $\Delta_X$ denote the diagonal of its structural morphism.

Semi-affinoid $K$-spaces are visibly separated. Moreover, uniformly rigid generic fibers of separated morphisms of formal $R$-schemes of locally ff type are separated, since functor $\srig$ preserves fibered products and closed immersions. Similarly, the comparison functors studied in Section \ref{compfuncsec} preserve the separatedness property.

\begin{lem}\label{sepintlem}
Let $X$ be a separated uniformly rigid $K$-space. The intersection of two open semi-affinoid subspaces in $X$ is an open semi-affinoid subspace in $X$.
\end{lem}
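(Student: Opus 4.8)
The plan is to exhibit $U\cap V$ as a pullback of the diagonal of $X$ and to use that separatedness makes this diagonal a closed immersion. Throughout, write $j_U\colon U\hookrightarrow X$ and $j_V\colon V\hookrightarrow X$ for the two open immersions, and recall that $\Delta_X$ denotes the diagonal of the structural morphism $X\to\sSp K$.

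First I would form the fibered product $U\times_{\sSp K}V$ over $\sSp K$. Since $U$ and $V$ are open semi-affinoid subspaces, they are semi-affinoid $K$-spaces; hence, by Proposition~\ref{amalgsumsprop} and the resulting existence of fibered products of semi-affinoid $K$-spaces, the product $U\times_{\sSp K}V$ is again semi-affinoid. The open immersions $j_U$ and $j_V$ induce a morphism $j_U\times j_V\colon U\times_{\sSp K}V\to X\times_{\sSp K}X$.

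Next I would check that the square
\[
\begin{diagram}
U\cap V&\rTo^\gamma&U\times_{\sSp K}V\\
\dTo&&\dTo\\
X&\rTo^{\Delta_X}&X\times_{\sSp K}X
\end{diagram}
\]
is cartesian, where the left vertical arrow is the open immersion $U\cap V\hookrightarrow X$, the right vertical arrow is $j_U\times j_V$, and $\gamma$ is induced by the two open immersions $U\cap V\hookrightarrow U$ and $U\cap V\hookrightarrow V$. This is a functor-of-points computation: a $T$-valued point of the fibered product $X\times_{X\times_{\sSp K}X}(U\times_{\sSp K}V)$ consists of a morphism $a\colon T\to X$ together with morphisms $T\to U$ and $T\to V$ whose composites with $j_U$ and $j_V$ both equal $a$. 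Since open immersions are monomorphisms, these last data are uniquely determined by $a$ and amount exactly to the condition that $a$ factor through the open subspace $U\cap V$. As a uniformly rigid $K$-space is determined by its functor of points with values in semi-affinoid spaces, the fibered product is thereby identified, together with its structure sheaf, with $(U\cap V,\O_X|_{U\cap V})$.

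Finally I would conclude. Because $X$ is separated, $\Delta_X$ is a closed immersion, and closed immersions of uniformly rigid $K$-spaces are stable under base change; hence $\gamma$ is a closed immersion. Its target $U\times_{\sSp K}V$ is semi-affinoid, so Proposition~\ref{closedimprop}, applied with this target as its own open semi-affinoid subspace, together with Definition~\ref{semaffclosedimdefi}, shows that the source $(U\cap V,\O_X|_{U\cap V})$ is semi-affinoid. Since $U\cap V$ is an admissible open subset of $X$---finite intersections of admissible opens being admissible---it is an open semi-affinoid subspace of $X$, as claimed. The only genuinely delicate step is the identification of the pullback with $(U\cap V,\O_X|_{U\cap V})$ as a G-ringed space rather than merely on underlying point sets; I would handle this via the functor-of-points characterization, using that both $\gamma$ and the open immersion $U\cap V\hookrightarrow X$ are monomorphisms and that separatedness enters precisely through $\Delta_X$ being a closed immersion.
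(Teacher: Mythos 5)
Your proof is correct and takes essentially the same route as the paper: both realize $U\cap V$ as the pullback of the open semi-affinoid subspace $U\times_{\sSp K}V\subseteq X\times_{\sSp K}X$ along the diagonal $\Delta_X$, which separatedness makes a closed immersion, and conclude via Proposition \ref{closedimprop}. Your explicit cartesian square and appeal to base-change stability of closed immersions merely repackage the paper's observation that $U\cap V$ is the $\Delta_X$-preimage of $U\times_{\sSp K}V$, which the paper checks on points with values in finite field extensions of $K$.
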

\begin{proof}
Let $U$ and $V$ be open semi-affinoid subspaces in $X$. We easily see, using points with values in finite field extensions of $K$, that $U\cap V$ is the $\Delta_X$-preimage of $U\times_{\sSp K}V$ which is an open semi-affinoid subspace of $X\times_{\sSp K}X$. Since $\Delta_X$ is a closed immersion by assumption on $X$, it follows from Proposition \ref{closedimprop} that $U\cap V$ is an open semi-affinoid subspace of $X$.
\end{proof}

\begin{cor}\label{calcthecohcor}
Let $X$ be a separated uniformly rigid $K$-space, and let $\sF$ be a coherent $\O_X$-module. Then the natural morphism
\[
\check{H}^q(X,\sF)\overset{\sim}{\rightarrow}H^q(X,\sF)
\]
is an isomorphism for all $q\geq 0$.
\end{cor}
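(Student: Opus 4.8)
The plan is to run the standard \v{C}ech-to-derived-functor comparison, i.e.\ the same spectral-sequence mechanism already used above for associated modules on semi-affinoid spaces, but over a basis tailored to $\sF$ rather than over all open semi-affinoid subspaces. Concretely, I would introduce the system
\[
\mathfrak{B}\,:=\,\{\,W\subseteq X\ \text{open semi-affinoid subspace with}\ \sF|_W\ \text{associated}\,\}
\]
and verify that $\mathfrak{B}$ meets the three hypotheses of that argument: it is a basis for the G-topology on $X$, it is stable under finite intersection, and the \v{C}ech cohomology of $\sF$ vanishes in positive degrees on each member.

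First I would check that $\mathfrak{B}$ is a basis. Since $\sF$ is coherent, there is an admissible semi-affinoid covering $(X_i)_{i\in I}$ of $X$ with $\sF|_{X_i}$ associated for all $i$. Given any admissible open $U\subseteq X$, Lemma \ref{saffbasislem} provides an admissible semi-affinoid covering of $U$, which after intersecting with the $X_i$ and refining by semi-affinoid subdomains (a basis on each $X_i$ by Proposition \ref{explicitprop}) yields an admissible covering of $U$ by members of $\mathfrak{B}$. Here each intersection with an $X_i$ is again open semi-affinoid by Lemma \ref{sepintlem}, and $\sF$ stays associated on it because associated modules are strictly coherent (Corollary \ref{frameembassoccor} ($iii$)). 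These same two facts give stability under finite intersection: for $W,W'\in\mathfrak{B}$ the intersection $W\cap W'$ is open semi-affinoid by Lemma \ref{sepintlem}, and since $W\cap W'$ is an open semi-affinoid subspace of $W$ to which the associated module $\sF|_W$ restricts, Corollary \ref{frameembassoccor} ($iii$) shows $\sF|_{W\cap W'}$ is again associated, so $W\cap W'\in\mathfrak{B}$.

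Finally, for $W\in\mathfrak{B}$ the module $\sF|_W$ is associated on the semi-affinoid space $W$, so every $\sT_\srig$-admissible covering of $W$ has a leaflike and hence $\sF$-acyclic refinement (Corollary \ref{moduleacythmcor}); passing to the colimit over refinements gives $\check H^q(W,\sF)=0$ for all $q>0$. The three conditions just established—stability under intersection, refinability by members of $\mathfrak{B}$, and vanishing of $\check H^q$ on $\mathfrak{B}$—are exactly those feeding the standard \v{C}ech spectral sequence argument used for the semi-affinoid comparison corollary above, which therefore yields $\check H^q(U,\sF)\overset{\sim}{\to}H^q(U,\sF)$ for every admissible open $U$ covered by $\mathfrak{B}$, in particular for $U=X$.

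The hard part, and the only place where separatedness is used, will be closing $\mathfrak{B}$ under finite intersection. For a non-separated $X$ the intersection of two open semi-affinoid subspaces need not be semi-affinoid, so neither the basis property nor intersection-stability would be available; it is precisely the interplay of Lemma \ref{sepintlem} (separatedness forces such intersections to remain open semi-affinoid) with the strict coherence of associated modules (Corollary \ref{frameembassoccor} ($iii$), which keeps $\sF$ associated after restriction) that makes $\mathfrak{B}$ a legitimate basis of $\sF$-acyclic opens. One should resist the temptation to take $\mathfrak{B}$ to be \emph{all} open semi-affinoid subspaces, since a general coherent $\sF$ is not known to be strictly coherent and may fail to be associated on such a subspace; restricting $\mathfrak{B}$ to the subspaces on which $\sF$ happens to be associated is what circumvents the open uniformly rigid analog of Kiehl's theorem.
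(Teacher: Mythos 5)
Your proposal is correct and follows essentially the same route as the paper: the paper's proof takes exactly your system $\mathfrak{B}$ (the open semi-affinoid subspaces on which $\sF$ is associated), invokes Lemma \ref{sepintlem} for stability under intersection, Corollary \ref{moduleacythmcor} for the vanishing of $\check{H}^q$ on its members, and concludes by the usual \v{C}ech spectral sequence. Your write-up merely makes explicit two details the paper leaves implicit, namely that strict coherence of associated modules (Corollary \ref{frameembassoccor} ($iii$)) keeps $\sF$ associated after restriction and that coherence of $\sF$ furnishes the basis property.
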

\begin{proof}
Let $S$ denote the set of open semi-affinoid subspaces $U$ in $X$ with the property that $\sF|_U$ is associated. By Lemma \ref{sepintlem}, this set is stable under the formation of intersections. It is clearly a basis for the G-topology on $X$, and $\check{H}^q(U,\sF)=0$ for any $U$ in $S$ and any $q\geq 0$ by Corollary \ref{moduleacythmcor}. We conclude by the usual \v{C}ech spectral sequence argument.
\end{proof}

If $X$ is a separated uniformly rigid $K$-space and if $\phi\colon Y\rightarrow X$ is a morphism of uniformly rigid $K$-spaces, then the graph $\Gamma_\phi\colon Y\rightarrow Y\times X$ of $\phi$ is a closed immersion since it is obtained from $\Delta_X$ via pullback. In particular, if $\fX$ and $\fY$ are $R$-models of locally ff type for $X$ and $Y$ respectively, the schematic closure of $\Gamma_\phi$ in $\fY\times\fX$ is well-defined. Here fibered products without indication of the base are understood over $\sSp K$ or $\Spf R$ respectively.

\section{Comparison with the theories of Berkovich and Huber}\label{berthhubercompsec}

The category of formal $R$-schemes of locally ff type is a full subcategory of Huber's category of adic spaces, cf.\ \cite{huberbook}. If $\fX$ is a formal $R$-scheme of locally ff type, viewed as an adic space, then by \cite{huberbook} 1.2.2 the fibered product $\fX\times_{\Spa(R,R)}\Spa(K,R)$ is the adic space associated to the rigid generic fiber $\fX^\rig$ of $\fX$. That is, the uniform structure induced by $\fX$ is lost. In fact, we do not see a way to view the category of uniformly rigid spaces as a full subcategory of Huber's category of adic spaces. The main obstacle lies in the fact that if $\ul{A}$ is an $R$-algebra of ff type, equipped with its natural Jacobson-adic topology, and if $A=\ul{A}\otimes_RK$, then the pair $(A,\ul{A})$ is in general \emph{not} an f-adic ring in the sense of \cite{huberbook}. For example, for $\ul{A}=R[[S]]$ there exists no ring topology on $A$ such that $\ul{A}$ is open in this topology: There is a unique such group topology, but multiplication by $\pi^{-1}$ in $A$ is not continuous because there is no $n\in\N$ such that $\pi^{-1}S^n\in R[[S]]\otimes_RK$ is contained in $R[[S]]$.

The situation is different if we consider the $\pi$-adic topology on $R$-algebras of ff type. If $\ul{A}^\pi$ denotes the ring $\ul{A}$ equipped with its $\pi$-adic topology, then the pair $(A,\ul{A}^\pi)$ is an f-adic ring in the sense of Huber. The induced topology on $A$ is in fact a $K$-Banach algebra topology; if, for $f\in A$ nonzero, we set $v_{\ul{A}}(F)\,:=\,\max\{n\in\N\,;\,\pi^{-n}f\in\ul{A}\}$, then $|f|_{\ul{A}}\,:=\,|\pi|^{v_{\ul{A}}(f)}$  defines a $K$-Banach algebra norm on $A$ which induces the topology defined by $\ul{A}^\pi$. If $\ul{A}=R[[S]]\langle T\rangle$ is a mixed formal power series ring in finitely many variables, then $|\cdot|_{R[[S]]\langle T\rangle}$ is the Gauss norm, and it coincides with the supremum semi-norm taken over all points in $\Max A$. Using \cite{bgr} 3.7.5/2, one proves that all $K$-Banach algebra structures on $A$ are equivalent; in particular, the valuation spectrum $M(A)$ in the sense of \cite{berkobook} 1.2 is well-defined. One shows that reduced semi-affinoid $K$-algebras are Banach function algebras, and one verifies that the supremum semi-norm, taken over all points in $\Max A$ or, equivalently, over all points in $M(A)$, takes values in $\sqrt{|K|}$. For a more detailed discussion, including proofs, we refer to Section 1.2.5 in the author's PhD thesis \cite{mythesis}.

The topological space $M(A)$ may be viewed as a compactification of the rigid space $(\sSp A)^\r$. To illustrate this idea in terms of an example, let us first explain how the specialization map extends to valuation spectra. If $A$ is a semi-affinoid $K$-algebra and if $\ul{A}$ is an $R$-model of ff type for $A$, there exists a natural specialization map 
\[
\sp_{\ul{A}}\colon M(A)\rightarrow\Specns(\ul{A}/\pi\ul{A})
\]
extending the specialization map which we discussed in Section \ref{specmapsec}: let $x$ be a point in $M(A)$, represented by a character $\chi_x:A\rightarrow\sK$ with values in some valued field extension $\sK$ of $K$; then $\sp_{\ul{A}}(x):=\ker(\tilde{\chi}_x:\ul{A}/\pi\ul{A}\rightarrow\tilde{\sK})$, where $\tilde{\sK}$ is the residue field of $\sK$ and where $\tilde{\chi}_x$ is the reduction of $\chi_x$. 

\begin{lem}\label{specmapberklem}
The map $\sp_{\ul{A}}$ is surjective onto $\Specns(\ul{A}/\pi\ul{A})$. Moreover, if $\ul{A}/\pi\ul{A}$ is a domain, then the residue norm $|\cdot|_{\ul{A}}$ is multiplicative and, hence, defines a point in $M(A)$. This point specializes to the generic point of $\Specns(\ul{A}/\pi\ul{A})$, and it is the only point in $M(A)$ with this property.
\end{lem}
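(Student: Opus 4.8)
The plan is to treat the two assertions separately: I would deduce the surjectivity of $\sp_{\ul A}$ from valuation theory, and then analyse the generic point by a direct computation with the gauge norm $|\cdot|_{\ul A}$, using that $R$-algebras of ff type are $\pi$-adically separated.

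For surjectivity, fix a prime $\mathfrak P\subseteq\ul A$ containing $\pi$, i.e.\ a point of $\Specns(\ul A/\pi\ul A)$; the goal is to produce a point $x\in M(A)$ reducing to it. Since $\ul A$ is $R$-flat, $\pi$ lies in no minimal prime, so I would choose a minimal prime $\mathfrak q_0\subseteq\mathfrak P$; then $B:=\ul A/\mathfrak q_0$ is an $R$-flat domain of ff type and $\overline{\mathfrak P}:=\mathfrak P/\mathfrak q_0$ is a prime of $B$ containing $\pi$. Writing $F:=\operatorname{Frac}(B)$, the next step is to choose a rank one valuation $v$ on $F$ whose center on $B$ is $\overline{\mathfrak P}$, that is, with $B\subseteq\mathcal O_v$ and $\mathfrak m_v\cap B=\overline{\mathfrak P}$; such a $v$ exists as a Rees valuation of the maximal ideal of the Noetherian local domain $B_{\overline{\mathfrak P}}$. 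As $\pi\in\overline{\mathfrak P}$ one has $v(\pi)>0$ while $v$ is trivial on $R^\times$, so after rescaling $v$ extends the $\pi$-adic valuation of $K$; completing $F$ yields a valued extension $\sK$ of $K$ and a character $\chi\colon A\to F\hookrightarrow\sK$ with $\chi(\ul A)\subseteq B\subseteq\mathcal O_v$. By construction, for $a\in\ul A$ one has $|\chi(a)|<1$ exactly when $a\in\mathfrak P$, so $x:=|\chi(\cdot)|$ lies in $M(A)$ and $\sp_{\ul A}(x)=\mathfrak P$. The main obstacle is precisely this construction of a \emph{rank one} (real-valued, hence Berkovich-admissible) valuation with a prescribed center of arbitrary height: surjectivity onto closed points is already contained in Remark \ref{specmaprem}, but non-closed points cannot be reached by specialization and require this extra valuation-theoretic input.

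Now assume $\ul A/\pi\ul A$ is a domain. I would first record that $\pi$-adic separatedness makes $v_{\ul A}(f):=\max\{n\in\mathbb Z\,;\,f\in\pi^n\ul A\}$ a well-defined integer for every nonzero $f\in A$. The inequality $v_{\ul A}(fg)\geq v_{\ul A}(f)+v_{\ul A}(g)$ is trivial because $\ul A$ is a ring; for equality I would write $f=\pi^{v_{\ul A}(f)}f_0$ and $g=\pi^{v_{\ul A}(g)}g_0$ with $f_0,g_0\in\ul A\setminus\pi\ul A$. Their classes in $\ul A/\pi\ul A$ are nonzero, so the domain hypothesis gives $f_0g_0\notin\pi\ul A$, whence $v_{\ul A}(fg)=v_{\ul A}(f)+v_{\ul A}(g)$. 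Thus $v_{\ul A}$ is additive and $|\cdot|_{\ul A}=|\pi|^{v_{\ul A}(\cdot)}$ is multiplicative, defining a point $\xi\in M(A)$.

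It remains to identify the reduction of $\xi$ and to establish uniqueness. For $a\in\ul A$ one has $\xi(a)<1$ if and only if $a\in\pi\ul A$, so $\tilde\xi\colon\ul A/\pi\ul A\to\tilde{\sK}$ is injective and $\sp_{\ul A}(\xi)=(0)$ is the generic point of $\Specns(\ul A/\pi\ul A)$. Conversely, let $x\in M(A)$ satisfy $\sp_{\ul A}(x)=(0)$. Boundedness gives $|a|_x\leq 1$ on $\ul A$, while $\sp_{\ul A}(x)=(0)$ forces $|a|_x<1$ for $a\in\ul A$ only when $a\in\pi\ul A$; hence $|a|_x=1$ for every $a\in\ul A\setminus\pi\ul A$. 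For arbitrary nonzero $f\in A$, writing $f=\pi^{v_{\ul A}(f)}f_0$ with $f_0\in\ul A\setminus\pi\ul A$ and using $|\pi|_x=|\pi|$ then yields $|f|_x=|\pi|^{v_{\ul A}(f)}=\xi(f)$. Therefore $x=\xi$, which simultaneously shows that $\xi$ specializes to the generic point and that it is the unique such point of $M(A)$.
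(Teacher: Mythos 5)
Your proof is correct and takes essentially the same route as the paper: there, surjectivity is deduced from EGA II 7.1.7 (existence of a discrete valuation ring of the fraction field dominating a noetherian local domain), which plays exactly the role of your Rees valuation after the reduction to the domain $\ul{A}/\mathfrak{q}_0$, while uniqueness is proved by observing that $\ul{A}_{\pi\ul{A}}$ is a discrete valuation ring so that every character specializing to the generic point factors through the fraction field of its $\pi$-adic completion, which amounts to the same computation as your factorization $f=\pi^{v_{\ul{A}}(f)}f_0$ with $|f_0|_x=1$. The only difference is that you spell out the multiplicativity of $|\cdot|_{\ul{A}}$, a step the paper treats as clear.
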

\begin{proof}
Surjectivity of $\sp_{\ul{A}}$ follows from \cite{egaii} 7.1.7. If $\ul{A}/\pi\ul{A}$ is a domain, then $|\cdot|_{\ul{A}}\in M(A)$ clearly specializes to $\pi\ul{A}$. Moreover, the local ring $\ul{A}_{\pi\ul{A}}$ is then a discrete valuation ring, such that every character $\chi$ of a point $x\in M(A)$ specializing to the generic point of $\Spec\ul{A}/\pi\ul{A}$ is equivalent to the character given by the natural homomorphism from $A$ to the fraction field of the $\pi$-adic completion of $\ul{A}_{\pi\ul{A}}$. It follows that $x$ equals $|\cdot|_{\ul{A}}$.
\end{proof}

One can easily verify that when $\ul{A}/\pi\ul{A}$ is a domain, then $\{|\cdot|_{\ul{A}}\}$ is the Shilov boundary of $M(A)$, cf.\ \cite{mythesis} 1.2.5.12; we will not use this fact in the following. Let us now discuss the example of the open unit disc $\sSp (R[[S]]\otimes_RK)$:

\begin{example}\label{discexample}
The set $M(R[[S]]\otimes_RK)$ is naturally identified with the closure of the Berkovich open unit disc within $M(K\langle S\rangle)$, which is obtained by adding the Gauss point.
\end{example}

\begin{proof}
To understand the continuous map $i\colon M(R[[S]]\otimes_RK)\rightarrow M(K\langle S\rangle)$ induced by the natural isometry $K\langle S\rangle\hookrightarrow R[[S]]\otimes_RK$, we distinguish the points in $M(R[[S]]\otimes_RK)$ with respect to their specializations to the scheme $\Spec k[[S]]$.
Applying Lemma \ref{specmapberklem} to $\ul{A}=R[[S]]$, we see that the unique point above the generic point of $\Spec k[[S]]$ is the Gauss point $|\cdot|_\Gauss$, which maps to the Gauss point in $M(K\langle S\rangle)$ via $i$. If $x\in M(R[[S]]\otimes_RK)$ is a point specializing to the special point of $\Spec k[[S]]$, then for any character $\chi_x$ representing $x$, the induced $R$-homomorphism $\mathring{\chi}_x\colon R[[S]]\rightarrow\mathring{\sK}$ is continuous for the $(\pi,S)$-adic topology on $R[[S]]$ and the valuation topology on $\mathring{\sK}$. In particular, $\chi_x$ is determined by the $\chi_x$-image of the variable $S$. We conclude that the map $i$ is injective and that it maps the complement of the Gauss point onto the Berkovich open unit disc.
The image of $i$ is the continuous image of a compact set and, hence, compact. Since $M(K\langle S\rangle)$ is Hausdorff, it follows that the image of $i$ is closed in $M(K\langle S\rangle)$. 
\end{proof} 
 
\begin{remark}
Given a complete non-trivially valued non-archimedean field $K$ with valuation ring $R$, one may wonder whether the points of the rigid open unit disc over $K$ lie dense in $M(R[[S]]\otimes_RK)$; this question is called the one-dimensional non-archimedean Corona problem. It is yet unanswered; cf.\ the introduction of \cite{deninger_corona} for a brief survey including other versions of non-archimedean Corona problems. If $K$ is discretely valued (which is the overall assumption in this paper), our discussion of Example \ref{discexample} above shows that the Corona question has a positive answer: indeed, let $Z\subseteq M(R[[S]]\otimes_RK)$ be the closure of the set of classical points; then the image of $Z$ under the natural map $i$ to the $K$-analytic space $M(K\langle S\rangle)$ is closed. Working locally on $M(K\langle S\rangle)$, we see that $i(Z)$ contains the Berkovich open unit disc and, hence, its closure. We have seen in Example \ref{discexample} that $i$ is injective onto that closure; thus it follows that $Z=M(R[[S]]\otimes_RK)$. The one-dimensional non-archimedean Corona problem is significantly more challenging when $K$ is not discretely valued: then the ring $R[[S]]\otimes_RK$ is not Noetherian, it has maximal ideals of infinite height (cf.\ \cite{vanderput_corona} Corollary 4.9), and it contains functions with infinitely many zeros on the rigid open unit disc.
\end{remark} 
 
It is natural to ask whether one can associate a topological space to a uniformly rigid $K$-space such that, in the semi-affinoid case, one recovers the construction $\sSp A\mapsto M(A)$ which we described above. However, the formation of $M(A)$ does not behave well with respect to localization; cf.\ the following example. This is not surprising: the Banach $K$-algebra structure on $A$ restricts to the $\pi$-adic topology on an $R$-model of ff type $\ul{A}$ for $A$, and complete localization of $\ul{A}$ with respect to the $\pi$-adic topology does in general not agree with complete localization with respect to the topology defined by the Jacobson radical. Similarly, the extended specialization map $\sp_{\ul{A}}$ maps onto the algebraization $\Specns(\ul{A}/\pi\ul{A})$ of the special fiber $\Spfns(\ul{A}/\pi\ul{A})$ of $\Spf\ul{A}$ whose formation, again, does in general not commute with localization.

\begin{example}\label{berkospecloc}
If $\ul{A}=R\langle X,Y\rangle[[Z]]/(XY-Z)$, equipped with the Jacobson-adic topology, and if $\ul{B}=\ul{A}_{\{X-Y\}}$, then the induced map $M(B)\rightarrow M(A)$ is not injective.
\end{example}
\begin{proof}
Let us write $\fX:=\Spf\ul{A}$, and let $\fX_0:=\Spec k[X,Y]/(XY)$ denote the smallest subscheme of definition of $\fX$. Since $\fX$ is formally smooth over $R$, its special fiber $\fX_k$ is formally smooth over $k$. The underlying topological space $|\fX_k|=|\fX_0|$ is connected; hence the ring $\ul{A}/\pi\ul{A}$ is a domain. By Lemma \ref{specmapberklem}, there exists a unique point $|\cdot|_{\ul{A}}$ of $M(A)$ specializing to the generic point of the algebraization  $\fX_k^\pi:=\Specns(\ul{A}/\pi\ul{A})$ of the special fiber $\fX_k=\Spf (\ul{A}/\pi\ul{A})$ of $\fX$. On the other hand, let us consider the open formal subscheme $\fU:=\Spf \ul{B}$ of $\fX$. Its underlying smallest subscheme of definition $\fU_0$ is
\[
\fU_0=\Spec (k[X,Y]/(XY))_{X-Y}\,=\,\Specns(k[X,X^{-1}]) \amalg \Specns(k[Y,Y^{-1}])\;,
\]
so $\fU$ has exactly two connected components. We conclude that $\ul{B}$ is a nontrivial direct sum $\ul{B}_1\oplus\ul{B}_2$ of flat $R$-algebras of ff type. Since $\fU$ is formally $R$-smooth, we see that $\ul{B}_i/\pi\ul{B}_i$ is a domain for $i=1,2$. We obtain an induced nontrivial decomposition $B=B_1\oplus B_2$ and, hence, a nontrivial decomposition $M(B)=M(B_1)\amalg M(B_2)$. By the proof of the statement in Example \ref{discexample}, there exist unique elements $|\cdot|_{\ul{B}_i}\in M(B_i)$, $i=1,2$, specializing to the respective generic point of $\fU_{i,k}^\pi:=\Spec\ul{B}_i/\pi\ul{B}_i$. To prove that the natural map $M(B)\rightarrow M(A)$ is not injective, it suffices to see that it maps the elements $|\cdot|_{\ul{B}_1}$, $|\cdot|_{\ul{B}_2}$ in $M(B)$ to $|\cdot|_{\ul{A}}$. By functoriality of the specialization map, it thus suffices to observe that the natural morphism $\fU_{i,k}^\pi\rightarrow\fX_k^\pi$ maps the generic point to the generic point. However, this is clear because $\ul{A}/\pi\ul{A}\rightarrow\ul{B}_i/\pi\ul{B}_i$ is injective. Indeed, it is a flat homomorphism of domains, where flatness follows from the fact that $\fU_{i,k}\rightarrow\fX_k$ is an open immersion of formal schemes.
\end{proof}

In the light of Example \ref{berkospecloc}, it is unclear how to define a global analog of $M(A)$. Nonetheless, we think that a quasi-compact uniformly rigid $K$-space $X$ should be viewed as a compactification of its underlying rigid $K$-space $X^\r$. This should be made more precise by studying the topos of $X$. 
\bibliographystyle{plain}
\bibliography{uniformlyrigidspacesbib}

\end{document}